\numberwithin{equation}{section}
\def\eps{{\varepsilon}}
\newcommand{\qq}{{\mathbb Q}}
\newcommand{\pp}{{\mathbb P}}
\newcommand{\Pic}{{\rm{Pic}}}
\newcommand{\Hbars}[2]{\overline{\mathcal{H}}_{{#1}, {#2}}}
\newcommand{\Nefbar}{\overline{\text{Nef}}}
\newcommand{\Mbar}[2]{\overline{\mathcal{M}}_{{#1}, {#2}}}
\theoremstyle{plain}
\newtheorem{theorem}{Theorem}[section]
\newtheorem{lemma}[theorem]{Lemma}
\newtheorem{proposition}[theorem]{Proposition}
\newtheorem{corollary}[theorem]{Corollary}
\theoremstyle{definition}
\newtheorem{remark}[theorem]{Remark}
\title{The Kodaira classification of the moduli of hyperelliptic curves}
\author[I. Barros]{Ignacio Barros}
\author[S. Mullane]{Scott Mullane\smallskip\\
\MakeLowercase{with an appendix by} Irene Schwarz}
\address[I. Barros]{
Department of Mathematics, Universiteit Antwerpen\\
Middelheimlaan 1, 2020 Antwerpen, Belgium} 
\email{ignacio.barros@uantwerpen.be}
\address[S. Mullane]{
School of Mathematics and Statistics, University of Melbourne, VIC 3010, Australia} 
   \email{{\tt mullanes@unimelb.edu.au}}
\address[I. Schwarz]{
Departement Mathematik, ETH Z\"{u}rich\\
  R\"{a}mistrasse 101, 8092 Z\"{u}rich, Switzerland} 
   \email{{\tt irene.schwarz@math.ethz.ch}}
\begin{document}

\maketitle
\vspace{-0.9cm}
\begin{abstract}
We study the birational geometry of the moduli spaces of hyperelliptic curves with marked points. We show that these moduli spaces have non $\mathbb{Q}$-factorial singularities. We complete the Kodaira classification by proving that these spaces have Kodaira dimension $4g+3$ when the number of markings is $4g+6$ and are of general type when the number of markings is $n\geq4g+7$. Similarly, we consider the natural finite cover given by ordering the Weierstrass points. In this case, we provide a full Kodaira classification showing that the Kodaira dimension is negative when $n\leq3$, one when $n=4$, and of general type when $n\geq 5$. For this, we carry out a singularity analysis of ordered and unordered pointed Hurwitz spaces. We show that the ordered space has canonical singularities and the unordered space has non-canonical singularities. We describe all non-canonical points and show that pluricanonical forms defined on the full regular locus extend to any resolution. Further, we provide a full classification of the structure of the pseudo-effective cone of Cartier divisors for the moduli space of hyperelliptic curves with marked points. We show the cone is non-polyhedral when the number of markings is at least two and polyhedral in the remaining cases.\\
{\textit{Mathematics Subject Classification 2020: 14H10, 14E08.}}  
\end{abstract}

\tableofcontents


\nopagebreak



\section{Introduction}

The Kodaira dimension measures the size of the canonical model, and hence is a measure of the complexity of a projective variety $X$, while the structure of the pseudo-effective cone of divisors broadly dictates the geometry of the variety. For instance, when $X$ is $\mathbb{Q}$-factorial and the Cox ring is finitely generated, then the effective cone is not only polyhedral but further decomposes into finitely many convex chambers each representing a different birational model of the variety. Such a variety is known as a Mori dream space and when the variety is a moduli space, the chambers often have a modular meaning. If $X$ has canonical singularities then $X$ is of maximal Kodaira dimension, or of general type, when the canonical divisor lies in the interior of the pseudo-effective cone, and is of minimal Kodaira dimension and uniruled when the canonical divisor is not pseudo-effective, cf. \cite{BDPP}. When the Kodaira dimension of $X$ lies somewhere between these two extremes, $X$ is of intermediate type and the canonical divisor lies on the boundary of the pseudo-effective cone.\\

The computation of the Kodaira dimension of $\overline{\mathcal{M}}_{g,n}$ for $g\geq 2$ has been a central problem over the past decades. It is well established \cite{HM, Lo} that the coarse moduli spaces $\overline{\rm{M}}_{g,n}$ are all of general type with only finitely many exceptions. When the genus and number of markings are small these spaces are usually unirational or uniruled, but as $g$ or $n$ grows, the problem of determining the Kodaira dimension becomes much harder with long-standing open cases such as $\overline{\rm{M}}_{g}$ with $17\leq g\leq21$. The transition from negative to maximal Kodaira dimension is poorly understood and only two such moduli spaces are known to be of intermediate Kodaira dimension; ${\rm{Kod}}\left(\overline{\rm{M}}_{11,11}\right)=19$ and ${\rm{Kod}}\left(\overline{\rm{M}}_{10,10}\right)=0$, cf. \cite{FV, BM}. {In general terms, $\overline{\rm{M}}_{g,n}$ becomes more complicated as either $g$ or $n$ grows.} \\

Similarly, the structure of the pseudo-effective cone and the related Mori dream space question on $\overline{\rm{M}}_{g,n}$ for $g\geq 2$ has presented a challenge. The cone is non-polyhedral for $n\geq2$ and they are also not Mori dream spaces for $g\geq 3$ and $n\geq1$, cf. \cite{Mu, Ke}. A full chamber decomposition of the cone is known only in the remaining cases for very small genera $g=2,3$, cf. \cite{Ru}. Both questions remain open on $\overline{\rm{M}}_{g}$ for $g\geq 4$ and despite many authors contributing to the known effective divisors in $\overline{\rm{M}}_{g}$, e.g.~\cite{HM,FarkasPopa, F}, there are no known extremal rays of the pseudo-effective cone other than the boundary divisors for $g\geq12$. \\

{Perhaps the most fundamental locus in $\rm{M}_{g,n}$ is the locus of hyperelliptic curves.} In this paper, we study the coarse moduli space of marked hyperelliptic curves and their stable degenerations $\overline{\rm{H}}_{g,n}\subset \overline{\rm{M}}_{g,n}$. We have two main results seemingly pointing in opposite directions. The first might advocate for the simplicity and the second for the complexity of $\overline{\rm{H}}_{g,n}$. In strong contrast to $\overline{\rm{M}}_{g,n}$, the transition from negative to maximal Kodaira dimension is surprisingly uniform and happens exactly at $n=4g+6$ for all $g\geq 2$. Our first main result is the following:

\begin{restatable}{theorem}{thmone}
\label{thm1}
For all $g\geq 2$, the moduli space $\overline{\rm{H}}_{g,n}$ has Kodaira dimension 
$${\rm{Kod}}\left(\overline{\rm{H}}_{g,n}\right)=\left\{\begin{array}{lcl}
n-3&\hbox{if}& n=4g+6,\\
2g-1+n&\hbox{if}&n>4g+6.\end{array}\right.$$
\end{restatable}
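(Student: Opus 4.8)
The plan is to compute $\mathrm{Kod}(\Hbars{g}{n})$ by analyzing the canonical divisor on a suitable model together with the singularities, using the hyperelliptic structure to reduce everything to Hurwitz space computations. The key point is that $\Hbars{g}{n}$ is, up to a finite cover, parametrized by an ordered configuration of $2g+2$ Weierstrass points and $n$ further marked points on $\pp^1$ (a pointed Hurwitz space), so one can pull back and push forward pluricanonical forms. I would first establish the singularity statements referenced in the abstract: the ordered pointed Hurwitz space $\Hbarc{g}{n}$ has canonical singularities, the unordered space $\Hbars{g}{n}$ has only non-canonical singularities that are "harmless" in the sense that pluricanonical forms on the regular locus extend to any resolution. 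Granting this, $\mathrm{Kod}(\Hbars{g}{n})$ equals the Iitaka dimension of $K$ computed on the smooth locus, and by the finite-cover argument it suffices to work on $\Hbarc{g}{n}$, keeping track of the ramification along the Weierstrass-identification and boundary divisors.

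Next I would write $K_{\Hbarc{g}{n}}$ (or the relevant log-canonical class on a chosen compactification, e.g.\ a blow-up of $(\pp^1)^{\,2g+2+n}$ or the Losev--Manin / Hassett-type space) as an explicit combination of: the $\lambda$-type (Hodge) class, the marked-point "$\psi$-type" classes attached to the $n$ markings and to the $2g+2$ branch points, and the boundary divisors. The hyperelliptic analogue of the Cornalba--Harris / Mumford relation expresses $\lambda$ in terms of the branch-point classes, so one obtains a clean formula whose positivity can be read off. Here the number $n = 4g+6 = 2(2g+3)$ should appear as the exact threshold: below it, a negative boundary coefficient (or a negative coefficient on some $\psi$) forces $K$ out of the effective cone, and at $n=4g+6$ the class $K$ becomes a non-negative combination that is \emph{not} big but has Iitaka dimension $n-3 = 4g+3$, realized by the forgetful-type map $\Hbars{g}{4g+6}\dashrightarrow$ (a space of dimension $4g+3$) whose fibers are covered by rational curves; for $n>4g+6$ one more marking makes the corresponding $\psi$-contribution push $K$ into the interior, giving $K$ big of the stated dimension $2g-1+n$ (the dimension of $\Hbars{g}{n}$ being $2g-1+n$).

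Concretely the argument splits into an upper bound and a lower bound. For the \textbf{upper bound} $\mathrm{Kod}\le n-3$ when $n=4g+6$: exhibit a dominant rational map from $\Hbars{g}{n}$ to a variety of dimension $n-3$ whose general fiber is uniruled (so that all pluricanonical forms descend), e.g.\ by forgetting a cleverly chosen subset of marked points or using the $\pp^1$-bundle structure; since the base has Kodaira dimension at most its dimension, and the fibers contribute nothing, $\mathrm{Kod}(\Hbars{g}{n})\le n-3$. For the \textbf{lower bound} and the general-type statement for $n\ge 4g+7$: produce an effective divisor $D$ (a pullback of a Brill--Noether / pointed-Weierstrass divisor, or a boundary-supported class) such that $K_{\Hbars{g}{n}} = aD + (\text{ample}) + (\text{effective boundary})$ with $a>0$ on a resolution, so $K$ is big; combined with the singularity control this gives general type. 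Inductively, general type for one value of $n$ plus a section of the forgetful map $\pi:\Hbars{g}{n+1}\to\Hbars{g}{n}$ with $K_{\Hbars{g}{n+1}}\succeq \pi^*K_{\Hbars{g}{n}} + (\text{effective})$ propagates upward.

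I expect the \textbf{main obstacle} to be the singularity analysis at the boundary and along the locus where Weierstrass points collide with markings or with each other: one must verify that no pluricanonical form acquires a pole on a resolution — i.e.\ that the discrepancies along all exceptional divisors are $\ge -1$ in the log sense (or that the "extra" non-canonical locus has the right codimension and the forms extend regardless), so that the naive computation on the smooth locus actually computes $\mathrm{Kod}$ of a smooth model. The second delicate point is pinning down the \emph{exact} threshold $n=4g+6$ rather than a nearby value: this requires the sharp hyperelliptic version of the slope/Cornalba--Harris inequality and a careful bookkeeping of all boundary coefficients in the canonical class, which is where the specific arithmetic of $2g+2$ Weierstrass points enters decisively.
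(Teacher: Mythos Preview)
Your overall architecture---pass to the Hurwitz model, control the singularities so that Kodaira dimension equals the Iitaka dimension of $K$, then analyze the canonical class as a boundary combination---matches the paper's strategy, and your instincts about the threshold $n=4g+6$ and the role of the forgetful map to a space of dimension $n-3$ are correct. The paper does exactly this, working on $\overline{\mathcal{H}ur}_{g,2}^n$ and pushing everything down via the branch morphism $\pi^c:\overline{\mathcal{H}ur}_{g,2}^n\to\overline{\mathcal{M}}_{0,[2g+2]+n}$, so that $K_{\overline{\mathcal{H}ur}_{g,2}^n}=\pi^{c,*}D$ for an explicit boundary combination $D$; effectivity at $n=4g+6$ and bigness for $n\geq4g+7$ are then read off from a Keel-relation rewriting of $D$ (Theorem~\ref{thm:gen_type}), and the lower bound $n-3\leq\kappa$ at $n=4g+6$ comes from observing $D\geq p^*(\text{big})$ for the forgetful map $p:\overline{\mathcal{M}}_{0,[2g+2]+n}\to\overline{\mathcal{M}}_{0,n}$.

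However, your upper bound argument at $n=4g+6$ has a genuine gap. You propose to exhibit a dominant map to a variety of dimension $n-3$ ``whose general fiber is uniruled'' (equivalently ``covered by rational curves''). But if such a fibration existed, Iitaka's easy addition formula would give $\mathrm{Kod}(\overline{\mathcal{H}}_{g,n})\leq\kappa(\text{fiber})+(n-3)=-\infty$, contradicting the theorem. The point is precisely that the fibers of the Iitaka fibration have Kodaira dimension \emph{zero}, not $-\infty$; they are not uniruled, and no naive forgetful map will have uniruled fibers here. The paper's mechanism for the upper bound is quite different and is the genuinely new idea: one finds a nef curve class $B_{0,\varnothing}$ on $\overline{\mathcal{M}}_{0,[2g+2]+n}$ with $B_{0,\varnothing}\cdot D=0$, and exploits the large supply of splittings $B_{0,\varnothing}=B_{i,S}+B_{2g+1-i,S^c}$ into covering curves for the boundary divisors $\delta_{[i]+S}$. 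Each such splitting, via Lemma~\ref{lem:baselocus}, forces the removal of a rigid boundary component from $|mD|$ without changing the Iitaka dimension; after running through all splittings one is left with an effective class $G$ with $\kappa(D)=\kappa(G)$ whose only nonzero coefficients lie among the $\delta_{[i]+T}$ with $2\leq|T|\leq n-2$, hence $G\leq p^*B$ for some big $B$ on $\overline{\mathcal{M}}_{0,n}$, giving $\kappa(D)\leq n-3$ (Proposition~\ref{prop:M0n}). This ``splitting curves'' argument is what replaces your proposed uniruled-fiber step, and it is the part of the proof you are missing.
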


Further, our investigation uncovers a new proof of the known uniruled results~\cite{Be, AB} which together provide a complete Kodaira classification.
It is remarkable that for fixed $n\gg0$, as the genus grows the moduli spaces $\overline{\rm{H}}_{g,n}$ change from being of general type to being uniruled. In some sense, they become ``simpler'', in contrast to the situation for $\overline{\rm{M}}_{g,n}$. In further contrast with $\overline{\rm{M}}_{g,n}$, that has only finite-quotient singularities, the moduli space $\overline{\rm{H}}_{g,n}$ is NOT $\qq$-factorial, making divisor calculations and positivity study of the canonical class unmanageable. Since the automorphism groups are finite, this implies that the stack $\overline{\mathcal{H}}_{g,n}$ is not smooth, answering a question posted in \cite{EH}. 

\begin{proposition}
\label{thm:Hgnsing}
For $g\geq 3$ and $n\geq 2$ the moduli space $\overline{\rm{H}}_{g,n}$ is normal and not $\qq$-factorial.
\end{proposition}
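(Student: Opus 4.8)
The plan is to exhibit an explicit point of $\overline{\mathcal{H}}_{g,n}$ where the local ring fails to be $\mathbb{Q}$-factorial, and then argue normality from general principles. Normality is the easy half: $\overline{\mathcal{M}}_{g,n}$ has only finite quotient singularities, hence is normal, and $\overline{\mathcal{H}}_{g,n}$ is a closed subvariety which is itself a quotient of a smooth space — the (compactified) Hurwitz-type parameter space of hyperelliptic covers with marked points — by a finite group, so it is normal as well. The substance is the failure of $\mathbb{Q}$-factoriality.

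For this I would work at a point $[C,p_1,\dots,p_n]$ of $\overline{\mathcal{H}}_{g,n}$ where $C$ is a nodal hyperelliptic curve whose automorphism group (beyond the hyperelliptic involution) is trivial, so that locally $\overline{\mathcal{H}}_{g,n}$ looks like an analytic neighborhood in the corresponding Hurwitz space; the relevant local model is the product of a smooth factor with a versal deformation of the node. The mechanism producing non-$\mathbb{Q}$-factoriality is that, for hyperelliptic curves, a non-separating node of $C$ must be traded for a pair of nodes on the double cover picture, or equivalently a degeneration that pinches $C$ symmetrically; the corresponding divisor (a component of the boundary meeting the hyperelliptic locus) is not $\mathbb{Q}$-Cartier because the local equation of the boundary in $\overline{\mathcal{M}}_{g,n}$ restricts to a square (or a product) on the hyperelliptic locus. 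Concretely, I would pick a point where two local branches of the hyperelliptic boundary meet so that their union is Cartier but neither is $\mathbb{Q}$-Cartier individually — the standard small-resolution picture where a Weil divisor class generates a rank-one but non-torsion obstruction in the local class group. One then computes the local class group of this point (it is of the form $\mathbb{Z}^2/\langle(1,1)\rangle$-type, or contains $\mathbb{Z}$ as a non-torsion summand) and exhibits a Weil divisor — a boundary component — that is not torsion, hence not $\mathbb{Q}$-Cartier. The hypotheses $g\geq 3$ and $n\geq 2$ are what guarantee such a boundary stratum (with the required combinatorics and with enough markings to kill extra automorphisms) actually lies inside $\overline{\mathcal{H}}_{g,n}$.

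The main obstacle is the explicit local computation: one must write down the versal deformation of the relevant stable hyperelliptic curve together with the action of the residual automorphisms and of the monodromy permuting branch points, identify which formal neighborhood of $\overline{\mathcal{M}}_{g,n}$ contains the hyperelliptic locus, and read off the class group of the resulting singularity. The delicate point is making sure the automorphisms really are as claimed — this is where $n\geq 2$ enters, to rigidify — and that the boundary divisor we single out is genuinely non-$\mathbb{Q}$-Cartier rather than merely non-Cartier; this requires checking that no multiple of its class becomes Cartier, i.e. that the relevant element of the local class group has infinite order. I would carry this out by exhibiting a one-parameter family of hyperelliptic curves, a curvilinear arc through the chosen point transverse to the putative $\mathbb{Q}$-Cartier divisor, along which the intersection multiplicity forces a contradiction with $\mathbb{Q}$-Cartierness (a Noetherian–Samuel multiplicity / blow-up argument), or equivalently by producing a small modification of $\overline{\mathcal{H}}_{g,n}$ at that point with nontrivial exceptional locus of codimension $\geq 2$. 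Finally, since the automorphism groups of pointed hyperelliptic curves are finite for $n\geq 1$, the coarse space and the stack agree up to finite quotients; a non-$\mathbb{Q}$-factorial coarse space forces the stack $\overline{\mathrm{H}}_{g,n}$ to be singular, which is the promised answer to the question of \cite{EH}.
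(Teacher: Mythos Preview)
Your normality argument contains a self-defeating claim. You assert that $\overline{\mathcal{H}}_{g,n}$ is ``a quotient of a smooth space\dots by a finite group''. But a finite quotient of a smooth variety has finite quotient singularities and is therefore $\mathbb{Q}$-factorial --- so if your normality argument were correct, it would immediately contradict the other half of the proposition. In fact the paper's point is precisely that the stack $\overline{\rm{H}}_{g,n}$ is \emph{not} smooth, so no such description is available. The paper instead proves normality via Serre's criterion: Cohen--Macaulayness is obtained inductively from the universal curve $\overline{\mathcal{H}}_{g,n}\to\overline{\mathcal{H}}_{g,n-1}$ (flat with CM fibres over a CM base), and smoothness in codimension one comes from the fact that the admissible-covers model $\overline{\mathcal{H}ur}_{g,2}^n$ is normal and the forgetful map $\phi$ is an isomorphism away from codimension two.

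For non-$\mathbb{Q}$-factoriality your proposal is diffuse: local class-group computations, multiplicity arguments, and small modifications are all listed as options, but none is carried out, and the local picture you sketch (``two branches whose union is Cartier'') is not what actually happens here. The paper's argument is sharper and avoids any local ring computation: it uses the birational morphism $\phi:\overline{\mathcal{H}ur}_{g,2}^n\to\overline{\mathcal{H}}_{g,n}$ and exhibits an explicit irreducible component of the exceptional locus of codimension two in the source --- namely a component of $\pi^{-1}(\delta_{[2i+2]+S_1}\cap\delta_{[2g-2i]+S_2})$ for suitable $i,S_1,S_2$ with $|S_1\cap S_2|\geq 2$ --- which is contracted to codimension three. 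One then checks (using the description of the divisorial exceptional locus) that for $g\geq 3$ this component does not lie inside any contracted divisor, so $\phi$ is a genuine small contraction. Since $\overline{\mathcal{H}ur}_{g,2}^n$ has only finite quotient singularities (its stack is smooth), the existence of a non-trivial small contraction from it forces the target to be non-$\mathbb{Q}$-factorial. Your final remark, that non-$\mathbb{Q}$-factoriality of the coarse space implies non-smoothness of the stack, is correct and is exactly how the paper answers the question of Edidin--Hu.
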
 

In light of Proposition \ref{thm:Hgnsing} we study better behaved birational model of $\overline{\rm{H}}_{g,n}$. This is provided by the Hurwitz space $\overline{\rm{H}ur}_{g,2}^n$ parameterising $n$-pointed admissible double covers \cite{HM}.\\ 

We also consider the natural generically finite cover 
$$\widehat{\rm{H}}_{g,n}\longrightarrow \overline{\rm{H}}_{g,n}$$
induced by marking and ordering the Weierstrass points. The moduli space $\widehat{\rm{H}}_{g,n}\subset \overline{\rm{M}}_{g, 2g+2+n}$ is the closure of the locus of $(2g+2+n)$-pointed curves
$$\left(C, w_1,\ldots,w_{2g+2},p_1,\ldots,p_n\right)\in \rm{M}_{g,2g+2+n}$$
where $C$ is hyperelliptic, and $w_1,\ldots,w_{2g+2}$ are the Weierstrass points. The space $\widehat{\rm{H}}_{g,n}$ is again not $\mathbb{Q}$-factorial when $g\geq3$ and $n\geq2$ and the birational model we study in this case is the Hurwitz space $\widehat{\rm{H}ur}_{g,2}^n$ parameterising $(2g+2+n)$-pointed admissible double covers, where the first $2g+2$ makings are Weierstrass points. In this case we obtain the full Kodaira classification:

\begin{restatable}{theorem}{thmtwo}
\label{thm2}
For all $g\geq 2$, the moduli space $\widehat{\rm{H}}_{g,n}$ has Kodaira dimension 
$${\rm{Kod}}\left(\widehat{\rm{H}}_{g,n}\right)=\left\{\begin{array}{lcl}
-\infty& \hbox{if}& n\leq 3\\
1&\hbox{if}& n=4,\\
2g-1+n&\hbox{if}&n>4.\end{array}\right.$$
\end{restatable}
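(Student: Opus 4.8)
The plan is to reduce Theorem \ref{thm2} to a careful analysis of the Hurwitz model $\widehat{\mathcal{H}ur}_{g,2}^n$ together with the divisor and singularity theory developed earlier in the paper. First I would handle the three regimes separately. For $n\leq 3$ the space $\widehat{\mathcal{H}}_{g,n}$ is birational to a space dominated by a fibration over $\overline{\mathcal{M}}_{0,2g+2+n}$ (or an affine bundle structure coming from the choice of the extra marked points on the rational base, together with the choice of branch points), and I would exhibit a rational curve through a general point — e.g. by moving one of the unconstrained $p_i$ along the $\mathbb{P}^1$ double cover — to conclude uniruledness, hence $\mathrm{Kod}=-\infty$. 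This recovers and slightly extends the known statements \cite{Be, AB}; the point is that with $n\leq 3$ marked non-Weierstrass points one has enough moduli of rational curves sweeping out the space. For the general type range $n>4$, the strategy is the standard one: compute the canonical class of a smooth model (the ordered pointed Hurwitz space), which by the earlier singularity analysis has canonical singularities so pluricanonical forms extend to any resolution, and then write $K$ as an effective sum of a big class, boundary divisors, and the pullback of an ample class, using the existence of enough independent effective divisor classes (Weierstrass-incidence divisors, Brill--Noether-type divisors on the $\mathbb{P}^1$ side, and boundary) once $n$ is large enough.

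The delicate case is $n=4$, where the claim is the intermediate value $\mathrm{Kod}=1$. Here I would first show $\mathrm{Kod}\geq 1$ by producing a single pluricanonical form, equivalently by showing $K$ (on the Hurwitz model) is $\mathbb{Q}$-linearly equivalent to an effective divisor whose Iitaka fibration has one-dimensional base; the natural candidate is the map that remembers the hyperelliptic $\mathbb{P}^1$ with its $2g+2+4$ points modulo the residual $\mathrm{PGL}_2$, or some forgetful/clutching-compatible pencil, and I would identify $K$ with a positive multiple of the pullback of a point (or of $\lambda$ on a one-dimensional quotient). Then for the upper bound $\mathrm{Kod}\leq 1$ I would show that along the fibers of this fibration the canonical class restricts to something non-big — e.g. the fibers are (up to finite cover) products of $\mathbb{P}^1$'s or abelian-type pieces on which $K$ is torsion — so no pluricanonical system can separate points in the fiber direction. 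Combining the two inequalities pins down $\mathrm{Kod}=1$.

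The main obstacle I anticipate is precisely the $n=4$ borderline computation: proving the exact equality of Kodaira dimension (as opposed to a bound) requires an honest identification of the canonical bundle of a resolution of $\widehat{\mathcal{H}ur}_{g,2}^4$ in terms of generators of its (rational) Picard group — $\lambda$, the marked-point psi-classes, the Hodge-type class coming from the double cover, and the full list of boundary divisors classified earlier — and then a delicate positivity bookkeeping showing the resulting class is neither big nor anti-effective but has Iitaka dimension exactly one. This is the kind of computation that is genus-uniform (so one formula should work for all $g\geq 2$), which is encouraging, but it is sensitive to the precise multiplicities of the non-canonical/canonical boundary contributions, so I would lean heavily on the earlier structural results: the classification of non-canonical points, the statement that pluricanonical forms on the regular locus extend, and the explicit pseudo-effective cone description, to control the discrepancy terms and to guarantee that no pluricanonical form is lost in passing to a resolution.

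\begin{remark}
The ordered cover $\widehat{\mathcal{H}}_{g,n}\to\overline{\mathcal{H}}_{g,n}$ is finite of degree $(2g+2)!$ onto its image, so one does not directly transfer Kodaira dimension between the two; the independence of Theorem \ref{thm1} and Theorem \ref{thm2} (in particular the different transition values $4g+6$ versus $4$) is exactly what forces the separate singularity analyses of the ordered and unordered pointed Hurwitz spaces announced in the abstract.
\end{remark}
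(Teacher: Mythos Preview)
Your proposal has the right overall architecture — work on the Hurwitz model $\widehat{\mathcal{H}ur}_{g,2}^n$, use the canonical singularities result to equate Kodaira and Iitaka dimensions, and treat the three regimes separately — but it misses the paper's central technical mechanism for the $n=4$ case, and your sketch there would not go through as written.

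For $n\leq 3$ and $n\geq 5$ your outline is close to the paper's, though the paper is more precise: everything is pushed down via the branch morphism to $\overline{\mathcal{M}}_{0,[2g+2]+n}$, where the canonical class is $\pi^*$ of an explicit boundary combination $D$. For $n\leq 3$ the paper does not move a marked point informally; it exhibits a specific nef curve $B_{0,\varnothing}$ (one unordered branch point moving freely) and computes $B_{0,\varnothing}\cdot D=(n-4)/2<0$, so $D$ is not pseudo-effective. For $n=5$ the paper shows $D$ is a strictly positive boundary sum, hence big via \cite{KM}; larger $n$ follows by subadditivity for curve fibrations.

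The real gap is at $n=4$. Your upper bound idea — restrict $K$ to fibers of some fibration and argue it is torsion there — is not what the paper does and is not obviously implementable: there is no evident fibration of $\widehat{\mathcal{H}ur}_{g,2}^4$ with fibers on which $K$ is numerically trivial. The paper's method is new and quite different. One observes $B_{0,\varnothing}\cdot D=0$, and crucially that $B_{0,\varnothing}$ admits many effective splittings
\[
B_{0,\varnothing}=B_{i,S}+B_{2g+1-i,S^c}
\]
where each summand is a covering curve for a boundary divisor. If either summand meets $D$ negatively, Lemma~\ref{lem:baselocus} strips off a rigid component without changing the Iitaka dimension; iterating over all $(i,S)$ forces the residual effective class $G$ to satisfy $B_{i,S}\cdot G=0$ for all $i,S$. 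These equations, computed in a Kapranov-type basis, kill every coefficient except those of boundary divisors pulled back from $\overline{\mathcal{M}}_{0,n}$, giving $\kappa(D)\leq n-3=1$. The lower bound comes not from producing a single form but from writing $D$ as $p^*(\text{big on }\overline{\mathcal{M}}_{0,n})+\text{effective}$ via the forgetful map $p:\overline{\mathcal{M}}_{0,[2g+2]+n}\to\overline{\mathcal{M}}_{0,n}$, using that the full boundary of $\overline{\mathcal{M}}_{0,n}$ is big.

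So the missing idea is this splitting-of-nef-curves technique (Section~\ref{section:M} and Proposition~\ref{prop:M0n}); without it your $n=4$ upper bound has no traction.
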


Moduli spaces of intermediate type parameterising curves of $g\geq 2$ are rare in nature. The transition from negative to maximal Kodaira dimension as the number of markings grows is rather sudden due to the positivity of the relative dualising sheaf for the universal family, see for instance the discussion in \cite[Section 1]{BM} applied to the hyperelliptic context. Theorems~\ref{thm1} and \ref{thm2} provide the first infinite collection of moduli spaces of curves of genus $g\geq2$ that are of intermediate type. \\

{Our second main result provides} a full classification for the structure of the cone of pseudo-effective Cartier divisors $\overline{\rm{Eff}}\left(\overline{\rm{H}}_{g,n}\right)$ which is as complicated as that of $\overline{\rm{M}}_{g,n}$ in the cases where the structure of the latter is known.

\begin{theorem}
\label{thm3}
The pseudo-effective cone of $\overline{\rm{H}}_{g,n}$ is
\begin{enumerate}
\item
generated by the irreducible components of the boundary for $n=0$,
\item
generated by the Weierstrass divisor and the irreducible components of the boundary for $n=1$,
\item
non-polyhedral and hence generated by infinitely many extremal rays for $n\geq 2$.
\end{enumerate}
\end{theorem}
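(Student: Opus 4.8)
The plan is to treat the three cases separately, leveraging the (generically finite, birational away from codimension-two loci) relationship between $\overline{\mathcal{H}}_{g,n}$ and the admissible double-cover Hurwitz space, together with the standard structure theory of effective cones on moduli of curves.

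For parts (1) and (2), the idea is to show that any effective Cartier divisor $D$ on $\overline{\mathcal{H}}_{g,n}$ can be written, after adding suitable boundary components (and, for $n=1$, the Weierstrass divisor $\mathfrak{W}$), as an effective sum with nonnegative coefficients. First I would fix the Picard group: since $\overline{\mathcal{H}}_{g,0}$ is rationally connected (indeed the hyperelliptic locus is unirational) and the interior $\mathcal{H}_{g}$ has trivial rational Picard group, $\operatorname{Pic}(\overline{\mathcal{H}}_{g,0})_{\mathbb{Q}}$ is spanned by the boundary divisors $\delta_i$ (and $\lambda$, which is itself an effective combination of boundary in the hyperelliptic setting via the known relation expressing $\lambda$ on the hyperelliptic locus). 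For $n=1$ one extra generator appears, represented by $\mathfrak{W}$. Then I would take a general effective divisor $D\equiv \sum a_i \delta_i + b\,\mathfrak{W}$ (with the convention $b=0$ when $n=0$) and argue by a covering-curve / test-curve computation: exhibit, for each generator of the putative cone, a moving family of curves sweeping out $\overline{\mathcal{H}}_{g,n}$ meeting that generator with the appropriate sign, forcing $a_i\ge 0$ and $b\ge 0$ on any effective class not supported on the boundary. Concretely, moving a branch point of the double cover gives a curve meeting $\mathfrak{W}$ positively and each $\delta_i$ nonnegatively; pencils of the form obtained by attaching a varying elliptic or rational tail produce the complementary inequalities. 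This shows $\overline{\mathrm{Eff}}$ is contained in the cone generated by these classes, and the reverse inclusion is immediate since all the generators are effective.

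For part (3), the plan is to mimic the non-polyhedrality argument of \cite{Mu} for $\overline{\mathcal{M}}_{g,n}$: produce an infinite sequence of pairwise non-proportional extremal effective divisors on $\overline{\mathcal{H}}_{g,n}$ for $n\ge 2$. The natural source is the pullback of the infinitely many extremal divisors constructed on $\overline{\mathcal{M}}_{g,n}$ — or, better adapted to this setting, the hyperelliptic analogues of the "higher Weierstrass" / pointed-Brill--Noether type divisors $D_{k}$ defined by imposing, for varying $k$, that two of the marked points $p_1,p_2$ differ by a divisor pulled back from $\mathbb{P}^1$ of the appropriate degree. Each such $D_k$ is effective, and one shows extremality by exhibiting a covering family of curves $B_k\subset \overline{\mathcal{H}}_{g,n}$ with $B_k\cdot D_k<0$ while $B_k$ moves in a family dominating $D_k$ (so $D_k$ is rigid along $B_k$), which by the standard argument forces $D_k$ to lie on an extremal ray; distinctness of the rays follows by computing the slopes $[D_k]\in N^1(\overline{\mathcal{H}}_{g,n})$ against fixed test curves and checking they are pairwise non-proportional as $k\to\infty$. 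Since $\overline{\mathcal{H}}_{g,n}$ is normal (Proposition \ref{thm:Hgnsing}) one can pass between Cartier and Weil considerations carefully; the non-$\mathbb{Q}$-factoriality means one must work with $\overline{\mathrm{Eff}}$ of Cartier divisors specifically, so I would pull the divisors $D_k$ back from the $\mathbb{Q}$-factorial Hurwitz model $\overline{\mathcal{H}ur}_{g,2}^n$ and push forward, checking Cartier-ness at the non-$\mathbb{Q}$-factorial points.

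The main obstacle I anticipate is part (3): constructing the infinite family of genuinely \emph{extremal} rays and, crucially, verifying that the covering-curve classes $B_k$ lie in the cone of curves of $\overline{\mathcal{H}}_{g,n}$ itself (not merely of $\overline{\mathcal{M}}_{g,n}$) and that the divisors remain Cartier and irreducible after restricting to the hyperelliptic locus — the intersection-theoretic bookkeeping on the singular space $\overline{\mathcal{H}}_{g,n}$, as opposed to its Hurwitz resolution, is where the real work lies. Parts (1) and (2) are comparatively routine once the Picard group and the effectivity of $\lambda$ (and its relation to $\mathfrak{W}$ and the boundary) on the hyperelliptic locus are in hand.
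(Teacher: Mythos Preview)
Your approach to parts (1) and (2) is different from the paper's and less efficient. The paper does not run test-curve computations on $\overline{\mathcal{H}}_{g,n}$ at all; instead it transports the problem to $\overline{\mathcal{M}}_{0,[2g+2]+n}$. For $n=0$ the isomorphism $\overline{\mathcal{H}}_g\cong\overline{\mathcal{M}}_{0,[2g+2]}$ reduces the claim to the known fact \cite[Thm.~1.3]{KM} that $\overline{\rm Eff}^1(\overline{\mathcal{M}}_{0,[2g+2]})$ is simplicial on the boundary. For $n=1$ the finite surjection $\beta:\overline{\mathcal{M}}_{0,[2g+2]+1}\to\overline{\mathcal{H}}_{g,1}$ together with the polyhedrality result \cite[Prop.~5.3]{Ru} for the source gives $\overline{\rm Eff}^1(\overline{\mathcal{H}}_{g,1})=\beta_*\overline{\rm Eff}^1(\overline{\mathcal{M}}_{0,[2g+2]+1})$, and one simply observes $\beta_*\delta_{[1]+1}=W$. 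Your covering-curve scheme could in principle be made to work, but you would have to produce, for each boundary component separately, a moving curve class with the correct sign pattern, which is considerably more bookkeeping than invoking two existing polyhedrality theorems.

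For part (3) there is a genuine gap: you have mis-remembered what \cite{Mu} actually does. Mullane's argument---and the paper's---does \emph{not} exhibit infinitely many extremal effective divisors directly. It works on the dual side. One takes the nef curve $[F]$ given by a fibre of $\overline{\mathcal{H}}_{g,n}\to\overline{\mathcal{H}}_{g,n-1}$ and proves two things: (i) $[F]$ is extremal in $\overline{\rm Nef}_1(\overline{\mathcal{H}}_{g,n})$, and (ii) the space $[F]^\vee=\{[D]\in\overline{\rm Eff}^1:[F]\cdot[D]=0\}$ has rank at most $\rho-2$. An extremal ray of a full-dimensional cone whose supporting face has codimension $\geq 2$ forces the dual cone to be non-polyhedral. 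The divisors $\overline{D}_k,\overline{E}_k$ enter only in step (i): they are used to rule out any nontrivial nef decomposition $[F]=[F']+[F'']$ by showing that a perturbed class $[F^t]$ would intersect some $\overline{D}_k$ or $\overline{E}_k$ negatively for $k\gg 0$. They are never claimed to be extremal in $\overline{\rm Eff}^1$, nor is any covering curve $B_k$ for $D_k$ with $B_k\cdot D_k<0$ ever produced. Your proposed route would require, for each $k$, constructing such a $B_k$ inside the hyperelliptic locus, verifying irreducibility of $D_k|_{\overline{\mathcal{H}}_{g,n}}$, and then checking pairwise non-proportionality---none of which is available, and the first of which is genuinely hard. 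The dual-cone argument sidesteps all of this.
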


We denote by $\overline{\mathcal{M}}_{0,[2g+2]+n}$ the stack quotient of $\overline{\mathcal{M}}_{0,2g+2+n}$ by the symmetric group $S_{2g+2}$ that acts by permuting the first $2g+2$ markings and we denote by $\overline{\rm{M}}_{0,[2g+2]+n}$ its coarse moduli space. When $n=0$ there is a regular birational map $\overline{\rm{H}}_g\rightarrow\overline{\rm{M}}_{0,[2g+2]}$ already used in \cite{Co} to described the Picard group of $\overline{\rm{H}}_g$. This map completely determines the birational type of the varieties $\overline{\rm{H}}_g$ making them rational. When $n>0$, there is still a generically finite rational map $\overline{\rm{H}}_{g,n}\dashrightarrow\overline{\rm{M}}_{0,[2g+2]+n}$ relating the geometry of $\overline{\rm{H}}_{g,n}$ with symmetric quotients of moduli spaces of pointed rational curves. When $n>0$ this rational map is resolved by the Hurwitz correspondence 

\begin{equation}
\label{intro:eq1}
\begin{tikzcd}
&\overline{\mathcal{H}ur}_{g,2}^n\arrow[dr, "\pi"]\arrow[dl, "\phi"']&\\
\overline{\mathcal{H}}_{g,n}&&\overline{\mathcal{M}}_{0,[2g+2]+n}.
\end{tikzcd}
\end{equation}
Here $\overline{\mathcal{H}ur}_{g,2}^n$ is the Deligne--Mumford stack of admissible double covers that parameterises stable $n$-pointed double covers $\left[f:C\to R, p_1,\ldots,p_n\right]$
with $R$ a rational curve and $p_i$ a marked point on the genus $g$ source curve $C$, and the image of all $p_i$'s and the branch points of $f$ are distinct. See Section \ref{section:hur} for definitions. The map $\phi$ forgets the cover, and $\pi$ forgets the source curve while remembering branch points and the images under $f$ of the markings. The generically finite forgetful morphism $\widehat{\mathcal{H}}_{g,n}\to\overline{\mathcal{H}}_{g,n}$ lifts to the finite $S_{2g+2}$-quotient on admissible covers 
\[
\widehat{\mathcal{H}ur}_{g,2}^n\longrightarrow\overline{\mathcal{H}ur}_{g,2}^n,
\]
where $\widehat{\mathcal{H}ur}_{g,2}^n$ parameterises $n$-pointed admissible double covers where we mark the $2g+2$ ramification points of the cover outside the nodes. 
Both unordered and ordered Hurwitz spaces are largely studied objects in their own right. However, they also have a long history of crucial importance in known results on the geometry of $\overline{\mathcal{M}}_{g,n}$ including the dimension, irreducibility, dimension of proper subvarieties, and the birational classification, cf. ~\cite{Riemann, Clebsch, Hurwitz, Diaz, HM}.
A modern example of this is in studying the tautological ring $R^\star(\overline{\mathcal{M}}_{g,n})$, cf. \cite{GP, FPa, SvZ, CL3}. Despite this interest and utility, many more questions remain open on the birational geometry of these spaces.\\

The stacks $\widehat{\mathcal{H}ur}_{g,2}^n$, and $\overline{\mathcal{H}ur}_{g,2}^n$ are smooth Deligne--Mumford \cite{JKK, SvZ} and their coarse spaces have finite quotient singularities. Further, the branch morphism on coarse spaces
$$\pi:\overline{\rm{H}ur}_{g,2}^n\to\overline{\rm{M}}_{0,[2g+2]+n}$$
and the forgetful morphism from the ordered to unordered Hurwitz space composed with this branch morphism
\[
\hat{\pi}:\widehat{\rm{H}ur}_{g,2}^n\to\overline{\rm{M}}_{0,[2g+2]+n}
\]
both share the property of saturated branching with simple ramification in codimension one. That is, the branch and ramification divisors are related as
$${\rm{Ram}}\left({\pi}\right)=\frac{1}{2}\pi^*{\rm{Branch}}\left({\pi}\right) \;\; \;\;\;\text{and}\;\;\;\;\;{\rm{Ram}}\left(\hat{\pi}\right)=\frac{1}{2}\pi^*{\rm{Branch}}\left(\hat{\pi}\right).  $$
This allows us to express both canonical divisors $K_{\overline{\rm{H}ur}_{g,2}^n}$ and $K_{\widehat{\rm{H}ur}_{g,2}^n}$ as the pullbacks of classes on $\overline{\rm{M}}_{0,[2g+2]+n}$ under the finite morphisms $\pi $ and $\hat{\pi}$ respectively. Hence the positivity properties of the the canonical divisors are dictated by the positivity properties of the corresponding divisors on $\overline{\rm{M}}_{0,[2g+2]+n}$. \\

The Picard group ${\rm{Pic}}_{\mathbb{Q}}\left(\overline{\rm{M}}_{0,[2g+2]+n}\right)$ is generated by boundary divisors subject to large collection of relations known as {\textit{Keel's relations}}. They all come from the fundamental relation on $\overline{\mathcal{M}}_{0,4}\cong \mathbb{P}^1$ by pulling back via different forgetful morphisms. We use these relations to find the right expression for the canonical classes ensuring effectivity when $n\geq 4g+6$ in the unordered case and $n\geq 4$ in the ordered case. Further, we show the canonical class is big for $n> 4g+6$ in the unordered case and $n> 4$ in the ordered case.\\

A big divisor has maximal Iitaka dimension and lies on the interior of the effective cone, while a divisor of minimal Iitaka dimension is not effective and hence lies outside of the effective cone. Outside of these two extremes, are divisors of intermediate Iitaka type which lie on the boundary of the effective cone. Determining the Iitaka dimension of such a divisor is often more challenging. \if{as understanding the asymptotic behaviour of global sections in these intermediate cases is a more delicate proposition}\fi This added complexity is evidenced by the existence of only two confirmed cases where $\overline{\rm{M}}_{g, n}$ is of intermediate type.\\

In this paper we present a successful new approach to these more challenging intermediate type cases via
the cone of moving or nef curves that is dual to the cone of pseudo-effective Cartier divisors. Our investigation of the curve geometry of $\overline{\rm{M}}_{0,[2g+2]+n}$, leads us to the existence of a special nef curve $M$ with the property that any effective divisor $D$ satisfying $M\cdot[D]=0$ must satisfy the strong bound
$$\kappa\left(\overline{\rm{M}}_{0,[2g+2]+n}, D\right)\leq \max\{0,n-3\}.$$ 
This property utilises the existence of a large number of effective decompositions or splittings of this nef curve, a new approach that may prove applicable to other moduli spaces. 
This means that for $k$ big enough and $n\geq3$, the dimension of the image of the rational map induced by the linear system $\left|kD\right|$ 
$$\phi_{\left|kD\right|}:\overline{\rm{M}}_{0,[2g+2]+n}\dashrightarrow \pp^N$$
has dimension at most $n-3$. The nef curve $M$ intersects $D$, the divisor in $\overline{\rm{M}}_{0,[2g+2]+n}$ such that $\pi^*D=K_{\overline{\rm{H}ur}_{g,2}^n}$, negatively when $n\leq 4g+5$ and has zero intersection when $n=4g+6$. Hence the canonical divisor is not effective for $n\leq 4g+5$ which immediately replicates this known bound on negative Kodaira dimension by Benzo and Agostini--Barros \cite{Be, AB}. However, with our singularity analysis this further shows that the Kodaira dimension $\overline{\rm{H}ur}_{g,2}^n$ (and therefore $\overline{\rm{H}}_{g,n}$) is bounded above by $n-3$ when $n=4g+6$.\\

The lower bound
\begin{equation}
\label{intro:eq_low}
n-3\leq\kappa\left(\overline{\rm{H}ur}_{g,2}^n, K_{\overline{\rm{H}ur}_{g,2}^n}\right)  
\end{equation}
when $n=4g+6$ is obtained by expressing $D$, the divisor such that $\pi^*D=K_{\overline{\rm{H}ur}_{g,2}^n}$, as an effective sum of boundary divisors and the pullback of a big divisor on $\overline{\rm{M}}_{0,n}$ via the forgetful morphism $\overline{\rm{M}}_{0,[2g+2]+n}\longrightarrow \overline{\rm{M}}_{0,n}$. The argument for the ordered space is similar. In order for all of our Iitaka dimension calculations to translate into actual Kodaira dimension we need to control the singularities of $\widehat{\rm{H}ur}_{g,2}^n$ and $\overline{\rm{H}ur}_{g,2}^n$. Using the smoothness of the fine moduli space $\overline{\mathcal{M}}_{0,2g+2+n}$ we show the following theorem:

\begin{theorem}
\label{thmfive}
The coarse moduli space $\widehat{\rm{H}ur}_{g,2}^n$ has canonical singularities for all $g,n$ with $g\geq 2$ and $n\geq 0$.
\end{theorem}

The unordered space $\overline{\rm{H}ur}_{g,2}^n$ requires a much more delicate treatment because it has non-canonical singularities. Our study begins with the Reid--Tai criterion, cf. \cite{HM, Lu, FL}. We use the extra structure provided by the cover to obtain lower bounds on the \textit{age} of a singularity by looking at the induced action of the group of automorphisms of $f$ on the $([2g+2]+n)$-pointed rational curve. We obtain the following:

\begin{theorem}
\label{thmseven}
An $n$-pointed admissible cover $[f:C\to R, p_1,\ldots,p_n]\in\overline{\rm{H}ur}_{g,2}^n$ of genus $g\geq 2$ is a non-canonical singularity if and only if it has an elliptic tail $E$ with no markings and $j$-invariant $0$. If we fix the node at the origin $0\in E$, then $f$ restricts to the quotient $f:E\to E\big/\langle\pm 1\rangle\cong \mathbb{P}_1$ and the remaining Weierstrass points are sent to a $\mathbb{Z}\big/3\mathbb{Z}$-orbit $\{z, \eta z, \eta^{2}z\}\subset \mathbb{P}^1$, with $\eta$ a cubic root of unity.
\end{theorem}

Curves with an elliptic tail of $j$-invariant $0$ are a recurrent source of non-canonical singularity on moduli spaces of curves. They admit an automorphism $g$ of order $6$ where $g^3$ is the elliptic involution and $g^2$ permutes the three non-zero $2$-torsion points. One observes $g^2$ is induced by the order three automorphism on the quotient $E/g^3\cong \mathbb{P}^1$. This appeared first \cite{HM}, and then in \cite[Thm. 3.1]{Lu} for moduli spaces of spin curves and \cite[Thm. 6.7]{FL} for moduli spaces of Prym curves. The degree two cover structure plays an essential role throughout our argument.

Once the non-canonical locus is described, the argument for extending pluricanonical forms works more or less uniformly in all these cases. In our case we have:

\begin{theorem}
\label{thmsix}
For $g\geq 2$ and $n\geq 0$ the Kodaira dimension of $\overline{\rm{H}ur}_{g,2}^n$ and the Iitaka dimension of the canonical divisor $\kappa\left(\overline{\rm{H}ur}_{g,2}^n, K_{\overline{\rm{H}ur}_{g,2}^n}\right)$ coincide. In other words, if
\[
h:Y\longrightarrow \overline{\rm{H}ur}_{g,2}^n
\]
is a resolution of singularities and $\left(\overline{\rm{H}ur}_{g,2}^n\right)_{reg}\subset \overline{\rm{H}ur}_{g,2}^n$ is the regular locus, then for some integer $m>0$ there is an isomorphism 
\[
H^0\left(\left(\overline{\rm{H}ur}_{g,2}^n\right)_{reg}, K_{\left(\overline{\rm{H}ur}_{g,2}^n\right)_{reg}}^{\otimes m}\right)\cong H^0\left(Y, K_Y^{\otimes m}\right).
\]
\end{theorem}

Finally, to prove Theorem \ref{thm3} we follow the strategy of~\cite{Mu} and identify an extremal ray of the dual cone of nef curves $\overline{\rm{Nef}}_1(\overline{\rm{H}}_{g,n})$  where the cone is not polyhedral. As in $\overline{\rm{M}}_{g,n}$, the general fibre of the forgetful morphism
\[
\pi:\overline{\rm{H}}_{g,n}\longrightarrow\overline{\rm{H}}_{g,n-1}
\]
forms a nef curve class that we denote $[F]$. For any curve class $[B]$ we define the pseudo-effective dual space by 
\[
[B]^\vee:=\left\{[D]\in \overline{\rm{Eff}}^1(\overline{\rm{H}}_{g,n})\hspace{0.3cm}\big|\hspace{0.3cm} [B]\cdot[D]=0\right\}.
\]
By analysing the fibres of the forgetful morphisms we bound the rank of the strictly effective divisors appearing in $[F]^\vee$. Then by considering the intersection of any divisor in $[F]^\vee$ with the surfaces obtained from 
\[
i:C\times C\longrightarrow \overline{\rm{H}}_{g,n}
\]
for fixed general genus $g$ hyperelliptic curve $C$, we obtain in Proposition~\ref{prop:corank}, bounds on the rank of $[F]^\vee$. For $g\geq 2$ and $n\geq2$, 
\[
\rho(\overline{\rm{H}}_{g,n})-n\leq\text{rank}([F]^{\vee}\otimes\mathbb{R})\leq \rho(\overline{\rm{H}}_{g,n})-2,
\]
where $\rho(\overline{\rm{H}}_{g,n})$ denotes the Picard number of $\overline{\rm{H}}_{g,n}$.\\

The remaining task is to show that $[F]$ is indeed an extremal nef curve. This is achieved by identifying a series of effective divisors that contradict the nefness of curve classes appearing in any non-trivial nef decomposition of the curve $[F]$. The required divisors come from Hurwitz space constructions that Lemma~\ref{lem:eff} shows provides codimension one conditions. The classes for the corresponding divisors in $\overline{\rm{M}}_{g,n}$ were computed in~\cite{Mu} and pullback to give the required classes in $\overline{\rm{H}}_{g,n}$.\\

Finally, the morphisms $\overline{\rm{H}}_{g}\overset{\sim}{\longrightarrow} \overline{\rm{M}}_{0,[2g+2]}$ and $ \overline{\rm{M}}_{0,[2g+2]+1}{\longrightarrow}\overline{{\rm{H}}}_{g,1}$ are used to show polyhedrality of the pseudo-effective cone when $n=0,1$.\\

\subsection*{Outline}The outline of the paper is as follows. In Section \ref{section:hur} we recall some basic facts about pointed admissible double covers and Hurwitz spaces and show Proposition \ref{thm:Hgnsing} by showing that exceptional locus of the forgetful map $\overline{\rm{H}ur}_{g,2}^n\to\overline{\rm{H}}_{g,n}$ contains an irreducible component of codimension two. In Section \ref{section:sing_ordered} we study the singularities of the coarse spaces $\widehat{\rm{H}ur}_{g,2}^n$ and prove Theorem \ref{thmfive}. In Section \ref{section:can_class} we use the branch morphisms to compute the canonical class of $\widehat{\rm{H}ur}_{g,2}^n$ and $\overline{\rm{H}ur}_{g,2}^n$ and show effectivity for $n\geq 4$ (resp. $n\geq 4g+6$) and bigness for $n=5$ (resp. $n\geq 4g+7$). In Section \ref{section:M} we construct nef curves on $\overline{\rm{M}}_{0,[2g+2]+n}$ and study the numerics to conclude various splittings. This is the main tool for the intermediate Kodaira dimension results. In Section \ref{section:KodDim} we complete the proof of Theorem \ref{thm2} and we prove Theorem \ref{thm1} assuming Theorem \ref{thmsix}. In Section \ref{section:eff} we study the cone of effective Cartier divisors of $\overline{\rm{H}}_{g,n}$ and prove Theorem \ref{thm3}. Finally in Section \ref{section:sing_unordered} we carry out the singularity analysis of unordered pointed Hurwitz spaces and prove Theorems \ref{thmseven} and \ref{thmsix}. In Appendix \ref{Appendix} Irene Schwarz provides a calculation of the canonical class of $\overline{\rm{H}}_{g,n}$ and shows that is effective when $n\geq 4g+6$ and can be written as the sum of big plus effective when $n\geq 4g+7$.\\

\subsection*{Conventions}
We work over the complex numbers. We will denote by $\overline{\mathcal{M}}_{g,n}$ the moduli stack and $\overline{\rm{M}}_{g,n}$ the (coarse) moduli space of stable $n$-pointed curves of genus $g$. Similarly, $\overline{\mathcal{H}}_{g,n}$ is defined as the stack-theoretic inverse image of the closure of the stack of hyperelliptic curves ${\mathcal{H}}_{g}\hookrightarrow\overline{\mathcal{M}}_g$ via the forgetful morphism $\overline{\mathcal{M}}_{g,n}\to \overline{\mathcal{M}}_g$. Objects in $\overline{\mathcal{H}}_{g,n}(T)$ are $T$-families of stable models of $n$-pointed admissible double covers when forgetting the cover structure. We denote by $\overline{\rm{H}}_{g,n}\subset\overline{\rm{M}}_{g,n}$ the coarse space. Similarly, $\widehat{\mathcal{H}ur}_{g,2}^n$ and $\overline{\mathcal{H}ur}_{g,2}^n$ will denote the stack of ordered/unordered pointed admissible double covers and $\widehat{\rm{H}ur}_{g,2}^n$, $\overline{\rm{H}ur}_{g,2}^n$ the coarse spaces.\\

For a normal projective variety $X$ and a $\mathbb{Q}$-Cartier divisor $D$, we denote by $\kappa\left(X, D\right)$ the {\textit{Iitaka dimension}} of $D$ defined as follows. Let $k>0$ be the smallest positive integer such that $kD$ is Cartier and consider the set
\[
N(D)=\left\{m\in \mathbb{Z}_{>0}\mid H^0\left(X, \mathcal{O}_{X}\left(kD\right)^{\otimes m}\right)\neq 0\right\}.
\]
Then
\[
\kappa\left(X,D\right)=\left\{\begin{array}{lcl}-\infty&\hbox{if}&N(D)=\varnothing,\\
{\rm{max}}\left\{{\rm{dim}}(\phi_{m, D}(X))\mid m\in N(D)\right\}&\hbox{otherwise.}&\end{array}\right.
\]
Here $\phi_{m,D}:X\dashrightarrow \mathbb{P}^n$ is the rational map induced by the linear system $\left|mkD\right|$ on $X$. The {\textit{Kodaira dimension}} of $X$, denoted ${\rm{Kod}}\left(X\right)$, is defined as the Iitaka dimension of the canonical divisor $\kappa\left(Y,K_Y\right)$, where $Y\longrightarrow X$ is a resolution of singularities of $X$. Recall that $\kappa\left(Y,K_Y\right)$ is a birational invariant between smooth projective varieties, in particular the definition does not depend on the resolution, making ${\rm{Kod}}(X)$ birational invariant.

Let $\mathcal{X}$ be a smooth Deligne--Mumford stack and $\mathcal{X}\longrightarrow X$ the coarse space. Then the push-forward induces a canonical isomorphism \cite{Vis} of graded rings ${\rm{CH}}_{\mathbb{Q}}^{\star}\left(\mathcal{X}\right)\cong {\rm{CH}}_{\mathbb{Q}}^{\star}\left(X\right)$. We use the common convention of defining divisors via reduced geometric loci on the stack and using the same notation to denote the unique divisor on the coarse space that pulls back to give the reduced geometric locus on the stack (see Remark~\ref{rem:stackram}).  Hence though our intersection theoretic calculations take place on the stack, the same holds for the intersection of the pushforward of our curves on the coarse space.

\subsection*{Acknowledgments}

This paper benefited from correspondence and conversations with Daniele Agostini, Fabio Bernasconi, Gabi Farkas, Brian Lehmann, Emanuele Macr\`{i}, Siddharth Mathur, Johannes Schmitt, and Irene Schwarz. Special thanks go to Ana-Maria Castravet and Jenia Tevelev for pointing out an issue with an earlier version of this paper. Further, we would like to warmly thank the anonymous referee for many useful comments that helped us improve the mathematics, exposition, and presentation of this paper. The first author was supported by the ERC Synergy Grant ERC-2020-SyG-854361-HyperK, the Deutsche Forschungsgemeinschaft (DFG, German Research Foundation) -- SFB-TRR 358/1 2023 -- 491392403, and the Research Foundation Flanders (FWO) within the framework of the
Odysseus program project number G0D9323N. The second author was supported by the Alexander von Humboldt Foundation, ERC Advanced Grant SYZYGY, and DECRA Grant DE220100918 from the Australian Research Council.
\section{Pointed admissible double covers and the hyperelliptic locus}
\label{section:hur}

Let ${\rm{H}ur}_{g,2}^n$ be the Hurwitz space parameterising isomorphism classes of marked degree two smooth covers 
\[
\left[f:C\overset{2:1}{\longrightarrow}\pp^1, p_1,\ldots,p_n\right]\in{\rm{H}ur}_{g,2}^n
\]
where the marked points $p_1,\dots,p_n$ are distinct points of $C$, unramified in $\pi$, and no two have the same image under $\pi$. We denote by $\overline{{\rm{H}ur}}_{g,2}^n$ the compactification by admissible covers defined in \cite{HM}. In the boundary this imposes the conditions that the markings $p_i$ cannot be nodes or ramification points of the nodal cover and any two markings $p_i, p_j$ on $C$ do not map to the same point on the target stable nodal rational curve marked with the branch points and the images of the marked points. More concretely, $\overline{\rm{H}ur}_{g,2}^{n}$ is the coarse space of the smooth stack $\overline{\mathcal{H}ur}_{g,2}^n$ where a $T$-point $T\longrightarrow\overline{\mathcal{H}ur}_{g,2}^n$ are flat $T$-families;
\begin{equation}
\label{eqn:univ_ad.cover}
\begin{tikzcd}
\mathcal{C}\arrow[rr, "f_T"]\arrow[dr]&&\mathcal{P}\arrow[dl]\\
&T,\arrow[bend left=40, ul, "\sigma_j"]&
\end{tikzcd}
\end{equation}
such that:
\begin{itemize}
\item[(i)] The map $\mathcal{C}\longrightarrow T$ is a family of $n$-marked semi-stable curves of genus $g$ with $\sigma_1,\ldots,\sigma_n:T\longrightarrow\mathcal{C}$ the $n$ disjoint sections not intersecting the nodes and Weierstrass points of the fibres. With the extra condition that no two points in $\sigma_1(t),\ldots,\sigma_n(t)$ are conjugate.
\item[(ii)] The family $\mathcal{P}\longrightarrow T$ is fibrewise a tree of rational curves and $f_T:\mathcal{C}\to\mathcal{P}$ is fibrewise an admissible cover of degree $2$.  
\item[(iii)] The map $\mathcal{P}\longrightarrow T$ together with the induced sections $\tau_1,\ldots,\tau_n:T\longrightarrow\mathcal{P}$, and the relative branch divisor away from the nodes is an element in the $S_{2g+2}$-quotient of $\overline{\mathcal{M}}_{0,2g+2+n}(T)$, where the symmetric group acts by permuting the first $2g+2$ markings. 

\end{itemize}

Further, a $T$-isomorphism between two $n$-pointed admissible covers $\left[f_T:\mathcal{C}\longrightarrow \mathcal{P}, \sigma_1,\ldots,\sigma_n\right]$ and $\left[f'_T:\mathcal{C}'\longrightarrow \mathcal{P}', \sigma'_1,\ldots,\sigma'_n\right]$ is a commutative diagram over $T$:
\begin{equation}
\label{eq:isoHur}
\begin{tikzcd}
\mathcal{C}\arrow[r, "g"]\arrow[d, "f"']&\mathcal{C}'\arrow[d,"f'"]\\
\mathcal{P}\arrow[r, "h"]&\mathcal{P}',
\end{tikzcd}
\end{equation}
where $g$ is a $T$-isomorphism of $n$-pointed curves and $h$ is a $T$-isomorphism of the induced $([2g+2]+n)$-pointed rational curves, i.e., it sends $f\circ\sigma_i(T)$ to $f'\circ\sigma'_i(T)$ and the branch divisor of $f_T$ to the branch divisor of $f'_T$.\\

The stack $\overline{\mathcal{H}ur}_{g,2}^n$ comes with two natural forgetful morphisms that we call $\phi$ and $\pi$ (we keep the same notation for the induced maps on coarse spaces). The map $\phi:\overline{\mathcal{H}ur}_{g,2}^n\to\overline{\mathcal{H}}_{g,n}$ forgets the cover and remembers the stable model of the pointed source curve. The second map $\pi:\overline{\mathcal{H}ur}_{g,2}^n\to\overline{\mathcal{M}}_{0,[2g+2]+n}$ is a forgetful morphism that remembers only the target rational curve together with the image of the markings and ramification points. They induce the following diagram on coarse spaces:
\begin{equation}  
\label{eqn:Hurwitz}
\begin{tikzcd}
&\overline{\rm{H}ur}_{g,2}^n\arrow[dl, "\phi"']\arrow[dr, "\pi"]&\\
\overline{\rm{H}}_{g,n}&&\overline{\rm{M}}_{0,\left[2g+2\right]+n}.
\end{tikzcd}
\end{equation}

The map $\pi$ is finite of degree $2^{n-1}$ for $n\geq1$ and of degree $1$ for $n=0$. On the other hand, $\phi$ is only generically finite. Analogously one defines the stack $\widehat{\mathcal{H}ur}_{g,2}^n$ with the extra condition that $\mathcal{C}\to T$ has $2g+2$ extra sections corresponding to the Weierstrass points. An isomorphism between two {\textit{ordered}} admissible covers $f$ and $f'$ is the same as in \eqref{eq:isoHur} where $g$ is an isomorphism between $(2g+2+n)$-pointed curves and $h$ an isomorphism of the corresponding ordered $(2g+2+n)$-rational curves. It sits in the following diagram:

 \begin{equation}  
\label{eqn:Hurwitzhat}
\begin{tikzcd}
&\widehat{\mathcal{H}ur}_{g,2}^n\arrow[dl, "\widehat{\phi}"']\arrow[dr, "\widehat{\pi}"]&\\
\widehat{\mathcal{H}}_{g,n}\subset \overline{\mathcal{M}}_{g,2g+2+n}&&\overline{\mathcal{M}}_{0,2g+2+n}.
\end{tikzcd}
\end{equation}

There are several constructions of spaces of admissible covers in the literature \cite{HM, ACV, JKK, SvZ}. In particular, the stack $\overline{\mathcal{H}}_{g,\mu_2,\xi}$ of admissible $G$-covers defined in \cite[Section 3.2]{SvZ} coincides with $\widehat{\mathcal{H}ur}_{g,2}^n$ when the group $G$ is the group of $2$-roots of unity $\mu_2=\{\pm1\}$, the number of markings is $2g+2+n$, and the monodromy data is given by 
\[
\xi=(\underbrace{-1,\ldots,-1}_{2g+2},\underbrace{1,\ldots,1}_{n}).
\]
See the remark before \cite[Definition 3.3]{SvZ}. A $T$-point $T\longrightarrow \overline{\mathcal{H}}_{g,\mu_2,\xi}$ corresponds to a $\mu_2$-admissible cover $\mu_2\curvearrowright\mathcal{C}\longrightarrow \mathcal{P}$ over $T$ with sections $x_1,\ldots,x_{2g+2}:T\longrightarrow \mathcal{C}$ and 
\[
p_{1,1}, p_{1,-1},\ldots,p_{n,1}, p_{n,-1}:T\longrightarrow \mathcal{C}
\]
such that $p_{i,1}$ and $p_{i,-1}$ are $\mu_2$ conjugates, and the $x_i's$ are ramification points of the cover outside the nodes, i.e., points for which the stabilizer of the $\mu_2$ action is $\langle -1\rangle=\mu_2$.

\begin{lemma}
\label{sec2:lemmaSvZ}
Let $\overline{\mathcal{H}}_{g,\mu_2,\xi}$ be the stack of pointed admissible $\mu_2$-covers with monodromy datum defined above. Then $\overline{\mathcal{H}}_{g,\mu_2,\xi}$ and $\widehat{\mathcal{H}ur}_{g,2}^n$ are isomorphic. In particular, both $\widehat{\mathcal{H}ur}_{g,2}^n$ and $\overline{\mathcal{H}ur}_{g,2}^n$ are smooth Deligne--Mumford stacks and their coarse spaces have finite quotient singularities.
\end{lemma}
\begin{proof}
An isomorphism between two $T$-points of $\overline{\mathcal{H}}_{g,\mu2,\xi}$, $\alpha=[\mathcal{C}\longrightarrow \mathcal{P}, x_i, p_{i,\pm1}]$ and $\alpha'=[\mathcal{C}'\longrightarrow \mathcal{P}', {x'_i}, p'_{i,\pm1}]$ is a $\mu_2$-equivariant $T$-map $\mathcal{C}\longrightarrow \mathcal{C}'$ preserving the markings. Note that this is the same as an isomorphism of ordered admissible covers $\beta=[\mathcal{C}\longrightarrow \mathcal{P}, x_i, p_{i,1}]$ and $\beta'=[\mathcal{C}'\longrightarrow \mathcal{P}', {x'_i}, p'_{i,1}]$. Indeed if $\phi_T: \mathcal{C}\longrightarrow \mathcal{C}'$ is a $\mu_2$-equivariant map preserving the $2g+2+2n$ markings, then is induces a diagram \eqref{eq:isoHur} where the marked Weierstrass points $x_i:T\to\mathcal{C}$ are sent to the marked Weierstrass points $x_i':T\longrightarrow \mathcal{C}'$, and the markings $p_{i,1}:T\to\mathcal{C}$ to the markings $p'_{i,1}$. Conversely if $\phi_T$ is an isomorphism of $T$-points in $\widehat{\mathcal{H}ur}_{g,2}^n$, i.e., ordered admissible covers $\beta$ and $\beta'$, since it is given by a $T$-diagram \eqref{eq:isoHur}, the map $g: \mathcal{C}\longrightarrow \mathcal{C}'$ is $\mu_2$-equivariant and by declaring $p_{i,-1}$ the be the congujate of the marked point $p_{i,1}$, the (sections) markings $x_i$ are sent to $x_i'$, $p_{i,1}$ to $p'_{i,1}$, and the conjugate points $p_{i,-1}$ must be sent to $p_{i,-1}$. The morphism of stacks $\overline{\mathcal{H}}_{g,\mu_2,\xi}\longrightarrow \widehat{\mathcal{H}ur}_{g,2}^n$ defined on $T$-points by 
\[
\alpha\mapsto \beta\;\;\hbox{and}\;\; \left({\rm{Isom}}_{\overline{\mathcal{H}}_{g,\mu_2,\xi}}\left(\alpha,\alpha'\right)\to T\right)\overset{=}\mapsto\left({\rm{Isom}}_{\widehat{\mathcal{H}ur}_{g,2}^n}\left(\beta, \beta'\right)\to T\right)
\]
is an isomorphism with inverse given on objects by marking the conjugate points $p_{i,-1}$ and by equality on morphisms. From \cite[Thm. 3.7]{SvZ} follows that $\widehat{\mathcal{H}ur}_{g,2}^n$ is a smooth and proper Deligne--Mumford stack. Further, the $S_{2g+2}$-quotient $\widehat{\mathcal{H}ur}_{g,2}^n\longrightarrow \overline{\mathcal{H}ur}_{g,2}^n$ is \'{e}tale.
\end{proof}

\subsection{The exceptional locus}

Recall the standard isomorphism $\Pic_{\mathbb{Q}}(\overline{\mathcal{M}}_{g, n})\cong \Pic_{\mathbb{Q}}\left(\overline{\rm{M}}_{g,n}\right)$ induced by the coarse map $\overline{\mathcal{M}}_{g, n}\longrightarrow\overline{\rm{M}}_{g, n}$. The Picard group is generated by $\lambda$, the first Chern class of the Hodge bundle, $\psi_i$ the first Chern class of the cotangent bundle on $\overline{\mathcal{M}}_{g,n}$ associated with the $i$-th marking for $i=1,\dots,n$, and the classes of the irreducible components of the boundary. We denote by $\delta_{\rm{irr}}$ the class being sent to the reduced locus $\Delta_{\rm{irr}}$ of curves with a non-separating node and $\delta_{i:S}$ for $0\leq i\leq g$, $S\subset\{1,\dots,n\}$ the class of the locus $\Delta_{i:S}$ of curves with a separating node that separates the curve such that one component has genus $i$ and contains precisely the markings of $S$. Hence $\delta_{i:S}=\delta_{g-i:S^c}$ and we require $|S|\geq 2$ for $i=0$ and $|S|\leq n-2$ for $i=g$. For $g\geq 3$ these divisors freely generate $\Pic_{\mathbb{Q}}(\overline{\mathcal{M}}_{g, n})$. Further, $\delta_{1:\varnothing}$ is defined as the class in $\overline{\mathcal{M}}_{g,n}$ that corresponds to $\frac{1}{2}\Delta_{1:\varnothing}$ on the coarse space $\overline{\rm{M}}_{g,n}$, accounting for the fact that the general element in $\Delta_{1:\varnothing}$ has automorphism group of order $2$. \\

We will denote by $\overline{\mathcal{M}}_{g,\left[n\right]+m}$ the stack $S_n$-quotient of $\overline{\mathcal{M}}_{g,n+m}$, where $S_n$ acts freely by permuting the first $n$ marked points. For every subset $S\subset\{n+1,\ldots,n+m\}$, the divisor 
\begin{equation}
\label{eq:sec2_delta}\delta_{i:t+S}=\sum_{\substack{T\subset \{1,\ldots,n\}\\|T|=t}}\delta_{i:T\cup S}\in {\rm{Pic}}_\qq\left(\overline{\mathcal{M}}_{g,n+m}\right)
\end{equation}
is $S_n$-invariant and descends to a divisor
$$\delta_{i:\left[t\right]+S}\in{\rm{Pic}}_\qq\left(\overline{\mathcal{M}}_{g,\left[n\right]+m}\right).$$
When $m=0$ we simply write $\delta_{i:\left[t\right]}$. We will be mostly working with $\overline{\mathcal{M}}_{0,[2g+2]+n}$. When $g=0$ instead of $\delta_{0:[t]+S}$ we write $\delta_{[t]+S}$. We denote by $\psi_j$ the usual $\psi$-classes on the ordered markings $j=n+1,\ldots,n+m$ and $\psi_\star$ the class in $\overline{\mathcal{M}}_{g,\left[n\right]+m}$ induced by the $S_n$-invariant class $\psi_1+\ldots+\psi_n$ on $\overline{\mathcal{M}}_{g,n+m}$. Recall that ${\rm{Pic}}_\qq\left(\overline{\mathcal{M}}_{g,\left[n\right]+m}\right)$ is generated by $\lambda, \psi_\star, \psi_1,\ldots,\psi_m$ and boundary divisors $\delta_{i:\left[t\right]+S}$. We will denote by $\delta_{i:[t]+j}$ the sum
$$\delta_{i:[t]+j}:=\sum_{\left|S\right|=j}\delta_{i:[t]+S}.$$
and again, when $g=0$ we write
$$\delta_{[t]+j}:=\sum_{\left|S\right|=j}\delta_{[t]+S}.$$

\begin{figure}[h]
\includegraphics[scale=0.15]{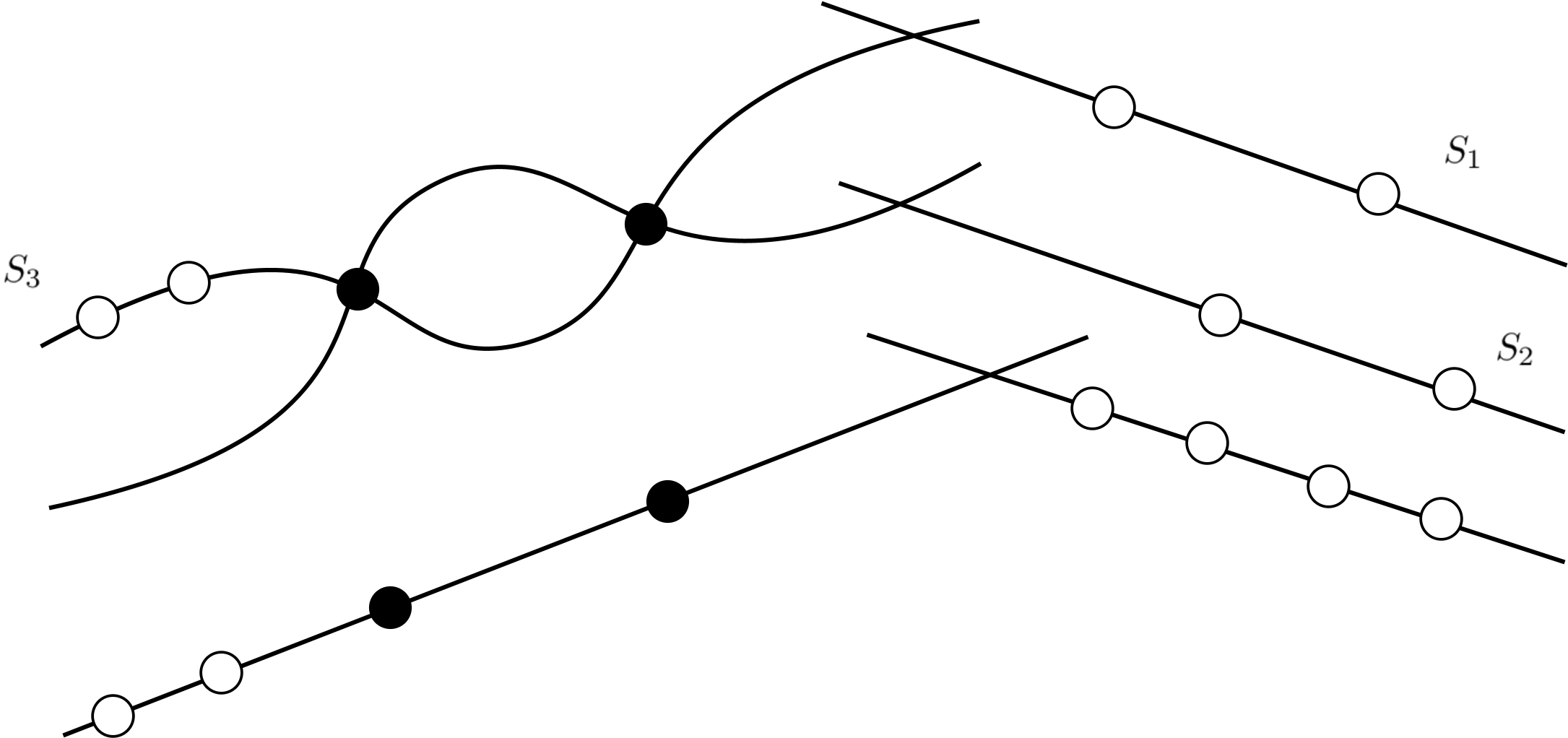}
\caption{Marked admissible double cover in the boundary $E_{(S_1,S_2)}$ (filled points are unordered).}
\label{figure:Pic1}
\end{figure}

As usual, we will denote by $\psi_j$ the $\psi$-class in 
$\overline{\mathcal{H}}_{g,n}$, $\overline{\mathcal{H}ur}_{g,2}^n$, and $\overline{\mathcal{M}}_{0,\left[2g+2\right]+n}$ corresponding to the $j$-th ordered marking. More precisely, let $p:\mathcal{C}\to B$ be a $B$-point of any of the moduli stacks in question, and $\sigma_j$ is a section. Then $\psi_j$ on $B$ is the first Chern class of $\sigma_j^*\Omega_p^1$, where $\Omega_p^1$ is the sheaf of relative $1$-forms of the family. Finally for any $3$-partition of the set of ordered markings 
\[
\mu=\left(S_1,S_2,S_3\right)\quad\hbox{with}\quad S_1\sqcup S_2\sqcup S_3=\{1,\ldots,n\},
\] 
we denote by $E_{\mu}$ or $E_{\left(S_1,S_2\right)}$ the divisor on $\overline{{\rm{H}ur}}_{g,2}^n$ (resp. $\widehat{E}_{\mu}$ or $\widehat{E}_{\left(S_1,S_2\right)}$ in $\widehat{{\rm{H}ur}}_{g,2}^n$) whose general element corresponds to a cover having two rational tails exchanged by the hyperelliptic involution, where the markings indexed by $S_1$ sit on one rational tail, the markings indexed by $S_2$ on the conjugate rational tail, and the markings indexed by $S_3$ sit on the smooth genus $g$ component. Observe that $E_{(S_1,S_2)}=E_{(S_2,S_1)}$. See Figure 1.\\ 

\begin{proposition}
\label{prop:exc_loc}
The divisorial exceptional locus of $\phi:\overline{\rm{Hur}}_{g,2}^n\longrightarrow \overline{\rm{H}}_{g,n}$ is given by the union 
\[
{\rm{div.exc}}(\phi)=\bigcup_{\substack{S_1,S_2\neq\varnothing\\\left|S_1\cup S_2\right|>2}} E_{(S_1,S_2)}\cup \bigcup_{\substack{i=1,2\\ i+\left|S\right|\geq 3}}\pi^{-1}\left(\delta_{[i]+S}\right).
\]
Similarly, the divisorial exceptional locus of $\widehat{\phi}:\widehat{\rm{Hur}}_{g,2}^n\longrightarrow \widehat{\rm{H}}_{g,n}$ is given by the same union replacing $E_{(S_1,S_2)}$ with $\widehat{E}_{(S_1,S_2)}$ and $\delta_{[i]+S}$ with $\delta_{i+S}$.
\end{proposition} 
\begin{proof}
A generic hyperelliptic curve admits a unique $g_2^1$. By Lemma \ref{sec2:lemmaSvZ} and \cite[Thm. 3.7]{SvZ}, the map 
\[
\widehat{\phi}':\widehat{\mathcal{H}ur}_{g,2}^n\longrightarrow \overline{\mathcal{M}}_{g,2g+2+2n}
\]
is an isomorphism onto its image. The map is $S_{2g+2}$-equivariant and descends to an isomorphism onto the image $\phi':\overline{\mathcal{H}ur}_{g,2}^n\longrightarrow \overline{\mathcal{M}}_{g,[2g+2]+2n}$. Consider the induced morphism on coarse spaces
\begin{equation}
\label{eq:phi_tilde}
\begin{array}{rcl}
\phi':\overline{\rm{H}ur}_{g,2}^n&\to&\overline{\rm{M}}_{g,[2g+2]+2n}\\
\left[f:C\longrightarrow\pp^1, p_1,\ldots,p_n\right]&\mapsto&\left[C, x_1+\ldots+x_{2g+2}, p_1, \overline{p}_1, \ldots,p_n,\overline{p}_n\right],
\end{array}
\end{equation}
where as before, $x_1+\ldots+x_{2g+2}$ is the ramification divisor, $p_i$ are the ordered marked points and $\overline{p}_i$ the conjugates. The morphism $\phi'$ is an isomorphism onto the image and all contracted divisors of $\phi$ must come from the forgetful morphism
\[
\overline{\rm{M}}_{g,[2g+2]+2n}\longrightarrow\overline{\rm{M}}_{g,n}
\]
restricted to the image of $\phi'$. At this point we observe that the map $\phi$ is an isomorphism onto the image when restricted to the interior $\rm{H}ur_{g,2}^n$. A generic cover on the boundary has exactly one node in the target stable rational curve corresponding to either one or two nodes in the image stable pointed curve in $\overline{\rm{M}}_{g,[2g+2]+2n}$. Moreover, a pointed curve on the image of $\phi'$ can only be in a positive dimensional fibre of the restricted map
\[
\overline{\rm{M}}_{g,[2g+2]+2n}\supset {\rm{Im}}\left(\phi'\right)\longrightarrow\overline{\rm{H}}_{g,n}
\]
if at least one of the forgotten $2g+2$ unordered ramification points or $n$ conjugate points lies on a rational component. Recall that a rational component of an admissible double cover cannot contain more than two Weierstrass points. If none of the forgotten (unordered) Weierstrass points lie on a rational component, then we are in the situation described in Figure 1 and at least three of the forgotten $n$ conjugate markings lie on conjugate rational components. One observes that $\phi'\left(E_{(S_1,S_2)}\right)$ coincides with the image of the map
\[
\phi'\left(\overline{\rm{Hur}}_{g,2}^{(S_1\cup S_2)^c\cup\{p\}}\right)\times\overline{\rm{M}}_{0,S_1\cup S_2\cup\{q\}}\longrightarrow \overline{\rm{M}}_{g,[2g+2]+2n}
\]
gluing two copies of an $(S_1\cup S_2)\cup\{q\}$-pointed rational curve, one by identifying $q$ with $p$ and the other one by identifying $q$ with the conjugate $\overline{p}$. As expected,
\[
\dim E_{(S_1,S_2)}=\dim \phi'(E_{(S_1,S_2)})=\dim\left(\overline{\rm{Hur}}_{g,2}^{(S_1\cup S_2)^c\cup\{p\}}\times\overline{\rm{M}}_{0,S_1\cup S_2\cup\{q\}}\right)=2g-2+n.
\]
On the other hand, after forgetting the marked conjugate points, the image of $E_{(S_1,S_2)}$ via $\phi$ coincides with the image of the gluing map
\begin{equation}
\label{eq1:lemma_exc}
\phi'\left(\overline{\rm{Hur}}_{g,2}^{(S_1\cup S_2)^c\cup\{p\}}\right)\times\overline{\rm{M}}_{0,S_1\cup\{q_1\}}\times\overline{\rm{M}}_{0,S_2\cup\{q_2\}}\longrightarrow \overline{\rm{M}}_{g,n}
\end{equation}
defined by identifying $q_1$ with $p$ and $q_2$ with the conjugate $\overline{p}$. With the convention that if $\left|S_i\right|<2$, then $\overline{\mathcal{M}}_{0,S_i\cup\{q_i\}}=\{pt\}$. Computing dimensions
\[
{\rm{codim}}\;\phi\left(E_{(S_1,S_2)}\right)=\left\{\begin{array}{lcl}1&\hbox{if}&S_i=\varnothing\hbox{ or } \left|S_1\right|=\left|S_2\right|=1 ,\\
2&\hbox{if}& \left|S_i\right|=1\hbox{ and }\left|S_1\right|+\left|S_2\right| \geq 3,\\
3&\hbox{otherwise.}&\end{array}\right.
\]
Similarly, generic points of $\pi^{-1}(\delta_{[1]+S})$ are characterised by the admissible cover having exactly one Weierstrass point together with the markings and conjugate markings labeled by $S$ lying on a rational component. Let $\widehat{\phi}'\left(\widehat{\rm{H}ur}_{g,2}^n\right)\subset \overline{\rm{M}}_{g,2g+2+2n}$ be the finite cover of ${\rm{Im}}\left(\phi'\right)\subset \overline{\rm{M}}_{g,[2g+2]+2n}$ given by forcing an order on the Weierstrass points. One observes that the image of $\pi^{-1}\left(\delta_{[1]+S}\right)$ via $\phi'$ can be identified with the image of the finite map given by the composition
\[
\widehat{\phi}' \left(\widehat{\rm{H}ur}_{g,2}^{S^c}\times\widehat{\rm{H}ur}_{0,2}^{S}\right)=\widehat{\phi}' \left(\widehat{\rm{H}ur}_{g,2}^{S^c}\right)\times\widehat{\phi}' \left(\widehat{\rm{H}ur}_{0,2}^{S}\right)\longrightarrow\overline{\rm{M}}_{g,2g+2+2n}\longrightarrow\overline{\rm{M}}_{g,[2g+2]+2n}
\] 
given by identifying the first ordered Weierstrass point in the two covers followed by the $S_{2g+2}$-quotient. Similarly, after forgetting the conjugate points and then the Weierstrass points, the image of $\pi^{-1}\left(\delta_{[1]+S}\right)$ via $\phi$ can be identified with the image of the map
\[
\widehat{\phi}'\left(\widehat{\rm{H}ur}_{g,2}^{S^c}\right)\times \overline{\rm{M}}_{0,S\cup\{p,q\}}\longrightarrow\overline{\rm{M}}_{g,2g+2+n}\longrightarrow\overline{\rm{H}}_{g,n}\subset\overline{\rm{M}}_{g,n}
\] 
obtained by attaching an $S\cup\{p,q\}$-pointed rational component by identifying $p$ with the first ordered Weierstrass point followed by forgetting the $S^c$-conjugate points to obtain a pointed stable curve in $\overline{\rm{M}}_{g,2g+2+n}$ then forgetting the $2g+2$ ordered Weierstrass points to obtain a stable pointed curve in $\overline{\rm{H}}_{g,n}$. Computing dimensions, we obtain 
\[
{\rm{codim}}\;\phi\left(\pi^{-1}\left(\delta_{[1]+S}\right)\right)=\left\{\begin{array}{lcl}1&\hbox{if}&\left|S\right|=1,\\
2&\hbox{if}& \left|S\right|\geq 2.\end{array}\right.
\]
The case $i=2$ is analogous. The argument for the ordered case is the same.
\end{proof}

\begin{figure}[h]
\includegraphics[scale=0.15]{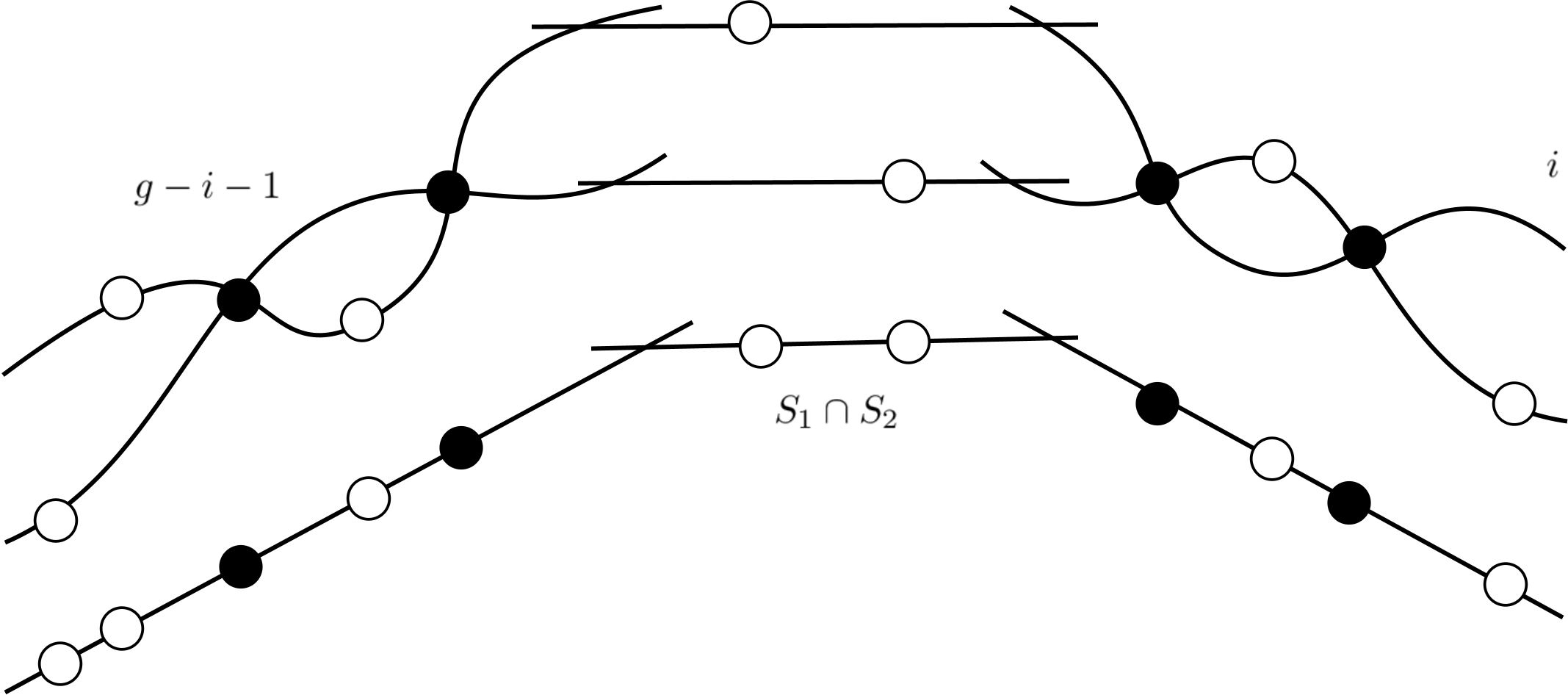}
\caption{Small contraction (filled points are unordered).}
\label{figure:contraction}
\end{figure}

\begin{proposition}
\label{prop:small_contraction}
Let $n\geq 2$. For any $0\leq i\leq \lfloor\frac{g}{2}\rfloor$ and $S_1,S_2\subset \{1,\ldots n\}$ with 
$$S_1\cup S_2=\{1,\ldots,n\}\quad\hbox{and}\quad |S_1\cap S_2|\geq2,$$ 
the pure codimension two locus in $\overline{\rm{H}ur}_{g,2}^n$ given by 
\[
\pi^{-1}\left(\delta_{[2i+2]+S_1}\cap \delta_{[2g-2i]+S_2}\right)\subset \overline{\rm{H}ur}_{g,2}^n
\]
has an irreducible component that is contracted via $\phi$ to a codimension three locus in $\overline{\rm{H}}_{g,n}$. Similarly, if $T\subset \{1,\ldots 2g+2\}$ is a set of markings corresponding to the Weierstrass points, and $|T|$ is even, then 
 the pure codimension two locus
\[
 \hat{\pi}^{-1}\left(\delta_{T+S_1}\cap \delta_{T^c+S_2}\right)\subset \widehat{\rm{H}ur}_{g,2}^n
 \]
has an irreducible component that is contracted via $\widehat{\phi}$ to a codimension three locus in $\widehat{\rm{H}}_{g,n}$.
\end{proposition}

\begin{proof}
For a general point on the intersection $\delta_{[2i+2]+S_1}\cap \delta_{[2g-2i]+S_2}$ there are two distinguished sets of preimages, one where all the markings over $S_1\cap S_2$ lie on one rational bridge and one where both rational bridges contain markings over $S_1\cap S_2$. The latter is shown in Figure \ref{figure:contraction}. Observe that the map $\phi$ restricted to the locus of covers described in Figure \ref{figure:contraction} where both rational bridges have marked points over $S_1\cap S_2$ has one-dimensional fibres, whereas the locus of covers where the markings over $S_1\cap S_2$ lie on the same rational bridge is not contracted. One observes that with the same notation as the proof of Proposition \ref{prop:exc_loc} the image of $\phi$ in $\overline{\rm{M}}_{g,[2g+2]+n}$ restricted to the locus described in Figure \ref{figure:contraction} coincides with the image of
\begin{equation}
\label{eq:contractionF2}
\phi'\left({\rm{H}ur}_{g-i-1,2}^{S_1'\cup \{p\}}\right)\times {\rm{M}}_{0,T\cup\{r,s\}}\times {\rm{M}}_{0,T'\cup\{t,u\}}\times\phi'\left({\rm{H}ur}_{i,2}^{S_2'\cup \{q\}}\right)\longrightarrow \overline{\rm{M}}_{g,[2g+2]+n},
\end{equation}
where $S_i'=S_i\setminus S_1\cap S_2$, $T\cup T'=S_1\cap S_2$ and the map is given by the identification $p=r$, $\overline{p}=t$, $q=s$, and $\overline{q}=u$. Computing dimensions we obtain the image of this locus has codimension three. To construct the $1$-parameter family of ordered admissible covers having the same image in $\overline{\rm{M}}_{g,2g+2+n}$ consider the map
\begin{equation*}
 {\overline{\rm{M}}}_{0,T\cup T'\cup\{r,s\}}\longrightarrow {\overline{\rm{M}}}_{0,T\cup\{r,s\}}\times  {\overline{\rm{M}}}_{0,T'\cup\{r,s\}}
\end{equation*}
obtained by forgetting the markings of $T$ and $T'$ to the components respectively. A general fibre of this map has dimension one. By gluing appropriately marked general fixed pointed rational curves at $r$ and $s$ we obtain a curve in $\delta_{2i+2+S_1}\cap \delta_{2g-2i+S_2}\subset\overline{\rm{M}}_{0,2g+2+n}$. Exactly one irreducible component of the pullback of this curve under the branch morphism lies in the locus of interest and gives a $1$-parameter family of admissible covers contracted by $\hat{\phi}$. Quotienting by the $S_{2g+2}$-action gives the required $1$-parameter family in the unordered case.
\end{proof}

As a consequence, we have the following.

\begin{proposition}
\label{prop:nonQfactorial}
When $n\geq 2$ and $g\geq 3$ the varieties $\overline{\rm{H}}_{g,n}$ and $\widehat{\rm{H}}_{g,n}$ are not $\mathbb{Q}$-factorial.
\end{proposition}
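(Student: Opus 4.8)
The plan is to combine Proposition~\ref{prop:small_contraction} with the standard characterization of $\mathbb{Q}$-factoriality in terms of small contractions. Recall the general principle: if $\phi:X\to Y$ is a proper birational morphism of normal varieties with $X$ $\mathbb{Q}$-factorial, then the exceptional locus of $\phi$ is of pure codimension one; equivalently, $Y$ cannot admit a small modification (an isomorphism in codimension one that is not an isomorphism) whose source is $\mathbb{Q}$-factorial unless more care is taken. The cleanest route here is: suppose for contradiction that $\overline{\mathcal{H}}_{g,n}$ (respectively $\widehat{\mathcal{H}}_{g,n}$) is $\mathbb{Q}$-factorial. By Proposition~\ref{prop:small_contraction}, for a suitable choice of $i$ and of subsets $S_1,S_2$ with $|S_1\cap S_2|\geq 2$ and $S_1\cup S_2=\{1,\dots,n\}$, the Hurwitz space $\overline{\mathcal{H}ur}_{g,2}^n$ contains a codimension two locus $Z=\pi^{-1}(\delta_{[2i+2]+S_1}\cap\delta_{[2g-2i]+S_2})$ one of whose irreducible components $Z_0$ is contracted by $\phi$ to a codimension three locus in $\overline{\mathcal{H}}_{g,n}$. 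The map $\phi$ is proper and birational (it is an isomorphism over the interior), so the existence of a contracted component of codimension two in the source gives a contradiction with $\mathbb{Q}$-factoriality of the target.

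More precisely, I would argue as follows. First I would note it suffices to treat $\overline{\mathcal{H}}_{g,n}$, since the finite cover $\widehat{\mathcal{H}}_{g,n}\to\overline{\mathcal{H}}_{g,n}$ lifts to a birational morphism $\widehat{\mathcal{H}ur}_{g,2}^n\to\widehat{\mathcal{H}}_{g,n}$ with the analogous contracted locus (the same combinatorial picture of Figure~\ref{figure:contraction} applies, now with ordered Weierstrass points), so the same argument works verbatim; alternatively, one can observe that if the base of a finite cover is $\mathbb{Q}$-factorial then so is a suitable quotient, but the direct argument is cleaner. Then, assuming $\overline{\mathcal{H}}_{g,n}$ is $\mathbb{Q}$-factorial, pick the contracted component $Z_0\subset\overline{\mathcal{H}ur}_{g,2}^n$ from Proposition~\ref{prop:small_contraction}; it has codimension two and $W:=\phi(Z_0)$ has codimension three. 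Choose a $\mathbb{Q}$-Cartier (by hypothesis) effective divisor $D$ on $\overline{\mathcal{H}}_{g,n}$ containing $W$ but not containing the (codimension one) image under $\phi$ of a general divisor through a general point of $W$ — concretely, take $D$ to be a general effective divisor through a general point $w\in W$, which then also passes through nearby points of $\overline{\mathcal{H}}_{g,n}\setminus W$ and whose strict transform $\widetilde{\phi^*D}$ we compare with $\phi^*D$.

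The key computation is the discrepancy/multiplicity bookkeeping: since $Z_0$ is contracted to something of codimension three, $Z_0$ does not appear with the "expected" coefficient in $\phi^*D$. More carefully, I would use the criterion that $\mathbb{Q}$-factoriality of $\overline{\mathcal{H}}_{g,n}$ forces: for every prime divisor $\Gamma$ on $\overline{\mathcal{H}ur}_{g,2}^n$ with $\phi(\Gamma)$ of codimension $\geq 2$, $\Gamma$ must lie in the support of $\phi^*(\text{something})$ in a constrained way — and then derive a contradiction by producing a curve $C\subset Z_0$ contracted by $\phi$ (which exists since $\dim Z_0 > \dim W$) such that $C\cdot\phi^*D = 0$ for every $\mathbb{Q}$-Cartier $D$, yet $C$ moves in a family covering $Z_0$ and $Z_0$ is not contained in any contracted divisor, contradicting that $N^1$ and $N_1$ pair perfectly on a $\mathbb{Q}$-factorial variety after pushforward. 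The honest statement I would invoke: if $\phi:X\to Y$ is birational proper, $Y$ $\mathbb{Q}$-factorial and $X$ normal, then $\operatorname{Exc}(\phi)$ has pure codimension one in $X$ — this is because for any prime divisor $\Gamma\subset\operatorname{Exc}(\phi)$ we may pick an ample divisor $H$ on $Y$, an effective $\mathbb{Q}$-Cartier divisor $D\sim_{\mathbb{Q}} H$ containing $\phi(\Gamma)$ but with $\phi^*D$ not containing $\Gamma$ entirely (possible since $\phi(\Gamma)$ has codimension $\geq 2$, so a general member of a very ample linear system through it has strict transform meeting but not containing $\Gamma$), while an effective $\mathbb{Q}$-Cartier $D'$ containing a divisor $E\supset\Gamma$ contracted by $\phi$ would be needed — running this on our component $Z_0$ of codimension two, which is NOT inside any $\phi$-exceptional divisor (only the loci in Proposition~\ref{prop:exc_loc} are exceptional divisors, and $Z_0$ is the "non-contracted" rather than the "contracted" side relative to those, so it is not contained in div.exc$(\phi)$), gives the contradiction.

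The main obstacle I anticipate is making the last paragraph genuinely rigorous: one must verify that the codimension two contracted component $Z_0$ of Proposition~\ref{prop:small_contraction} is not contained in any of the divisorial exceptional components $E_{(S_1',S_2')}$ or $\pi^{-1}(\delta_{[i']+S'})$ listed in Proposition~\ref{prop:exc_loc} — i.e., that $Z_0$ is a "new" contracted locus of codimension exactly two and not merely a subvariety of a known contracted divisor. If $Z_0$ were contained in a contracted divisor, there would be no contradiction with $\mathbb{Q}$-factoriality. This is a boundary-combinatorics check: the general point of $Z_0$ is a cover with \emph{two} rational bridges each meeting $f^{-1}(S_1\cap S_2)$ (the left picture in Figure~\ref{figure:contraction}), and one checks that a general such point does not lie on any $E_{(S_1',S_2')}$ (which would require a rational tail exchanged by the involution with all its markings, a different degeneration) nor on the non-contracted sheets described in Proposition~\ref{prop:exc_loc}; equivalently, the contracted component of $Z$ meets the complement of $\bigcup E_{(\cdot,\cdot)}\cup\bigcup\pi^{-1}(\delta_{[i]+S})$ in its own smooth locus. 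Once that transversality/genericity statement is in hand, the contradiction with $\mathbb{Q}$-factoriality via the pure-codimension-one property of exceptional loci of birational morphisms onto $\mathbb{Q}$-factorial targets is immediate, and the same reasoning applied to $\widehat{\mathcal{H}ur}_{g,2}^n\to\widehat{\mathcal{H}}_{g,n}$ disposes of the ordered case.
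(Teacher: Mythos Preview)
Your overall strategy matches the paper's: exhibit via Proposition~\ref{prop:small_contraction} a codimension-two irreducible component $Z_0$ of the full exceptional locus of $\phi$, then invoke the standard fact that a proper birational morphism onto a $\mathbb{Q}$-factorial target has pure codimension-one exceptional locus. The middle paragraph with discrepancy bookkeeping is an unnecessary detour; the pure-codimension-one criterion you state at the end is all that is needed.

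The real content, as you correctly flag, is the ``main obstacle'': showing $Z_0$ is not contained in any component of ${\rm div.exc}(\phi)$. You leave this as a vague combinatorial check, but this is precisely where the hypothesis $g\geq 3$ enters, and your sketch does not explain it. The paper's argument is sharper: choose $i$ with $1\leq i\leq g-2$ (possible exactly when $g\geq 3$). Then both $2i+2\geq 4$ and $2g-2i\geq 4$, so by Proposition~\ref{prop:exc_loc} neither $\pi^{-1}(\delta_{[2i+2]+S_1})$ nor $\pi^{-1}(\delta_{[2g-2i]+S_2})$ is $\phi$-exceptional. A general point of their transverse intersection then admits an analytic neighbourhood $U$ in $\overline{\mathcal{M}}_{0,[2g+2]+n}$ meeting no other boundary divisor; since $\phi$ is an isomorphism over $\pi^{-1}(\mathcal{M}_{0,[2g+2]+n})$, it is an isomorphism on $\pi^{-1}(U)\setminus\pi^{-1}(\delta_{[2i+2]+S_1}\cap\delta_{[2g-2i]+S_2})$. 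Hence $Z_0$ is an honest codimension-two component of ${\rm exc}(\phi)$. Without this restriction on $i$ (e.g.\ for $i=0$), one of the two boundary divisors has only two unordered points and its $\pi$-preimage \emph{is} $\phi$-exceptional by Proposition~\ref{prop:exc_loc}, so the argument would collapse. For the ordered case the paper, like you, passes through the finite cover $\widehat{\mathcal{H}ur}_{g,2}^n\to\overline{\mathcal{H}ur}_{g,2}^n$.
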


\begin{proof}
We have to rule out the case when the codimension $2$ locus singled out in Proposition \ref{prop:small_contraction} lies inside a codimension one component in the full exceptional locus of $\phi$. By Proposition \ref{prop:exc_loc}, when $1\leq i\leq g-2$, the divisors $\pi^*\delta_{[2i+2]+S_1}$ and $\pi^*\delta_{[2g-2i]+S_2}$ are not contracted via $\phi$. Moreover, they intersect transversally along an irreducible codimension $2$ subvariety. A general point on the intersection admits an analytic open neighborhood $U$ intersecting only $\delta_{[2i+2]+S_1}$ and $\delta_{[2g-2i]+S_2}$ and no other boundary divisor of $\overline{\mathcal{M}}_{0:[2g+2]+n}$. The map $\phi$ is an isomorphism on the interior $\rm{H}ur_{g,2}^n\subset\overline{\rm{H}ur}_{g,2}^n$, and $\pi^{-1}(\rm{M}_{0:[2g+2]+n})=\rm{H}ur_{g,2}^n$. In particular $\phi$ is an isomorphism when restricted to $\pi^{-1}\left(U\right)\setminus\pi^{-1}\left(\delta_{[2i+2]+S_1}\cap\delta_{[2g-2i]+S_2}\right)$. This shows that there exists an irreducible component of the full exceptional locus ${\rm{exc}}(\phi)$ of codimension $2$ contracted by $\phi$. The existence of a small contraction over $\widehat{\rm{H}}_{g,n}$ follows analogously.
\end{proof}

\begin{proposition}
The variety $\overline{\rm{H}}_{g,n}$ is normal. 
\end{proposition}

\begin{proof}
Let $U$ be a smooth scheme and $U\rightarrow \overline{\mathcal{M}}_{g,n}$ is an \'etale presentation giving $\overline{\mathcal{M}}_{g,n}$ the structure of smooth a Deligne--Mumford stack. Since the inclusion $\overline{\mathcal{H}}_{g,n}\longrightarrow \overline{\mathcal{M}}_{g,n}$ is representable, $U'=U\times_{\overline{\mathcal{M}}_{g,n}}\overline{\mathcal{H}}_{g,n}$ is a scheme and $U'\longrightarrow \overline{\mathcal{H}}_{g,n}$ is also an \'etale presentation. In particular $\overline{\mathcal{H}}_{g,n}$ is a DM stack, \'etale locally of the form $\left[{\rm{Spec}}(A)\big/G\right]$ with $G$ finite. 

We proceed by induction on the number of markings. For $n=0$, $\overline{\mathcal{H}}_g$ is a smooth proper Deligne-Mumford stack \cite[Ch. XI, Lemma 6.15]{ACG}, see also \cite{Co}. In particular Cohen-Macauly and normal. The coarse space $\overline{\rm{H}}_g$ has finite quotient singularities, thus $\overline{\rm{H}}_g$ is normal. Assume $\overline{\mathcal{H}}_{g,n-1}$ is Cohen--Macauly and normal. The forgetful morphism $p:\overline{\mathcal{H}}_{g,n}\to \overline{\mathcal{H}}_{g,n-1}$ is flat \cite[Lemma 101.25.3, ]{S-P} as it is the base change of a flat morphism $\overline{\mathcal{M}}_{g,n}\to \overline{\mathcal{M}}_{g, n-1}$. Moreover, every fibre of $p$ is a stable curve, thus Cohen-Macaulay. This means that the morphism $p$ is Cohen-Macaulay. By assumption $\pi:\overline{\mathcal{H}}_{g, n-1}\to {\rm{Spec}}(\mathbb{C})$ is Cohen-Macaulay and this property is preserved under compositions \cite[Lemma 37.20.4]{S-P}. We conclude that $\overline{\mathcal{H}}_{g,n}$ is Cohen-Macaulay. Smoothness in codimension one is proved in \cite[Prop. 5.4]{Sca}. This gives us normality of the stack $\overline{\mathcal{H}}_{g,n}$. Finally, \'etale locally the coarse map $\overline{\mathcal{H}}_{g,n}\longrightarrow \overline{\rm{H}}_{g,n}$ is of the form 
\[
\left[{\rm{Spec}}(A)\big/G\right]\longrightarrow {\rm{Spec}}(A^G)
\]
and normality is preserved after taking invariants. 
\end{proof}
\begin{remark}
We cannot conclude that $\overline{\rm{H}}_{g,n}$ is also Cohen--Macauly since locally, $A$ being Cohen--Macauly does not necessarily implies that $A^G$ also is. Even when $G$ is finite and acts linearly on a polynomial algebra $A$. See \cite{LP}.
\end{remark}

\section{Singularities of ordered Hurwitz spaces}
\label{section:sing_ordered}

Proposition \ref{prop:nonQfactorial} forces us to work with a better behaved birational model of $\overline{\rm{H}}_{g,n}$. This is provided by the coarse moduli space $\overline{\rm{H}ur}_{g,2}^n$. Let us recall some elementary notions on singularity of pairs, cf. \cite{Am, Ko, Ko2}. Let $X$ be a normal variety and $D=\sum d_iD_i$ a linear combination of Weil $\mathbb{R}$-divisors with $d_i\in \mathbb{R}$. Assume further that $K_X+D$ is $\mathbb{R}$-Cartier. Let $f:Y\to X$ be a birational morphism between normal varieties and consider the equality
$$K_Y=f^*\left(K_X+D\right)+\sum a(E,X,D)E$$
where the last sum runs over different prime divisors $E\subset Y$ with the convention that $E$ is either an exceptional divisor or of the form $E=f^{-1}D_i$ in which case $a(E,X,D)$ is declared to be $-d_i$. The real number $a(E,X,D)$ is called the {\textit{discrepancy}} of $E$ with respect to $X$ and $D$. This quantity does not depend on $Y$ in the sense that if $f':Y'\to X$ is a different birational model and $E'\subset Y'$ is the proper transform of $E$, then $a(E,X,D)=a(E',X,D)$. The {\textit{discrepancy of the pair}} $(X,D)$ is defined as \cite[Def. 2.9]{Ko2}, \cite[Def. 1.1]{Am},
$${\rm{discrep}}(X,D)={\rm{inf}}_{E}\left\{a(E,X,D)\mid E \hbox{ exceptional divisor over }X\right\}.$$ 
A pair $(X,D)$ is called \textit{canonical} or is said to have {\textit{canonical singularities}} if ${\rm{discrep}}(X,D)\geq 0$. An immediate remark is that $X$ having canonical singularities in the usual sense is equivalent to the pair $(X,0)$ being canonical. Another observation is that if $D'=\sum d_i'D_i$ with $d_i'\geq d_i$, then 
\begin{equation}
\label{eq:ineq_discrep}
{\rm{discrep}}(X,D')\leq{\rm{discrep}}(X,D).
\end{equation} 

Recall the comparison of discrepancies for finite morphisms:
\begin{proposition}[see Corollary 2.43 in \cite{Ko2}]
\label{prop:discrep_finite}
Let $f:X\to Y$ be a finite morphism between normal varieties of the same dimension, $D_Y$ a $\mathbb{Q}$-divisor on $Y$ and assume $K_Y+D_Y$ is $\mathbb{Q}$-Cartier. Define $D_X$ by the equality
$$K_X+D_X=f^*\left(K_Y+D_Y\right).$$
Then 
$${\rm{discrep}}\left(Y,D_Y\right)+1\leq {\rm{discrep}}\left(X,D_X\right)+1\leq \deg(f)\left({\rm{discrep}}\left(Y,D_Y\right)+1\right).$$
\end{proposition}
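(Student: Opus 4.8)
This is \cite[Cor.~2.43]{Ko2}, and the plan is to reproduce Koll\'ar's argument: reduce both inequalities to a single ramification identity and then pass to infima, keeping careful track of signs. The heart of the matter is the identity itself. I would fix a divisorial valuation $v$ over $X$; its restriction to the subfield $K(Y)\subset K(X)$ is a positive integer multiple of a divisorial valuation $w$ over $Y$, whose center is the image under $f$ of the center of $v$ --- which has codimension $\ge 2$ exactly when the center of $v$ does, because $f$ is finite. Next I would choose a proper birational $g\colon Y'\to Y$ with $Y'$ normal on which $w$ is realized by a prime divisor $E'$, enlarging $Y'$ by further blow-ups if necessary so that $v$ is realized by a prime divisor $F$ on $X'$, the normalization of the component of $X\times_Y Y'$ dominating $Y'$; then $f'\colon X'\to Y'$ is finite of degree $\deg(f)$, the map $\pi\colon X'\to X$ is birational, and the ramification index $r_F$ of $f'$ along $F$ satisfies $1\le r_F\le\deg(f)$. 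Combining the ramification formula $K_{X'}=(f')^{*}K_{Y'}+\sum_F(r_F-1)[F]$ with the discrepancy expression $K_{Y'}=g^{*}(K_Y+D_Y)+\sum_{E'}a(E',Y,D_Y)[E']$, and using $(f')^{*}g^{*}=\pi^{*}f^{*}$, the defining relation $f^{*}(K_Y+D_Y)=K_X+D_X$, and $(f')^{*}[E']=\sum_{F\mapsto E'}r_F[F]$, comparison of coefficients along each prime $F$ over $E'$ yields
$$a(F,X,D_X)+1=r_F\bigl(a(E',Y,D_Y)+1\bigr),\qquad 1\le r_F\le\deg(f),$$
and this remains valid for strict transforms of components of $D_X$ and $D_Y$ by a direct check using $D_X=f^{*}D_Y-R_f$, with $R_f$ the ramification divisor of $f$.

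Granting the identity, the two inequalities are formal, using the dichotomy ${\rm{discrep}}(\,\cdot\,,\,\cdot\,)\in\{-\infty\}\cup[-1,\infty)$. Every exceptional divisor over $X$ lies over an exceptional divisor over $Y$, and conversely every exceptional $E'$ over $Y$ has a component of $(f')^{-1}(E')$ lying over it (on a suitable model), which is exceptional over $X$. If ${\rm{discrep}}(Y,D_Y)\ge -1$ then $a(E',Y,D_Y)+1\ge 0$ for every exceptional $E'$, so $r_F\ge 1$ gives $a(F,X,D_X)+1\ge {\rm{discrep}}(Y,D_Y)+1$ for all exceptional $F$, hence the lower bound; and $r_F\le\deg(f)$, applied to divisors $E'$ whose discrepancies approach ${\rm{discrep}}(Y,D_Y)$ together with a component $F$ over each, gives ${\rm{discrep}}(X,D_X)+1\le \deg(f)\bigl({\rm{discrep}}(Y,D_Y)+1\bigr)$, the upper bound. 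If instead ${\rm{discrep}}(Y,D_Y)=-\infty$, choosing $E'$ with $a(E',Y,D_Y)+1$ as negative as desired and a component $F$ over it shows ${\rm{discrep}}(X,D_X)=-\infty$ too, so both inequalities hold trivially.

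I expect the main obstacle to be the geometric step concealed in the first paragraph: producing one birational model $Y'$ of $Y$ over which $w$ is a prime divisor and, simultaneously, $v$ becomes a prime divisor on the normalized fiber product $X'$ --- that is, that after enough blow-ups of $Y'$ the center of $v$ on $X'$ drops to a divisor, so that the \emph{finite} morphism $f'\colon X'\to Y'$, to which the ramification formula applies, actually sees $v$. This rests on the characteristic-zero fact that every divisorial valuation is attained by a finite sequence of blow-ups, carried out compatibly on $Y'$ and on its normalized fiber product; once this is granted, the remaining bookkeeping with discrepancies and with the cases $\ge -1$ versus $-\infty$ is routine.
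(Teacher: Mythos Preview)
The paper does not give a proof of this proposition; it simply cites \cite[Cor.~2.43]{Ko2}. Your argument is correct and is precisely Koll\'ar's: the ramification identity $a(F,X,D_X)+1=r_F\bigl(a(E',Y,D_Y)+1\bigr)$ together with $1\le r_F\le\deg(f)$ yields both inequalities by taking infima, with the $-\infty$ case handled separately. The one technical point you flag---arranging a single finite morphism $f'\colon X'\to Y'$ of normal models on which both valuations are realized by prime divisors---is exactly what Koll\'ar sets up in the proposition preceding his corollary, and your sketch of it is adequate.
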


The map 
$$\alpha:\widehat{\rm{Hur}}_{g,2}^n\longrightarrow\overline{\rm{Hur}}_{g,2}^n$$
is a finite cover of degree $(2g+2)!$ simply ramified along the divisor $\pi^{c,*}\delta_{[2]+\varnothing}$ where two Weierstrass points lie on a rational bridge of the cover with no markings. There are branch morphisms $\pi^c, \hat{\pi}^c$ on coarse spaces that sit in the following commutative (not necessarily cartesian) diagram:
\begin{equation}
\begin{tikzcd}
\widehat{\rm{H}ur}_{g,2}^n\arrow[r, "\hat{\pi}^c"]\arrow[d, "\alpha"']&\overline{\rm{M}}_{0,2g+2+n}\arrow[d, "\sigma"] \\
\overline{\rm{H}ur}_{g,2}^n\arrow[r, "\pi^c"]&\overline{\rm{M}}_{0,[2g+2]+n}.
\end{tikzcd}
\end{equation}
The ramification divisors of $\hat{\pi}^c$ and $\pi^c$ can be computed using Proposition \ref{prop:ramif_brach1}. This will be considered in detail in the next section. In particular, we have the following identity (see Corollary \ref{prop:ramif_brach2} and Proposition \ref{prop:canonicalstack})
\begin{equation}
\label{eq:can_hur0}
K_{\widehat{\rm{H}ur}_{g,2}^n}=\hat{\pi}^{c,*}\left(K_{\overline{\rm{M}}_{0,2g+2+n}}+\frac{1}{2}D\right),
\end{equation}
where $D$ is a simple normal crossing divisor on $\overline{\rm{M}}_{0,2g+2+n}$ given by 
$$D=\sum_{p=0}^{\lfloor g/2\rfloor}\sum_{\substack{|T|=2p+1\\ S\neq\varnothing}} \delta_{T\cup S}.$$
Here $T$ runs over subsets of the first $2g+2$ markings and $S$ over subsets of the last $n$ markings. 

\begin{theorem}
\label{thm:cs_hat}
The coarse moduli space $\widehat{\rm{H}ur}_{g,2}^n$ has canonical singularities for all $g,n$ with $g\geq 2$ and $n\geq 0$.
\end{theorem}
\begin{proof}
Equation \eqref{eq:can_hur0} and Proposition \ref{prop:discrep_finite} imply that 
$${\rm{discrep}}\left(\overline{\rm{M}}_{0,2g+2+n},\frac{1}{2}D\right)\leq {\rm{discrep}}\left(\widehat{\rm{H}ur}_{g,2}^n, 0\right).$$
The result follows from the smoothness of $\overline{\rm{M}}_{0,2g+2+n}$ and the fact that $D$ is simple normal crossing, cf. \cite[Corollary 2.11]{Ko2}.
\end{proof}

The singularity analysis in the unordered case is more delicate and we treat it in the last section where we show in full Theorem \ref{thmsix}.



\section{The canonical class of pointed Hurwitz spaces}
\label{section:can_class}
If $\psi:\mathcal{X}\longrightarrow\mathcal{Y}$ is a finite map of Deligne--Mumford stacks, the ramification divisor ${\rm{Ram}}(\psi)\in {\rm{Pic}}_{\mathbb{Q}}\left(\mathcal{X}\right)$ is defined as the difference
\begin{equation}
\label{eq:ramif_explanation}
{\rm{Ram}}(\psi)=c_1\left(\Omega_{\mathcal{X}}\right)-c_1\left(\psi^*\Omega_{\mathcal{Y}}\right).
\end{equation}
In our situation, when $\mathcal{X}$ is a proper smooth Deligne--Mumford stack over $\mathbb{C}$ and $c:\mathcal{X}\longrightarrow X$ is a coarse moduli space, then for any point $x\in \mathcal{X}$, the finite stabilizer $G_x$ acts linearly on the tangent space $T_{x}\mathcal{X}$ (it actually acts on the full completed local ring $\widehat{O}_{\mathcal{X},x}$). Recall that the completed local ring pro-represents the corresponding deformation functor at the point $x$. We say that $x\in \mathcal{X}$ is a ramification point for $c$ if the action of $G_x$ on $T_{x}\mathcal{X}$ is non-trivial, or equivalently, if the quotient map $T_{x}\mathcal{X}\longrightarrow T_{x}\mathcal{X}\big/ G_x$ is ramified. Since $X$ has finite-quotient singularities, its singular locus has codimension at least two. The point $c(x)\in X$ is a smooth point if and only if $T_{x}\mathcal{X}\big/ G_x\cong T_{x}\mathcal{X}$. This only happens when elements of $G_x$ act as a {\textit{quasi-reflections}}, i.e., with eigenvalues $\eta, 1, 1,\ldots, 1$ with $\eta$ a root of unity. These are the elements fixing a hyperplane in $T_{x}\mathcal{X}$. Generically on the the locus where $G_x$ acts non-trivially and by quasi-reflections, one has that $G_x$ is cyclic, and the ramification order of the quotient $T_{x}\mathcal{X}\longrightarrow T_{x}\mathcal{X}\big.G_x$ at the origin is exactly $n={\rm{ord}}\left(G_x\right)$. We say that $c:\mathcal{X}\longrightarrow X$ is ramified along a reduced divisor $D\subset \mathcal{X}$ with order $n$ if for a general point $x$ on each component of $D$, the map on tangent spaces is as described above. One shows (see for instance \cite[Section 3]{GK}, and also \cite[Section 2]{HM}) that if $x\in D$ is such a point, with $c(x)\in X$ smooth, the map is locally defined by  
\[
\widehat{O}_{\mathcal{X},x}={\rm{Spec}}\left(\mathbb{C}\llbracket t_1,\ldots,t_d\rrbracket\right)\longrightarrow\widehat{O}_{X,x}={\rm{Spec}}\left(\mathbb{C}\llbracket s_1,\ldots,s_d\rrbracket\right)\
\]
where $d=\dim(\mathcal{X})$, $D$ is locally at $x$ defined by $(t_1=0)$ and the map is given by $s_1=t_1^n$, $s_2=t_2, \ldots,s_d=t_d$. If $ds_1\wedge\ldots\wedge ds_d$ is a local basis for ${\rm{det}}(\Omega_{X,x})$ and $dt_1\wedge\ldots\wedge dt_d$ a local basis for ${\rm{det}}\left(\Omega_{\mathcal{X},x}\right)$, then 
\[
c^*(ds_1\wedge\ldots\wedge ds_d)=n\cdot(t_1)^{n-1}(dt_1\wedge\ldots\wedge dt_d)
\]
vanishes with order $n-1$ at $D$. This is how we will compute \eqref{eq:ramif_explanation} in most of our cases. The ramification divisor for our coarse maps is ultimately the class $\sum_{i}(n_i-1)D
_i$ where $\cup D_i$ is the divisorial locus of objects with non-trivial automorphisms acting non-trivially on first order deformations, and $n_i$ is the order of such group on a general point.\\ 

The strategy to compute the canonical classes of the coarse spaces of interest is as follows. We compute the ramification of the map of stacks 
$\hat{\pi}:\widehat{{\mathcal{H}ur}}_{g,2}^n\longrightarrow \overline{\mathcal{M}}_{0,2g+2+n}$ and use the known formula \cite[Lemma 3.5]{KM} for $K_{\overline{\mathcal{M}}_{0,2g+2+n}}$ to compute the canonical class of $\widehat{{\mathcal{H}ur}}_{g,2}^n$. Second, we use the cartesian diagram on stacks:
\begin{equation}
\begin{tikzcd}
\widehat{\mathcal{H}ur}_{g,2}^n\arrow[r, "\hat{\pi}"]\arrow[d, "\hat{\sigma}"']&\overline{\mathcal{M}}_{0,2g+2+n}\arrow[d, "\sigma"] \\
\overline{\mathcal{H}ur}_{g,2}^n\arrow[r, "\pi"]&\overline{\mathcal{M}}_{0,[2g+2]+n},
\end{tikzcd}
\end{equation}
as well as the \'etale morphism (of stacks) $\overline{\mathcal{M}}_{2g+2+n}\longrightarrow\overline{\mathcal{M}}_{[2g+2]+n}$ to compute the ramification of $\pi$ and the canonical class of $\overline{\mathcal{H}ur}_{g,2}^n$. We then study the ramification of the coarse maps $\widehat{\mathcal{H}ur}_{g,2}^n\longrightarrow \widehat{\rm{H}ur}_{g,2}^n$ and $\overline{\mathcal{H}ur}_{g,2}^n\longrightarrow \overline{\rm{H}ur}_{g,2}^n$ to conclude the calculation of the canonical classes of the coarse spaces. Finally we use the commutativity (both in the ordered and unordered cases) of the diagram
\begin{equation}
\begin{tikzcd}
\overline{\mathcal{H}ur}_{g,2}^n\arrow[r, "\hat{\pi}"]\arrow[d]&\overline{\mathcal{M}}_{0,[2g+2]+n}\arrow[d] \\
\overline{\rm{H}ur}_{g,2}^n\arrow[r, "\pi^c"]&\overline{\rm{M}}_{0,[2g+2]+n},
\end{tikzcd}
\end{equation}
as well as the standard isomorphism on Chow groups between stack and coarse spaces to express the canonical class of $\overline{\rm{H}ur}_{g,2}^n$ (resp. ordered case) as a pull-back via $\pi^c$ of a class from $\overline{\rm{M}}_{0,[2g+2]+n}$. We use this to reduce the study positivity of the canonical class of $\overline{\rm{H}ur}_{g,2}^n$ to the positivity of classes in $\overline{\rm{M}}_{0,[2g+2]+n}$.

We start by recalling a well-known fact about the stack $\widehat{\mathcal{H}ur}_{g,2}^n$. 

\begin{proposition}[\cite{HM} or \cite{JKK, SvZ}]
\label{prop:ramif_brach1}
The stack $\widehat{\mathcal{H}ur}_{g,2}^n$ is smooth and the branch morphism
$$\hat{\pi}: \widehat{{\mathcal{H}ur}}_{g,2}^n\longrightarrow \overline{\mathcal{M}}_{0,2g+2+n}$$
is simply ramified over $\delta_{T\cup S}$ where $T\subset \{1,\ldots,2g+2\}$ has odd cardinality and $S$ is a subset of the remaining $n$ markings. Concretely, if we denote by $\sigma:\overline{\mathcal{M}}_{0,2g+2+n}\to\overline{\mathcal{M}}_{0,[2g+2]+n}$ the $S_{2g+2}$-quotient, then
$${\rm{Ram}}\left(\hat{\pi}\right)=\frac{1}{2}\hat{\pi}^*\sigma^*\left(\sum_{p=0}^{\lfloor g/2\rfloor}\sum_S \delta_{[2p+1]+S}\right).$$
\end{proposition}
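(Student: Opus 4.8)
The plan is to work étale-locally on the source curve of an admissible cover and read off the ramification of $\hat{\pi}$ from the deformation theory of the cover versus the deformation theory of the target pointed rational curve. Away from the boundary, $\hat{\pi}$ is unramified: over a smooth double cover $f\colon C\to\pp^1$ with $2g+2$ marked ramification points and $n$ extra marked points on $C$ (with distinct images), deforming $f$ is the same as deforming the target $(2g+2+n)$-pointed rational curve, so $\hat{\pi}$ is an isomorphism of stacks over $\mathcal{M}_{0,2g+2+n}$. The content is therefore entirely at the boundary, where a node of the target rational curve $R$ is resolved. The first step is the standard local picture: a node of $R$ over which the admissible cover is \emph{ramified} contributes a $2:1$ map $\{xy=t^2\}\to\{uv=s\}$, $u=x^2$, $v=y^2$, $s=t^2$; while a node of $R$ over which the cover is \emph{étale} contributes two disjoint nodes upstairs, each mapping isomorphically, giving $s\mapsto t$ locally. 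So the obstruction to lifting a smoothing of the target node is: the parameter $s$ on $\overline{\mathcal{M}}_{0,2g+2+n}$ becomes a square $t^2$ of a local parameter $t$ on $\widehat{\mathrm{Hur}}_{g,2}^n$ precisely at the nodes where the cover is ramified, and is étale otherwise.

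The second step is the combinatorial identification of which boundary divisors of $\overline{\mathcal{M}}_{0,2g+2+n}$ carry a ramified node. A boundary divisor $\delta_{T\cup S}$ of $\overline{\mathcal{M}}_{0,2g+2+n}$ corresponds to a stable $2$-vertex tree of $\pp^1$'s; the node separating the markings $T\cup S$ from their complement is a point $q\in R$, and I want to determine when $f$ is ramified over $q$. By the Riemann--Hurwitz / local parity constraint for admissible double covers, the number of branch points lying on one side of $q$ determines the parity of the local ramification at $q$: the cover is ramified over $q$ if and only if the number of branch points strictly on the $T\cup S$ side is \emph{odd}. Since the branch points of a double cover are exactly (the images of) the $2g+2$ Weierstrass/ramification markings, this number is $|T|$. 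Hence $\hat{\pi}$ is (simply) ramified over $\delta_{T\cup S}$ precisely when $|T|$ is odd, and since $|T|+|T^c|=2g+2$ is even, we may normalize $|T|=2p+1$ with $0\le p\le\lfloor g/2\rfloor$ (the bound coming from stability of the admissible cover: a rational component of the source can carry at most two ramification points, so the genus split on the two sides is between $p$ and $g-p$, forcing $2p+1\le 2g+1$, i.e. $p\le g-\tfrac12$, but one also must have at least one branch point on each side when $S\ne\varnothing$ is involved — I will spell out the exact range and verify it matches $0\le p\le\lfloor g/2\rfloor$ together with the $\delta_{[2p+1]+S}$ indexing after taking the $S_{2g+2}$-quotient).

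The third step is to assemble the divisorial identity. Étale-locally at a generic point of a ramified boundary divisor $\delta_{T\cup S}$ with $|T|$ odd, $\hat{\pi}$ looks like $t\mapsto t^2=s$, so $\mathrm{Ram}(\hat\pi)$ contains $\delta_{T\cup S}$ with multiplicity $1$ and $\hat{\pi}^*\delta_{T\cup S}=2\,\delta_{T\cup S}+(\text{other components})$; at a generic point of an unramified boundary divisor $\hat{\pi}$ is étale, contributing nothing to the ramification divisor. Summing over all ramified boundary divisors and using that $\sigma\colon\overline{\mathcal{M}}_{0,2g+2+n}\to\overline{\mathcal{M}}_{0,[2g+2]+n}$ pushes $\sum_{|T|=2p+1}\delta_{T\cup S}$ forward to $\delta_{[2p+1]+S}$ (and is étale in codimension one over the generic point of each such divisor, as $S_{2g+2}$ acts freely there once $S\ne\varnothing$ pins down orientation — or, more carefully, $\sigma^*\delta_{[2p+1]+S}=\sum_{|T|=2p+1}\delta_{T\cup S}$ by the definition recalled in the excerpt), we get
\[
\mathrm{Ram}(\hat\pi)=\sum_{p=0}^{\lfloor g/2\rfloor}\sum_{\substack{|T|=2p+1\\ S\ne\varnothing}}\delta_{T\cup S}=\tfrac12\,\hat{\pi}^*\sigma^*\!\left(\sum_{p=0}^{\lfloor g/2\rfloor}\sum_S\delta_{[2p+1]+S}\right),
\]
where the factor $\tfrac12$ is exactly the simple-ramification statement $\hat{\pi}^*\delta_{T\cup S}=2\,\mathrm{Ram}\text{-part}+\cdots$. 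The smoothness of $\widehat{\mathrm{Hur}}_{g,2}^n$ is the already-cited input from \cite{JKK, SvZ} (or the original admissible-covers construction of \cite{HM}), and I would simply invoke it rather than reprove it.

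The main obstacle I anticipate is the bookkeeping in step two and three: pinning down the \emph{exact} range of $(p,T,S)$ that actually occurs as a codimension-one ramified locus — one must exclude unstable configurations and be careful that when $S=\varnothing$ the relevant divisor is $\delta_{[2p+1]+\varnothing}$, which the statement deliberately omits because such a divisor either does not appear (a rational bridge with only an even number of Weierstrass markings and no $p_i$'s is either unstable or gives an \emph{unramified} node) — and correctly matching the $S_{2g+2}$-equivariant classes downstairs to the ordered classes upstairs, so that the factor of $\tfrac12$ and the combinatorial sum come out as written. The geometric input (the local $x^2=u$ picture at a ramified node, and the parity-of-branch-points criterion) is completely standard for double admissible covers and will not be the hard part.
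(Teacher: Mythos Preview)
Your overall approach---reading off the ramification of $\hat\pi$ from the local model $t_i=s_i^2$ versus $t_i=s_i$ at each node of the target, together with the parity criterion for when a node of $R$ is a branch point of the cover---is exactly what the paper does. The paper is terser: it simply writes down the completed local ring of $\widehat{\rm Hur}_{g,2}^n$ at $[f]$ in terms of the smoothing parameters of $R$, observes it is a polynomial ring (hence smoothness), and reads off simple ramification at the divisors where exactly one $r_i=1$. Your explicit parity argument is a welcome expansion of a step the paper leaves implicit.

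However, your combinatorial bookkeeping in step three contains a genuine error. You write the ramification divisor with the restriction $S\ne\varnothing$ and then claim ``the statement deliberately omits'' the $S=\varnothing$ terms. It does not: the sum $\sum_S\delta_{[2p+1]+S}$ in the proposition runs over \emph{all} subsets $S$, including $S=\varnothing$ (for $p\ge 1$, so that the divisor exists). A boundary divisor $\delta_{T\cup\varnothing}$ with $|T|=2p+1$ odd still corresponds to a node over which the double cover is ramified---your own parity argument shows this---and so it contributes to $\mathrm{Ram}(\hat\pi)$ at the stack level. The restriction to $S\ne\varnothing$ appears only in the \emph{coarse} statement (Corollary~\ref{prop:ramif_brach2}), because the coarse map $\overline{\rm Hur}_{g,2}^n\to\overline{\mathcal{H}ur}_{g,2}^n$ is itself simply ramified along $\frac12\pi^*\sum_{p\ge1}\delta_{[2p+1]+\varnothing}$ (the generic curve there has an extra automorphism from the hyperelliptic involution on the tail), and these two ramification contributions cancel. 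You have conflated the stack-level and coarse-level formulas.

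A smaller point: your explanation of the range $0\le p\le\lfloor g/2\rfloor$ via ``a rational component of the source can carry at most two ramification points'' is off. The range is purely the symmetry $\delta_{[2p+1]+S}=\delta_{[2g+1-2p]+S^c}$ in $\overline{\mathcal{M}}_{0,[2g+2]+n}$, i.e.\ we index by $2p+1\le g+1$ to avoid double-counting; there is no stability constraint beyond the usual $|T\cup S|\ge 2$.
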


\begin{proof}
Let $[f:C\to R, r_1,\ldots,r_{2g+2},p_1,\ldots,p_n]$ be a $\mathbb{C}$-point in $\widehat{\mathcal{H}ur}_{g,2}^n$. The completed local ring of $\overline{\mathcal{M}}_{0,2g+2+n}$ at $[R, f(r_1),\ldots,f(r_{2g+2}), f(p_1), \ldots, f(p_n)]$ is smooth and of the form 
$$\mathbb{C}\llbracket t_1,\ldots,t_{2g-1+n}\rrbracket.$$ We assume that $t_1,\ldots,t_\delta$ are smoothing parameters of the nodes $q_1,\ldots,q_\delta\in R$ and we write $f^{-1}(q_i)=\{q'_{i,1},\ldots,q'_{i,r_i}\}$ where $r_i=1$ or $2$ depending on whether $f$ is ramified at the node. Then, the completed local ring of $\widehat{\mathcal{H}ur}_{g,2}^n$ at $[f]$ is given by 
\begin{equation}
\label{eq:Hurhat_loc0}
\widehat{\mathcal{O}}_{\widehat{\mathcal{H}ur}_{g,2}^n, [f]}\cong \mathbb{C}\llbracket t_1,\ldots,t_{2g-1+n}, s_1,s_2, \ldots,s_\delta\rrbracket\big/\left(t_i=\left\{\begin{array}{cc} s_i^2&\hbox{ if }r_i=1\\ s_i&\hbox{ if }r_i=2\end{array} \right| 1\leq i\leq \delta\right).
\end{equation}
From this local description one concludes smoothness and observes that the map $\hat{\pi}:\widehat{\mathcal{H}ur}_{g,2}^n\to\overline{\mathcal{M}}_{0,2g+2+n}$ is simply ramified at the locus where the source curve $C$ of the admissible cover $f:C\to R$ has exactly one node with ramification index $2$, i.e., where exactly one $r_i=1$. 
\end{proof}
As a corollary, we have the analogous statement for the $S_{2g+2}$-quotient
\begin{corollary}\label{prop:ramif_brach2}
The ramification divisor of the branch morphism between moduli stacks in the unordered setting
$$\pi: \overline{{\mathcal{H}ur}}_{g,2}^n\longrightarrow \overline{\mathcal{M}}_{0,[2g+2]+n}$$
is given by 
$${\rm{Ram}}\left(\pi\right)=\frac{1}{2}\pi^*\left(\sum_{p=0}^{\lfloor g/2\rfloor}\sum_S \delta_{[2p+1]+S}\right).$$
\end{corollary}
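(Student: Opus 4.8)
The plan is to deduce the unordered statement directly from Proposition~\ref{prop:ramif_brach1} by descent along the degree-$(2g+2)!$ finite cover $\alpha:\widehat{{\rm{Hur}}}_{g,2}^n\to\overline{{\rm{Hur}}}_{g,2}^n$, using the cartesian square relating $\hat{\pi}$, $\pi$, $\alpha$ and the $S_{2g+2}$-quotient map $\sigma:\overline{\mathcal{M}}_{0,2g+2+n}\to\overline{\mathcal{M}}_{0,[2g+2]+n}$. First I would observe that, since both squares in question commute, one has $\sigma\circ\hat\pi=\pi\circ\alpha$, so that the composite $\overline{{\rm{Hur}}}_{g,2}^n\xrightarrow{\pi}\overline{\mathcal{M}}_{0,[2g+2]+n}$ can be compared with $\hat\pi$ after pulling back to $\widehat{{\rm{Hur}}}_{g,2}^n$. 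The key point is that $\alpha$ and $\sigma$ are \emph{étale in codimension one away from the respective ramification divisors of $\hat\pi$ over $\delta_{T\cup S}$}: the $S_{2g+2}$-action on $\overline{\mathcal{M}}_{0,2g+2+n}$ has stabilizers acting freely on a divisor only along the boundary strata where Weierstrass labels collide, and these are precisely the loci governed by admissible covers with a rational bridge carrying two ramification points.

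The main body of the argument is a Hurwitz-formula bookkeeping on ramification divisors. From Proposition~\ref{prop:ramif_brach1} we have
\[
{\rm{Ram}}(\hat\pi)=\tfrac12\,\hat\pi^*\sigma^*\Bigl(\sum_{p=0}^{\lfloor g/2\rfloor}\sum_{S}\delta_{[2p+1]+S}\Bigr),
\]
i.e. $\hat\pi$ is simply ramified over each boundary divisor $\delta_{T\cup S}$ of $\overline{\mathcal{M}}_{0,2g+2+n}$ with $|T|$ odd, $S\neq\varnothing$ (and only there, in codimension one). By the cartesian property, the divisor $\pi^{-1}(\delta_{[2p+1]+S})$ in $\overline{{\rm{Hur}}}_{g,2}^n$ pulls back under $\alpha$ to the corresponding sum of $\hat\pi^{-1}(\delta_{T\cup S})$; since $\alpha$ is simply ramified exactly along the locus where two Weierstrass points sit on a rational bridge — which is the $\alpha$-image of a component of $\hat\pi^{-1}(\delta_{T\cup S})$ — the ramification of $\pi$ over $\delta_{[2p+1]+S}$ is forced, by comparing the two factorizations of ${\rm{Ram}}(\sigma\circ\hat\pi)={\rm{Ram}}(\pi\circ\alpha)$, to again be simple. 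Equivalently: $\hat\pi$ is simply ramified over $\sigma^{-1}$ of each $\delta_{[2p+1]+S}$, $\sigma$ is simply ramified over $\delta_{[2p+1]+S}$ precisely when $2p+1\geq 3$ (two colliding Weierstrass labels) and unramified in codimension one otherwise, and $\alpha$ absorbs exactly the discrepancy, leaving $\pi$ simply ramified over each $\delta_{[2p+1]+S}$. No other boundary divisor of $\overline{\mathcal{M}}_{0,[2g+2]+n}$ lies in the branch locus, since over a general point of $\delta_{[2p]+S}$ (even $|T|$) or $\delta_{[t]+S}$ with $S$ empty the admissible cover is unramified at every node, so the local ring computation \eqref{eq:Hurhat_loc0} upstairs shows $\pi$ is étale there in codimension one.

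The one genuine subtlety — and the step I expect to be the main obstacle — is making precise the claim that passing to the $S_{2g+2}$-quotient does not introduce \emph{extra} codimension-one ramification for $\pi$ beyond what is already recorded in Proposition~\ref{prop:ramif_brach1}, and that it does not \emph{merge} branch components in a way that changes the coefficient $\tfrac12$. Concretely, one must check that over a general point of $\delta_{[2p+1]+S}$ the fibre of $\pi$ consists of admissible covers each of which is ramified over exactly one node (the one separating the rational bridge), with local equation $t=s^2$ as in \eqref{eq:Hurhat_loc0}, and that the $S_{2g+2}$-orbit structure is compatible so that the pushforward of ${\rm{Ram}}(\hat\pi)$ along $\alpha$ is $\tfrac12\,\pi^*(\sum_p\sum_S\delta_{[2p+1]+S})$ on the nose. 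This is a local, essentially combinatorial, verification using the admissible-covers local model; I would carry it out by fixing a general point of the target stratum, enumerating the gluing data (which Weierstrass labels land on the bridge), and checking that $\alpha$ identifies exactly the orbit of covers differing by relabelling the two bridge ramification points. Once that is in hand the stated formula
\[
{\rm{Ram}}(\pi)=\tfrac12\,\pi^*\Bigl(\sum_{p=0}^{\lfloor g/2\rfloor}\sum_{S}\delta_{[2p+1]+S}\Bigr)
\]
follows immediately.
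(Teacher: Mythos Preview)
Your overall strategy---use the cartesian square
\[
\begin{tikzcd}
\widehat{\rm{Hur}}_{g,2}^n\arrow[r, "\hat{\pi}"]\arrow[d, "\alpha"']&\overline{\mathcal{M}}_{0,2g+2+n}\arrow[d, "\sigma"] \\
\overline{\rm{Hur}}_{g,2}^n\arrow[r, "\pi"]&\overline{\mathcal{M}}_{0,[2g+2]+n}
\end{tikzcd}
\]
and compare ramification via Riemann--Hurwitz for the two factorizations of $\sigma\circ\hat\pi=\pi\circ\alpha$---is exactly the paper's approach. But your bookkeeping contains a concrete error that derails the argument.

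You claim that ``$\sigma$ is simply ramified over $\delta_{[2p+1]+S}$ precisely when $2p+1\geq 3$''. This is false: $\sigma$ is simply ramified over $\delta_{[2]+S}$ (where a transposition in $S_{2g+2}$ fixes the configuration because two unordered points sit alone on a rational tail), and is \emph{unramified} in codimension one over every $\delta_{[2p+1]+S}$, since $2p+1$ generic points on a rational component have trivial $S_{2g+1}$-stabilizer. Likewise your description of ${\rm{Ram}}(\alpha)$ as lying over odd-$|T|$ strata is off: the locus ``two Weierstrass points on a rational bridge'' sits over $\delta_{[2]+S}$, not over any $\delta_{[2p+1]+S}$. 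So the branch loci of $\sigma$ and of $\hat\pi$ are \emph{disjoint} in codimension one, and there is no ``discrepancy'' for $\alpha$ to absorb over the odd strata. (Separately, your restriction $S\neq\varnothing$ is wrong at the stack level; that condition only enters for the coarse-space map in Corollary~\ref{prop:ramif_brach2}.)

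Once this is corrected the argument collapses to one line, which is what the paper does. The key fact is the exact identity ${\rm{Ram}}(\alpha)=\hat\pi^*{\rm{Ram}}(\sigma)$: both sides are the reduced divisor of covers with two labelled Weierstrass points on a rational bridge, and $\hat\pi$ is unramified there since $|T|=2$ is even. Feeding this into
\[
\hat\pi^*{\rm{Ram}}(\sigma)+{\rm{Ram}}(\hat\pi)=\alpha^*{\rm{Ram}}(\pi)+{\rm{Ram}}(\alpha)
\]
gives $\alpha^*{\rm{Ram}}(\pi)={\rm{Ram}}(\hat\pi)=\tfrac12\hat\pi^*\sigma^*\bigl(\sum_{p,S}\delta_{[2p+1]+S}\bigr)=\alpha^*\bigl(\tfrac12\pi^*\sum_{p,S}\delta_{[2p+1]+S}\bigr)$, and injectivity of $\alpha^*$ finishes. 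No local combinatorial enumeration is needed.
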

\begin{proof}
The Cartesian diagram 
\begin{equation}
\begin{tikzcd}
\widehat{\mathcal{H}ur}_{g,2}^n\arrow[r, "\hat{\pi}"]\arrow[d, "\hat{\sigma}"']&\overline{\mathcal{M}}_{0,2g+2+n}\arrow[d, "\sigma"] \\
\overline{\mathcal{H}ur}_{g,2}^n\arrow[r, "\pi"]&\overline{\mathcal{M}}_{0,[2g+2]+n},
\end{tikzcd}
\end{equation}
holds on the level of stacks. Hence as $\hat{\sigma}$ and $\sigma$ are finite and \'{e}tale, the ramification divisor of $\hat{\pi}$ is given by $\hat{\sigma}^* {\rm{Ram}}(\pi)$.

\end{proof}


The canonical classes of the stacks $\widehat{\mathcal{H}ur}_{g,2}^n$ and $\overline{\mathcal{H}ur}_{g,2}^n$ can hence be computed via the branch morphisms and the canonical classes of $\overline{\mathcal{M}}_{0,2g+2+n}$ and $\overline{\mathcal{M}}_{0,[2g+2]+n}$. For convenience, we record this result as the following proposition.

\begin{proposition}[Canonical classes of moduli stacks]\label{prop:canonicalstack}
The canonical classes of the moduli stacks 
are given by:
\begin{eqnarray*}
\label{eq:K_M02g+2n}
K_{\overline{\mathcal{M}}_{0,[2g+2]+n}}&=&\sum_{j=0}^n\sum_{i=0}^{g+1}\left(\frac{(i+j)(2g+2+n-i-j)}{2g+n+1}-2\right)\delta_{[i]+j}.
\end{eqnarray*}
\begin{eqnarray*}
K_{ \widehat{{\mathcal{H}ur}}_{g,2}^n   }&=&\hat{\pi}^*\sigma^*\left( \sum_{j=0}^n\sum_{i=0}^{g+1}\left(\frac{(i+j)(2g+2+n-i-j)}{2g+n+1}-2\right)\delta_{[i]+j}  \right)\\
&&+\frac{1}{2}\hat{\pi}^*\sigma^*\left(\sum_{p=0}^{\lfloor g/2\rfloor}\sum_S \delta_{[2p+1]+S}\right)
\end{eqnarray*}
and
\begin{eqnarray*}
K_{ \overline{{\mathcal{H}ur}}_{g,2}^n   }&=&  {\pi}^*\left( \sum_{j=0}^n\sum_{i=0}^{g+1}\left(\frac{(i+j)(2g+2+n-i-j)}{2g+n+1}-2\right)\delta_{[i]+j}  \right)\\
&&+\frac{1}{2}\pi^*\left(\sum_{p=0}^{\lfloor g/2\rfloor}\sum_S \delta_{[2p+1]+S}\right).
\end{eqnarray*}
\end{proposition}

\begin{proof}
The formula for the canonical class of $\overline{\mathcal{M}}_{0,m}$ was computed in \cite[Lemma 3.5]{KM} and is given by
\begin{equation}
\label{eq:K_M0nb}
K_{\overline{\mathcal{M}}_{0,m}}=\sum_{s=2}^{\lfloor m/2\rfloor}\left(\frac{s(m-s)}{m-1}-2\right)\delta_{s}.
\end{equation}
Further, the morphism
$$\sigma:\overline{\mathcal{M}}_{0,2g+2+n}\longrightarrow \overline{\mathcal{M}}_{0,[2g+2]+n}$$
is finite and \'{e}tale. Hence by Riemann-Hurwitz 
$$K_{\overline{\mathcal{M}}_{0,2g+2+n}}=\sigma^*K_{\overline{\mathcal{M}}_{0,[2g+2]+n}}+\rm{Ram}(\sigma) $$
where $\rm{Ram}(\sigma)=0$, which gives the result for $K_{\overline{\mathcal{M}}_{0,[2g+2]+n}}$. 

Applying Riemann-Hurwitz to the morphisms $\hat{\pi}$ and $\pi$ we obtain the remaining canonical divisors similarly via the ramification specified in Proposition~\ref{prop:ramif_brach1} and Corollary~\ref{prop:ramif_brach2}.
\end{proof}

\begin{remark}\label{rem:stackram}
The map to the coarse space is simply ramified over the divisor $\delta_{[2]+\emptyset}$. This is because a general $[2g+2]+n$-pointed rational curve in $\delta_{[2]+\emptyset}$ has automorphism group of order two, i.e., the stabilizer group $G_x$ is cyclic of order two. This is the divisor in the stack and we have under the coarse map $\alpha: \overline{\mathcal{M}}_{0,[2g+2]+n}\longrightarrow \overline{\rm{M}}_{0,[2g+2]+n}$ that $\alpha^*\left(\frac{1}{2}\Delta_{[2]}\right)=\delta_{[2]}$, where $\Delta_{[2]}$ is the reduced locus whose general point corresponds to a curve having a rational tail with two unordered makings on it.
\end{remark}

Recall that we have the following commutative diagram 
\begin{equation}
\begin{tikzcd}
\widehat{\rm{H}ur}_{g,2}^n\arrow[r, "\hat{\pi}^c"]\arrow[d, "\hat{\sigma}^c"']&\overline{\rm{M}}_{0,2g+2+n}\arrow[d, "\sigma"] \\
\overline{\rm{H}ur}_{g,2}^n\arrow[r, "\pi^c"]&\overline{\rm{M}}_{0,[2g+2]+n}.
\end{tikzcd}
\end{equation}
on coarse spaces. For convenience, we now denote by
$$\varphi^c:=\sigma^c\circ\hat{\pi}^c=\hat{\sigma}^c\circ\pi^c: \widehat{\rm{H}ur}_{g,2}^n\longrightarrow \overline{\rm{M}}_{0,[2g+2]+n}  $$
the composition of maps on coarse spaces and obtain the following expressions for the canonical divisors of the coarse spaces. 

\begin{proposition}[Canonical classes of coarse spaces]\label{prop:canonicalcoarse}
The canonical classes of the coarse spaces 
are given by:
\begin{eqnarray*}
K_{\overline{\rm{M}}_{0,[2g+2]+n}}&=&-\delta_{[2]+0}+\sum_{j=0}^n\sum_{i=0}^{g+1}\left(\frac{(i+j)(2g+2+n-i-j)}{2g+n+1}-2\right)\delta_{[i]+j}.
\end{eqnarray*}
\begin{equation}
K_{\overline{\rm{Hur}}_{g,2}^n}=(\pi^c)^*(K_{\overline{\rm{M}}_{0,[2g+2]+n}}+\frac{1}{2}E)\hspace{0.5cm}\text{where}\hspace{0.5cm}E=\sum_{p=0}^{\lfloor g/2\rfloor}\sum_{j=1}^{n-1}\delta_{[2p+1]+j},
\end{equation}
and 
\begin{equation}
K_{\widehat{\rm{Hur}}_{g,2}^n}=(\varphi^c)^*(K_{\overline{\rm{M}}_{0,[2g+2]+n}}+\frac{1}{2}E') \hspace{0.5cm}\text{where}\hspace{0.5cm}  E'=2\delta_{[2]+0}+\sum_{p=0}^{\lfloor g/2\rfloor}\sum_{j=1}^{n-1}\delta_{[2p+1]+j}.
\end{equation}
\end{proposition}
\begin{proof}
As discussed in Remark~\ref{rem:stackram}, the map to the coarse space $\overline{\mathcal{M}}_{0,[2g+2]+n}\longrightarrow \overline{\rm{M}}_{0,[2g+2]+n}$ is simply ramified over the divisor $\delta_{[2]+\emptyset}$. Hence the first expression follows.

The coarse map $\overline{\mathcal{H}ur}_{g,2}^n\to\overline{\rm{Hur}}_{g,2}^n$ is simply ramified at
\begin{equation}
\label{eq:ramification1}
\pi^*\delta_{[2]+\emptyset}+\frac{1}{2}\pi^*\left(\sum_{p=1}^{g}\delta_{[2p+1]+\varnothing}\right),
\end{equation}
while the coarse map $\widehat{\mathcal{H}ur}_{g,2}^n\to\widehat{\rm{Hur}}_{g,2}^n$
is simply ramified at
\begin{equation}
\label{eq:ramification1}
\frac{1}{2}\varphi^*\left(\sum_{p=1}^{g}\delta_{[2p+1]+\varnothing}\right),
\end{equation}
(see \cite[Section 3]{GK} or \cite[Section 1.1]{FM}) and therefore we obtain the canonical divisor of this coarse space.
\end{proof}

\begin{theorem}
\label{thm:gen_type}
The divisor 
$$K_{\overline{\rm{M}}_{0,[2g+2]+n}}+\frac{1}{2}E$$
specified in Proposition~\ref{prop:canonicalcoarse} is effective for $n\geq 4g+6$ and big for $n\geq 4g+7$. More precisely, when $n\geq 4g+6$ it admits an effective decomposition of the form
$$K_{\overline{\rm{M}}_{0,[2g+2]+n}}+\frac{1}{2}E=\sum_{\substack{i,j\geq0\\ i+j\geq 2}}d_{[i]+j}\delta_{[i]+j},$$
where $d_{[i]+j}>0$ for all $i,j$ when $n\geq 4g+7$, and the coefficients $d_{[i]+j}$ are all positive except 
$$d_{[2]+0}=d_{[1]+1}=0$$
when $n=4g+6$.
 \end{theorem}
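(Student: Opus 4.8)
The proof will be a direct computation in ${\rm{Pic}}_{\mathbb{Q}}(\overline{\mathcal{M}}_{0,[2g+2]+n})$; write $m:=2g+2+n$. First I would expand $E:=K_{\overline{\mathcal{M}}_{0,[2g+2]+n}}+\tfrac12{\rm{Br}}(\pi^c)$ into a single sum $\sum_{s,j}e_{[s]+j}\,\delta_{[s]+j}$, by adding the descent formula for $K_{\overline{\mathcal{M}}_{0,[2g+2]+n}}$ recorded above (obtained from \eqref{eq:K_M0nb} via Riemann--Hurwitz for $\sigma$) to $\tfrac12{\rm{Br}}(\pi^c)=\tfrac12\sum_{p=0}^{\lfloor g/2\rfloor}\sum_{j=1}^{n-1}\delta_{[2p+1]+j}$ (see \eqref{eq:Br}). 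Setting $x=s+j$ on $\delta_{[s]+j}$ (and $x=j$, resp.\ $x=j+1$, on those with $s=0$, resp.\ $s=1$), each coefficient $e_{[s]+j}$ equals $\tfrac{x(m-x)}{m-1}-2$, with two modifications: on the divisors $\delta_{[2]+j}$ the coefficient is instead $\tfrac{(j+2)(m-j-2)}{2(m-1)}-\tfrac32$ because $\sigma$ is ramified over them, and a $+\tfrac12$ is added on every $\delta_{[\mathrm{odd}]+j}$ with $1\le j\le n-1$ from ${\rm{Br}}(\pi^c)$. The function $x\mapsto\tfrac{x(m-x)}{m-1}-2$ is concave and nonnegative for $3\le x\le m-3$ once $m\ge 7$, and a short check shows that for $n\ge 4g+6$ (where $m\ge 20$) the only coefficients of $E$ that can be negative are those of $\delta_{[0]+2}$, $\delta_{[2]+0}$ and $\delta_{[2]+1}$, each a small negative rational.

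The second step is to clear these three coefficients using Keel's relations. Every choice of four markings on $\overline{\mathcal{M}}_{0,2g+2+n}$ gives the relation $\sum_{ab\mid cd}\delta=\sum_{ac\mid bd}\delta$, and summing over the $S_{2g+2}\times S_n$-orbit of the quadruple yields, according to how many of the four markings lie in the Weierstrass block, four families of identities $\sum_{s,j}c^{(\tau)}_{[s]+j}\,\delta_{[s]+j}=0$ with explicit coefficients. I would add suitable non-negative multiples of the identities coming from "four regular", "two regular $+$ two Weierstrass" and "one regular $+$ three Weierstrass" markings to the expression for $E$: the last two move the deficits on $\delta_{[2]+0}$ and $\delta_{[2]+1}$ onto $\delta_{[1]+1}$, $\delta_{[1]+2}$ and the higher $\delta_{[2]+j}$, while the first feeds positive weight into $\delta_{[0]+2}$ at the cost of $\delta_{[0]+3}$ and $\delta_{[1]+2}$. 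The multiples are pinned down by the requirement that all three bad coefficients reach $0$, and one rechecks with the same concavity estimate that no other coefficient is pushed below $0$; the output is the desired expression $E=\sum d_{[i]+j}\delta_{[i]+j}$ with $d_{[i]+j}\ge 0$, which is the effectivity statement for $n\ge 4g+6$.

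The third step is the bookkeeping that pins down the value $n=4g+6$. After the substitution the $d_{[i]+j}$ are explicit rational functions of $n$ and $g$; I would check that for $n\ge 4g+7$ there is slack in every inequality, so all $d_{[i]+j}>0$, whereas at $n=4g+6$ the subtraction is exactly tight: the multiple of the relation needed to clear $\delta_{[2]+0}$ is precisely the one that also drives $\delta_{[1]+1}$ and $\delta_{[2]+1}$ to $0$, and every remaining coefficient stays strictly positive. Here one must watch the parity of $g$, which controls which odd $s$ occur in ${\rm{Br}}(\pi^c)$, and the edge indices $j\in\{n-1,n\}$, which lie just outside the range of ${\rm{Br}}(\pi^c)$. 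Bigness for $n\ge 4g+7$ is then formal: with all $d_{[i]+j}>0$ the support of the effective $\mathbb{Q}$-divisor $E$ is the entire boundary of $\overline{\mathcal{M}}_{0,[2g+2]+n}$, whose complement $\mathcal{M}_{0,[2g+2]+n}=\mathcal{M}_{0,2g+2+n}/S_{2g+2}$ is affine (a finite quotient of the affine hyperplane-arrangement complement $\mathcal{M}_{0,2g+2+n}$); since an effective divisor whose complement is affine is big — apply Noether normalization to the coordinate ring of the complement to get a generically finite rational map to a projective space of the same dimension and pull back a hyperplane — $E$ is big.

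The part I expect to be the real obstacle is steps two and three together: pinning down exactly which symmetrized Keel relations to use and in what multiples, and then running the two-variable sign analysis in $(n,g)$ that confirms that precisely the three advertised coefficients hit $0$ at $n=4g+6$ while nothing else turns negative. The arithmetic is elementary, but computing the symmetrized coefficients $c^{(\tau)}_{[s]+j}$ correctly is delicate, and it is exactly there that the threshold $4g+6$ is forced.
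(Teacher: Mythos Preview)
Your overall strategy matches the paper's: write $E=K_{\overline{\mathcal{M}}_{0,[2g+2]+n}}+\tfrac12\mathrm{Br}(\pi^c)$ in boundary coordinates, then add a symmetrized Keel relation to make all coefficients non-negative, and your identification of the initially negative coefficients $e_{[0]+2},e_{[2]+0},e_{[2]+1}$ is correct. However, your plan in step two has a concrete gap. You propose using three families of symmetrized Keel relations (``four regular'', ``two regular $+$ two Weierstrass'', ``one regular $+$ three Weierstrass''), but two of these are trivially zero after $S_{2g+2}\times S_n$-symmetrization. Indeed, for a quadruple with three markings of one type and one of the other (or all four of one type), the three boundary points of the corresponding $\overline{\mathcal{M}}_{0,4}$ lie in a single orbit under the subgroup permuting the same-type markings; the $2$-dimensional relation space is then the standard representation of $S_3$, which has no nonzero invariants. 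Only the ``$2$W$+2$O'' relation survives symmetrization---and that is precisely the single relation the paper uses. So you do not have three independent knobs to clear three coefficients separately; you have one.

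What actually works (and what the paper does) is to use this single relation to express $\delta_{[2]+0}$ in terms of the remaining $\delta_{[i]+j}$, and then substitute with the coefficient chosen so that the new $d_{[2]+0}$ equals $\tfrac{n-(4g+6)}{12g+6}$. This one substitution simultaneously repairs $\delta_{[0]+2}$, $\delta_{[2]+0}$ and $\delta_{[2]+1}$; a direct case analysis (splitting on $i=0,1,2$ and $3\le i\le 2g-1$) then verifies all $d_{[i]+j}\ge 0$ for $n\ge 4g+6$, with exactly $d_{[2]+0}=d_{[1]+1}=d_{[2]+1}=0$ at the threshold. Your step three is thus not a separate balancing act among three relations but the automatic outcome of choosing the one available parameter.

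Your bigness argument via affineness of the open stratum $\mathcal{M}_{0,[2g+2]+n}$ is correct and is a nice alternative to the paper's route, which instead pushes forward to $\overline{\mathcal{M}}_{0,[2g+2+n]}$ and invokes the Keel--McKernan description of its pseudo-effective cone.
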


Assuming Theorem \ref{thmsix} we confirm the prediction of Schwarz (see Appendix \ref{Appendix}):

\begin{corollary}
The moduli space $\overline{\rm{H}}_{g,n}$ has non-negative Kodaira dimension when $n\geq 4g+6$ and is of general type when $n\geq 4g+7$.
\end{corollary}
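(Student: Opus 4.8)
The plan is to deduce the corollary from the chain of results already assembled in the excerpt. The key point is that $\overline{\mathcal{H}ur}_{g,2}^n$ is a birational model of $\overline{\mathcal{H}}_{g,n}$ via the forgetful morphism $\phi$ (which is generically finite of degree one onto $\overline{\mathcal{H}}_{g,n}$, being an isomorphism over the interior $\mathcal{H}ur_{g,2}^n$), so the two spaces have the same Kodaira dimension. Hence it suffices to show $\mathrm{Kod}\!\left(\overline{\mathcal{H}ur}_{g,2}^n\right)\geq 0$ for $n\geq 4g+6$ and $=\dim$ for $n\geq 4g+7$.

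The first step is to invoke Theorem~\ref{thm:kod=Iitaka}, which identifies $\mathrm{Kod}\!\left(\overline{\mathcal{H}ur}_{g,2}^n\right)$ with the Iitaka dimension $\kappa\!\left(\overline{\mathcal{H}ur}_{g,2}^n, K_{\overline{\mathcal{H}ur}_{g,2}^n}\right)$ of the canonical Weil divisor computed on the coarse space. By the Riemann--Hurwitz formula~\eqref{eq:K_hur}, $K_{\overline{\mathcal{H}ur}_{g,2}^n}=\pi^{c,*}\!\left(K_{\overline{\mathcal{M}}_{0,[2g+2]+n}}+\tfrac12\mathrm{Br}(\pi^c)\right)$, and since $\pi^c$ is a finite morphism, pulling back an effective (resp. big) divisor under $\pi^c$ yields an effective (resp. big) divisor, and moreover $\kappa\!\left(\overline{\mathcal{H}ur}_{g,2}^n, \pi^{c,*}D\right)=\kappa\!\left(\overline{\mathcal{M}}_{0,[2g+2]+n}, D\right)$ for any $\mathbb{Q}$-Cartier $D$ on the smooth target (global sections pull back isomorphically up to the degree of the cover, by finiteness and the projection formula together with $\pi^c_*\mathcal{O}\supset\mathcal{O}$). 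Then Theorem~\ref{thm:gen_type} supplies exactly what is needed: the class $K_{\overline{\mathcal{M}}_{0,[2g+2]+n}}+\tfrac12\mathrm{Br}(\pi^c)$ is effective for $n\geq 4g+6$ and big for $n\geq 4g+7$. Effectivity gives $\kappa\geq 0$, hence $\mathrm{Kod}\!\left(\overline{\mathcal{H}ur}_{g,2}^n\right)\geq 0$; bigness gives maximal Iitaka dimension, hence $\mathrm{Kod}\!\left(\overline{\mathcal{H}ur}_{g,2}^n\right)=\dim \overline{\mathcal{H}ur}_{g,2}^n=\dim\overline{\mathcal{H}}_{g,n}$, i.e. general type.

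The final step is to transfer from $\overline{\mathcal{H}ur}_{g,2}^n$ to $\overline{\mathcal{H}}_{g,n}$. Since $\phi$ is birational and both spaces are normal projective varieties, Kodaira dimension is a birational invariant, so $\mathrm{Kod}\!\left(\overline{\mathcal{H}}_{g,n}\right)=\mathrm{Kod}\!\left(\overline{\mathcal{H}ur}_{g,2}^n\right)$; for the general type claim one notes additionally that $\dim\overline{\mathcal{H}ur}_{g,2}^n=\dim\overline{\mathcal{H}}_{g,n}$. This yields non-negative Kodaira dimension for $n\geq 4g+6$ and general type for $n\geq 4g+7$.

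I expect no serious obstacle here: the corollary is genuinely a formal consequence of Theorems~\ref{thm:kod=Iitaka} and~\ref{thm:gen_type}. The one point requiring a line of care is the invariance of Iitaka dimension under the finite pullback $\pi^{c,*}$ (that effectivity and bigness are preserved, and that sections do not proliferate to increase the Iitaka dimension beyond that of the downstairs class), which follows from finiteness of $\pi^c$ and the splitting $\mathcal{O}_{\overline{\mathcal{M}}_{0,[2g+2]+n}}\hookrightarrow \pi^c_*\mathcal{O}_{\overline{\mathcal{H}ur}_{g,2}^n}$; in the general type case one can alternatively argue directly from bigness being preserved under finite pullback. The birational invariance of Kodaira dimension across the $\phi$-correspondence is immediate from the definitions recalled in the Conventions.
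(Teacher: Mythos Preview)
Your argument is correct and is essentially the paper's own proof, just spelled out in more detail: the paper simply notes that the pullback of an effective (resp.\ big) divisor under a finite morphism is effective (resp.\ big), and then cites \eqref{eq:K_hur} together with Theorem~\ref{thm:kod=Iitaka}. Your additional remarks on the birational invariance across $\phi$ and the behaviour of Iitaka dimension under $\pi^{c,*}$ are the natural justifications implicit in that one-line proof.
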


\begin{proof}
The pullback of an effective (resp. big) divisor via a finite morphism is again effective (resp. big). The result follows from Proposition~\ref{prop:canonicalcoarse}, Theorem~\ref{thm:gen_type}, and Theorem \ref{thmsix}. 
\end{proof}

\begin{proof}[Proof of Theorem \ref{thm:gen_type}]
The Keel relations on ${\rm{Pic}}_{\mathbb{Q}}\left(\overline{\rm{M}}_{0,4}\right)$, cf. \cite{Ke}, yield an expression for the following divisor with trivial class
\begin{equation}\label{eqn:keel}
\sum_{j=2}^nc_{0,j}\delta_{[0]+j}+\sum_{j=1}^nc_{1,j}\delta_{[1]+j}+\sum_{j=0}^n{c_{2,j}}\delta_{[2]+j}+\sum_{i=3}^{g+1}\sum_{j=0}^{n}c_{i,j}\delta_{[i]+j} =0, 
\end{equation}
where
$$c_{i,j}=2i(2g+2-i)j(n-j)-i(i-1)(n-j)(n-j-1)-(2g+2-i)(2g+1-i)j(j-1). $$
More explicitly, consider the forgetful morphism $\mu:\Mbar{0}{2g+2+n}\longrightarrow \Mbar{0}{4}$ that forgets all but the marked points labelled in $\{1,2,2g+3,2g+4\}$. Then pulling back the trivial divisor $\delta_{\{1,2\}}-\delta_{\{1,3\}}$ in $\Mbar{0}{4}$ under $\mu$ and symmetrising via the action of $S_{2g+2}\times S_{n}$ we obtain an expression for the trivial divisor equal to the pullback of \eqref{eqn:keel} to $\overline{\rm{M}}_{0,2g+2+n}$ from $\overline{\rm{M}}_{0,[2g+2]+n}$.

This gives us an expression for a divisor with the same class as $\delta_{[2]+0}$ in terms of the other boundary divisors that we denote by $\mathfrak{D}$, namely
\begin{equation}
\label{eq:D}
\delta_{[2]+0}\equiv \mathfrak{D}=\frac{1}{2n(n-1)}\left(\sum_{j=2}^nc_{0,j}\delta_{[0]+j}+\sum_{j=1}^nc_{1,j}\delta_{[1]+j}+\sum_{j=1}^n{c_{2,j}}\delta_{[2]+j}+\sum_{i=3}^{g+1}\sum_{j=0}^{n}c_{i,j}\delta_{[i]+j}\right).
\end{equation}
Observe that the following expression hence trivially holds on the level of classes
$$ \frac{n-(4g+6)}{6g+3}\delta_{[2]+0}+\left(\frac{2(2g+n)}{2g+n+1}-3- \frac{n-(4g+6)}{6g+3}\right)\mathfrak{D} \equiv \left(\frac{2(2g+n)}{2g+n+1}-3  \right)\delta_{[2]+0}$$
and we obtain the following expression for the divisor of interest
$$K_{\overline{\rm{M}}_{0,[2g+2]+n}}+\frac{1}{2}E=\sum_{j=2}^nd_{[0]+j}\delta_{[0]+j}+\sum_{j=1}^nd_{[1]+j}\delta_{[1]+j}+\sum_{i=2}^{g+1}\sum_{j=0}^{n}d_{[i]+j}\delta_{[i]+j}$$
where
$$d_{[i]+j}=\begin{cases}
\frac{n-(4g+6)}{6g+3}&\text{for $i=2$, $j=0$}\\
\left(\frac{(i+j)(2g+2+n-i-j)}{2g+n+1}-\frac{3}{2}\right)+\left(\frac{2(2g+n)}{2g+n+1}-3- \frac{n-(4g+6)}{6g+3}\right)\frac{c_{i,j}}{2n(n-1)}&\text{for $i$ odd, $j\ne0,n,$}\\
\left(\frac{(i+j)(2g+2+n-i-j)}{2g+n+1}-2\right)+\left(\frac{2(2g+n)}{2g+n+1}-3- \frac{n-(4g+6)}{6g+3}\right)\frac{c_{i,j}}{2n(n-1)}&\text{otherwise}\\
\end{cases}   $$
It is this expression we will show that satisfies the positivity requirements of the proposition.

Observe that for $i\ne 2$, we have 
$$d_{[i]+j}=\begin{cases}
\frac{(F+G) }{6(2g+1)n(n-1)}+\frac{1}{2}&\text{for $i$ odd, $j\ne 0,n,$}\\
\frac{(F+G) }{6(2g+1)n(n-1)}&\text{otherwise.}\\
\end{cases}$$
where
\begin{eqnarray*}
F&=&n(n-1)(i^2n-in+2gi^2-3i^2+10gi+9i-24g-12) \\
G&=&2(2 g+1) j(  (2g^2+5g+3+g n-2n)j  +3 n^2 - i n^2 - 2 g i n - 2 i n - g n - n + 2 g i\\
&& + 3 i - 2 g^2 - 
 5 g - 3   ). \end{eqnarray*}
and we consider three subcases. 

 \textbf{For $i=0$:} Setting $n=4g+6+t$ we have 
 $$G=2(2g+1)j(((g-2)t+6g^2+3g-9)j+3t^2+23g t+35t +42g^2+129g+99)$$ 
 which is minimised for  $j\geq 2$ at $j=2$ and hence
 $$F+ G\geq  4 (2g+1) (2 g^2+5g+3 - 2 n + g n)  $$
 which is always positive and hence $d_{[0]+j}$ is always positive.
 
 \textbf{For $i=1$:}  Similarly, for $j\ne n$, setting $n=4g+6+t$ we have
 $$ G=2 (2 g+1) j (((g-2)t+6g^2-3g+3)j     +2t^2+13g t-3t+18g^2-15g)   $$
 which is minimised for  $1\leq j\leq n-1$ at $j=1$ and strictly increasing in $j$ and hence $d_{[1]+j}>d_{[1]+1}$ for $2\leq j\leq n-1$. But
 $$d_{[1]+1}= \frac{n-(4g+6)}{6n}   $$
 and hence $d_{[1]+1}=0$ for $n=4g+6$ and all other $d_{[1]+j}>0$ for $1\leq j\leq n-1$ and $n\geq 4g+6$. Further,
 $$d_{[1]+n}=\frac{1}{3}(2g^2+g n+3g-3)>0.$$

\textbf{For $3\leq i\leq 2g-1$:} Observe 
$$d_{[i]+j}\geq \frac{(F+G) }{6(2g+1)n(n-1)}$$
minimising the quadratic $G$ in $j$ which in these cases occurs at the vertex we obtain
$$G\geq G'= -\frac{(2 g+1)(in^2-3n^2+2gin+2in+gn+n-2gi-3i+2g^2+5g+3)^2}{2 (2g^2+5g+gn-2n+3)}. $$
Further, $F+G'$ becomes a rational function with strictly positive denominator and numerator that is quadratic in $i$ with negative coefficient of $i^2$ and maximum at $i=g+1$. Hence the minimum occurs at $i=3$ and setting $g=h+2$ and $n=4(h+2)+6+t$ we obtain the numerator of this rational function 
\begin{eqnarray*}
&\geq &485100 + 1362060 h + 1242963 h^2 + 512802 h^3 + 99540 h^4 + 
 7416 h^5 + 120204 t \\
 &&+ 314610 h t + 228714 h^2 t + 65064 h^3 t + 
 6456 h^4 t + 9639 t^2 + 25572 h t^2 + 13543 h^2 t^2 \\
 &&+ 2014 h^3 t^2 + 
 252 t^3 + 888 h t^3 + 264 h^2 t^3 + 12 h t^4
 \end{eqnarray*}
 and hence $d_{[i]+j}$ is clearly always positive for $h,t\geq0$.
 
\textbf{For $i=2$:}  These cases require a different approach. 
For $1\leq j\leq n$ the coefficient of $\delta_{[2]+j}$ is 
$$\left(\frac{(j+2)(2g+n-j)}{2g+n+1}-2\right)+\left(\frac{2(2g+n)}{2g+n+1}-3- \frac{n-(4g+6)}{6g+3}\right)\frac{c_{2,j}}{2n(n-1)}.$$
Letting $g=h+2$ and $n=4(h+2)+6+t$ we obtain 
$$d_{[2]+j}=\frac{H}{3(2g+1)n(n-1)} $$
where
\begin{eqnarray*}
H&=& t^3+(2h j+5j+14h+42)t^2\\
&&+(2h^2j^2+5hj^2+6h^2j+41hj+65j+64h^2+390h+587)t\\
&&+3 (2h+5) (2 h+7)(j(j-1)(h+1)+8h+26).
\end{eqnarray*}
Observe the denominator is always positive and the numerator is strictly increasing in $t\geq 0$ for fixed $j\geq1, h\geq 0$ and hence minimised at $t=0$ and giving the strict inequality 
$$d_{[2]+j}>0 $$
for $n\geq4g+6$ and $j\geq1$ as required.

Finally, recall the pseudo-effective cone of $\overline{\rm{M}}_{0,[n]}$  is generated by the boundary divisors $\delta_{[s]}$, cf. \cite[Thm. 1.3]{KM}. In particular, every divisor of the form $\sum d_{s}\delta_{[s]}$ with all coefficients $d_s>0$ lies in the interior of the pseudo-effective cone, i.e., is big. We have shown for $n\geq 4g+7$ our divisor can be expressed as a strictly positive sum of the boundary divisors in $\overline{\rm{M}}_{0,[2g+2]+n}$ and hence can be expressed as
$$\nu^*B+E$$
for the finite morphism $\nu:\overline{\rm{M}}_{0,[2g+2]+n}\longrightarrow \overline{\rm{M}}_{0,[2g+2+n]} $ with $B$ a big divisor and $E$ an effective divisor and is hence big.
\end{proof}

\begin{theorem}
\label{thm:gen_type2}
The divisor 
$$K_{\overline{\rm{M}}_{0,[2g+2]+n}}+\frac{1}{2}E'$$
specified in Proposition~\ref{prop:canonicalcoarse} is effective for $n=4$ and big for $n= 5$. More precisely, there exists an effective decomposition of the form 
$$K_{\overline{\rm{M}}_{0,[2g+2]+n}}+\frac{1}{2}E'=\sum d_{[t]+s}\delta_{[t]+s}$$
where $d_{[i]+j}>0$ for all $i,j$ when $n=5$, and the coefficients $d_{[i]+j}$ are all positive except 
$$d_{[2]+0}=d_{[1]+1}=d_{[2]+1}=0$$
when $n=4$.
\end{theorem}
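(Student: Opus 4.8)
The plan is to follow the proof of Theorem~\ref{thm:gen_type} almost verbatim, the only new feature being the extra summand $\tfrac12\sum_{j=0}^{n}\delta_{[2]+j}$ that ${\rm{Br}}(\hat{\pi}^c\circ\sigma)$ carries compared with ${\rm{Br}}(\pi^c)$, see \eqref{eq:Br}. First I would fix $n\in\{4,5\}$ and substitute the formula for $K_{\overline{\mathcal{M}}_{0,[2g+2]+n}}$ together with \eqref{eq:Br} to write
$$K_{\overline{\mathcal{M}}_{0,[2g+2]+n}}+\tfrac12{\rm{Br}}(\hat{\pi}^c\circ\sigma)=\sum_{i,j}e_{[i]+j}\,\delta_{[i]+j},$$
with each $e_{[i]+j}$ an explicit rational function of $g$ whose denominator is a positive multiple of $2g+n+1$; a direct computation gives $e_{[2]+0}=-\tfrac{1}{2g+n+1}<0$, so the expression requires a correction by a Keel relation before it can be seen to be effective.

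I would then use the Keel relation \eqref{eqn:keel} in the normalised form \eqref{eq:D}, i.e.\ the identity $\delta_{[2]+0}=D$ with $D$ a combination of the remaining boundary divisors, and add $\lambda\,(\delta_{[2]+0}-D)$ for a scalar $\lambda=\lambda(g)>0$. This replaces $e_{[2]+0}$ by $e_{[2]+0}+\lambda$, and for $(i,j)\ne(2,0)$ it adds $-\tfrac{\lambda}{n(n-1)}c_{i,j}$ (respectively $-\tfrac{\lambda}{n(n-1)}\cdot\tfrac{c_{2,j}}{2}$ when $i=2$, $j\geq1$) to $e_{[i]+j}$, exactly as in Theorem~\ref{thm:gen_type}. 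For $n=4$ one takes $\lambda=\tfrac{1}{2g+5}$; then $d_{[2]+0}=0$, and using $c_{1,1}=12g+6$ and $c_{2,1}=24g-12$ (the values of these $c_{i,j}$ at $n=4$) one checks that this same $\lambda$ forces $d_{[1]+1}=0$ and $d_{[2]+1}=0$ as well, since the ratios of $e_{[1]+1}$ and of $e_{[2]+1}$ to the corresponding coefficients of $D$ both equal $\tfrac{1}{2g+5}$. For $n=5$ the conditions $d_{[2]+0}>0$, $d_{[1]+1}>0$, $d_{[2]+1}>0$ become $\lambda\in\bigl(\tfrac{1}{2g+6},\ \min\{\tfrac{5(g+1)}{4(g+3)(2g+1)},\ \tfrac{5g}{2(g+3)(4g-3)}\}\bigr)$, and one checks this interval is nonempty for every $g\geq2$; pick any such $\lambda$.

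The core of the argument is then the positivity of the remaining $d_{[i]+j}$, uniformly in $g\geq2$, where $i$ runs over $\{0,1\}\cup\{3,\dots,g+1\}\cup\{2\}$ (with $j\geq2$ in the last case) and $0\leq j\leq n$. I would proceed exactly as in Theorem~\ref{thm:gen_type}: each $d_{[i]+j}$ is a fraction with manifestly positive denominator whose numerator is a polynomial in $g$, quadratic in $j$ and --- for $3\leq i\leq 2g-1$, after using the symmetry $\delta_{[i]+j}=\delta_{[2g+2-i]+(n-j)}$ to reduce to $i\leq g+1$ --- quadratic in $i$. Setting $g=h+2$, one minimises the numerator over $j$ (at the vertex of the parabola, or simply over $j=0,\dots,n$, which is cheap since $n\leq5$) and over $i$, and checks that the result is a polynomial in $h$ with non-negative coefficients. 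This splits into the cases $i=0$, $i=1$, $3\leq i\leq g-1$, $i\in\{g,g+1\}$, and $i=2$ with $j\geq2$, mirroring the five subcases of Theorem~\ref{thm:gen_type}, the only simplification being that the auxiliary parameter ``$t$'' there is now specialised to $n-4\in\{0,1\}$.

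For $n=4$ effectivity is then immediate, the decomposition exhibiting the divisor as a non-negative combination of boundary divisors. For $n=5$ all $d_{[i]+j}>0$; letting $\nu\colon\overline{\mathcal{M}}_{0,[2g+2]+5}\to\overline{\mathcal{M}}_{0,[2g+7]}$ be the finite symmetrisation morphism, each $\nu^{*}\delta_{[a]}$ is a sum of boundary divisors $\delta_{[i]+j}$, so subtracting a small multiple of $\nu^{*}\bigl(\sum_a\delta_{[a]}\bigr)$ leaves a positive combination of boundary divisors, hence an effective divisor; since $\overline{\rm{Eff}}\bigl(\overline{\mathcal{M}}_{0,[2g+7]}\bigr)$ is generated by its boundary divisors by \cite[Thm.~1.3]{KM}, a strictly positive combination of them is big, and the pullback of a big class under a finite morphism plus an effective class is big, giving the claim. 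The hard part of the whole proof will be the uniform positivity check: because the generic coefficient is quadratic in $i\in\{3,\dots,g+1\}$, the minimising $i$ depends on where the parabola's vertex sits relative to this $g$-dependent range, which forces the same case analysis --- and the same explicit large-coefficient polynomial identities in $h$ --- as in the proof of Theorem~\ref{thm:gen_type}.
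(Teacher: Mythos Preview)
Your approach is correct and essentially identical to the paper's: use the Keel relation \eqref{eq:D} to redistribute the negative $\delta_{[2]+0}$-coefficient, then verify positivity case by case, and deduce bigness for $n=5$ via the finite morphism to $\overline{\mathcal{M}}_{0,[2g+7]}$ and \cite[Thm.~1.3]{KM}. Your computations for $n=4$ (in particular $\lambda=\tfrac{1}{2g+5}$ and the vanishing of $d_{[2]+0},d_{[1]+1},d_{[2]+1}$) match the paper exactly.

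One point to tighten for $n=5$: you describe an interval of admissible $\lambda$ determined by the three ``critical'' coefficients and then say ``pick any such $\lambda$'', but the subsequent positivity of the \emph{remaining} $d_{[i]+j}$ still depends on $\lambda$, so you must either verify those inequalities hold for every $\lambda$ in the interval or fix a specific value before checking. The paper does the latter, taking $\lambda=\tfrac{5g+7}{8(g+1)(g+3)}$ (equivalently, rewriting $\delta_{[2]+0}=-\tfrac{g+3}{4(g+1)}\delta_{[2]+0}+\tfrac{5g+7}{4(g+1)}D$), which yields closed formulas such as $d_{[2]+j}=\tfrac{(10g^2-g-11)j^2+(-10g^2+g+19)j+20}{160(g+1)}$ and makes the verification a short endpoint check in $i$ for each $j\in\{0,\dots,5\}$. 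With a concrete $\lambda$ in hand the positivity check is actually simpler than the five-subcase analysis you anticipate from Theorem~\ref{thm:gen_type}, precisely because $n\le 5$ lets you enumerate $j$.
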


As a corollary we have

\begin{corollary}
For all $g\geq2$ the moduli space $\widehat{\rm{H}}_{g,n}$ is of general type when $n\geq5$ and it has non-negative Kodaira dimension when $n\geq4$. 
\end{corollary}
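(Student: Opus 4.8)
The plan is to derive the corollary from Theorem~\ref{thm:gen_type2}, the pullback formula~\eqref{eq:K_hur}, Theorem~\ref{thm:cs_hat}, and the fact that $\widehat{\mathcal{H}}_{g,n}$ is birational to the ordered Hurwitz model $\widehat{\mathcal{H}ur}_{g,2}^n$. First I would record this birationality: in the commutative square whose horizontal arrows are the ``forget the ordering of the Weierstrass points'' maps $\widehat{\mathcal{H}ur}_{g,2}^n\to\overline{\mathcal{H}ur}_{g,2}^n$ and $\widehat{\mathcal{H}}_{g,n}\to\overline{\mathcal{H}}_{g,n}$, both finite of degree $(2g+2)!$, and whose vertical arrows are the maps $\hat\phi,\phi$ forgetting the cover, the map $\phi:\overline{\mathcal{H}ur}_{g,2}^n\to\overline{\mathcal{H}}_{g,n}$ is an isomorphism outside codimension two, hence birational; comparing degrees around the square forces $\hat\phi:\widehat{\mathcal{H}ur}_{g,2}^n\to\widehat{\mathcal{H}}_{g,n}$ to be birational as well. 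Since the Kodaira dimension is a birational invariant of normal projective varieties, it suffices to work with $\widehat{\mathcal{H}ur}_{g,2}^n$.

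For $n=4$ and $n=5$ I would argue as follows. By the third identity in~\eqref{eq:K_hur}, $K_{\widehat{\mathcal{H}ur}_{g,2}^n}=(\hat\pi^c\circ\sigma)^*\big(K_{\overline{\mathcal{M}}_{0,[2g+2]+n}}+\tfrac12{\rm{Br}}(\hat\pi^c\circ\sigma)\big)$ is the pullback, under the finite surjective morphism $\hat\pi^c\circ\sigma$, of the divisor treated in Theorem~\ref{thm:gen_type2}, which is effective for $n=4$ and big for $n=5$. Since the pullback of an effective divisor under a finite morphism is effective, and the pullback of a big divisor under a finite surjective morphism is big, $K_{\widehat{\mathcal{H}ur}_{g,2}^4}$ is effective and $K_{\widehat{\mathcal{H}ur}_{g,2}^5}$ is big. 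By Theorem~\ref{thm:cs_hat}, $\widehat{\mathcal{H}ur}_{g,2}^n$ has canonical singularities, so pluricanonical forms on the regular locus extend to any resolution and $\Kod\left(\widehat{\mathcal{H}ur}_{g,2}^n\right)=\kappa\left(\widehat{\mathcal{H}ur}_{g,2}^n,K\right)$. Hence $\Kod\left(\widehat{\mathcal{H}}_{g,4}\right)\geq0$ and $\widehat{\mathcal{H}}_{g,5}$ is of general type.

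For $n>5$ I would bootstrap from the forgetful morphism $p:\widehat{\mathcal{H}}_{g,n}\to\widehat{\mathcal{H}}_{g,5}$ that drops the last $n-5$ markings. Its base is of general type by the previous paragraph. The general fibre, over a point $(C,w_1,\dots,w_{2g+2},p_1,\dots,p_5)$ with $C$ a general hyperelliptic curve of genus $g$, is a modification of the configuration space of $n-5$ points on $C$ and hence birational to $C^{n-5}$; as $g\geq2$ the bundle $\omega_C$ is ample, so $\omega_{C^{n-5}}=\bigotimes_i \mathrm{pr}_i^*\omega_C$ is ample and the general fibre is of general type. Passing to smooth birational models and applying the additivity of the Kodaira dimension for a fibration over a base of general type (Kollár, building on Viehweg's weak positivity) gives $\kappa\left(\widehat{\mathcal{H}}_{g,n}\right)\geq\kappa\left(C^{n-5}\right)+\dim\widehat{\mathcal{H}}_{g,5}=\dim\widehat{\mathcal{H}}_{g,n}$, so $\widehat{\mathcal{H}}_{g,n}$ is of general type for every $n\geq5$. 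I expect the main obstacle to be precisely this last step: one must correctly identify the general fibre of $p$, confirm it is of general type, and invoke $C_{n,m}$ for a base of general type on suitable smooth models; everything else is a formal transport of effectivity and bigness through finite morphisms combined with the canonical-singularity statement of Theorem~\ref{thm:cs_hat}.
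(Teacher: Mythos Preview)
Your proof is correct and, for $n=4,5$, coincides with the paper's: pull back the divisor from Theorem~\ref{thm:gen_type2} along the finite map $\hat\pi^c\circ\sigma$ via~\eqref{eq:K_hur}, and invoke Theorem~\ref{thm:cs_hat} to equate Iitaka and Kodaira dimensions.

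For $n>5$ you take a genuinely different route. The paper argues inductively, one marking at a time: the forgetful map $\widehat{\mathcal{H}}_{g,n+1}\to\widehat{\mathcal{H}}_{g,n}$ has general fibre a smooth hyperelliptic curve of genus $g\geq2$, and Kawamata's subadditivity for \emph{curve} fibrations~\cite{Kaw} gives $\Kod(\widehat{\mathcal{H}}_{g,n+1})\geq \Kod(\widehat{\mathcal{H}}_{g,n})+1$, so general type propagates from $n=5$ upward. You instead forget all $n-5$ extra markings at once, identify the general fibre birationally with $C^{n-5}$, and appeal to Koll\'ar's proof of $C_{n,m}$ over a base of general type. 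Both are valid; the paper's approach is lighter because the one-dimensional-fibre case of subadditivity is substantially easier than the full Koll\'ar--Viehweg machinery you invoke, while yours has the minor advantage of reaching the conclusion in a single step without induction. Your correct identification of the general fibre and its Kodaira dimension is the only delicate point, and you handle it properly.
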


\begin{proof}
The result for $0\leq n\leq 5$ follows from Proposition~\ref{prop:canonicalcoarse}, Theorems \ref{thm:cs_hat}, and \ref{thm:gen_type2}. For $n>5$, observe that there is a forgetful map $$\widehat{\mathcal{H}}_{g,n+1}\longrightarrow\widehat{\mathcal{H}}_{g,n}$$ whose general fibre is a smooth hyperelliptic curve of genus $g\geq2$. The result follows from sub-additivity of the Kodaira dimension for curve fibrations \cite{Kaw}.
\end{proof}

\begin{proof}[Proof of Theorem \ref{thm:gen_type2}]
As in the proof of Theorem~\ref{thm:gen_type} (see equation \eqref{eq:D}) utilising the identification $\delta_{[2]+0}\equiv \mathfrak{D}$ in ${\rm{Pic}}_{\mathbb{Q}}\left(\overline{\rm{M}}_{0,[2g+2]+n}\right)$ for $n=4$, and in the case $n=5$, the relation
$$\delta_{[2]+0}\equiv -\frac{g+3}{4(g+1)}\delta_{[2]+0} +\frac{5 g+7}{4(g+1)} \mathfrak{D},$$  
we obtain the following expression for the divisor of interest
$$K_{\overline{\rm{M}}_{0,[2g+2]+n}}+\frac{1}{2}E'=\sum_{j=2}^nd_{[0]+j}\delta_{[0]+j}+\sum_{j=1}^nd_{[1]+j}\delta_{[1]+j}+\sum_{i=2}^{g+1}\sum_{j=0}^{n}d_{[i]+j}\delta_{[i]+j}$$
where for $n=4$,
$$d_{[i]+j}=\begin{cases}
\frac{1}{6}(g-1)j(j-1)&\text{for $i=2$}\\
\frac{1}{6}((g-1) j^2-(g+3i-7)j+6i-9)&\text{for $i$ odd and $j\ne 0,n,$}\\
\frac{1}{6}((g-1) j^2-(g+3i-7)j+6i-12)&\text{otherwise.}\\
\end{cases}   $$
For $n=5$,
$$d_{[i]+j}=\begin{cases}
\frac{1}{4(g+1)} &\text{for $i=2$, $j=0$}\\
\left(\frac{(i+j)(2g+7-i-j)}{2g+6}-\frac{3}{2}\right)-\frac{(5g+7)}{160(g+3)(g+1)}c_{i,j}&\text{for $i$ odd and $j\ne 0,n,$}\\
\left(\frac{(i+j)(2g+7-i-j)}{2g+6}-2\right))-\frac{(5g+7)}{160(g+3)(g+1)}c_{i,j}&\text{otherwise.}\\
\end{cases}   $$
We show that these expressions satisfy the positivity requirements of the Theorem.

Consider the case $n=4$. Observe
$$d_{[2]+0}=d_{[1]+1}=d_{[2]+1}=0$$
and $d_{[2]+j}>0$ for $j\geq 2$. Further, by simply substituting $j=0,1,2,3,4$ in the remaining cases, we observe that the remaining coefficients are all positive for $g\geq 2$ and hence the proposition holds for $n=4$.

Consider the case $n=5$. We observe 
$$d_{[2]+j}=\frac{(10 g^2-g-11) j^2+(-10g^2+g+19) j+20}{160(g+1)}>0$$
for $j=1,2,3,4,5$ and 
$$d_{[1]+j}=\frac{(10 g^2-g-11) j^2+(-10g^2+41g+55) j-40-40g}{80(g+1)}  >0$$
for $j=1,2,3,4$ by simple substitution. Similarly
$$d_{[1]+5}=\frac{10g^2+5g-4}{4(g+1)}>0.$$
For $i$ odd, $3\leq i\leq g+1$ and $j\ne 0,5$ we have
\begin{equation}
\resizebox{.9\hsize}{!}{$d_{[i]+j}=\frac{10i^2+(80g-36j-40g j+70)i  -120 - 120 g  + 91 j + 81 g j - 10 g^2 j - 11 j^2 - g j^2 + 
 10 g^2 j^2}{80(g+1)}$}
 \end{equation}
the numerator is a positive definite quadratic in $i$ which attains a minimum at 
$$i=\frac{18j+20gj-40g-35}{10}.$$
However as this value falls outside our domain for all values $j=1,2,3,4$, testing the endpoints $i=3$ and $g+1$ for each $j=1,2,3,4$ suffices and proves positivity of $d_{[i]+j}$ in these cases.

Finally, for $i$ odd, $3\leq i\leq g+1$ and $j=0,5$ or $i$ even and $4\leq i\leq g+1$ and $0\leq j\leq 5$ we have
\begin{equation}
\resizebox{.9\hsize}{!}{$d_{[i]+j}=\frac{10i^2+(80g-36j-40g j+70)i  -160 - 160 g  + 91 j + 81 g j - 10 g^2 j - 11 j^2 - g j^2 + 
 10 g^2 j^2}{80(g+1)}$}
 \end{equation}
the numerator is again positive definite quadratic in $i$ which again attains a minimum at 
$$i=\frac{18j+20gj-40g-35}{10}.$$
For each value of $j$ this value again falls outside our domain for $i$ and hence testing these endpoints for each fixed $j=0,1,2,3,4,5$ suffices and proves positivity of $d_{[i]+j}$ in the remaining cases completing the proof.
Bigness for $n=5$ follows again from \cite[Thm. 1.3]{KM}, in particular the fact that our divisor of interest can be expressed as 
$$\nu^*B+\widetilde{E}$$
where $\widetilde{E}$ is effective, $\nu:\overline{\rm{M}}_{0,[2g+2]+n}\to\overline{\rm{M}}_{0,[2g+2+n]}$, and $B=\sum d_s\delta_{[s]}$ where all boundary coefficients $d_s$ are positive. This implies that $B$ is big and therefore so also is the divisor of interest.
\end{proof}


\section{Splitting curves}
\label{section:M}

Let $S$ be a subset of the last $n$ markings $S\subset \{2g+2+1,\ldots,2g+2+n\}$, $i$ a non-negative integer with $0\leq i\leq 2g$, and $I\subset\{1,\ldots,2g+2\}$ any subset of cardinality $i$ on the first $2g+2$ markings. Let
\begin{equation}
\label{eqn:bound.map}
\xi_{I,S}:\overline{\mathcal{M}}_{0,I^c\cup S^c\cup\{p\}}\times\overline{\mathcal{M}}_{0,I\cup S\cup\{q\}}\longrightarrow\overline{\mathcal{M}}_{0,2g+2+n}
\end{equation}
be the boundary map that glues two marked rational curves
$$\left[R_1, p_{I^c}, q_{S^c}, p\right]\in \overline{\mathcal{M}}_{0,I^c\cup S^c\cup\{p\}}\quad\hbox{and}\quad\left[R_2,p_I,q_S, q\right]\in\overline{\mathcal{M}}_{0,I\cup S\cup\{q\}}$$
by identifying $p$ and $q$. Here $p_I$ is short notation for the set of points $p_i$ with $i\in I$, and similarly for the rest.\\
 
Let $j\in I^c$ and 
\begin{equation}
\label{eqn:definitionBis}
\overline{\mathcal{M}}_{0,I^c\cup S^c\cup\{p\}}\longrightarrow \overline{\mathcal{M}}_{0,I^c\cup S^c\cup \{p\}-\{j\}}
\end{equation}
the corresponding forgetful morphism. We define the curve $B_{i, S}$ in $N_1\left(\overline{\mathcal{M}}_{0,\left[2g+2\right]+n}\right)$ as the pushforward of a fibre of \eqref{eqn:definitionBis}; 
$$F\times\{pt\}\subset \overline{\mathcal{M}}_{0,I^c\cup S^c\cup\{p\}}\times\overline{\mathcal{M}}_{0,I\cup S\cup\{q\}}$$
by the composition
$$\overline{\mathcal{M}}_{0,I^c\cup S^c\cup\{p\}}\times\overline{\mathcal{M}}_{0,I\cup S\cup\{q\}}\overset{\xi_{I,S}}{\longrightarrow}\overline{\mathcal{M}}_{0,2g+2+n}\longrightarrow\overline{\mathcal{M}}_{0,\left[2g+2\right]+n}.$$
Observe that since we are taking the $S_{2g+2}$-quotient, the definition of $B_{i, S}$ is independent of the set $I$ and $j\in I^c$, and only depends on the choice of $S$ and the cardinality $\left|I\right|=i$. Informally, the curve $B_{i, S}$ is obtained by gluing two fixed rational curves at fixed points, where one component has fixed points indexed by $S$ together with $i$-many fixed unordered points and the other component has fixed ordered points indexed by $S^c$ and $2g+2-i$ unordered points where one of them moves freely on the component and the rest are fixed. When $i+|S|\leq1$, the curve $B_{i, S}$ consists of a smooth rational curve with one of the unordered marked points moving freely while the remaining points are fixed.\\

\begin{figure}[h]
\includegraphics[scale=0.15]{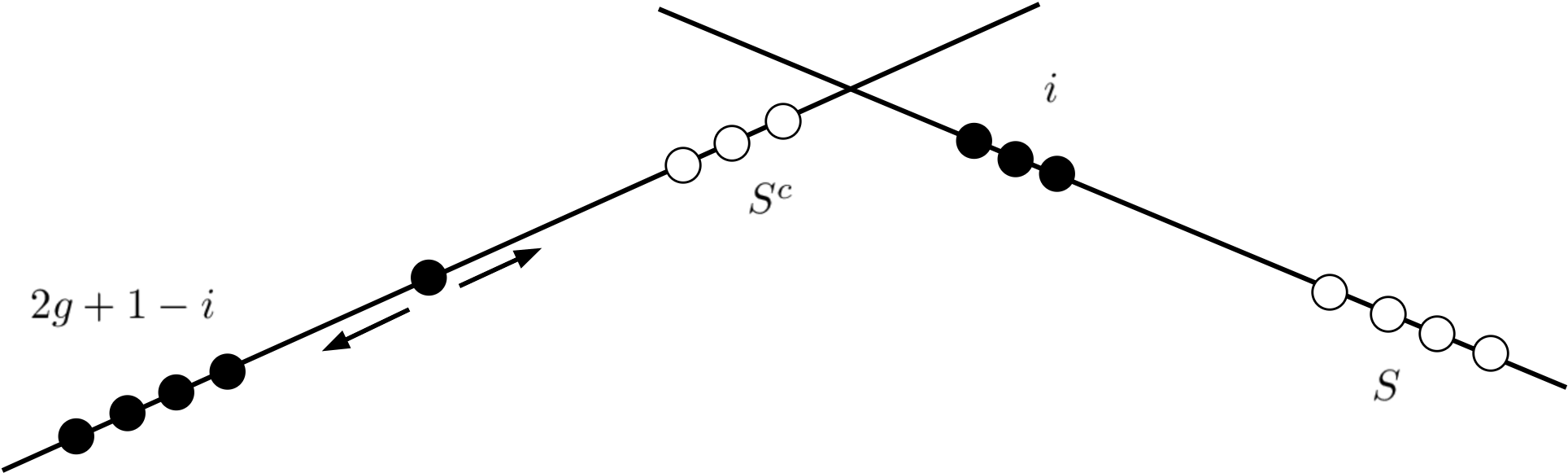}
\caption{Curve $B_{i,S}$ (filled points are unordered).}
\label{figure:BiS}
\end{figure}

\begin{proposition}
\label{prop:intersections_BiS}
Let $n$ be a non-negative integer, $0\leq i\leq 2g+1$, and $g\geq 2$. The curve $B_{i,S}\in N_1\left(\overline{\mathcal{M}}_{0,\left[2g+2\right]+n}\right)$ intersects the psi and boundary classes in ${\rm{Pic}}_\qq\left(\overline{\mathcal{M}}_{0,\left[2g+2\right]+n}\right)$ as follows:
\begin{equation*}
\begin{array}{rcl}
B_{0,\varnothing}\cdot \delta_{\left[2\right]+\varnothing}&=&2g+1,\\
B_{0,\varnothing}\cdot\delta_{\left[1\right]+\{j\}}&=&1,\\
B_{0,\varnothing}\cdot \psi_k&=&1,\\
B_{0,\varnothing}\cdot \psi_\star&=&4g+n,
\end{array}
\end{equation*}
and when $2\leq i+|S|\leq 2g+n-1$,
\begin{equation*}
\begin{array}{rcl}
B_{i,S}\cdot \delta_{\left[2\right]+\varnothing}&=&\begin{cases} 3&\hbox{if $i=2g-1$ and $S=\{1,\dots,n\}$,}   \\2g-2&\hbox{if $i=2$ and $S=\varnothing$,}   \\ 2g+1-i\;\;\;&\hbox{otherwise,} \end{cases}\\
B_{i,S}\cdot \delta_{\left[i\right]+S}&=&\begin{cases}2g-2&\hbox{if $i=2$ and $S=\varnothing$,}   \\
-1\;\;\;&\hbox{otherwise,} \end{cases}\\
B_{i,S}\cdot\delta_{\left[i+1\right]+S}&=&\begin{cases} 3&\hbox{if $i=2g-1$ and $S=\{1,\dots,n\}$,}   
\\1\;\;\;&\hbox{otherwise,} \end{cases}\\
B_{i,S}\cdot\delta_{\left[1\right]+\{j\}}&=&1\;\;\;\hbox{if}\;\;\;j\in S^c,\\
B_{i,S}\cdot \psi_k&=&1\;\;\;\hbox{if}\;\;\;k\in S^c,\\
B_{i,S}\cdot \psi_\star&=&2(2g-i)+1+\left|S^c\right|,
\end{array}
\end{equation*}
and all other intersections are trivial.
\end{proposition}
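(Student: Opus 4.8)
The plan is to reduce every intersection number to a computation on the \emph{smooth} space $\overline{\mathcal{M}}_{0,2g+2+n}$ by lifting $B_{i,S}$ along the finite quotient $\sigma\colon\overline{\mathcal{M}}_{0,2g+2+n}\to\overline{\mathcal{M}}_{0,[2g+2]+n}$, where intersection theory is classical. Fix a representative subset $I$ of cardinality $i$ and a point $j\in I^c$, and let $\widetilde{B}\subset\overline{\mathcal{M}}_{0,2g+2+n}$ be the image under $\xi_{I,S}$ of $F\times\{\mathrm{pt}\}$, with $F$ a general fibre of the forgetful morphism $\overline{\mathcal{M}}_{0,I^c\cup S^c\cup\{p\}}\to\overline{\mathcal{M}}_{0,I^c\cup S^c\cup\{p\}\setminus\{j\}}$. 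One first checks that $\sigma$ is generically injective on the locus swept out by $\widetilde{B}$, so that $\sigma_*[\widetilde{B}]=[B_{i,S}]$; the only subtlety is that when the tail carries exactly two of the $2g+2$ unordered markings, i.e. the divisor $\delta_{[2]+\varnothing}$ and its complement $\delta_{[2g]+\{1,\dots,n\}}=\delta_{[2]+\varnothing}$, the extra $\mathbb{Z}/2$-automorphism of such a tail forces a careful comparison of multiplicities. Granting this, the projection formula gives $B_{i,S}\cdot\alpha=\widetilde{B}\cdot\sigma^*\alpha$ for every class $\alpha$, and it remains to pull back the $\psi$- and boundary classes and to intersect with $\widetilde{B}$ downstairs.

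For the pullbacks one uses $\sigma^*\psi_\star=\psi_1+\dots+\psi_{2g+2}$, $\sigma^*\psi_k=\psi_k$ for an ordered marking $k$, and $\sigma^*\delta_{[t]+S'}=c\sum_{|T|=t}\delta_{T\cup S'}$, where the constant $c$ records the ramification of $\sigma$ (a factor $2$ appears over the two-unordered-marking divisors, by Proposition~\ref{prop:ramif_brach1} and its corollaries). For the intersections with $\widetilde{B}=\xi_{I,S}(F\times\{\mathrm{pt}\})$ apply the projection formula once more: these reduce to intersections of the fibre $F$ with $\psi$- and boundary classes in the single factor $\overline{\mathcal{M}}_{0,I^c\cup S^c\cup\{p\}}$, together with the normal bundle $N_{\xi_{I,S}}=-\psi_p-\psi_q$ of the glued divisor whenever $\alpha$ restricts nontrivially to it. For a general fibre $F$ of a forgetful morphism the needed facts are standard: $\psi_k\cdot F=1$ for any marking $k\neq j$, $\psi_j\cdot F=\dim\overline{\mathcal{M}}_{0,I^c\cup S^c\cup\{p\}}$ for the forgotten point $j$, $\delta_{\{j,k\}}\cdot F=1$ for a collision of $j$ with a marking or with $p$, and $\delta_T\cdot F=0$ otherwise. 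Substituting these recovers the generic entries of the table; for instance the $\psi_\star$ calculation splits as $0$ over the $i$ markings on the fixed tail, $2g-i+|S^c|$ over the moving point, and $2g+1-i$ over the remaining fixed markings on the moving component, for a total $2(2g-i)+1+|S^c|$.

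The bulk of the work is the case analysis producing the exceptional values, and this is where I expect the real obstacle to lie: one must track, for each boundary divisor $\alpha$, exactly which degenerations of the moving point $j$ (into another marking, or into the node $p$) produce a curve lying on $\alpha$, with which multiplicity, while also accounting for the $\mathbb{Z}/2$-automorphisms of two-marking tails. The curve $B_{2,\varnothing}$ lies \emph{inside} $\delta_{[2]+\varnothing}$, so $B_{2,\varnothing}\cdot\delta_{[2]+\varnothing}$ is a twisted self-intersection computed via $N_{\xi_{I,\varnothing}}=-\psi_p-\psi_q$, producing the correction $-2$ relative to the generic value. For $B_{2g-1,S}$ the moving component is four-pointed, so the degeneration $j\to p$ creates an extra bubble landing in $\delta_{[2g]+S}$, and when $S=\{1,\dots,n\}$ the two further collisions of $j$ with the remaining unordered points also land there; these additional incidences, together with the ramification factor, give the values $2$ and $6$. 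Finally the degenerate range $i+|S|\le 1$ is handled separately: there the gluing construction trivialises, $B_{i,S}$ is a general fibre of $\overline{\mathcal{M}}_{0,[2g+2]+n}\to\overline{\mathcal{M}}_{0,[2g+1]+n}$ (hence agrees with $B_{0,\varnothing}$), and the same fibre calculus applies with the moving point now on the component carrying all remaining markings, yielding $\psi_\star\cdot B_{0,\varnothing}=(2g+1)+(2g+n-1)=4g+n$ and the other entries.
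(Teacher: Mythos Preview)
Your approach is correct and is essentially the same as the paper's: both lift the curve to $\overline{\mathcal{M}}_{0,2g+2+n}$ via the construction $\xi_{I,S}$, apply the projection formula, and reduce everything to standard fibre calculus on $\overline{\mathcal{M}}_{0,I^c\cup S^c\cup\{p\}}$ together with the normal-bundle contribution $-\psi_p-\psi_q$ for the self-intersection, while tracking the factor of $2$ coming from the simple ramification of the $S_{2g+2}$-quotient over the $\delta_{[2]+\text{--}}$ divisors. The paper is more terse---it cites \cite[Lemma~4.38, \S17]{ACG} for the pullback formulas $\theta_{I,S}^*\psi_k$ and $\theta_{I,S}^*\delta_{0,J\cup S}$ and then declares the remaining intersections ``clear from the construction''---whereas you spell out the exceptional cases explicitly; but the underlying computation is identical. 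One small correction: the ramification of $\sigma$ over $\delta_{[2]+S}$ is not the content of Proposition~\ref{prop:ramif_brach1} (which concerns $\hat{\pi}$), but rather the standard fact about symmetric quotients that the paper invokes directly in the last sentence of its proof.
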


\begin{remark}
Note that our class  $\psi_\star$ is the unique class that pulls back under the $S_{2g+2}$-quotient morphism 
$$\Mbar{0}{2g+2+n}\longrightarrow \Mbar{0}{[2g+2]+n}$$
to give  $\psi_1+\ldots+\psi_{2g+2}$. This convention differs from some other authors.
\end{remark}

\begin{proof}
By the construction and the projection formula,
\begin{equation}
\label{eqn:int1_BiS}
B_{i,S}\cdot \psi_\star=F\cdot\theta_{I,S}^*\left(\psi_1+\ldots+\psi_{2g+2}\right),
\end{equation}
where $I$ is any subset of $\{1,\ldots,2g+2\}$ of cardinality at most $2g$, $\theta_{I,S}$ is the restriction of the boundary map \eqref{eqn:bound.map} to 
$$\overline{\mathcal{M}}_{0,I^c\cup S^c\cup\{p\}}\times \{pt\}\subset\overline{\mathcal{M}}_{0,I^c\cup S^c\cup\{p\}}\times\overline{\mathcal{M}}_{0,I\cup S\cup\{q\}},$$
and $F$ is the class of a fibre of the forgetful map \eqref{eqn:definitionBis} obtained by forgetting some marking $j\in I^c$. Recall that $F\cdot \psi_k=1$ if $k\neq j$. Similarly, by \cite[Lemma 4.38, Section 17]{ACG},
$$
\theta_{I,S}^*\psi_k=\left\{\begin{array}{ccc}\psi_k&\hbox{if}&k\in I^c,\\
0&\hbox{otherwise.}&\end{array}\right.
$$ 
Thus, for $k\in I^c$,
$$F\cdot \psi_k=1\;\;\hbox{for }k\neq j\quad\hbox{and}\quad F\cdot \psi_j=\left\{\begin{array}{ccc}2g-1+n&\hbox{for $i=0$ and $S=\varnothing$.}&\\-2+\left|I^c\right|+\left|S^c\right|&\hbox{for $2\leq i+|S|\leq 2g+n-1$.}
\end{array}\right.$$
This together with \eqref{eqn:int1_BiS} imply
\begin{equation}
\resizebox{1\hsize}{!}{$B_{i,S}\cdot \psi_\star=\left\{\begin{array}{cc}4g+n&\hbox{for $i=0$ and $S=\varnothing$.}\\2(2g-i)+1+\left|S^c\right|&\hbox{for $2\leq i+|S|\leq 2g+n-1$,}\end{array}\right.\quad\hbox{and}\quad B_{i,S}\cdot \psi_k=1\;\;\hbox{for }k\in S^c.$}
\end{equation}
Similarly, by the projection formula
\begin{equation}
\label{eqn:int2_BiS}
B_{i,S}\cdot \delta_{\left[i\right]+S}=F\cdot\theta_{I,S}^*\left(\sum_{\substack{J\subset\{1+\ldots+2g+2\}\\ \left|J\right|=i}}\delta_{J\cup S}\right)
\end{equation}
and \cite[Lemma 4.38, Section 17]{ACG} reads
$$\theta_{I,S}^*\delta_{0,J\cup S}=\left\{\begin{array}{ccc}-\psi_p&\hbox{if}&J=I\hbox{ or }J=I^c\hbox{ and }S=\varnothing\\0&\hbox{otherwise,}&\end{array}\right.$$
and $F\cdot\psi_p=1$. Recall that if $S=\varnothing$, then $\delta_{0,I}=\delta_{0,I^c}$ and we count it only once in the sum on the right hand side of \eqref{eqn:int2_BiS}. Thus for $2\leq i+|S|\leq 2g+n-1$ and $(i,S)\neq (2,\emptyset)$,
$$B_{i,S}\cdot \delta_{\left[i\right]+S}=-1.$$
The remaining intersections are clear from the construction.
\end{proof}

Proposition \ref{prop:intersections_BiS} implies 
$$B_{0,\varnothing}=B_{i,S}+B_{2g+1-i,S^c}$$
for any $i\leq 2g$ and $S$. Geometrically, for each $i$ and $S$, a smooth $\left(\left[2g+1\right]+n\right)$-pointed rational curve can be deformed to a curve on the boundary $\delta_{\left[i\right]+S}$, where the the free unordered point on the smooth curve, after degenerating will move freely on each component giving the two curves.\\

\section{The Kodaira dimension of $\overline{\rm{H}}_{g,n}$ and $\widehat{\rm{H}}_{g,n}$}
\label{section:KodDim}

In this section we assume Theorem \ref{thmsix} and complete the proofs of Theorems \ref{thm1} and \ref{thm2} by showing that 
\[
{\rm{Kod}}\left(\overline{\rm{H}}_{g,4g+6}\right)=4g+3, \;\;\; {\rm{Kod}}\left(\widehat{\rm{H}}_{g,4}\right)=1,
\]
and ${\rm{Kod}}\left(\widehat{\rm{H}}_{g,n}\right)=-\infty$ for $n=0,1,2,3$. The proof of Theorem \ref{thmsix} is presented in the last section. \\

We start with a well-known property of the Iitaka dimension. We add a proof for the sake of completeness.
\begin{proposition}[See Section 1.3 in \cite{Vie} or Chapters 2 and 3 in \cite{U}]
\label{prop:finite}
Let $f:X\to Y$ be a generically finite surjective morphism between two projective normal $\mathbb{Q}$-factorial varieties, $D_Y$ and $D_X$ $\mathbb{Q}$-effective divisors on $Y$ and $X$ respectively. Then
$$\kappa\left(X,f^*D_Y\right)=\kappa\left(Y,D_Y\right)$$
and 
$$\kappa\left(X,D_X\right)\leq \kappa\left(Y, f_*D_X\right).$$ 
\end{proposition}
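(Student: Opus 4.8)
\textbf{Proof proposal for Proposition \ref{prop:finite}.}

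The plan is to reduce everything to the definition of Iitaka dimension via the dimension of images of pluricanonical-type rational maps, and to exploit the fact that $f$ is generically finite surjective (hence of relative dimension $0$ on a dense open, and dominant). First I would treat the equality $\kappa(X,f^*D_Y)=\kappa(Y,D_Y)$. Fix $k>0$ so that $kD_Y$ is Cartier; then $kf^*D_Y=f^*(kD_Y)$ is Cartier as well. The key input is that pulling back along a generically finite surjective morphism of normal varieties induces, for every $m$, an injection $H^0(Y,\mathcal O_Y(mkD_Y))\hookrightarrow H^0(X,\mathcal O_X(mkf^*D_Y))$, so $N(D_Y)\subseteq N(f^*D_Y)$; in particular $N(D_Y)=\varnothing$ forces $\kappa(Y,D_Y)=-\infty$ and I must then show $\kappa(X,f^*D_Y)=-\infty$ too. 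Conversely, for $m\in N(D_Y)$ the rational map $\phi_{m,f^*D_Y}:X\dashrightarrow\mathbb P^N$ factors as $\phi_{m,D_Y}\circ f$ up to the injection of section spaces — more precisely the image of $\phi_{m,f^*D_Y}$ contains, and since $f$ is dominant equals on a dense open, the image of $\phi_{m,D_Y}\circ f$, which has the same dimension as the image of $\phi_{m,D_Y}$ because $f$ is dominant. Hence $\dim\phi_{m,f^*D_Y}(X)=\dim\phi_{m,D_Y}(Y)$ for all such $m$. To get the reverse containment $N(f^*D_Y)\subseteq N(D_Y)$ (which also handles the $-\infty$ case), I would invoke the standard fact that a section of $mkf^*D_Y$ over $X$ descends: the trace/norm map, or simply the observation that over the open locus where $f$ is finite one may average over the fibres (in characteristic $0$), produces a nonzero section of a bounded multiple of $mkD_Y$ on $Y$; combined with the inequality $\kappa(X,f^*D_Y)\ge\kappa(Y,D_Y)$ already shown this yields equality, after noting that multiplying $m$ by $\deg f$ does not change the Iitaka dimension.

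For the inequality $\kappa(X,D_X)\le\kappa(Y,f_*D_X)$, the main point is the projection-formula-type inclusion of section spaces. Writing $D_X$ as a $\mathbb Q$-effective divisor and choosing $k$ so that $kD_X$ is Cartier, I would compare $mkD_X$ with $f^*f_*(mkD_X)$. On the dense open $U\subseteq Y$ over which $f$ is finite and $V=f^{-1}(U)$, one has $f_*\mathcal O_V(mkD_X|_V)\supseteq \mathcal O_U(mk f_*D_X|_U)$ via the trace, so every section of $\mathcal O_X(mkD_X)$ restricted to $V$ pushes forward to a section of $\mathcal O_U$ of a divisor $\le$ a bounded multiple of $f_*D_X$, and extends over the codimension-$\ge 2$ complement by normality of $Y$. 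This gives an injection $H^0(X,\mathcal O_X(mkD_X))\hookrightarrow H^0(Y,\mathcal O_Y(\ell D_X'))$ for a divisor $D_X'$ with $\mathbb Q$-class proportional to $f_*D_X$ and a bounded $\ell$; tracking the induced factorization of the associated rational maps as above, and using that $f$ is dominant so that $\dim\phi(X)\le\dim(\text{image on }Y)$, yields $\kappa(X,D_X)\le\kappa(Y,f_*D_X)$. The $\mathbb Q$-factoriality of $X$ and $Y$ is used to guarantee that $f_*D_X$ (and the relevant pullbacks) are $\mathbb Q$-Cartier so that all Iitaka dimensions in sight are defined.

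The step I expect to be the main obstacle is the bookkeeping around \emph{descent of sections} and the control of the precise divisor class one lands in after push-forward: the trace map only gives a section of some $\mathcal O_Y$-twist that is comparable to, but not literally equal to, $m f_*D_X$ (there is a factor of $\deg f$ and possible ramification contributions), so one has to argue that passing to these bounded multiples and rounding down $\mathbb Q$-divisors does not change the Iitaka dimension — this is where one uses that $\kappa(Y,D)=\kappa(Y,\lfloor mD\rfloor/m)$-type stability statements and the birational invariance packaged in \cite{U, Vi}. Everything else (the injection $H^0(Y,-)\hookrightarrow H^0(X,f^*-)$, the factorization of $\phi_{m}$, the fact that $f$ dominant makes image dimensions agree) is formal. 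I would therefore spend the bulk of the write-up on the section-descent argument and simply cite \cite[\S1.3]{Vi} and \cite[Ch. 2--3]{U} for the stability of $\kappa$ under passing to multiples and for the behaviour of Iitaka dimension under generically finite maps, keeping the proof short as the statement is標準.
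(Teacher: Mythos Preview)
Your argument is correct and, for the inequality $\kappa(Y,D_Y)\le\kappa(X,f^*D_Y)$, identical to the paper's (the inclusion $\mathcal O_Y\hookrightarrow f_*\mathcal O_X$ and the projection formula). For the two upper bounds, though, the paper takes a route that dissolves exactly the obstacle you single out. Instead of descending \emph{sections} by a trace or norm and then tracking factors of $\deg f$, ramification corrections, and $\mathbb Q$-divisor rounding, the paper works directly with effective \emph{divisors}: the assignment $E\mapsto f_*E$ is a well-defined map of linear systems
\[
|mf^*D_Y|\longrightarrow |m\,f_*f^*D_Y|=|m\deg(f)\,D_Y|,\qquad\text{respectively}\qquad |mD_X|\longrightarrow |m\,f_*D_X|,
\]
because cycle pushforward respects linear equivalence and is $\mathbb Z$-linear; one then observes this map has no positive-dimensional fibre, whence $\dim|mf^*D_Y|\le\dim|m\deg(f)\,D_Y|$ and similarly for $D_X$. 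This is the same map as your norm on sections (since $f_*\operatorname{div}(s)=\operatorname{div}\bigl(N_{K(X)/K(Y)}(s)\bigr)$), but phrasing it at the divisor level makes the target linear system transparent: there is no ramification term in cycle pushforward, and the only ``extra factor'' is the harmless $\deg(f)$ in $f_*f^*D_Y=\deg(f)D_Y$, which does not change $\kappa$. In short, your proof works, but the bookkeeping paragraph you anticipated writing is unnecessary once you switch from sections to divisors.
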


\begin{proof}
Let $m>0$, then by projection formula 
$$H^0\left(X, \mathcal{O}_X(mf^*D_Y)\right)=H^0\left(Y, f_*\mathcal{O}_X(mf^*D_Y)\right)=H^0\left(Y, \mathcal{O}_Y(mD_Y)\otimes f_*\mathcal{O}_X\right),$$
and there is an embedding $\mathcal{O}_Y\hookrightarrow f_*\mathcal{O}_X$. This gives us the inequality $\kappa\left(Y,D_Y\right)\leq \kappa\left(X,f^*D_Y\right)$. On the other hand $f_*$ preserves linear equivalence. Moreover, the map
$$
\begin{array}{rcl}
\left|mf^*D_Y\right|&\longrightarrow&\left|mf_*f^*D_Y\right|\\
E&\mapsto&f_*E
\end{array}
$$
is well defined and cannot have positive dimensional fibres. In particular
$$\dim\left|mf^*D_Y\right|\leq \dim\left|\deg(f)\cdot mD_Y\right|$$
giving us 
$$\kappa\left(X, f^*D_Y\right)\leq \kappa\left(Y, f_*f^*D_Y\right)=\kappa(Y, \deg(f)D_Y)=\kappa\left(Y, D_Y\right).$$
The last inequality follows from the fact that the map between projective spaces
$$\begin{array}{rcl}
\left|mD_X\right|&\longrightarrow&\left|mf_*D_X\right|\\
E&\mapsto&f_*E
\end{array}$$
is well defined.
\end{proof}
 
In light of Proposition \ref{prop:finite} we can reduce our problem to a calculation on $\overline{\rm{M}}_{0,[2g+2]+n}$. By Proposition~\ref{prop:canonicalcoarse} and theorems~\ref{thm:gen_type} and~\ref{thm:gen_type2} we are in a very special situation where for $n\geq 4g+6$ (resp. $n\geq 4$) the canonical divisor of $\overline{\rm{H}ur}_{g,2}^n$ (resp. $\widehat{\rm{H}ur}_{g,2}^n$) is the pullback via a finite morphism of an effective divisor 
$$D=\sum_{i,S}d_{[i]+S}\delta_{[i]+S}\in \mathbb{Q}_{\geq 0}\langle \delta_{[i]+S}\mid i,S\rangle\subset\overline{\rm{Eff}}^1\left(\overline{\rm{M}}_{0,[2g+2]+n}\right).$$
Proposition \ref{prop:finite} together with theorems \ref{thm:cs_hat} and \ref{thmsix} imply that the Kodaira dimension of $\overline{\rm{Hur}}_{g,2}^n$ is given by $\kappa\left(\overline{\rm{M}}_{0,[2g+2]+n},D\right)$, where 
\begin{equation}
\label{eq:Dunordered}
D=K_{\overline{\rm{M}}_{0,[2g+2]+n}}+\frac{1}{2}\sum_{p=0}^{\lfloor g/2\rfloor}\sum_{j=1}^{n-1}\delta_{[2p+1]+j}.
\end{equation}
While the Kodaira dimension of $\widehat{\rm{Hur}}_{g,2}^n$ is given by $\kappa\left(\overline{\rm{M}}_{0,[2g+2]+n},D'\right)$, where 
\begin{equation}
\label{eq:Dordered}
D'=K_{\overline{\rm{M}}_{0,[2g+2]+n}}+\delta_{[2]+0}+\frac{1}{2}\sum_{p=0}^{\lfloor g/2\rfloor}\sum_{j=1}^{n-1}\delta_{[2p+1]+j}.
\end{equation}
\\

As explained in the introduction we separate the proof into two inequalities. The proof in the ordered and the unordered setting are analogous.\\ 

For simplicity, assume that we are in the unordered case. First we show that when $n=4g+6$ there exists a big class $B$ in $\overline{\rm{M}}_{0,n}$ such that $D$ equals $p^*B$ plus an effective class, where $p:\overline{\rm{M}}_{0,[2g+2]+n}\to \overline{\rm{M}}_{0,n}$ is the morphism that forgets the unordered markings. Since $p$ is surjective and has connected fibres, Zariski's Main Theorem gives us 
$$p_*\mathcal{O}_{\overline{\rm{M}}_{0,[2g+2]+n}}=\mathcal{O}_{\overline{\rm{M}}_{0,n}}$$
and therefore
$$H^0\left(\overline{\rm{M}}_{0,[2g+2]+n}, \mathcal{O}(p^*B)\right)=H^0\left(\overline{\rm{M}}_{0,n}, \mathcal{O}(B)\right).$$
This will give us
\begin{equation}
\label{sec6:eq_low}
n-3=\kappa\left(\overline{\rm{M}}_{0,[2g+2]+n}, p^*B\right)\leq\kappa\left(\overline{\rm{M}}_{0,[2g+2]+n}, D\right)={\rm{Kod}}\left(\overline{\rm{H}}_{g,n}\right).
\end{equation}

For the second inequality we use the abundance of splittings of the curve $B_{0,\varnothing}$ into covering curves for the divisors $\delta_{[i]+S}$ to identify rigid components in $|mD|$. Removing such rigid components provides a large number of conditions on the coefficients of the remaining effective divisor, in fact, enough to ensure that the class plus an effective divisor is a pullback of a big class on $\overline{\rm{M}}_{0,n}$, giving us 
\begin{equation}
\label{sec6:eq_up}
{\rm{Kod}}\left(\overline{\rm{H}}_{g,n}\right)\leq\kappa\left(\overline{\rm{M}}_{0,[2g+2]+n}, D\right)\leq n-3
\end{equation}
when $n=4g+6$.\\

We start with inequality \eqref{sec6:eq_low}. The only property of the effective decomposition of $K_{\overline{\rm{H}ur}_{g,2}^{4g+6}}$ that we will use is the following lemma on the positivity of the coefficients.

\begin{lemma}
\label{sec6:lemma_lowerbound}
Let $n\geq 4$ and $D$ be an effective divisor in $\overline{\rm{M}}_{0,[2g+2]+n}$ that admits an effective decomposition as follows
\begin{equation}
\label{sec6:eq_lemma:lowerbound}
D=\sum_{\substack{0\leq i\leq 2g+2\\ 0\leq S\leq\frac{n}{2}}} d_{[i]+S}\delta_{[i]+S},
\end{equation}
where $d_{[2]+S},d_{[1]+S}\geq 0$ for all $S\subset\{1,\ldots,n\}$ with $|S|=0$ or $1$, and all other coefficients are strictly positive. Then 
$$n-3\leq \kappa\left(\overline{\rm{M}}_{0,[2g+2]+n},D\right).$$
\end{lemma}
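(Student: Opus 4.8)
The plan is to exhibit $D$, up to an effective summand, as the pullback of a big divisor from $\overline{\mathcal{M}}_{0,n}$ along the forgetful morphism $p\colon\overline{\mathcal{M}}_{0,[2g+2]+n}\to\overline{\mathcal{M}}_{0,n}$ that drops the $2g+2$ unordered markings. Since $\overline{\mathcal{M}}_{0,[2g+2]+n}$ is normal (a finite quotient of a smooth variety) and $p$ is surjective with connected fibres, $p_*\mathcal{O}=\mathcal{O}$; hence by the projection formula $H^0(\overline{\mathcal{M}}_{0,[2g+2]+n},\mathcal{O}(mp^*B))=H^0(\overline{\mathcal{M}}_{0,n},\mathcal{O}(mB))$ for every line bundle $B$ and all $m$, and since the induced rational maps factor through $p$ we get $\kappa(\overline{\mathcal{M}}_{0,[2g+2]+n},p^*B)=\kappa(\overline{\mathcal{M}}_{0,n},B)$. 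If $B$ is big this equals $\dim\overline{\mathcal{M}}_{0,n}=n-3$, and if $D-p^*B$ is $\mathbb{Q}$-effective then $\kappa(\overline{\mathcal{M}}_{0,[2g+2]+n},D)\ge\kappa(\overline{\mathcal{M}}_{0,[2g+2]+n},p^*B)$. Thus it suffices to produce such a big $B$ with $D-p^*B$ effective.

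I would use the pullback formula $p^*\delta_{0:T}=\sum_{i=0}^{2g+2}\delta_{[i]+T}$, valid for $T\subset\{1,\dots,n\}$ with $2\le|T|\le n-2$ (distribute the forgotten unordered markings across the separating node; no unstable configuration arises because $|T|,|T^c|\ge2$). The crucial observation is that every coefficient $d_{[i]+T}$ appearing on the right is strictly positive: the hypothesis only allows $d_{[1]+S}$ and $d_{[2]+S}$ with $|S|\le1$ to vanish, while here $|T|\ge2$. So, putting $\beta:=\tfrac12\min\{d_{[i]+T}\;:\;2\le|T|\le n-2,\ 0\le i\le 2g+2\}>0$ and $B:=\beta\sum_{2\le|T|\le n-2}\delta_{0:T}=\beta\Delta$, with $\Delta$ the total boundary of $\overline{\mathcal{M}}_{0,n}$, one checks that in $p^*B$ each divisor $\delta_{[i]+U}$ occurs with coefficient exactly $2\beta\le d_{[i]+U}$ (it receives a contribution from $T=U$ and from $T=U^c$, both admissible since $2\le|U|,|U^c|\le n-2$), whence $D-p^*B$ is effective.

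Finally, $B=\beta\Delta$ is big on $\overline{\mathcal{M}}_{0,n}$: the class $\psi_1$ is big and semiample, being the pullback of $\mathcal{O}_{\mathbb{P}^{n-3}}(1)$ under Kapranov's birational morphism $\overline{\mathcal{M}}_{0,n}\to\mathbb{P}^{n-3}$, and Keel's relation $\psi_1=\sum_{1\in T,\ 2,3\notin T}\delta_{0:T}$ exhibits it as a sub-sum of $\Delta$, so $\Delta=\psi_1+(\text{effective})$ is big. Stringing together
$$n-3=\kappa\!\left(\overline{\mathcal{M}}_{0,n},B\right)=\kappa\!\left(\overline{\mathcal{M}}_{0,[2g+2]+n},p^*B\right)\le\kappa\!\left(\overline{\mathcal{M}}_{0,[2g+2]+n},D\right)$$
completes the argument.

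I expect the only delicate point to be the bookkeeping in the middle paragraph: one must ensure that precisely the coefficients the hypothesis permits to vanish — namely $d_{[1]+S}$ and $d_{[2]+S}$ for $|S|\le1$ — never enter the divisors $p^*\delta_{0:T}$ used to build $B$, which is the reason the hypothesis is stated the way it is. The bigness of the total boundary of $\overline{\mathcal{M}}_{0,n}$ and the invariance of Iitaka dimension under the fibration $p$ are standard and were already recorded in the surrounding discussion.
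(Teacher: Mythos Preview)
Your proof is correct and follows essentially the same route as the paper: bound $D$ from below by a positive multiple of $p^*\Delta$ where $\Delta$ is the total boundary of $\overline{\mathcal{M}}_{0,n}$, then invoke $p_*\mathcal{O}=\mathcal{O}$ and the bigness of $\Delta$. The only minor deviation is that the paper establishes the bigness of $\Delta$ by passing to the full symmetric quotient $\overline{\mathcal{M}}_{0,[n]}$ and citing the Keel--McKernan description of its pseudo-effective cone, whereas you use the Kapranov relation $\psi_1=\sum_{1\in T,\,2,3\notin T}\delta_{0:T}$ to write $\Delta=\psi_1+(\text{effective})$; both are standard and equally valid.
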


\begin{proof}
Let $m$ be the smallest positive coefficient in the decomposition \eqref{sec6:eq_lemma:lowerbound} 
$$m=\min\left\{d_{[i]+S}\left|(i,|S|)\neq (2,0), (1,1), (2,1)\right.\right\}.$$ 
One observes that 
$$m\cdot\left(\sum_{\substack{0\leq i\leq 2g+2\\ 2\leq\left|S\right|\leq \frac{n}{2}}}\delta_{[i]+S}\right)\leq D$$
and further
$$\sum_{\substack{0\leq i\leq 2g+2\\ 2\leq\left|S\right|\leq \frac{n}{2}}}\delta_{[i]+S}=p^*\delta,$$
where $p=\overline{\rm{M}}_{0:[2g+2]+n}\longrightarrow \overline{\rm{M}}_{0,n}$ is the standard forgetful morphism and 
\begin{equation}
\label{sec6:eq_boundaryM0n}\delta=\sum_{2\leq\left|S\right|\leq \frac{n}{2}}\delta_{S}
\end{equation}
is the full boundary divisor of $\overline{\rm{M}}_{0,n}$. Note that $\delta_{S}=\delta_{S^c}$ is counted only once in the sum \eqref{sec6:eq_boundaryM0n}. The morphism $p$ is surjective is it has connected fibres. In particular for any $r>0$ 
\[
H^0\left(\overline{\rm{M}}_{0,[2g+2]+n}, \mathcal{O}(rp^*\delta)\right)=H^0(\overline{\rm{M}}_{0,n}, \mathcal{O}(r\delta))
\]
giving us the following inequalities:
\[
\kappa\left(\overline{\rm{M}}_{0,n},\delta\right)=\kappa\left(\overline{\rm{M}}_{0,[2g+2]+n},p^*\delta\right)\leq\kappa\left(\overline{\rm{M}}_{0,[2g+2]+n},D\right).
\]
Finally we claim that $\delta$ is big. Indeed, if $q:\overline{\rm{M}}_{0:n}\to\overline{\rm{M}}_{0,[n]}$ is the $S_n$ quotient, then 
\[
\delta=q^*\left(\sum_{2\leq s\leq \frac{n}{2}}\delta_{[s]}\right).
\]
The pseudo-effective cone of $\overline{\rm{M}}_{0,[n]}$  is generated by the boundary divisors $\delta_{[s]}$, cf. \cite[Thm. 1.3]{KM}. In particular, every divisor of the form $\sum d_{s}\delta_{[s]}$ with all coefficients positive $d_s>0$ lies in the interior of the pseudo-effective cone, i.e., is big. This implies that $\delta$ is big as well giving us the desired lower bound.
\end{proof}

As a consequence we have:

\begin{corollary}
\label{sec6:cor_lowerbound}
The Kodaira dimension of $\overline{\rm{H}}_{g,4g+6}$ is bounded from below by
\[
4g+3\leq{\rm{Kod}}\left(\overline{\rm{H}}_{g,4g+6}\right)
\]
and the Kodaira dimension of $\widehat{\rm{H}}_{g,4}$ is bounded from below by
\[
1\leq{\rm{Kod}}\left(\widehat{\rm{H}}_{g,4}\right).
\]
\end{corollary}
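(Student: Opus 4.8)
The plan is to deduce the corollary directly from Lemma \ref{sec6:lemma_lowerbound} together with the canonical class formulas \eqref{eq:K_hur} and the effective decompositions established in Theorems \ref{thm:gen_type} and \ref{thm:gen_type2}, and then transport the Iitaka dimension statement down to $\overline{\mathcal{H}}_{g,n}$ and $\widehat{\mathcal{H}}_{g,n}$ via Propositions \ref{prop:finite}, \ref{thm:cs_hat} and Theorem \ref{thm:kod=Iitaka}. First I would handle the unordered case $n=4g+6$. By Theorem \ref{thm:gen_type}, the divisor $D_{\mathrm{un}}:=K_{\overline{\mathcal{M}}_{0,[2g+2]+n}}+\tfrac12\mathrm{Br}(\pi^c)$ admits an effective decomposition $D_{\mathrm{un}}=\sum d_{[i]+j}\delta_{[i]+j}$ in which every coefficient is strictly positive except $d_{[2]+0}=d_{[1]+1}=d_{[2]+1}=0$. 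This is precisely the positivity hypothesis needed to apply Lemma \ref{sec6:lemma_lowerbound}, so that lemma gives $n-3\leq\kappa(\overline{\mathcal{M}}_{0,[2g+2]+n},D_{\mathrm{un}})$, i.e.\ $4g+3\leq\kappa(\overline{\mathcal{M}}_{0,[2g+2]+4g+6},D_{\mathrm{un}})$.

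Next I would translate this to $\overline{\mathcal{H}}_{g,4g+6}$. By \eqref{eq:K_hur} we have $K_{\overline{\mathcal{H}ur}_{g,2}^n}=\pi^{c,*}D_{\mathrm{un}}$ with $\pi^c$ finite, so the first statement of Proposition \ref{prop:finite} gives $\kappa(\overline{\mathcal{H}ur}_{g,2}^n,K_{\overline{\mathcal{H}ur}_{g,2}^n})=\kappa(\overline{\mathcal{M}}_{0,[2g+2]+n},D_{\mathrm{un}})\geq 4g+3$. Theorem \ref{thm:kod=Iitaka} identifies this Iitaka dimension with the Kodaira dimension of $\overline{\mathcal{H}ur}_{g,2}^n$, and since $\phi:\overline{\mathcal{H}ur}_{g,2}^n\to\overline{\mathcal{H}}_{g,n}$ is a generically finite birational morphism (it contracts only the exceptional loci of Propositions \ref{prop:exc_loc}, \ref{prop:small_contraction}), the Kodaira dimension is a birational invariant and equals $\mathrm{Kod}(\overline{\mathcal{H}}_{g,4g+6})$. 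Hence $4g+3\leq\mathrm{Kod}(\overline{\mathcal{H}}_{g,4g+6})$.

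The ordered case $n=4$ is entirely parallel: Theorem \ref{thm:gen_type2} provides an effective decomposition of $D_{\mathrm{ord}}:=K_{\overline{\mathcal{M}}_{0,[2g+2]+4}}+\tfrac12\mathrm{Br}(\hat\pi^c\circ\sigma)$ with all coefficients positive except the same three, so Lemma \ref{sec6:lemma_lowerbound} with $n=4$ gives $1\leq\kappa(\overline{\mathcal{M}}_{0,[2g+2]+4},D_{\mathrm{ord}})$. The formula $K_{\widehat{\mathcal{H}ur}_{g,2}^4}=(\hat\pi^c\circ\sigma)^*D_{\mathrm{ord}}$ in \eqref{eq:K_hur}, Proposition \ref{prop:finite}, Theorem \ref{thm:cs_hat} (which says $\widehat{\mathcal{H}ur}_{g,2}^n$ has canonical singularities, so its Kodaira and Iitaka dimensions agree), and the birational invariance of Kodaira dimension under the generically finite birational morphism $\hat\phi:\widehat{\mathcal{H}ur}_{g,2}^4\to\widehat{\mathcal{H}}_{g,4}$ then yield $1\leq\mathrm{Kod}(\widehat{\mathcal{H}}_{g,4})$. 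There is no serious obstacle here; the only point requiring a little care is the compatibility of the singularity-extension results (Theorems \ref{thm:cs_hat} and \ref{thm:kod=Iitaka}) with the passage along $\phi$ and $\hat\phi$ to the non-$\mathbb{Q}$-factorial spaces $\overline{\mathcal{H}}_{g,n}$, $\widehat{\mathcal{H}}_{g,n}$, but since Kodaira dimension is computed on a common resolution this reduces to knowing that $\phi$, $\hat\phi$ are birational, which follows from Propositions \ref{prop:exc_loc} and \ref{prop:small_contraction}.
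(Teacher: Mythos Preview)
Your proposal is correct and follows essentially the same route as the paper: verify that the effective decompositions from Theorems~\ref{thm:gen_type} and~\ref{thm:gen_type2} meet the hypotheses of Lemma~\ref{sec6:lemma_lowerbound}, obtain the Iitaka bound on $\overline{\mathcal{M}}_{0,[2g+2]+n}$, then transfer it to the Hurwitz spaces via \eqref{eq:K_hur} and Proposition~\ref{prop:finite}, and finally identify Iitaka with Kodaira dimension using Theorems~\ref{thm:cs_hat} and~\ref{thm:kod=Iitaka} together with birational invariance. The paper's own proof is a single sentence citing the same ingredients (the Lemma being implicit as the immediately preceding statement), so your version is simply a more explicit unpacking of the same argument.
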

\begin{proof}
This follows immediately from Proposition~\ref{prop:canonicalcoarse}, Theorems \ref{thm:gen_type}, \ref{thm:gen_type2} and Proposition \ref{prop:finite}.
\end{proof}

Now we prove the upper bound \eqref{sec6:eq_up}. Recall that a {\textit{covering curve for an irreducible divisor}} is a curve class such that irreducible curves with this class cover a Zariski open subset of the divisor. Let $D$ be an effective Cartier divisor on a normal projective variety $X$. An {\textit{effective decomposition}} of $D$ is a sum
$$D=\sum_{i}c_iD_i,$$
where the summands $D_i$ are again effective Cartier and the coefficients are non-negative $c_i\geq 0$. A well-known trick \cite[Lemma 4.1]{CC} for showing an irreducible divisor is rigid is to show the divisor has negative intersection with a covering curve class for the divisor. However, the utility of covering curves also lies in their ability to detect base loci which we present as the following lemma. For simplicity, we assume $\mathbb{Q}$-factoriality.

\begin{lemma}
\label{lem:baselocus}
Let $X$ be a normal $\mathbb{Q}$-factorial projective variety and let $\beta\in N_1(X)$ be a curve class such that irreducible curves with class equal to $\beta$ cover a Zariski open subset of an irreducible effective divisor $E$ with $\beta\cdot E<0$. Then for any effective divisor $D$ such that $\beta\cdot D<0$, 
\begin{equation}
\label{sec6:lemma_baselocus_eq1}
|kD|=\left|kD-k\left(\frac{\beta\cdot D}{\beta\cdot E}\right)E\right|+k\left(\frac{\beta\cdot D}{\beta\cdot E}\right)E
\end{equation}
for some sufficiently large $k>0$.
\end{lemma}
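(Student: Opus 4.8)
The plan is to bound the stable base locus of $kD$ from below using the covering curve $\beta$, and then subtract off the forced multiplicity of $E$. First I would fix a sufficiently divisible $k>0$ so that $kD$ is Cartier and $|kD|\neq\varnothing$ (if $|kD|=\varnothing$ the statement is vacuous, as both sides are empty for every multiple). Write $a := -\dfrac{\beta\cdot D}{\beta\cdot E}$, which is strictly positive since $\beta\cdot D<0$ and $\beta\cdot E<0$. I want to show every effective divisor $D'\in|kD|$ contains $ka E$ as a subdivisor, i.e. $D' - ka E \geq 0$. Since $\beta$ is the class of irreducible curves sweeping out a Zariski-dense subset of $E$, a general such curve $C$ is \emph{not} contained in $D'$ (its deformations dominate $E$, hence cannot all lie in the fixed divisor $D'$ unless $E$ is a component of $D'$ — which is exactly what we are trying to extract). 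The cleaner way: decompose $D' = mE + D''$ where $m\geq 0$ is the multiplicity of the component $E$ in $D'$ and $D''$ is effective with $E\not\subseteq\mathrm{Supp}(D'')$. A general curve $C$ with class $\beta$ then meets $D''$ properly, so $\beta\cdot D'' = C\cdot D'' \geq 0$; on the other hand $\beta\cdot D' = k(D\cdot\beta) = -ka\,(E\cdot\beta)$, so $m(\beta\cdot E) + (\beta\cdot D'') = -ka(\beta\cdot E)$, giving $(m - ka)(\beta\cdot E) = -(\beta\cdot D'')\leq 0$. Since $\beta\cdot E<0$, this forces $m\geq ka$, hence $D' - kaE = (m-ka)E + D''\geq 0$.

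This containment $D'\geq kaE$ holds for every member of $|kD|$, so the map $D'\mapsto D' - kaE$ is a well-defined bijection $|kD|\to|kD - kaE|$ (its inverse being $D''\mapsto D''+kaE$, which lands in $|kD|$ because $kaE$ is a fixed effective divisor). Unwinding $a = -\dfrac{\beta\cdot D}{\beta\cdot E}$ and noting $ka = k\bigl(\tfrac{\beta\cdot D}{\beta\cdot E}\bigr)\cdot(-1)$ — being careful with signs, $\tfrac{\beta\cdot D}{\beta\cdot E} = a>0$ since both intersection numbers are negative — we get exactly
\[
|kD| = \left|kD - k\!\left(\frac{\beta\cdot D}{\beta\cdot E}\right)E\right| + k\!\left(\frac{\beta\cdot D}{\beta\cdot E}\right)E,
\]
which is the claimed identity. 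The only mild subtlety is that $k$ must be chosen divisible enough that $ka = -k\tfrac{\beta\cdot D}{\beta\cdot E}$ is a nonnegative integer and $kD$ is Cartier; since $\beta\cdot D$ and $\beta\cdot E$ are fixed integers this is automatic for $k$ a multiple of $(\beta\cdot E)$ times the Cartier index, and for the Iitaka-dimension application (comparing linear systems $|mrD|$ for all large $m$) passing to such multiples is harmless.

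The main obstacle — really the only nontrivial point — is justifying that a general curve $C$ in the covering family meets the $E$-free part $D''$ properly, i.e. that $C\not\subseteq\mathrm{Supp}(D'')$. This is where the covering hypothesis is essential: if every deformation of $C$ lay inside some fixed component $Z$ of $D''$, the family would not dominate $E$, contradicting that the curves of class $\beta$ sweep out a dense subset of $E$ (and $Z\neq E$ by construction of $D''$). One phrases this as: the family of irreducible curves with class $\beta$ has image of dimension $\dim E$ in $X$, and since only finitely many of them can lie in any given divisor not containing $E$, a general member meets $D''$ in dimension $0$, so $C\cdot D''\geq 0$ holds as an honest (nonnegative) intersection number. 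With that in hand the sign chase above is purely formal, using $\mathbb{Q}$-factoriality only to make sense of the component-wise decomposition $D' = mE + D''$ and the intersection numbers of Weil divisors with curves.
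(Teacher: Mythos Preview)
Your argument is correct and is exactly the approach the paper takes—the paper's own proof is a one-sentence sketch (``any effective divisor $kD$ with $\beta\cdot kD<0$ must contain an unmovable component supported on $E$''), and you have supplied the quantitative details that pin down the multiplicity. One caveat: your initial definition $a := -\tfrac{\beta\cdot D}{\beta\cdot E}$ has a sign slip (both numerator and denominator are negative, so this $a$ would be negative, not positive); you silently correct this to $a = \tfrac{\beta\cdot D}{\beta\cdot E} > 0$ a few lines later, and with that fix the chain $(m-ka)(\beta\cdot E)=-(\beta\cdot D'')\leq 0 \Rightarrow m\geq ka$ goes through as written.
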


\begin{remark}
\label{rmk:baselocus}
In other words, $D-(\frac{\beta\cdot D}{\beta\cdot E})E$ is $\qq$-effective and
\begin{equation}
\label{sec6:eqIitaka}
\kappa\left(X,D\right)=\kappa\left(X,D-(\frac{\beta\cdot D}{\beta\cdot E})E\right).
\end{equation}

\end{remark}

\begin{proof}[Proof of Lemma~\ref{lem:baselocus}]
As irreducible curves with class equal to $\beta$ cover a Zariski dense subset of $E$, any effective divisor $kD$ with $\beta\cdot kD<0$ must contain an unmovable component supported on $E$.


\end{proof}

%
%

The key idea to obtain the upper bound \eqref{sec6:eq_up} is as follows. Assume a nef curve $M$ and effective divisor $D$ have intersection $M\cdot D=0$ and $M$ splits as
\[
M=\beta_1+\beta_2.
\]
Then either the intersections $\beta_1\cdot D$ and $\beta_2\cdot D$ are of opposite parities, or both are zero. If the curves $\beta_i$ are in fact covering curves for divisors $D_i$, then the first situation by Lemma~\ref{lem:baselocus} would identify a rigid component in $D$. More precisely if $\beta_1\cdot D>0$, then 
$$\kappa\left(X,D\right)=\kappa\left(X,D-aD_2\right)$$
where $a=\frac{\beta_2\cdot D}{\beta_2\cdot D_2}$. Since $M$ is nef, again $M\cdot(D-aD_2)=0$ and now both $\beta_1$ and $\beta_2$ intersect $(D-aD_2)$ trivially. In conclusion, for proposes of computing the Iitaka dimension of $D$ we can always assume $\beta_1\cdot D=\beta_2\cdot D=0$.\\

\begin{proposition}\label{prop:M0n}
Let $n\geq4$ and $D$ an effective divisor in $\overline{\rm{M}}_{0,[2g+2]+n}$ such that $B_{0,\varnothing}\cdot D=0$ then
\[
\kappa(\overline{\rm{M}}_{0,[2g+2]+n},D)\leq n-3.
\]
\end{proposition}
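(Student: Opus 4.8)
The plan is to exploit the abundance of splittings of the nef curve $M = B_{0,\varnothing}$ recorded after Proposition~\ref{prop:intersections_BiS}, namely $B_{0,\varnothing} = B_{i,S} + B_{2g+1-i,S^c}$ for every $0 \le i \le 2g$ and every $S$, together with the base-locus detection mechanism of Lemma~\ref{lem:baselocus}. First I would check that $B_{0,\varnothing}$ is a nef curve class: each $B_{i,S}$ is (up to the factor from the $S_n$-quotient ramification along $\delta_{[2]+S}$) a covering curve for the boundary divisor $\delta_{[i]+S}$ — it is obtained by letting one unordered point move freely in a fixed one-parameter family on a fixed boundary stratum — and $B_{0,\varnothing}$ is a covering curve for the interior, hence nef by a standard argument (a fixed general $([2g+1]+n)$-pointed rational curve with one free unordered point sweeps out a dense subset, and any effective divisor meets a general such curve non-negatively). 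The key structural point is that from $B_{0,\varnothing}\cdot D = 0$ and nefness, every summand in every splitting meets $D$ non-negatively.

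Next I would run the reduction described in the paragraph preceding the statement. Fix a sufficiently divisible $k$ and replace $D$ by a $\mathbb{Q}$-effective representative. Using the splittings $B_{0,\varnothing} = B_{i,S} + B_{2g+1-i,S^c}$ for all $(i,S)$ with $i + |S| \ge 2$ (and their complements), whenever $B_{i,S}\cdot D > 0$ we necessarily have $B_{2g+1-i,S^c}\cdot D < 0$, so by Lemma~\ref{lem:baselocus} (applied with $\beta = B_{2g+1-i,S^c}$, $E = \delta_{[2g+1-i]+S^c}$, where $B_{2g+1-i,S^c}\cdot \delta_{[2g+1-i]+S^c} < 0$ by Proposition~\ref{prop:intersections_BiS}) the linear system $|kD|$ contains a rigid component supported on $\delta_{[2g+1-i]+S^c}$; subtracting it does not change the Iitaka dimension and keeps $B_{0,\varnothing}\cdot(\text{new }D) = 0$. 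Iterating over all such splittings — finitely many — I arrive at a $\mathbb{Q}$-effective divisor $D'$ with $\kappa(D') = \kappa(D)$ and $B_{i,S}\cdot D' = 0$ for \emph{every} $(i,S)$ with $i+|S|\ge 2$, hence also $B_{i,S}\cdot D' = 0$ for the complementary strata, i.e. $D'$ has zero intersection with the curve classes $B_{i,S}$ for all boundary strata involving the unordered markings.

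Then I would extract the conclusion by linear algebra on $\mathrm{Pic}_{\mathbb Q}(\Mbar{0}{[2g+2]+n})$. Writing $D' = \sum d_{[i]+S}\,\delta_{[i]+S}$ and using the intersection numbers of Proposition~\ref{prop:intersections_BiS} — in particular $B_{i,S}\cdot\delta_{[i]+S} = -1$ (or $-2$), $B_{i,S}\cdot\delta_{[i+1]+S} = 1$, $B_{i,S}\cdot\delta_{[2]+\varnothing} = 2(2g+1-i)$, and all others zero among the classes supported on strata not forgotten by $p:\Mbar{0}{[2g+2]+n}\to\Mbar{0}{n}$ — the vanishing $B_{i,S}\cdot D' = 0$ for all relevant $(i,S)$ forces all coefficients $d_{[i]+S}$ with $i \ge 1$ to vanish, so $D'$ is supported on the divisors $\delta_{[0]+S}$ with $|S|\ge 2$. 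These are precisely the pullbacks $p^*\delta_S$ of boundary divisors of $\Mbar{0}{n}$ (the $\delta_{[0]+S}$ do not involve the unordered markings), so $D' = p^*B$ for an effective divisor $B$ on $\Mbar{0}{n}$. Since $p$ has connected fibres, $p_*\mathcal O = \mathcal O$, whence $\kappa(D) = \kappa(D') = \kappa(\Mbar{0}{n}, B) \le \dim \Mbar{0}{n} = n-3$, as claimed.

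The main obstacle I anticipate is the bookkeeping in the iterated base-locus subtraction: one must verify that after peeling off a rigid component along some $\delta_{[j]+T}$, the intersection numbers of the \emph{remaining} curve classes $B_{i,S}$ with the new divisor stay non-negative (so the process is well-founded) and that the process genuinely terminates with \emph{all} the intersection numbers zero rather than looping — this is where nefness of $B_{0,\varnothing}$ and the sign pattern of Proposition~\ref{prop:intersections_BiS} must be used carefully, perhaps ordering the strata by the cardinality $i + |S|$ so that subtracting along a stratum only affects curves associated to strictly ``smaller'' or incomparable strata. A secondary subtlety is the factor-of-two discrepancies coming from the $S_n$-quotient ramification along the $\delta_{[2]+S}$, which affect whether a given $B_{i,S}$ is literally a covering curve or a multiple of one; this only changes the rational constants $a$ in Lemma~\ref{lem:baselocus} and not the qualitative argument.
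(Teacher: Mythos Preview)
Your overall strategy matches the paper's: exploit the splittings $B_{0,\varnothing}=B_{i,S}+B_{2g+1-i,S^c}$, peel off rigid components via Lemma~\ref{lem:baselocus} until every $B_{i,S}$ meets the resulting effective divisor $G$ trivially, then do linear algebra to bound $\kappa$ by a pullback from $\Mbar{0}{n}$. The reduction step and the termination concern you flag are handled in the paper exactly as you describe.

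The gap is in your linear algebra step. First, the boundary divisors of $\Mbar{0}{[2g+2]+n}$ are \emph{not} linearly independent (Keel relations), so ``forces all $d_{[i]+S}$ with $i\ge 1$ to vanish'' is ill-posed until you fix a basis. The paper does this by taking a modified Kapranov basis centred at the first ordered point (replacing $\psi_1$ by $\delta_{[0]+\{2,3\}}$) and then running the equations $B_{i,S}\cdot G=0$ against \emph{that} basis. Second, the conclusion you reach is wrong even at the level of the class: the paper does \emph{not} find that $G$ is supported on the $\delta_{[0]+S}$; rather the surviving basis elements are the $\delta_{[i]+T}$ with $2\le |T|\le n-2$ for \emph{all} $i$, i.e.\ the irreducible components of the pullbacks $p^*\delta_T$. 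Relatedly, your assertion that ``$\delta_{[0]+S}$ are precisely the pullbacks $p^*\delta_S$'' is false: one has $p^*\delta_S=\sum_{i}\delta_{[i]+S}$, so a single $\delta_{[0]+S}$ is only one component of the pullback. Consequently the paper does not obtain $G=p^*B$; it only obtains $G\le p^*B$ for a suitably large big $B$ on $\Mbar{0}{n}$ (using that any strictly positive boundary combination on $\Mbar{0}{n}$ is big), and this inequality is what gives $\kappa(G)\le n-3$ via $p_*\mathcal{O}=\mathcal{O}$.

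So the missing idea is the choice of basis making the system of equations $B_{i,S}\cdot G=0$ triangular enough to solve, together with the correct identification of what survives (components of $p^*$-pullbacks rather than $i=0$ strata) and the passage to an \emph{inequality} $G\le p^*B$ rather than an equality.
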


\begin{proof}
Proposition \ref{prop:intersections_BiS} implies 
\[
B_{0,\varnothing}=B_{i,S}+B_{2g+1-i,S^c}
\]
for all choices of $i$ and $S$. Further, as $B_{i,S}$ and $B_{2g+1-i,S^c}$ form covering curves for the divisors $\delta_{[i]+S}$ and $\delta_{[2g+1-i]+S^c}$ respectively, the divisor $D$ having negative intersection with either would identify a rigid component. As pointed out in the discussion above, there hence exists an effective divisor $G$ with
\[
\kappa(\overline{\rm{M}}_{0,[2g+2]+n},D)=\kappa(\overline{\rm{M}}_{0,[2g+2]+n},G)
\]
and $B_{i,S}\cdot G=0$ for all choices of $i$ and $S$. We show that this implies $G\leq p^*B$ for a big divisor $B$ on $\Mbar{0}{n}$.\\ 

Symmetrising the Kapranov basis \cite{Kap} we obtain the basis for ${\rm{Pic}}_\qq\left(\overline{\rm{M}}_{0,[2g+2]+n}\right)$ centred at the first unsymmetrised point. The basis consists on $H=\psi_1$ and the boundary divisors $\delta_{[i]+S\cup\{1\}}$ where $1\leq i+|S|\leq 2g+2+n-4$ and $0\leq i\leq 2g+2$. However, in this basis for $n\geq3$ 
\[
\delta_{[0]+\{2,3\}}=H-\sum_{2,3\not\in S}\delta_{[i]+S\cup\{1\}}.
\]
Hence we provide a non-standard basis, that will serve our needs by replacing $H$ by $\delta_{[0]+\{2,3\}}$ when $n\geq3$. Consider the unique expression for the class of $G$ in this basis
\begin{equation}
\label{sec6:eqG}
G=\sum c_{[i]+T}\delta_{[i]+T},
\end{equation}
where the sum runs over the elements of our basis.\\

At this point we observe that $B_{0,\varnothing}\cdot G$ implies $c_{[1]+\{1\}}=0$ and $B_{2g-1,\{1,\dots,n\}}\cdot G=0$ implies $c_{[2g-1]+\{1,\dots,n\}}=0$. While $B_{2g-i,\{1,\dots,n\}}\cdot G=0$ for $i=2,\dots,2g$ implies 
$$c_{[2g-i+1]+\{1,\dots,n\}}-c_{[2g-i]+\{1,\dots,n\}}=0$$
and hence $c_{[j]+\{1,\dots,n\}}=0$ for all $j$.\\

Similarly, $B_{2g+2-i,\{2,\dots,n\}}\cdot G=0$ for $i=2,\dots,2g+2$ gives 
$$c_{[i-1]+\{1\}}-c_{[i]+\{1\}}=0$$
and hence as $c_{[1]+\{1\}}=0$ we have $c_{[j]+\{1\}}=0$ for all $j$. \\

Finally, $B_{2g+2-i,\{1,\dots,n\}\setminus\{j\}}\cdot G=0$ for $i=3,\dots,2g+2$ and $j=2,\dots,n$ gives 
$$c_{[2g+3-i]+\{1,\dots,n\}\setminus\{j\}}-c_{[2g+2-i]+\{1,\dots,n\}\setminus\{j\}}=0$$
with the base case of $i=2$ giving $c_{[2g]+\{1,\dots,n\}\setminus\{j\}}=0$ and hence $c_{[j]+\{1,\dots,n\}\setminus\{j\}}=0$ for all $j$.\\

The only non-zero coefficients in \eqref{sec6:eqG} are those corresponding to boundary divisors $\delta_{[i]+T}$ where $2\leq \left|T\right|\leq n-2$, that is, divisors that arise as irreducible components of the pullback of boundary divisors on $\overline{\rm{M}}_{0,n}$ under the forgetful morphism 
$$p:\overline{\rm{M}}_{0,[2g+2]+n}\longrightarrow \overline{\rm{M}}_{0,n}.$$
However, any effective divisor on $\overline{\rm{M}}_{0,n}$ that can be expressed as a positive sum of all boundary components is big (see the proof of Lemma~\ref{sec6:lemma_lowerbound}). Hence we can choose a big divisor $B$ on $\overline{\rm{M}}_{0,n}$ such that $G\leq p^*B$. As the morphism $p$ is surjective and has connected fibres, we have $p_*\mathcal{O}=\mathcal{O}$ and 
\[
\kappa(\overline{\rm{M}}_{0,[2g+2]+n},D)\leq \kappa(\overline{\rm{M}}_{0,[2g+2]+n},p^*B)=n-3.
\]
\end{proof}

As a consequence, we have an upper bound on the Kodaira dimension.

\begin{corollary}
\label{sec6:coro_bound2}
The Kodaira dimension of $\overline{\rm{H}}_{g,4g+6}$ is bounded by 
\[
{\rm{Kod}}\left(\overline{\rm{H}}_{g,4g+6}\right)\leq4g+3
\]
and the Kodaira dimension of $\widehat{\rm{H}}_{g,4}$ is bounded by
\[
{\rm{Kod}}\left(\widehat{\rm{H}}_{g,4}\right)\leq1.
\]
\end{corollary}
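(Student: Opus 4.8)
The plan is to reduce the upper bound for both spaces to the single orthogonality $B_{0,\varnothing}\cdot D=0$ on $\overline{\mathcal{M}}_{0,[2g+2]+n}$, after which Proposition~\ref{prop:M0n} supplies the bound $\kappa\leq n-3$ directly. The point is that all the hard input — the coincidence of Kodaira and Iitaka dimension, and the curve geometry of $\overline{\mathcal{M}}_{0,[2g+2]+n}$ — is already available, so what remains is a reduction together with one intersection calculation.

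First I would treat the unordered case with $n=4g+6$. Since $\phi:\overline{\mathcal{H}ur}_{g,2}^n\to\overline{\mathcal{H}}_{g,n}$ is an isomorphism outside a locus of codimension two it is birational, so ${\rm{Kod}}(\overline{\mathcal{H}}_{g,n})={\rm{Kod}}(\overline{\mathcal{H}ur}_{g,2}^n)$, and by Theorem~\ref{thm:kod=Iitaka} the latter equals $\kappa(\overline{\mathcal{H}ur}_{g,2}^n,K_{\overline{\mathcal{H}ur}_{g,2}^n})$. By the first identity in \eqref{eq:K_hur} together with Theorem~\ref{thm:gen_type} one has $K_{\overline{\mathcal{H}ur}_{g,2}^n}=\pi^{c,*}D$ with $D=\sum_{i,j}d_{[i]+j}\delta_{[i]+j}$ effective. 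As both coarse Hurwitz spaces and $\overline{\mathcal{M}}_{0,[2g+2]+n}$ have only quotient singularities and are therefore $\mathbb{Q}$-factorial, Proposition~\ref{prop:finite} applied to the finite morphism $\pi^c$ gives
$$\kappa\left(\overline{\mathcal{H}ur}_{g,2}^n,\pi^{c,*}D\right)=\kappa\left(\overline{\mathcal{M}}_{0,[2g+2]+n},D\right),$$
so the Kodaira dimension is now an Iitaka dimension on $\overline{\mathcal{M}}_{0,[2g+2]+n}$.

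The key step is the computation $B_{0,\varnothing}\cdot D=0$. By Proposition~\ref{prop:intersections_BiS} the only boundary divisors meeting $B_{0,\varnothing}$ nontrivially are $\delta_{[2]+\varnothing}$ and the $\delta_{[1]+\{j\}}$, with $B_{0,\varnothing}\cdot\delta_{[2]+\varnothing}=2(2g+1)$ and $B_{0,\varnothing}\cdot\delta_{[1]+1}=n$, whence
$$B_{0,\varnothing}\cdot D=2(2g+1)\,d_{[2]+0}+n\,d_{[1]+1}.$$
Theorem~\ref{thm:gen_type} records exactly the vanishing $d_{[2]+0}=d_{[1]+1}=0$ at $n=4g+6$, so this intersection vanishes and Proposition~\ref{prop:M0n} yields $\kappa(\overline{\mathcal{M}}_{0,[2g+2]+n},D)\leq n-3=4g+3$, as required.

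The ordered case with $n=4$ is identical in structure: the map $\hat\phi:\widehat{\mathcal{H}ur}_{g,2}^4\to\widehat{\mathcal{H}}_{g,4}$ is a birational (small) contraction, so the two Kodaira dimensions agree; Theorem~\ref{thm:cs_hat} (canonical singularities) replaces Theorem~\ref{thm:kod=Iitaka} to identify Kodaira with Iitaka dimension; and one uses the third identity in \eqref{eq:K_hur} together with Theorem~\ref{thm:gen_type2} and Proposition~\ref{prop:finite} for the finite morphism $\hat\pi^c\circ\sigma$. Since Theorem~\ref{thm:gen_type2} again forces $d_{[2]+0}=d_{[1]+1}=0$ at $n=4$, the same formula gives $B_{0,\varnothing}\cdot D=0$ and Proposition~\ref{prop:M0n} delivers $\kappa\leq n-3=1$. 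The only genuinely case-sensitive point, and hence the step I would check most carefully, is that $B_{0,\varnothing}$ is orthogonal to $D$: this rests entirely on the precise coefficients $d_{[2]+0}$ and $d_{[1]+1}$ vanishing at the transition values $n=4g+6$ and $n=4$, which is exactly what Theorems~\ref{thm:gen_type} and \ref{thm:gen_type2} were set up to guarantee; everything else is the formal machinery of Propositions~\ref{prop:finite} and \ref{prop:M0n}.
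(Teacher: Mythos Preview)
Your proposal is correct and follows essentially the same approach as the paper: express the canonical divisor as the pullback of an effective $D$ on $\overline{\mathcal{M}}_{0,[2g+2]+n}$ via \eqref{eq:K_hur}, reduce to the Iitaka dimension of $D$ using Proposition~\ref{prop:finite} together with the singularity results (Theorems~\ref{thm:cs_hat} and~\ref{thm:kod=Iitaka}), verify $B_{0,\varnothing}\cdot D=0$ from the vanishing coefficients in Theorems~\ref{thm:gen_type} and~\ref{thm:gen_type2}, and apply Proposition~\ref{prop:M0n}. Your write-up is in fact slightly more explicit than the paper's about the birationality step and the precise intersection calculation $B_{0,\varnothing}\cdot D=2(2g+1)d_{[2]+0}+n\,d_{[1]+1}$, but the argument is the same.
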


\begin{proof}
Observe that Proposition~\ref{prop:canonicalcoarse} gives
\[
K_{\overline{\rm{H}ur}_{g,2}^{4g+6}}=(\pi^{c})^*D\;\;\;\hbox{and}\;\;\;K_{\widehat{\rm{H}ur}_{g,2}^{4}}=\left(\varphi^c\right)^*D'\]
where $D, D'$ are the divisors in $\overline{\rm{M}}_{0,[2g+2]+n}$ given in Equations~\eqref{eq:Dunordered} and \eqref{eq:Dordered}. Theorems~\ref{thm:gen_type} and \ref{thm:gen_type2} show these divisors are effective for $n=4g+6$ and $n=4$ respectively. Further, Proposition \ref{prop:intersections_BiS} gives
\[
B_{0,\varnothing}\cdot D=\frac{n-(4g-6)}{2}\;\;\;\hbox{and}\;\;\; B_{0,\varnothing}\cdot D'=\frac{n-4}{2}
\]
and hence
\[
B_{0,\varnothing}\cdot D=0\;\;\;\hbox{and}\;\;\; B_{0,\varnothing}\cdot D'=0.
\]
for $n=4g+6$ and $n=4$ respectively. By Proposition \ref{prop:finite}
\[
{\rm{Kod}}\left(\overline{\rm{H}ur}_{g,2}^{4g+6}\right)=\kappa\left(\overline{\rm{M}}_{0,[2g+2]+4g+6}, D\right)
\]
and 
\[
{\rm{Kod}}\left(\widehat{\rm{H}ur}_{g,2}^{4}\right)=\kappa\left(\overline{\rm{M}}_{0,[2g+2]+4}, D'\right).
\]
Hence Proposition \ref{prop:M0n} provides the result.
\end{proof}

Combining Theorem \ref{thm:gen_type} and Corollaries~\ref{sec6:cor_lowerbound} and \ref{sec6:coro_bound2} we obtain

\thmone*

Finally, observe that the curve $B_{0,\varnothing}$ gives more information on the positivity of the canonical classes of interest. For the divisors $D$ and $D'$ in $\overline{\mathcal{M}}_{0,[2g+2]+n}$ we have
$$B_{0,\varnothing}\cdot D=\frac{n-(4g+6)}{2}     \hspace{0.3cm}\text{and}\hspace{0.3cm}B_{0,\varnothing}\cdot D'=\frac{n-4}{2}.$$
But as $B_{0,\varnothing}$ is nef (it forms a moving curve), this implies that $D$ and $D'$ are not pseudo-effective for $n\leq 4g+5$ and $n\leq3$ respectively.

Said differently, as $(\pi^c)^* B_{0,\varnothing}$ and $(\varphi^c)^* B_{0,\varnothing}$ are nef (they form a moving curves) and intersect $K_{\overline{\rm{H}ur}_{g,2}^n}$ and $K_{\widehat{\rm{H}ur}_{g,2}^n}$ negatively for $n\leq 4g+5$ and $n\leq3$ respectively, the canonical divisors are not pseudo-effective in this range. In the unordered case, this provides a new proof of the previously known uniruledness results~\cite{Be, AB}. In the ordered case, we record this as the following proposition. 
\begin{proposition}
The moduli spaces $\widehat{\rm{H}}_{g,n}$ are uniruled for $n\leq 3$.
\end{proposition}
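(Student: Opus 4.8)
The plan is to deduce this from the characterization of uniruledness in \cite{BDPP} together with the numerical input assembled just above. Since uniruledness is a birational invariant and the forgetful morphism $\hat{\phi}:\widehat{\mathcal{H}ur}_{g,2}^n\to\widehat{\mathcal{H}}_{g,n}$ is birational --- a smooth hyperelliptic curve has a unique $g^1_2$, so $\hat{\phi}$ is an isomorphism onto its image over the interior --- it suffices to prove that $\widehat{\mathcal{H}ur}_{g,2}^n$ is uniruled. For this I would first record that $\gamma:=(\hat{\pi}^c\circ\sigma)^*B_{0,\varnothing}$ is a moving curve class: the curve $B_{0,\varnothing}$ is a covering curve on $\overline{\mathcal{M}}_{0,[2g+2]+n}$ (it passes through a general point, being obtained by letting one unordered marking vary on an otherwise fixed rational curve), and the preimage of a general such curve under the finite surjection $\hat{\pi}^c\circ\sigma$ again sweeps out a general point of $\widehat{\mathcal{H}ur}_{g,2}^n$. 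By the projection formula and \eqref{eq:K_hur},
$$\gamma\cdot K_{\widehat{\mathcal{H}ur}_{g,2}^n}=\deg(\hat{\pi}^c\circ\sigma)\cdot B_{0,\varnothing}\cdot\Big(K_{\overline{\mathcal{M}}_{0,[2g+2]+n}}+\tfrac12\mathrm{Br}(\hat{\pi}^c\circ\sigma)\Big)=\deg(\hat{\pi}^c\circ\sigma)\cdot\tfrac{n-4}{2}<0$$
for $n\leq 3$, and since a moving curve class has non-negative intersection with every pseudo-effective divisor, $K_{\widehat{\mathcal{H}ur}_{g,2}^n}$ is not pseudo-effective.

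To pass from the non-pseudo-effectivity of the canonical class on the singular space to actual uniruledness, I would take a resolution $f:Y\to\widehat{\mathcal{H}ur}_{g,2}^n$. By Theorem \ref{thm:cs_hat} the singularities are canonical, so $K_{\widehat{\mathcal{H}ur}_{g,2}^n}$ is $\qq$-Cartier and $K_Y=f^*K_{\widehat{\mathcal{H}ur}_{g,2}^n}+E$ with $E$ effective and $f$-exceptional; applying $f_*$ gives $f_*K_Y=K_{\widehat{\mathcal{H}ur}_{g,2}^n}$. Since $f_*$ carries pseudo-effective classes to pseudo-effective classes, $K_Y$ is not pseudo-effective, so $Y$ is uniruled by \cite{BDPP}, and hence so are $\widehat{\mathcal{H}ur}_{g,2}^n$ and $\widehat{\mathcal{H}}_{g,n}$.

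I expect the only genuinely non-formal point to be the moving-curve claim for $\gamma$, i.e.\ checking that the covering family of rational curves representing $B_{0,\varnothing}$ really pulls back to a covering family on $\widehat{\mathcal{H}ur}_{g,2}^n$; finiteness and surjectivity of $\hat{\pi}^c\circ\sigma$ make this essentially automatic, and the computation of $\gamma\cdot K_{\widehat{\mathcal{H}ur}_{g,2}^n}$ is legitimate because $K_{\widehat{\mathcal{H}ur}_{g,2}^n}$ is $\qq$-Cartier while $\gamma$ is an honest $1$-cycle. Everything else reduces to standard facts about pushforward of divisor classes under birational morphisms and the BDPP criterion.
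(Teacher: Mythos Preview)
Your argument is correct and follows the same route as the paper: pull back the covering curve $B_{0,\varnothing}$ through the finite branch morphism, compute its intersection with the canonical class as a positive multiple of $\tfrac{n-4}{2}$, and conclude non-pseudo-effectivity of the canonical divisor for $n\leq 3$. The paper leaves the final passage to uniruledness via \cite{BDPP} implicit, whereas you spell out the resolution step using Theorem~\ref{thm:cs_hat}; this added care is welcome and the reasoning is sound.
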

This concludes the proof of our Theorem:
\thmtwo*


\section{The effective cone of $\overline{\rm{H}}_{g,n}$}
\label{section:eff}

In this section we completely classify the structure of the pseudo-effective cone of $\overline{\rm{H}}_{g, n}$. Recall that for a normal projective variety $X$ we denote by ${\rm{N}}^1(X)$ the $\mathbb{R}$-vector space consisting on Cartier divisors with real coefficients up to numerical equivalence. The dimension (or rank) of ${\rm{N}}^1(X)$ is the \textit{Picard number} of $X$. The \textit{effective cone}, denoted ${\rm{Eff}}^1(X)$, is the convex cone inside ${\rm{N}}^1(X)$ generated by classes of effective divisors. This cone is in general not closed and the closure $\overline{\rm{Eff}}^1(X)$ is called the \textit{pseudo-effective cone}. The $\mathbb{R}$-vector space consisting on one-dimensional cycles with real coefficients up to numerical equivalence is denoted ${\rm{N}}_1(X)$ an it is dual to ${\rm{N}}^1(X)$ via the intersection pairing
$${\rm{N}}^1(X)\times{\rm{N}}_1(X)\longrightarrow \mathbb{R}.$$
The dual of $\overline{\rm{Eff}}^1(X)$ via the intersection paring is the cone of of \textit{nef curves} ${\rm{Nef}}_1(X)$ generated by effective irreducible curves having non-negative intersection with all pseudo-effective divisors. We use this duality to show the non-polyhedrality of $\overline{\rm{Eff}}^1(\overline{\rm{H}}_{g,n})$ for $n\geq2$.\\

We closely follow~\cite{Mu} and identify a nef curve that forms an extremal ray, at which this cone is non-polyhedral. The class group $CH^1(\overline{\mathcal{H}}_{g,n})_\mathbb{Q}=CH^1(\overline{\rm{H}}_{g,n})_\mathbb{Q}$ is described in \cite{Sca}. We are only interested in divisors on $\overline{\mathcal{H}}_{g,n}$ that are $\mathbb{Q}$-Cartier. Let $i:\overline{\mathcal{H}}_{g,n}\to \overline{\mathcal{M}}_{g,n}$ be the natural inclusion. Then there is an inclusion
\[
i^*\left(\frac{{\rm{Pic}}_{\mathbb{Q}}\left(\overline{\mathcal{M}}_{g,n}\right)}{\langle\lambda\rangle}\right)\subset{\rm{Pic}}_{\mathbb{Q}}\left(\overline{\mathcal{H}}_{g,n}\right).
\]
Moreover the Picard group on the interior ${\rm{Pic}}_{\mathbb{Q}}\left({\mathcal{H}}_{g,n}\right)$ is generated by the restriction of $\psi_1,\ldots,\psi_n$. This follows from \cite[Thm. 1.1]{Sca} and the fact that restriction of Cartier divisors are again Cartier. We use the same notation for divisors on $\overline{\mathcal{H}}_{g,n}$ that are the restriction of divisors on $\overline{\mathcal{M}}_{g,n}$.

\begin{lemma}
For $g\geq 2$, the cone $\overline{\rm{Eff}}^1(\overline{\rm{H}}_g)$ is polyhedral and generated by the irreducible components of the boundary.
\end{lemma}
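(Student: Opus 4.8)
The plan is to use the birational isomorphism $\overline{\mathcal{H}}_g \xrightarrow{\sim} \overline{\mathcal{M}}_{0,[2g+2]}$ — which is in fact a regular birational morphism, as noted in the introduction — to transport the question to $\overline{\mathcal{M}}_{0,[2g+2]}$, where the pseudo-effective cone is completely understood. First I would recall that by \cite[Thm.~1.3]{KM} the cone $\overline{\rm{Eff}}^1(\overline{\mathcal{M}}_{0,[m]})$ is polyhedral, generated by the boundary divisors $\delta_{[s]}$ for $2\le s\le \lfloor m/2\rfloor$. Applying this with $m = 2g+2$ gives that $\overline{\rm{Eff}}^1(\overline{\mathcal{M}}_{0,[2g+2]})$ is simplicial (indeed the $\delta_{[s]}$ are the generators of the effective cone of this rational variety of Picard number $g$), hence polyhedral.

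Next I would pull back along the birational morphism $\alpha\colon\overline{\mathcal{H}}_g \to \overline{\mathcal{M}}_{0,[2g+2]}$ appearing in diagram~\eqref{eqn:Hurwitz}. Since $\alpha$ is a birational morphism of normal projective varieties which is an isomorphism in codimension one (this is the content of the identification used in \cite{Co}; the exceptional locus of $\alpha$, if any, has codimension $\ge 2$, and for $n=0$ in fact $\alpha$ is an isomorphism by \cite[Prop.~2.2]{EH}), pushforward and pullback of divisors induce mutually inverse isomorphisms $\mathrm{N}^1(\overline{\mathcal{H}}_g)\cong \mathrm{N}^1(\overline{\mathcal{M}}_{0,[2g+2]})$ identifying the two pseudo-effective cones. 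Concretely, $\alpha$ being an isomorphism (for $n=0$) means $\overline{\rm{Eff}}^1(\overline{\mathcal{H}}_g)=\alpha^*\,\overline{\rm{Eff}}^1(\overline{\mathcal{M}}_{0,[2g+2]})$, which is polyhedral. Finally I would identify the generators: under $\alpha$ the boundary divisor $\delta_{[s]}$ of $\overline{\mathcal{M}}_{0,[2g+2]}$ pulls back (up to the factor of $2$ convention for $\delta_{[2]}$ coming from the automorphism of a $2$-pointed rational bridge, as the $S_{2g+2}$-quotient $\overline{\mathcal{M}}_{0,2g+2}\to\overline{\mathcal{M}}_{0,[2g+2]}$ is simply ramified over $\delta_{[2]}$) to the boundary divisor of $\overline{\mathcal{H}}_g$ parameterising admissible covers with a rational bridge carrying $s$ of the branch points — i.e.\ to the irreducible component $\Delta_{i:\varnothing}$-type locus of the boundary, matching branch points on a tail with components of the degenerate hyperelliptic curve. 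Thus $\overline{\rm{Eff}}^1(\overline{\mathcal{H}}_g)$ is generated by the irreducible components of its boundary.

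The step I expect to require the most care is verifying cleanly that $\alpha$ (or more precisely the composition $\overline{\mathcal{H}}_g\cong\overline{\mathrm{Hur}}_{g,2}\xrightarrow{\pi}\overline{\mathcal{M}}_{0,[2g+2]}$) is an isomorphism in codimension one and that under this identification the boundary components of $\overline{\mathcal{H}}_g$ correspond exactly to the $\delta_{[s]}$, so that "components of the boundary" on the two sides really match up. This is essentially the description of $\mathrm{Pic}(\overline{\mathcal{H}}_g)$ from \cite{Co}, but one should be slightly careful about the $\lambda$-class: on $\overline{\mathcal{H}}_g$ the Hodge class $\lambda$ is a rational linear combination of boundary classes (a relation special to the hyperelliptic locus), so it does not contribute a new extremal ray, and indeed $\mathrm{Pic}_{\mathbb{Q}}(\overline{\mathcal{H}}_g)$ has rank $g$ spanned by the boundary. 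Once this bookkeeping is in place, polyhedrality and the identification of the generators are immediate from \cite[Thm.~1.3]{KM}.
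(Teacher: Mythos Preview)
Your approach is correct and matches the paper's proof essentially verbatim: both reduce to \cite[Thm.~1.3]{KM} via the isomorphism $\alpha:\overline{\mathcal{H}}_g\overset{\sim}{\to}\overline{\mathcal{M}}_{0,[2g+2]}$. Your extra discussion about codimension-one isomorphism and the $\lambda$-relation is unnecessary here since for $n=0$ the map $\alpha$ is an honest isomorphism on coarse spaces, so the identification of pseudo-effective cones and boundary generators is immediate.
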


\begin{proof}
The cone $\overline{\rm{Eff}}^1(\overline{\rm{M}}_{0,[2g+2]})$ is polyhedral and generated by the boundary divisors, cf. \cite[Thm. 1.3]{KM}. Hence the isomorphism $\alpha:\overline{\rm{H}}_g\overset{\sim}{\to}\overline{\rm{M}}_{0,[2g+2]}$ implies the result.
\end{proof}

\begin{lemma}
For $g\geq 2$, the cone $\overline{\rm{Eff}}^1(\overline{\rm{H}}_{g,1})$ is polyhedral generated by the irreducible components of the boundary and the Weierstrass divisor.
\end{lemma}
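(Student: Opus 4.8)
The plan is to reduce the computation of $\overline{\rm{Eff}}^1(\Hbars{g}{1})$ to the already-understood effective cone of $\Mbar{0}{[2g+2]+1}$, via the forgetful/branch morphism $\pi\colon\overline{\mathcal{H}ur}_{g,2}^1\to\Mbar{0}{[2g+2]+1}$ together with $\phi\colon\overline{\mathcal{H}ur}_{g,2}^1\to\Hbars{g}{1}$. The key structural input is that for $n=1$ the map $\phi$ has no divisorial exceptional locus: indeed by Proposition~\ref{prop:exc_loc} the divisorial exceptional locus of $\phi$ is $\bigcup_{|S_1\cup S_2|>2}E_{(S_1,S_2)}\cup\bigcup_{i+|S|\geq 3}\pi^{-1}(\delta_{[i]+S})$, and with only one marking there are no triples $(S_1,S_2)$ with $|S_1\cup S_2|>2$ and no $(i,S)$ with $i+|S|\geq 3$ that is relevant, so $\phi$ is a small birational morphism when $n=1$ (this matches Proposition~\ref{prop:nonQfactorial}, which only gives non-$\mathbb{Q}$-factoriality for $n\geq 2$; for $n=1$ the space $\Hbars{g}{1}$ is in fact $\mathbb{Q}$-factorial). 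Hence $\phi_*$ and $\phi^*$ (on Weil divisor classes, or on $\mathbb{Q}$-Cartier classes) give an isomorphism $\mathrm{N}^1(\overline{\mathcal{H}ur}_{g,2}^1)_{\mathbb{Q}}\cong \mathrm{N}^1(\Hbars{g}{1})_{\mathbb{Q}}$ identifying pseudo-effective cones, since a small birational morphism between $\mathbb{Q}$-factorial varieties induces an isomorphism of pseudo-effective cones.

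Next I would pin down the effective cone of the intermediate space. Since $\pi\colon\overline{\mathcal{H}ur}_{g,2}^1\to\Mbar{0}{[2g+2]+1}$ is finite (of degree $2^{0}=1$ for $n=1$, in fact $\pi$ is an isomorphism when $n=1$ by the discussion after \eqref{eqn:Hurwitz} which states $\pi$ has degree $1$ for $n=0$ — wait, for $n=1$ the stated degree is $2^{n-1}=1$), so $\pi$ is a degree-one finite morphism between normal varieties, hence an isomorphism onto its image, and in fact birational; combined with the fact that both sides are normal and $\pi$ is finite, $\pi$ identifies $\overline{\mathcal{H}ur}_{g,2}^1$ with $\Mbar{0}{[2g+2]+1}$ up to the ramification locus. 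More carefully: $\pi$ being finite of degree one means $\pi$ is the normalization of its image, but $\Mbar{0}{[2g+2]+1}$ is normal, so $\pi$ is an isomorphism. Therefore $\overline{\mathcal{H}ur}_{g,2}^1\cong\Mbar{0}{[2g+2]+1}$, and by \cite[Thm.~1.3]{KM} (quoted repeatedly in the excerpt) the cone $\overline{\rm{Eff}}^1(\Mbar{0}{[2g+2]+1})$ is polyhedral, generated by the boundary divisors $\delta_{[i]+S}$. Pushing forward along $\phi$: the boundary divisor $\delta_{[0]+\{1\}}$-type pieces and the divisors $\pi^{-1}(\delta_{[1]+\varnothing})$, $\pi^{-1}(\delta_{[2p+1]+\varnothing})$ push forward either to boundary components of $\Hbars{g}{1}$ or get contracted (to lower-dimensional loci — but we established $\phi$ is small, so none get contracted to divisors, and those that do get contracted contribute nothing to $\mathrm{N}^1$). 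The surviving generators are: the images of boundary divisors $\Delta_{\mathrm{irr}}$, $\Delta_{i:\varnothing}$, $\Delta_{i:\{1\}}$ of $\Hbars{g}{1}$, plus the image of the divisor $\pi^{-1}(\delta_{[1]+\{1\}})$ (one Weierstrass point together with the marking $p_1$ on a rational tail), which by Proposition~\ref{prop:exc_loc} has $\phi$-image of codimension one — this is precisely the Weierstrass divisor $\mathfrak{W}$ on $\Hbars{g}{1}$ (the closure of the locus where $p_1$ is a Weierstrass point).

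So the concrete steps are: (1) invoke Proposition~\ref{prop:exc_loc} to conclude $\phi$ is small for $n=1$, hence induces an isomorphism of pseudo-effective cones between $\overline{\mathcal{H}ur}_{g,2}^1$ and $\Hbars{g}{1}$; (2) observe $\pi$ is a degree-one finite morphism to the normal variety $\Mbar{0}{[2g+2]+1}$, hence an isomorphism, so $\overline{\rm{Eff}}^1(\overline{\mathcal{H}ur}_{g,2}^1)\cong\overline{\rm{Eff}}^1(\Mbar{0}{[2g+2]+1})$; (3) apply \cite[Thm.~1.3]{KM} to get polyhedrality with boundary generators; (4) translate the generators through $\phi_*$: the $S_{2g+2}$-symmetrized boundary classes of $\Mbar{0}{[2g+2]+1}$ map to the boundary components of $\Hbars{g}{1}$ except $\delta_{[1]+\{1\}}$ which maps to the Weierstrass divisor $\mathfrak{W}$, and the classes $\delta_{[2p+1]+\varnothing}$ for $p\geq 1$ map (with multiplicity) onto the boundary divisors $\Delta_{p:\varnothing}$; one checks via the formulas in Section~\ref{section:hur} that no generator is lost and none is spuriously added. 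The main obstacle I anticipate is step (4): carefully matching up which symmetrized boundary divisor $\delta_{[i]+S}\subset\Mbar{0}{[2g+2]+1}$ pushes forward to which boundary component of $\Hbars{g}{1}$ (respectively to $\mathfrak{W}$), accounting for the $\tfrac12$-factors in $\delta_{1:\varnothing}$ and the ramification of $\pi^c$ recorded in Corollary~\ref{prop:ramif_brach2}, and verifying that the pushforward of a polyhedral cone generated by these is still exactly the polyhedral cone generated by the claimed divisors (i.e. that $\phi_*$ of the effective cone equals the effective cone of $\Hbars{g}{1}$, which uses that $\phi$ is small so $\phi_*$ and $\phi^*$ are mutually inverse on $\mathrm{N}^1$). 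A secondary subtlety is confirming $\Hbars{g}{1}$ is $\mathbb{Q}$-factorial so that this cone isomorphism is clean; this should follow from $\overline{\mathcal{H}ur}_{g,2}^1\cong\Mbar{0}{[2g+2]+1}$ being smooth and $\phi$ small, since a small birational morphism from a smooth (hence $\mathbb{Q}$-factorial) variety forces the target to be $\mathbb{Q}$-factorial only if the morphism is an isomorphism — so in fact one should argue $\phi$ is itself an isomorphism for $n=1$, which is plausible since there is simply no boundary phenomenon to contract, and then the statement reduces entirely to \cite[Thm.~1.3]{KM} plus the dictionary of divisor classes.
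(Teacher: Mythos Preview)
Your overall strategy matches the paper's: identify $\overline{\mathcal{H}ur}_{g,2}^1$ with $\Mbar{0}{[2g+2]+1}$ via the degree-one map $\pi$, then push the effective cone forward along $\phi$ (the paper calls this composite $\beta$). However, there is a concrete error and an unnecessary detour.

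The error is your claim that $\phi$ has no divisorial exceptional locus when $n=1$. Check Proposition~\ref{prop:exc_loc} again: for $i=2$ and $S=\{1\}$ one has $i+|S|=3\geq 3$, so $\pi^{-1}(\delta_{[2]+\{1\}})$ \emph{is} divisorially contracted. Concretely, a general point there is a cover whose source is a genus $g-1$ hyperelliptic curve joined along two conjugate points to a rational bridge carrying the marking $p_1$; after forgetting Weierstrass points the bridge has exactly three special points and is rigid, so the image has dimension $2g-2$, codimension two in $\Hbars{g}{1}$. Thus $\phi$ is neither small nor an isomorphism, and your argument that ``small birational $\Rightarrow$ isomorphism of pseudo-effective cones'' does not apply. (Relatedly, $\Mbar{0}{[2g+2]+1}$ is a finite quotient of $\Mbar{0}{2g+3}$ and need not be smooth, only $\mathbb{Q}$-factorial.)

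The fix is to drop the smallness discussion entirely. For any birational morphism $\beta\colon X\to Y$ between normal projective varieties one has $\beta_*\overline{\rm{Eff}}^1(X)=\overline{\rm{Eff}}^1(Y)$: pushforward of effective is effective, and any effective $D$ on $Y$ satisfies $\beta_*\beta^{-1}_*D=D$. Contracted generators simply push to zero and drop out. This is exactly what the paper does in one line. Finally, a citation issue: \cite[Thm.~1.3]{KM} treats the \emph{fully} symmetric quotient $\Mbar{0}{[m]}$, not $\Mbar{0}{[2g+2]+1}$; the correct reference (used in the paper) is \cite[Prop.~5.3]{Ru}. With these two corrections your step~(4) dictionary is right: the non-contracted boundary generators push to boundary components of $\Hbars{g}{1}$, and $\delta_{[1]+\{1\}}$ pushes to the Weierstrass divisor $W$.
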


\begin{proof}
Consider the map 
$$\beta: \overline{\rm{M}}_{0,[2g+2]+1}\longrightarrow \overline{\rm{H}}_{g,1}.$$
The cone $\overline{\rm{Eff}}^1(\overline{\rm{M}}_{0,[2g+2]+1})$ is polyhedral and generated by the boundary divisors~\cite[Prop. 5.3]{Ru}. Moreover,
$$\beta_*\delta_{[1]+1}=W$$ 
where $a>0$ and $W$ is the closure of the locus of $1$-pointed hyperelliptic curves where the marking is a Weierstrass point. Therefore we have 
$$\overline{\rm{Eff}}^1(\overline{\rm{H}}_{g,1})=\beta_*(\overline{\rm{Eff}}(\overline{\rm{M}}_{0,[2g+2]+1}))$$
is generated by the irreducible components of the boundary and the Weierstrass divisor.
\end{proof}

\begin{remark}
The class of $W$ in $\overline{\rm{H}}_{g,1}$ was recently computed in \cite{EH}.
\end{remark}

The case of $n\geq2$ is more interesting. We follow closely~\cite{Mu} which proves non-polyhedrality of $\overline{\rm{Eff}}^1\left(\overline{\rm{M}}_{g,n}\right)$ for $g\geq2$ and $n\geq 2$. Denote by $[F]$ the curve class of the general fibre of the forgetful morphism 
$$\overline{\rm{H}}_{g,n}\longrightarrow\overline{\rm{H}}_{g,n-1}.$$
Irreducible curves with this class sweep out an open Zariski dense subset of $\overline{\rm{H}}_{g,n}$ and hence $[F]\cdot[D]\geq0$ for any pseudo-effective divisor $[D]$, that is, $[F]$ is a nef curve.\\

Following the strategy of~\cite{Mu} we show that $[F]$ is in fact an extremal ray in the cone $\Nefbar_1(\Hbars{g}{n})$ of nef (or moving) curve classes which is dual to $\overline{\rm{Eff}}^1(\overline{\rm{H}}_{g,n})$. However, we show the corank of pseudo-effective divisors $[D]$ with $[F]\cdot[D]=0$ is at least two in ${\rm{N}}^1(\overline{\rm{H}}_{g,n})$ and hence this curve forms an extremal but non-polyhedral edge of ${\rm{Nef}}_1(\overline{\rm{H}}_{g,n})$. \\

For any nef curve class $[B]$ we define the pseudo-effective dual space
$$[B]^\vee:=\{[D]\in \overline{\text{Eff}}^1(\overline{\rm{M}}_{g,n})\hspace{0.3cm}\big|\hspace{0.3cm} [B]\cdot[D]=0\}.$$
Our first Proposition bounds the rank of this dual space for the curve $[F]$.

\begin{proposition}\label{prop:corank}
For $g\geq 2$ and $n\geq2$, 
$$\rho(\overline{\rm{H}}_{g,n})-n\leq{\rm{rank}}([F]^{\vee}\otimes\mathbb{R})\leq \rho(\overline{\rm{H}}_{g,n})-2,$$
where $\rho(\overline{\rm{H}}_{g,n})$ denotes the Picard number of $\overline{\rm{H}}_{g,n}$.
\end{proposition}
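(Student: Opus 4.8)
The strategy is to establish the two inequalities separately, following the general method of \cite{Mu}. For the \emph{lower bound} $\rho(\Hbars{g}{n})-n\leq{\rm{rank}}([F]^{\vee}\otimes\mathbb{R})$, the idea is to exhibit many pseudo-effective divisors that intersect $[F]$ trivially. The curve $[F]$ is the fibre of the forgetful map $\Hbars{g}{n}\to\Hbars{g}{n-1}$; hence any divisor pulled back from $\Hbars{g}{n-1}$ has zero intersection with $[F]$, as do all boundary divisors $\delta_{0:S}$ with $n\notin S$ and, more importantly, all divisors supported on the fibres of forgetful maps that are disjoint from the generic fibre $F$. Concretely, I would argue that $i^*$ of the pullback of $\Pic_\qq(\Hbars{g}{n-1})$ together with the classes $\psi_1,\dots,\psi_{n-1}$ (the $\psi$-classes on markings other than the $n$-th) and the relevant boundary classes all pair to zero with $[F]$, and that these span a subspace of dimension at least $\rho(\Hbars{g}{n})-n$ inside $\overline{\rm{Eff}}^1(\Hbars{g}{n})$. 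The count of ``$n$'' comes from the classes that \emph{do} intersect $[F]$ nontrivially, essentially $\psi_n$ and the boundary divisors $\delta_{0:\{i,n\}}$; one must check these are genuinely independent in $N^1$, which follows from the known structure of $\Pic_\qq$ via the relation to $\Mbar{0}{[2g+2]+n}$ and the description in \cite{Sca}.

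For the \emph{upper bound} ${\rm{rank}}([F]^{\vee}\otimes\mathbb{R})\leq \rho(\Hbars{g}{n})-2$, equivalently that $[F]^\vee$ has corank at least two, the plan is to restrict to $n=2$ (the general case reducing to it by pulling back along forgetful maps, since a pseudo-effective divisor trivial on $[F]$ for general $n$ restricts to one trivial on $[F]$ for $n=2$) and then use the surfaces $i\colon C\times C\to \Hbars{g}{2}$ obtained by fixing a general genus-$g$ hyperelliptic curve $C$ and letting the two marked points vary. On such a surface $S=C\times C$, the two rulings $\{pt\}\times C$ and $C\times\{pt\}$ give two curve classes; one of them is (a multiple of) $[F]$, but the \emph{other} ruling gives a second independent curve class $[F']$, and the diagonal contributes as well. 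Any divisor $[D]\in[F]^\vee$ pulls back to $S$; I would compute $i^*\psi_1$, $i^*\psi_2$, $i^*\delta_{0:\{1,2\}}$ on $C\times C$ (these are combinations of the two rulings and the diagonal, with coefficients involving $g$ and the hyperelliptic structure) and deduce that $[D]\cdot[F]=0$ forces a \emph{second} linear relation on the coefficients of $[D]$, coming from effectivity of $[D]|_S$ together with $[D]\cdot[F']\ge 0$ and $[D]\cdot(\text{diagonal})$. That produces corank $\ge 2$.

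The main obstacle I anticipate is the \emph{upper bound}: one must produce a genuinely second constraint, not merely re-derive $[F]\cdot[D]=0$. This requires understanding the intersection theory of $C\times C$ precisely enough — in particular how $\psi_2$ and $\delta_{0:\{1,2\}}$ restrict, which involves the self-intersection of the diagonal ($=2-2g$) and the interplay with the hyperelliptic involution acting on the second factor — and then arguing that a pseudo-effective $[D]$ with $[D]\cdot[F]=0$ whose restriction to $S$ is effective but not supported on fibres would violate nefness of some curve in $S$. A secondary technical point is making the ``reduce to $n=2$'' step rigorous: one needs that the $n-2$ extra forgetful maps are flat with the fibre $[F]$ behaving compatibly, so that a divisor in $[F]^\vee$ for the $n$-pointed space restricts to one in $[F]^\vee$ for the $2$-pointed space without collapsing the corank. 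Both of these are carried out in \cite{Mu} for $\Mbar{g}{n}$, and the remaining work is to check that the pullback along $\phi\colon\overline{\mathcal{H}ur}_{g,2}^n\to\Hbars{g}{n}$, together with the computations on $\Mbar{0}{[2g+2]+n}$ already available in this paper, transports those arguments to the hyperelliptic setting.
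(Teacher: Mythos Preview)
Your lower-bound argument is essentially the paper's: effective divisors in $[F]^\vee$ are supported on pullbacks from $\Hbars{g}{n-1}$ together with boundary components away from the $\delta_{0:\{i,n\}}$, and these span a subspace of rank at least $\rho-n$.

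The gap is in the upper bound. For $n=2$ you correctly locate the surface $i:C\times C\to\Hbars{g}{2}$, but the mechanism you propose---using nefness of the other ruling $[F']$ and the diagonal to produce a ``second linear relation''---does not work as stated: $[D]\cdot[F']\ge 0$ is an \emph{inequality}, not an equation, and cannot by itself cut down the linear span of $[F]^\vee$. The paper's argument is sharper and of a different nature. The lower-bound analysis already shows that every \emph{effective} $[D]\in[F]^\vee$ is a pullback; hence if $\mathrm{rank}([F]^\vee\otimes\mathbb{R})=\rho-1$ there must exist a \emph{strictly} pseudo-effective class $[D]\in[F]^\vee$ with nonzero $\psi_2$-coefficient. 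One then forms the covering curve $[B_D]:=i_*i^*[D]$ and computes $[B_D]\cdot[D]=-8a^2(g-1)^2g<0$; by the covering-curve argument this forces $[D]$ to contain a rigid strictly effective component, contradicting strict pseudo-effectivity. The key idea you are missing is this self-intersection/rigidity trick from \cite{Mu}, not a second nef curve.

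For $n\ge 3$ your reduction ``by pulling back along forgetful maps'' is in the wrong direction and is not what the paper does. Instead the paper (i) observes that the fibre of $\Hbars{g}{n}\to\Hbars{g}{n-1}$ over a general point of $\delta_{0:\{1,\dots,n-1\}}$ is reducible, which splits $[F]$ as a sum of two covering curves and forces each to pair to zero with any extremal $[D]\in[F]^\vee$ away from two specified boundaries; and (ii) uses the \emph{gluing} morphism $\varphi:\Hbars{g}{2}\to\Hbars{g}{n}$ attaching a fixed rational $n$-pointed tail at the first marking to pull $[D]$ back to $\Hbars{g}{2}$, where the $n=2$ analysis yields $c_{\psi_n}=0$. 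That is the second independent condition.
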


\begin{proof}
Consider an irreducible effective divisor $D$ in $\overline{\rm{H}}_{g,n}$ that intersects the interior $\rm{H}_{g,n}$ with $[F]\cdot [D]=0$. For any point $[C,p_1,\dots,p_{n}]\in\overline{\rm{H}}_{g,n}$ contained in $D$, denote by $B$ the curve that is the fibre of $[C,p_1,\dots,p_{n-1}]$ under the forgetful morphism $\pi:\overline{\rm{H}}_{g,n}\longrightarrow \overline{\rm{H}}_{g,n-1}$ that forgets the $n$th point. As $[B]=[F]$, by assumption we have $[B]\cdot[D]=0$ so necessarily as $B$ is irreducible and intersects $D$ set theoretically, $B$ is contained in the support of $D$. Hence the divisor $D$ is supported on the pullback of an irreducible effective divisor under $\pi$. Further, if $D$ is an irreducible divisor in $\overline{\rm{H}}_{g,n}$ that does not intersect the interior $\rm{H}_{g,n}$ with $[F]\cdot [D]=0$ then $D$ is an irreducible component of the boundary outside of $\delta_{0:\{i,n\}}$ for $i=1,\dots,n-1$. This shows 
$${\rm{rank}}([F]^{\vee}\otimes\mathbb{R})\geq \rho(\overline{\rm{H}}_{g,n})-n.$$

A key observation from~\cite{Mu} is that the intersection in $\overline{\rm{H}}_{g,2}$ of the surface equal to the general fibre of the forgetful morphism to $\overline{\mathcal{H}}_g$ with any effective divisor offers a potential way to obtain a covering curve for the divisor. More formally, let $C$ be a general smooth hyperelliptic genus $g$ curve and
$$i:C\times C\longrightarrow \overline{\rm{H}}_{g,2}$$
the natural morphism. For a fixed divisor $[D]$ in $\overline{\rm{H}}_{g,2}$, define
$$[B_D]:=i_*i^*[D].$$

Further, \cite[Prop. 1.6]{Mu} holds verbatim when replacing $\overline{\rm{M}}_{g,2}$ by $\overline{\rm{H}}_{g,2}$ and we have that if
$$[B_D]\cdot [D]=(2g-2)\left((4g-4)c_{\psi_1}c_{\psi_2}+(c_{\psi_1}+c_{\psi_2})^2-c_{0:\{1,2\}}^2  \right)<0$$
then $[D]$ contains a rigid strictly effective component.\\

Our argument for the lower bound of the pseudo-effective corank of $[F]$ in the case of $n=2$ implies that $\text{rank}([F]^{\vee}\otimes\mathbb{R})= \rho(\overline{\rm{H}}_{g,2})-1$ if and only if there exists a strictly pseudo-effective class in $\overline{\rm{Eff}}^1(\overline{\rm{H}}_{g,2})$ with zero intersection with $[F]$. That is, this would imply the existence of a $[D]\in\overline{\rm{Eff}}^1(\overline{\rm{H}}_{g,2})\setminus \text{Eff}^1(\overline{\rm{H}}_{g,2})$ with class
$$[D]=((1-2g)a-b)\psi_1+a\psi_2+b\delta_{0:\{1,2\}}+\text{other boundary components}$$
for $a,b\in\mathbb{R}$ with $a\ne 0$. However, for such a class we obtain
$$[B_D]\cdot [D]=-8a^2(g-1)^2g<0$$
implying the existence of a strictly effective divisor class $[E]$ with non-zero $\psi_2$-coefficient and $[F]\cdot[E]=0$, hence providing a contradiction.\\

The last remaining task is to extend this upper bound to the case $n\geq3$. Though the fibre of the forgetful morphism $\overline{\rm{H}}_{g,n}\longrightarrow\overline{\rm{H}}_{g,n-1}$ over a point on the interior ${\rm{H}}_{g,n-1}$ is irreducible, for $n\geq3$ the fibre over a general point in $\delta_{0:\{1,\dots,n-1\}}$ is reducible with two irreducible components. As a result, $[F]$ can be expressed as an effective sum of these two curves which form covering curves for $\delta_{0:\{1,\dots,n-1\}}$ and $\delta_{0:\{1,\dots,n\}}$. Any extremal $[D]\in \overline{\rm{Eff}}^1(\overline{\rm{H}}_{g,n})$ outside of $\delta_{0:\{1,\dots,n-1\}}$ or $\delta_{0:\{1,\dots,n\}}$ must have nonnegative intersection with each of these covering curves and if further $[F]\cdot[D]=0$ then each of these intersections must in fact be zero. \\

Let $\varphi:\overline{\rm{H}}_{g,2}\longrightarrow \overline{\rm{H}}_{g,n}$ be the morphism gluing in a general $n$-pointed rational curve at the first marked point. For such a $[D]$ we obtain $[F]\cdot\pi^*[D]=0$. The proof of Lemma 3.3 from~\cite{Mu} for $\overline{\rm{M}}_{g,n}$ holds verbatim for $\overline{\rm{H}}_{g,n}$ and shows in this situation $\pi^*[D]$ is pseudo-effective and hence by pulling back the class of $[D]$ we have $c_{\psi_n}=0$. This provides a second linearly independent condition on $[F]^\vee\otimes\mathbb{R}$ for $n\geq 3$ completing the proof of the upperbound.
\end{proof}

The remaining task is to show that $[F]$ is indeed extremal in ${\rm{Nef}}_1(\overline{\rm{H}}_{g,n})$. We do this through the use of well-chosen families of effective Cartier divisor classes such that for any potential nef decomposition of the class $[F]$ we can provide an effective Cartier divisor class with a negative intersection with a summand.\\

Define $D_k$ and $E_k$ as the following loci in $\overline{\rm{H}}_{g,2}$ defined via conditions in the Jacobian of hyperelliptic curves
\small
\begin{equation*}
\resizebox{.95\hsize}{!}{$D_k:={\left\{ [C,p_1,p_2]\in\rm{H}_{g,2} \big| \exists [C,p_1,p_2,q_1,\dots,q_{g-1}]\in\rm{H}_{g,g+1}, \mathcal{O}_C((k(g-1)+1)p_1-p_2-k\sum_{i=1}^{g-1}q_i)\sim\mathcal{O}_C   \right\}}$}
\end{equation*}
\begin{equation*}
\resizebox{.95\hsize}{!}{$E_k:={\left\{ [C,p_1,p_2]\in\rm{H}_{g,2} \big| \exists [C,p_1,p_2,q_1,\dots,q_{g-1}]\in\rm{H}_{g,g+1}, \mathcal{O}_C((k(g-1)-1)p_1+p_2-k\sum_{i=1}^{g-1}q_i)\sim\mathcal{O}_C   \right\}}.$}
\end{equation*}
\normalsize
Taking the closure we obtain two families of divisors $\overline{D}_k$ and $\overline{E}_k$ for $k\geq 2$ in $\overline{\rm{H}}_{g,2}$. The following lemma proves these are indeed effective and Cartier.

\begin{lemma}\label{lem:eff}
For any $k\geq 2$, $\overline{E}_k$ and $\overline{D}_k$ are effective and Cartier.
\end{lemma}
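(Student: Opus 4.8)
The plan is to exhibit $\overline{D}_k$ and $\overline{E}_k$ as components of the pushforward under a forgetful map of a Weierstrass-type locus in a Hurwitz space where the classes are visibly effective, and then deduce the Cartier property from the fact that the relevant divisors live in the smooth locus of the moduli space. First I would reinterpret the interior conditions: a point $[C,p_1,p_2]\in D_k$ lies in $D_k$ exactly when there exist $q_1,\dots,q_{g-1}$ such that $(k(g-1)+1)p_1 \sim p_2 + k\sum q_i$, i.e.\ the linear system $|(k(g-1)+1)p_1|$ contains an effective divisor of the prescribed shape. Pulling back to the ordered Hurwitz space $\widehat{\mathcal{H}ur}_{g,2}^2$ (or an auxiliary space with the $q_i$ marked, $\widehat{\mathcal{H}ur}_{g,2}^{g+1}$), this becomes a determinantal/incidence condition: the vanishing of a section of a line bundle built from the relative dualising sheaf and the marked-point sections, so the locus is the zero scheme of such a section and is therefore an effective Cartier divisor on the Hurwitz space, which is smooth as a stack by \cite{JKK, SvZ}. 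One then pushes forward along the forgetful morphism $\widehat{\mathcal{H}ur}_{g,2}^{g+1}\to \Hbars{g}{2}$ (forgetting the $q_i$ and the Weierstrass markings), and $\overline{D}_k$ is a component — effectivity is preserved under pushforward.

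The key steps, in order, are: (i) set up the universal hyperelliptic curve $\mathcal{C}\to B$ over $B=\widehat{\mathcal{H}ur}_{g,2}^{g+1}$ with sections $\sigma_{p_1},\sigma_{p_2},\sigma_{q_1},\dots,\sigma_{q_{g-1}}$, and consider the line bundle $L = \mathcal{O}_{\mathcal{C}}((k(g-1)+1)\sigma_{p_1} - \sigma_{p_2} - k\sum\sigma_{q_i})$ of relative degree zero; (ii) observe that the condition $L|_{\mathcal{C}_b}\cong \mathcal{O}_{\mathcal{C}_b}$ is the condition that $L$ lies in the image of the zero section of the relative Jacobian, equivalently the vanishing of a canonically defined section of a line bundle on $B$ (the pullback of $\mathcal{O}$ from the relative degree-zero Jacobian along the classifying section, or more concretely the determinant-of-cohomology argument applied to $L\otimes(\text{twist})$); (iii) check the zero locus has pure codimension one (it is not all of $B$ — a general hyperelliptic curve admits no such relation, which one can see on the genus-$g$ curve directly, since $(k(g-1)+1)p_1$ generically moves in a pencil of too-small dimension to contain the prescribed divisor for general $q_i$), so it is an effective Cartier divisor; (iv) push forward to $\Hbars{g}{2}$ and identify $\overline{D}_k$ with the closure of (a component of) the image, noting the interior description matches; (v) argue Cartier-ness on $\Hbars{g}{2}$ — here one uses that $\overline{D}_k$ is supported away from the codimension-$\geq 2$ non-$\mathbb{Q}$-factorial locus identified in Proposition \ref{prop:small_contraction}, or more simply that $\overline{D}_k$ meets the interior $\mathcal{H}_{g,2}$ (where the coarse space has only finite quotient singularities and a sufficiently divisible multiple is Cartier) and its boundary behaviour can be read off from $\overline{\mathcal{H}ur}_{g,2}^2$; the argument for $\overline{E}_k$ is identical with $(k(g-1)+1, -1)$ replaced by $(k(g-1)-1, +1)$.

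The main obstacle I expect is step (v), the Cartier (as opposed to merely Weil/$\mathbb{Q}$-Cartier) claim, given that $\Hbars{g}{2}$ is not $\mathbb{Q}$-factorial by Proposition \ref{prop:nonQfactorial}. The cleanest route is probably to produce the divisor class already on the smooth Hurwitz stack $\overline{\rm{Hur}}_{g,2}^2$ (or $\widehat{\rm{Hur}}_{g,2}^2$) as an honest Cartier divisor, pull back to the coarse space, and then observe that it is the pullback under $\phi$ of a genuine Cartier divisor on $\Hbars{g}{2}$ precisely because it does not meet the exceptional locus of $\phi$ described in Proposition \ref{prop:exc_loc} — a general curve parametrised by $\overline{D}_k$ or $\overline{E}_k$ is smooth or has only a non-separating node, so lies outside the divisorial exceptional locus $\bigcup E_{(S_1,S_2)}\cup\bigcup\pi^{-1}(\delta_{[i]+S})$, and $\phi$ is an isomorphism there. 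One should double-check that "Cartier" as used later (e.g.\ in the definition of $\overline{\rm{Eff}}^1$) only requires a class in $\mathrm{Pic}_{\mathbb{Q}}$, in which case the $\mathbb{Q}$-Cartier statement coming from the finite-quotient-singularity locus suffices and this obstacle largely dissolves.
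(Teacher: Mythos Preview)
Your approach differs substantially from the paper's, and there is a real gap at the step you flag least.

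The paper's argument is much shorter. It does not construct the divisors from scratch on a Hurwitz space; instead it uses that $\overline{D}_k$ and $\overline{E}_k$ are already known to be effective divisors in $\overline{\mathcal{M}}_{g,2}$ by~\cite{Mu}. Since $\overline{\mathcal{M}}_{g,2}$ is $\mathbb{Q}$-factorial, these are $\mathbb{Q}$-Cartier there, and the restriction of a $\mathbb{Q}$-Cartier divisor along the closed embedding $\overline{\mathcal{H}}_{g,2}\hookrightarrow\overline{\mathcal{M}}_{g,2}$ is again $\mathbb{Q}$-Cartier. So your entire step~(v), with its analysis of the exceptional locus of $\phi$, is unnecessary: the Cartier property comes for free from the ambient moduli space, exactly as you suspect at the end of your last paragraph.

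The genuine content of the lemma is then purely that $\overline{\mathcal{H}}_{g,2}$ is not contained in $\overline{D}_k$ or $\overline{E}_k$, i.e.\ that some $2$-pointed hyperelliptic curve fails the defining condition. This is precisely where your proposal has a gap. In your step~(iii) you argue that the determinantal locus on the larger space $B$ (with the $q_i$ marked) has codimension one, which is fine. But in step~(iv) you push forward along the map forgetting the $q_i$, and you must show that the \emph{image} is a proper subvariety of $\overline{\mathcal{H}}_{g,2}$. Since the condition defining $D_k$ is existential in the $q_i$, this amounts to showing that for a general $(C,p_1,p_2)$ \emph{no} choice of $q_1,\dots,q_{g-1}$ satisfies the linear equivalence. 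Your parenthetical dimension count (``$(k(g-1)+1)p_1$ generically moves in a pencil of too-small dimension to contain the prescribed divisor for general $q_i$'') only addresses general $q_i$, not all $q_i$, and so does not rule out surjectivity of the image.

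The paper handles this by an explicit induction on $g$: it proves the stronger statement that for a general pointed hyperelliptic $(C,q)$ with $q$ Weierstrass and any nonzero pair $(n_1,n_2)$, there exist $p_1,p_2\in C$ such that $\mathcal{O}_C(n_1p_1+n_2p_2-k\sum q_i-(n_1+n_2-kr)q)\ncong\mathcal{O}_C$ for every choice of $q_1,\dots,q_r$ with $r\leq g-1$. The base case $g=1$ is a torsion condition; the inductive step degenerates $C$ to $C_1\cup_q C_2$ with $g(C_1)=1$, $g(C_2)=g-1$, placing $p_1\in C_1$ and $p_2\in C_2$, and splits the relation on the Jacobian into one on each component, at least one of which violates the inductive hypothesis. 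This degeneration argument is the actual work, and it is missing from your outline.
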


\begin{proof}
By \cite{Mu} both are effective in $\overline{\rm{M}}_{g,2}$. Since $\overline{\rm{M}}_{g,2}$ is $\mathbb{Q}$-factorial and the divisors are defined by restriction to $\overline{\rm{H}}_{g,2}$ it is enough to show that there is a single 2-pointed hyperelliptic curve $[C,p_1,p_2]\in \overline{\rm{H}}_{g,2}$ not contained in $\overline{D}_k$, respectively $\overline{E}_k$. Let $[C,q]$ be a general pointed hyperelliptic curve of genus $g\geq 1$ where $q$ is a Weierstrass point if $g\geq2$. We will prove the following more general statement by induction on $g$. For any integers $n_1,n_2$, not both zero, there exist points $p_1,p_2$ in $C$ such that for any set of points $q_1,\ldots,q_{r}$ with $r\leq g-1$ and $q_i\neq p_j$ when $n_j\neq0$
$$\mathcal{O}_{C}\left(n_1p_1+n_2p_2-k\sum_{i=1}^r q_i-(n_1+n_2-kr)q\right)\neq \mathcal{O}_C.$$ 
The lemma then will follow choosing $n_1=k(g-1)+1$ and $n_2=-1$ for $\overline{D}_k$ and $n_1=k(g-1)-1$ and $n_2=1$ for $\overline{E}_k$.\\

When $g=1$ the condition becomes a non-trivial torsion condition on three points on an elliptic curve and hence the statement clearly holds. Now let $C$ be the hyperelliptic curve of genus $g$ obtained by gluing $C_1$ and $C_2$ along a Weierstrass point $q$ and $n_1,n_2$ to integers not both zero. We assume $C_1$ has genus $1$ and therefore $C_2$ has genus $g-1$. Take $p_1, p_1'$ in $C_1$ satisfying the induction hypothesis for integers $(n_1,0)$ and $p_2,p_2'\in C_2$ satisfying the induction hypothesis for integers $(n_2,0)$. Further, observe that as there is no condition on $p_1'$ and $p_2'$ this will hold for these points chosen freely in the curves $C_1$ and $C_2$ respectively. Then for any set of $r\leq g-1$ points, in $C$, say $q_1,\ldots,q_{r_1}\in C_1$ and $q_1',\ldots,q_{r_2}'\in C_2$ with $r=r_1+r_2$.
The equation in the Jacobian of $C$ then becomes
\begin{equation}\label{sec7:eqEff}
\resizebox{.95\hsize}{!}{$\mathcal{O}_{C_1}\left(n_1p_1-k\sum_{i=1}^{r_1}q_i-(n_1-kr_1)q\right)= \mathcal{O}_{C_1}\quad\hbox{and}\quad\mathcal{O}_{C_2}\left(n_2p_2-k\sum_{i=1}^{r_2} q_i'-(n_2-k{r_2})q\right)= \mathcal{O}_{C_2}.$}
\end{equation}
But either $r_1<1$ or $r_2<g-1$. Thus, by induction hypothesis there are no points 
$$q_1,\ldots,q_{r_1},q_1',\ldots,q_{r_2}'$$
with $q_i\neq p_1$ if $n_1\neq 0$ and $q_j'\neq p_2$ if $n_2\neq 0$ satisfying \eqref{sec7:eqEff}. 
\end{proof}

The classes of the corresponding divisors in $\overline{\rm{M}}_{g,2}$ was computed in~\cite[Corollary 4.2]{Mu}. The statement in the proof of Lemma~\ref{lem:eff} further shows that $\delta_{0:\{1,2\}}$ is not an effective component of the pullback of these divisors from $\overline{\rm{M}}_{g,2}$. That is, under the inclusion
$$i:\overline{\rm{H}}_{g,2}\longrightarrow \overline{\rm{M}}_{g,2}$$
 we obtain the classes of these divisors in $\Pic_\mathbb{Q}(\overline{\rm{H}}_{g,2})$ as \small
$$  [ \overline{D}_k]=\frac{1}{2}(gk+1)(gk-k+1)k^{2g-2}\psi_1+\frac{1}{2}(1-k)k^{2g-2}\psi_2 -\frac{1}{2}(gk^{2g}-k^{2g}+2)g\delta_{0:\{1,2\}}        +\dots$$ \normalsize
and \small
$$   [\overline{E}_k]=\frac{1}{2}(gk-1)(gk-k-1)k^{2g-2}\psi_1+\frac{1}{2}(k+1)k^{2g-2}\psi_2-\frac{1}{2}(gk^{2g}-k^{2g}+2)g\delta_{0:\{1,2\}}+\dots    $$ \normalsize

\begin{proposition}\label{prop:extremal}
The curve class $[F]$ is extremal in $\overline{\text{Nef}}_1(\overline{\rm{H}}_{g,2})$ for $g\geq2$ and $n\geq1$.
\end{proposition}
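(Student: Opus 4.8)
The plan is to follow the strategy of~\cite{Mu}: assume a nontrivial decomposition $[F]=[\beta_1]+[\beta_2]$ with $[\beta_1],[\beta_2]\in\overline{\rm{Nef}}_1(\Hbars{g}{n})$ and prove that each $[\beta_i]$ lies in $\mathbb{R}_{\geq0}[F]$. First I would dispose of $n=1$. There the fibres of $\Hbars{g}{1}\to\overline{\mathcal{H}}_g$ over a general point (a smooth hyperelliptic curve) lie entirely in the interior $\mathcal{H}_{g,1}$, so $[F]$ meets no component of the boundary, while $[F]\cdot\psi_1=2g-2>0$. Since the irreducible boundary divisors are effective and $\sum_i[\beta_i]\cdot\delta=[F]\cdot\delta=0$, each $[\beta_i]$ is orthogonal to every boundary component, hence (comparing $Z$ with $Z+N\Xi$ for a suitable effective boundary-supported $\mathbb{Q}$-Cartier class $\Xi$, e.g.\ $\Xi=i^{*}(\partial\overline{\mathcal{M}}_{g,1})$) to every $\mathbb{Q}$-Cartier class supported on the boundary. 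As ${\rm{Pic}}_\mathbb{Q}(\Hbars{g}{1})$ is spanned by $\psi_1$ together with boundary-supported classes, each $[\beta_i]$ is determined by $[\beta_i]\cdot\psi_1$ and is therefore proportional to $[F]$; the constant is $\geq0$ because $\psi_1$ is nef and $[\beta_i]$ is a nef curve.

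The heart of the argument is the case $n=2$. Here $[F]\cdot\psi_1=1$, $[F]\cdot\psi_2=2g-1$, $[F]\cdot\delta_{0:\{1,2\}}=1$, and $[F]$ meets no other boundary divisor. As in the $n=1$ case, effectivity of the boundary divisors and nefness of $[\beta_i]$ force $[\beta_i]$ to be orthogonal to every $\mathbb{Q}$-Cartier class supported on the boundary away from $\delta_{0:\{1,2\}}$, so only $a_i:=[\beta_i]\cdot\psi_1$, $b_i:=[\beta_i]\cdot\psi_2$, $c_i:=[\beta_i]\cdot\delta_{0:\{1,2\}}$ survive, with $a_1+a_2=1$, $b_1+b_2=2g-1$, $c_1+c_2=1$. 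I would then use the effective divisors $\overline{D}_k,\overline{E}_k$ of Lemma~\ref{lem:eff}: in the classes displayed above, the unwritten ``$\dots$'' involves only boundary divisors $\neq\delta_{0:\{1,2\}}$ (the $\lambda$-part is pulled back from $\overline{\mathcal{H}}_g$, on which $\lambda$ is boundary-supported, and $\delta_{0:\{1,2\}}$ is not in the preimage of $\partial\overline{\mathcal{H}}_g$), so $[\beta_i]$ kills it and
\begin{align*}
2k^{2-2g}\,[\beta_i]\cdot\overline{D}_k&=g(g-1)k^{2}(a_i-c_i)+\big((2g-1)a_i-b_i\big)k+(a_i+b_i)-2g\,k^{2-2g}c_i,\\
2k^{2-2g}\,[\beta_i]\cdot\overline{E}_k&=g(g-1)k^{2}(a_i-c_i)-\big((2g-1)a_i-b_i\big)k+(a_i+b_i)-2g\,k^{2-2g}c_i,
\end{align*}
both $\geq0$ for every $k\geq2$. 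Letting $k\to\infty$, the $k^{2}$-coefficient $g(g-1)(a_i-c_i)$ must be $\geq0$ for each $i$; since $a_1+a_2=c_1+c_2=1$ this forces $a_i=c_i$. The remaining $k^{1}$-coefficients $\pm\big((2g-1)a_i-b_i\big)$ must then both be $\geq0$, so $b_i=(2g-1)a_i$. Hence $[\beta_i]$ and $a_i[F]$ have the same intersection with $\psi_1,\psi_2,\delta_{0:\{1,2\}}$ and with every $\mathbb{Q}$-Cartier class supported on the boundary off $\delta_{0:\{1,2\}}$; as these span ${\rm N}^1(\Hbars{g}{2})$, we get $[\beta_i]=a_i[F]$ with $a_i\geq0$.

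For $n\geq3$ I would reduce to the $n=2$ case exactly as in the proof of Proposition~\ref{prop:corank}. The forgetful morphism $p:\Hbars{g}{n}\to\Hbars{g}{n-1}$ contracts $[F]$, and since $\overline{\rm{Nef}}_1(\Hbars{g}{n-1})$ is pointed we get $p_*[\beta_i]=0$, so each $[\beta_i]$ is $p$-vertical. The fibre of $p$ over a general point of $\delta_{0:\{1,\dots,n-1\}}$ splits into two components, which are covering curves for $\delta_{0:\{1,\dots,n\}}$ and $\delta_{0:\{1,\dots,n-1\}}$; pairing $[\beta_i]$ with these, with the $\delta_{0:S\cup\{n\}}$ for $|S|\geq2$, and with $\psi_n$, and invoking~\cite[Lemma 3.3]{Mu} (which, as noted there, holds verbatim for $\Hbars{g}{n}$) to control the $\psi_n$-content through the gluing map $\varphi:\Hbars{g}{2}\to\Hbars{g}{n}$, one finds that $[\beta_i]$ has the intersection profile of a non-negative multiple of $[F]$ and concludes as before.

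\textbf{The main obstacle} I anticipate is the bookkeeping forced by the failure of $\mathbb{Q}$-factoriality: one must argue throughout with $\mathbb{Q}$-Cartier classes, confirm that the unwritten boundary terms of $[\overline{D}_k]$ and $[\overline{E}_k]$ are genuinely supported off $\delta_{0:\{1,2\}}$, and check that a curve class is nonetheless pinned down by its pairings with the $\mathbb{Q}$-Cartier classes under control (so that the vanishing of $[\beta_i]\cdot\overline{D}_k$, $[\beta_i]\cdot\overline{E}_k$ against boundary-supported classes is legitimate). The $n\geq3$ step is the other delicate point, as it rests on the covering-curve analysis of the reducible fibres of $p$ and on transplanting~\cite[Lemma 3.3]{Mu} to the hyperelliptic setting.
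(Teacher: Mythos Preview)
Your argument for $n=2$ is essentially the paper's, with a cosmetic difference in bookkeeping: the paper first invokes the structure of $[F]^\vee$ established in Proposition~\ref{prop:corank} to normalise to a one-parameter family $[F^t]$ with $[F^t]\cdot\psi_1=[F^t]\cdot\delta_{0:\{1,2\}}=1$ and $[F^t]\cdot\psi_2=2g-1+t$, and then uses only the leading $k^{2g-1}$-term of $[F^t]\cdot[\overline{D}_k]$ and $[F^t]\cdot[\overline{E}_k]$ to force $t=0$. You instead carry both ratios $a_i:c_i$ and $a_i:b_i$ and read off the two constraints from the $k^2$- and $k^1$-coefficients of the same intersections; this is correct and arguably more self-contained, since it does not lean on the full $[F]^\vee$ analysis. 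Your separate treatment of $n=1$ is also welcome, as the paper's proof in fact begins at $n=2$.

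The genuine gap is your reduction for $n\geq3$. Pushing forward along $p:\Hbars{g}{n}\to\Hbars{g}{n-1}$ kills $[F]$, so $p_*[\beta_i]=0$ merely places $[\beta_i]$ in $\ker p_*$, which is far from one-dimensional (already for $\Mbar{0}{5}\to\Mbar{0}{4}$ this kernel has rank four). Your subsequent appeal to reducible fibres, the $\delta_{0:S\cup\{n\}}$, and \cite[Lemma~3.3]{Mu} via $\varphi$ recovers at best the content of Proposition~\ref{prop:corank}, namely that $[\beta_i]$ lies in the annihilator of $[F]^\vee$; but that annihilator has dimension at least $2$, so you cannot conclude $[\beta_i]\in\mathbb{R}[F]$ from this alone. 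The paper takes a different route: it uses the $n-1$ forgetful maps $\pi_j:\Hbars{g}{n}\to\Hbars{g}{2}$ retaining the $j$th and $n$th points. Each of these satisfies $\pi_{j,*}[F]=[F]$, so extremality on $\Hbars{g}{2}$ forces $\pi_{j,*}[\beta_i]=k_j[F]$; the projection formula applied to $\pi_j^*\psi_1$, $\pi_j^*\psi_2$, $\pi_j^*\delta_{0:\{1,2\}}$ then yields $n-1$ independent linear relations on the perturbation parameters, forcing them all to vanish. Replacing your $p$-vertical sketch with this $\pi_j$-argument closes the gap cleanly.
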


\begin{proof}
Consider the case of $n=2$. If $[F]$ is not extremal it is the sum of nef curve classes with zero intersection with $[F]^\vee$. Hence a non-trivial decomposition of $[F]$ into a sum of nef curves implies the existence of a nef curve class $[F^t]$ for a fixed $t\in\mathbb{R}\setminus\{0\}$ with intersections
$$[F^t]\cdot\psi_1=1,\hspace{1cm}[F^t]\cdot \psi_2=(2g-1)+t,\hspace{1cm}[F^t]\cdot\delta_{0:\{1,2\}}=1$$
and zero intersection with the other generators of $\Pic_\mathbb{Q}(\overline{\rm{H}}_{g,2})$.  But for any fixed $t>0$ 
$$ [F^t]\cdot [\overline{D}_k]= (k^{2g - 2}-1) g +t\frac{1}{2}(1-k)k^{2g-2} =\frac{-t}{2}k^{2g-1}+\mathcal{O}(k^{2g-2})<0 \text{  for  }k\gg0,$$
and for any fixed $t<0$ 
$$ [F^t]\cdot [\overline{E}_k]=(k^{2g - 2}-1) g   +t \frac{1}{2}k^{2g-2}(k+1)=\frac{t}{2}k^{2g-1}+\mathcal{O}(k^{2g-2}) <0 \text{  for  }k\gg0.$$
Hence $[F^t]\in \Nefbar_1(\overline{\rm{H}}_{g,2})$ if and only if $t=0$. For $n\geq 3$ we consider the the forgetful morphisms
$$\pi_j:\overline{\rm{H}}_{g,n}\longrightarrow\overline{\rm{H}}_{g,2}$$
for $j=1,\dots,n-1$ that forget all but the $j$th and $n$th points. As ${\pi_j}_*[F]=[F]$ is an extremal nef curve and the pushforward of a nef curve is nef, if $[F']$ is a nef curve class appearing in a nef decomposition of $[F]$ then ${\pi_j}_*[F']=k_j[F]$ for some $0\leq k_j\leq 1$. 

Hence if $[F]$ is not an extremal nef curve class, there must exist a nef curve with class $[F^{\underline{t}}]$ for $\underline{t}=(t_1,\dots,t_{n-1})\in \mathbb{R}^{n-1}$ with $\underline{t}\ne \underline{0}$ where
 $$[F^{\underline{t}}]\cdot\psi_i=[F^{\underline{t}}]\cdot\delta_{0:\{i,n\}}=1+t_i,\text{ for $i=1,\dots,n-1,$} \hspace{1cm}[F^{\underline{t}}]\cdot \psi_n=2g-(n-3),  $$

By the observation that \cite{AC}
 $$\pi_j^*\psi_1=\psi_j-\sum_{j\in S,n\notin S}\delta_{S},\hspace{0.5cm}\pi_j^*\psi_2=\psi_n-\sum_{n\in S,j\notin S}\delta_{S},\hspace{0.5cm}\pi_j^*\delta_{0:\{1,2\}}=\sum_{j,n\in S}\delta_{S},$$
 and an application of the projection formula for each $j$ we obtain 
 $$ 2g-1-\sum_{i\ne j}t_i=k_j(2g-1)\hspace{0.5cm}\text{ and }\hspace{0.5cm}1+t_j=k_j.$$
 Giving $n-1$ linearly independent relations in the $t_j$, hence the requiring all $t_i=0$ and providing the contradiction.
\end{proof}

Proposition~\ref{prop:corank} and Proposition~\ref{prop:extremal} provide the main result of this section.

\begin{proposition}
For $g\geq 2$ and $n\geq2$, the cone $\overline{\rm{Eff}}^1(\overline{\rm{H}}_{g,n})$ is non-polyhedral.
\end{proposition}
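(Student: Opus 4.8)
The plan is to deduce the statement directly from Propositions~\ref{prop:corank} and~\ref{prop:extremal} by duality, so that no new geometric input is needed. I will argue by contradiction. Suppose $\overline{\rm{Eff}}^1(\Hbars{g}{n})$ is polyhedral. Since $\Hbars{g}{n}$ is projective and normal, $\overline{\rm{Eff}}^1(\Hbars{g}{n})$ is a full-dimensional pointed closed convex cone in ${\rm{N}}^1(\Hbars{g}{n})$, and hence its dual cone $\Nefbar_1(\Hbars{g}{n})$ is again a full-dimensional pointed polyhedral cone; fix a minimal (irredundant) set of extremal ray generators $\gamma_1,\dots,\gamma_m$ of $\Nefbar_1(\Hbars{g}{n})$.

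Next I would invoke Proposition~\ref{prop:extremal}: the class $[F]$ spans an extremal ray of $\Nefbar_1(\Hbars{g}{n})$, so up to positive scaling $[F]=\gamma_1$. Dualizing back, $\overline{\rm{Eff}}^1(\Hbars{g}{n})=\{[D]\in{\rm{N}}^1(\Hbars{g}{n})\mid \gamma_i\cdot[D]\geq0\ \text{for}\ i=1,\dots,m\}$, and because the list $\gamma_1,\dots,\gamma_m$ is irredundant the inequality $[F]\cdot[D]\geq0$ cannot be omitted. By the standard facet/extremal-ray correspondence for pointed full-dimensional polyhedral cones, this means that $[F]^\vee=\{[D]\in\overline{\rm{Eff}}^1(\Hbars{g}{n})\mid [F]\cdot[D]=0\}$ is a facet of $\overline{\rm{Eff}}^1(\Hbars{g}{n})$, hence spans a hyperplane in ${\rm{N}}^1(\Hbars{g}{n})$; equivalently $\text{rank}([F]^\vee\otimes\mathbb{R})=\rho(\Hbars{g}{n})-1$.

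This contradicts Proposition~\ref{prop:corank}, which for $g\geq2$ and $n\geq2$ gives $\text{rank}([F]^\vee\otimes\mathbb{R})\leq\rho(\Hbars{g}{n})-2$. Therefore $\overline{\rm{Eff}}^1(\Hbars{g}{n})$ cannot be polyhedral. All the real content lies in the two preceding propositions --- the corank estimate and the extremality of $[F]$ --- which are already established; the remaining step, that an extremal ray of the nef-curve cone of a polyhedral effective cone forces a genuine codimension-one face, is pure convex geometry, and it is precisely there that the gap $\rho-2<\rho-1$ produces the contradiction. So I expect no serious obstacle at this final stage.
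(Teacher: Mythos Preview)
Your proposal is correct and is essentially the same argument as the paper's, just with the convex-geometry step spelled out explicitly: the paper's proof simply records that $[F]$ is extremal in $\Nefbar_1(\Hbars{g}{n})$ with pseudo-effective dual of corank at least two, and concludes non-polyhedrality in one line. You have unpacked that one line into the standard facet/extremal-ray correspondence, which is exactly the intended mechanism.
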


\begin{proof}
The curve class $[F]$ forms an extremal ray of ${\rm{Nef}}_1(\overline{\rm{H}}_{g,n})$ where the corank of the pseudo-effective dual space is at least two. Hence ${\rm{Nef}}_1(\overline{\rm{H}}_{g,n})$ and the dual $\overline{\rm{Eff}}^1(\overline{\rm{H}}_{g,n})$ are non-polyhedral.
\end{proof}

\begin{remark}
A long standing question is whether symmetric quotients of $\overline{\rm{M}}_{0,n}$ are Mori dream spaces. This is only known for the full symmetric quotient when $n\leq 7$, cf. \cite{Mo}. If both $\overline{\rm{M}}_{0,[2g+2]}$ and $\overline{\rm{M}}_{0,[2g+2]+1}$ are Mori dream spaces, by~\cite[Thm 1.1]{O}, the isomorphism $\overline{\rm{M}}_{0,[2g+2]}\overset{\sim}{\to} \overline{{\rm{H}}}_{g}$ and the morphism $ \overline{\rm{M}}_{0,[2g+2]+1}{\to}\overline{{\rm{H}}}_{g,1}$ would show $\overline{{\rm{H}}}_{g}$ and $\overline{\rm{H}}_{g,1}$ are also Mori dream spaces completing the classification for the moduli of pointed hyperelliptic curves. 
 \end{remark}
 
 
\section{Singularities of unordered Hurwitz spaces}
\label{section:sing_unordered}

Let $[f:C\to R, r_1,\ldots,r_{2g+2},p_1,\ldots,p_n]$ be a $\mathbb{C}$-point in $\widehat{\mathcal{H}ur}_{g,2}^n$. The completed local ring of $\overline{\mathcal{M}}_{0,2g+2+n}$ at $[R, f(r_1),\ldots,f(r_{2g+2}), f(p_1), \ldots, f(p_n)]$ is smooth and of the form 
$$\mathbb{C}\llbracket t_1,\ldots,t_{2g-1+n}\rrbracket.$$ We assume that $t_1,\ldots,t_\delta$ are smoothing parameters of the nodes $q_1,\ldots,q_\delta\in R$ and we write $f^{-1}(q_i)=\{q'_{i,1},\ldots,q'_{i,r_i}\}$ where $r_i=1$ or $2$ when $f$ is ramified or unramified at the node respectively. Then, the completed local ring of $\widehat{\mathcal{H}ur}_{g,2}^n$ at $[f]$ is given by 
\begin{equation}
\label{eq:Hurhat_loc}
\widehat{\mathcal{O}}_{\widehat{\mathcal{H}ur}_{g,2}^n, [f]}\cong \mathbb{C}\llbracket t_1,\ldots,t_{2g-1+n}, \tau_1,\tau_2, \ldots,\tau_\delta\rrbracket\big/\left(t_i=\left\{\begin{array}{cc} \tau_i^2&\hbox{ if }r_i=1\\ \tau_i&\hbox{ if }r_i=2\end{array} \right| 1\leq i\leq \delta\right).
\end{equation}
In particular, if $\mathbb{C}_{t}^{2g-1+n}$ be the versal deformation space of the $2g+2+n$-pointed target rational curve $[R, f(r_1),\ldots,f(r_{2g+2}), f(p_1), \ldots, f(p_n)]$ with coordinates $t=(t_1,\ldots,t_{2g-1+n})$ and $\mathbb{C}_\tau^{2g-1+n}$ is the versal deformation space of $[f]$ with coordinates $\tau=(\tau_1,\ldots,\tau_{2g-1+n})$, then locally at $[f]$ the map $\hat{\pi}$ is of the form $\tau_i\mapsto  \tau_i^2=t_i$ when $1\leq i\leq \delta$ and $r_i=1$ and $\tau_i\mapsto t_i$ otherwise. On coarse spaces the map $\hat{\pi}^c$ is locally given by 
$$\mathbb{C}_\tau^{2g-1+n}\big/{\rm{Aut}}(f)\longrightarrow \mathbb{C}_t^{2g-1+n}.$$
An automorphism of an $n$-pointed (ordered/unordered) admissible double cover $[f]$ is a $\sigma$-equivariant automorphism of $C$ that fixes the ($2g+2+n$ resp. $n$) markings. Here $\sigma$ stands for the cover involution. We denote the group automorphisms by ${\rm{Aut}}_\sigma(C,r_1,\ldots, r_{2g+2},p_1,\ldots,p_n)$ and ${\rm{Aut}}_\sigma(C,p_1,\ldots,p_n)$ respectively and they sit in the following commutative diagram

\begin{equation}
\begin{tikzcd}
{\rm{Aut}}_\sigma(C,r_1,\ldots r_{2g+2},p_1,\ldots,p_n)\arrow[hookrightarrow,r]\arrow[d]&{\rm{Aut}}_\sigma(C,p_1,\ldots,p_n)\arrow[d, "\Pi"]\\
\{Id\}\arrow[r]&{\rm{Aut}}(R,Br(f), f(p_1),\ldots,f(p_n)),
\end{tikzcd}
\end{equation}
where the last group consists of automorphisms of $R$ fixing the image of the markings $f(p_i)$, and the set of branch points away from the nodes, i.e., fixing the branch divisor $Br(f)=f(r_1)+\ldots+f(r_{2g+2})$. The group ${\rm{Aut}}_\sigma(C,p_1,\ldots,p_n)$ acts on $\mathbb{C}_\tau^{2g-1+n}$ and the map $\pi^c:\overline{\mathcal{H}ur}_{g,2}^n\to\overline{\mathcal{M}}_{0,[2g+2]+n}$ is analytically locally at $[f:C\to R,p_1,\ldots,p_n]$ of the form
\begin{equation}
\label{eq:pi_local}
\pi^c_{[f]}:\mathbb{C}_\tau^{2g-1+n}\big/{\rm{Aut}}_\sigma(C,p_1,\ldots,p_n)\longrightarrow \mathbb{C}_t^{2g-1+n}\big/{\rm{Aut}}_\sigma\left(R,Br(f), f(p_1),\ldots,f(p_n)\right).
\end{equation}
We follow \cite{HM} and study the singularities of $\overline{\rm{Hur}}_{g,2}^n$ by means of the {\textit{Reid--Tai criterion}} that we now recall.\\ 

Let $V$ be a finite $d$-dimensional vector space and $G$ a finite group acting linearly on $V$. Let $g\in G\subset GL(V)$ with eigenvalues $e^{2\pi i r_i}$, $0\leq r_i<1$. The {\textit{age}} of $g$ is defined as the sum ${\rm{age}}(g)=r_1+\ldots+r_d$. The Reid--Tai criterion \cite{Re} states that if $G$ contains no quasi-reflections, then the quotient $V\big/G$ has canonical singularities if and only if for every $g\in G$, the age of $g$ satisfies
$${\rm{age}}(g)\geq1.$$ 
A quasi-reflexion is an element $\tau\in GL(V)$ that fixes a hyperplane or equivalently, that is diagonalisable as
$$\tau={\rm{diag}}\left(e^{2\pi i r_1}, 1, 1,\ldots,1\right).$$\\
There are two ways to deal with groups containing quasi-reflexions. One is to consider a modified Reid--Tai sum as in \cite[Prop. 5.11]{GHS} with the downside that we lose the `only if' part of the criterion. Let $g\in GL(V)$ be an element order $m=s\cdot k$ where $k$ is the smallest positive integer such that $g^k$ is a quasi-reflexion and assume that the only non-trivial eigenvalue of $g^k$ is $\xi=e^{2\pi i r_1\cdot k}$. Then $\xi$ is an $s$-root of unity. We define the {\textit{modified age of $g$}} as 
$${\rm{age}}'(g)=(r_1+\ldots+r_n)+sr_1.$$
The weaker criterion then says that $V\big/G$ has canonical singularities if for all $g\in G$, the modified age satisfies ${\rm{age}}'(g)\geq1$.\\

The second strategy is to look at the action of $G$ on $V/H$, where $H$ is the subgroup generated by quasi-reflexions in $G$, see \cite{P}. Then $V/H$ is smooth, $G$ acts without quasi-reflections, and the original criterion applies. A difficulty of this strategy is describing the subgroup generated by quasi-reflections and describing the action of the quotient on the new coordinate space $V/H$.\\

Before dealing with quasi-reflexions we need a more canonical description of the versal deformation space in order to understand the action of ${\rm{Aut}}_\sigma\left(C,p_1,\ldots,p_n\right)$. We will make full use of the map already considered in \eqref{eq:phi_tilde} given by
\begin{equation}
\label{eq:phi_tilde_stack}
\widetilde{\Phi}:\overline{\rm{Hur}}_{g,2}^n\longrightarrow\overline{\rm{M}}_{g,[2g+2]+2n}
\end{equation}
sending an $n$-pointed admissible double cover
$$[f:C\to R, p_1,\ldots,p_n]\mapsto[C,r_1+\ldots+r_{2g+2}, p_1, \overline{p_1},\ldots,p_n,\overline{p_n}]$$
where $\overline{p_i}$ is the conjugate of $p_i$ and $r_1+\ldots+r_{2g+2}$ is the ramification divisor of $f$ away from the nodes. This map is an isomorphism onto the image, cf. Lemma \ref{sec2:lemmaSvZ} and \cite[Thm 3.7]{SvZ} (see also proof of  Proposition \ref{prop:exc_loc}).\\

In \cite[Ch. XI, Lemma 6.15]{ACG} smoothness of $\overline{\mathcal{H}}_g$ is obtained implicitly via the morphism \eqref{eq:phi_tilde_stack} for $n=0$ 
$$\overline{\mathcal{H}ur}_{g,2}\longrightarrow\overline{\mathcal{H}}_{g}.$$ 
First order deformations of $[C]\in \overline{\mathcal{H}}_g$ are parameterised by $\sigma$-invariant first order deformations of $C$ where $\sigma$ is the hyperelliptic involution. One can see that the data of the cover $C\to R$ determines uniquely $(C,\sigma)$.  This is no longer the case for $\overline{\mathcal{H}}_{g,n}$, since there is stable reduction involved and the resulting stable model may not have an involution. By remembering the ramification divisor and the conjugate markings the resulting curve is always stable. The involution $\sigma$ induced by $f$ acts on 
$$Ext^1\left(\Omega_C,\mathcal{O}_C\left(-p_1-\overline{p_1}-\ldots-p_n-\overline{p_n}\right)\right)$$
and the same argument as in \cite[Ch. XI, Lemma 6.15]{ACG} (see also \cite[Section 3]{ACV}) identifies first order deformations of $[f:C\to R, p_1,\ldots,p_n]$ with the $\sigma$-invariant subspace 
\begin{equation}
\label{eq:ext_sigma}Ext^1\left(\Omega_C,\mathcal{O}_C\left(-p_1-\overline{p_1}-\ldots-p_n-\overline{p_n}\right)\right)^{\sigma}\cong \left(H^0\left(C,\Omega_C\otimes\omega_C\left(\sum p_i+\overline{p_i}\right)\right)^\vee\right)^{\sigma}.
\end{equation}
Then the coarse space $\overline{\mathcal{H}ur}_{g,2}^n$ around $\left[f:C\to \mathbb{P}^1,p_1,\ldots,p_n\right]$ is analytically locally isomorphic to the quotient
\begin{equation}
\label{eq:ann_loc1}
Ext^1\left(\Omega_C,\mathcal{O}_C\left(-p_1-\overline{p_1}-\ldots-p_n-\overline{p_n}\right)\right)^{\sigma}\big/ {\rm{Aut}}_\sigma(C,p_1,\ldots,p_n).
\end{equation}
Or equivalently to the quotient
\begin{equation}
\label{eq:ann_loc2}
\resizebox{.95\hsize}{!}{$H^0\left(C,\Omega_C\otimes\omega_C\left(p_1+\overline{p_1}+\ldots+p_n+\overline{p_n}\right)\right)^{\sigma}\big/ {\rm{Aut}}_\sigma(C,p_1,\ldots,p_n)\cong \mathbb{C}_\tau^{2g-1+n}\big/{\rm{Aut}}_\sigma(C,p_1,\ldots,p_n).$}
\end{equation}
Let $\coprod_\alpha R_\alpha$ be the normalisation of the target rational curve $R$. Note that for each component $R_\alpha$ there are two possibilities for the pre-image $C_\alpha=f^{-1}(R_\alpha)$, either it is smooth, irreducible, and $\sigma$ acts as the hyperelliptic involution or $C_\alpha\cong R_\alpha\coprod R_\alpha$ is the union of two copies of $R_\alpha$ being interchanged by $\sigma$ (see Figures \ref{figure:Pic1} and \ref{figure:contraction}). Similarly, if $q\in Sing(R)$, then either $f^{-1}(q)$ consists on two nodes interchanged by $\sigma$ of one node being fixed by $\sigma$. Recall the exact sequence \cite[p. 33]{HM}:
\begin{equation}
\label{eq:tor_sequence}
\scalebox{.9}{$
0\to \bigoplus_{q\in Sing(R)}{\rm{tor}}_{f^{-1}(q)}\to H^0\left(C,\Omega_C\otimes\omega_C\left(\sum_{i=1}^n p_i+\overline{p_i}\right)\right) \to \bigoplus_\alpha H^0\left(C_\alpha,\omega_{C_\alpha}^{\otimes 2}\left(N_\alpha+P_\alpha\right)\right) \to0,$}
\end{equation}
where $P_\alpha=\sum_{p_i\in C_\alpha}(p_i+\overline{p_i})$ is the sum of the preimages of markings (and conjugates) that land in $C_\alpha$ after normalisation, and $N_\alpha$ is the sum of the preimages of the nodes that land in $C_\alpha$ after normalisation. Here 
$${\rm{tor}}_{f^{-1}(q)}=\left\{\begin{array}{lcl}{\rm{tor}}_p&\hbox{if}&f^{-1}(q)=p\\
{\rm{tor}}_p\oplus {\rm{tor}}_{p'}&\hbox{if}&f^{-1}(q)=\{p,p'\},
\end{array}\right.$$
and ${\rm{tor}}_p$ is the one-dimensional vector space of torsion differentials based at $p\in Sing(C)$. The one dimensional vector space ${\rm{tor}}_p\cong \mathbb{C}_z$ parameterises deformations that smooth the node $p$ (we call $z$ a {\textit{smoothing parameter}}) and the summands on the right of \eqref{eq:tor_sequence} correspond to deformations that preserve the nodes and deform the corresponding pointed components. Note that $\sigma$ acts on each vector space on the sequence \eqref{eq:tor_sequence}. Moreover, $\sigma$ acts as the identity on ${\rm{tor}}_{f^{-1}(q)}$ if $f^{-1}(q)=p$ and by interchanging the factors if $f^{-1}(q)=\{p,p'\}$, see the proof of \cite[Ch. XI, Lemma 6.15]{ACG} for the dual statement. The cover involution $\sigma$ also acts on the normalisation $\coprod C_\alpha$ by acting on each $C_\alpha$ as the hyperelliptic involution $\sigma_\alpha$ of $C_\alpha\to R_\alpha$ if $C_\alpha$ is irreducible and by interchanging the two copies of $R_\alpha$ if $C_\alpha= R_\alpha\coprod R_\alpha$. We partition the nodes of $R$ as $Sing(R)=Q_1\sqcup Q_2$, where 
$$Q_i=\left\{q\in Sing(R)\left|\left|f^{-1}(q)\right|=i\right.\right\}.$$
Then we have the following exact sequence:
\begin{equation}
\label{eq:torinvariant_sequence}
\scalebox{.75}{$
0\to \bigoplus_{q\in Q_1}{\rm{tor}}_{f^{-1}(q)}\oplus \bigoplus_{q\in Q_2}V_{f^{-1}(q)}\to H^0\left(C,\Omega_C\otimes\omega_C\left(\sum_{i=1}^n p_i+\overline{p_i}\right)\right)^\sigma \to \bigoplus_\alpha H^0\left(C_\alpha,\omega_{C_\alpha}^{\otimes 2}\left(N_\alpha+P_\alpha\right)\right)^{\sigma_\alpha} \to0.$}
\end{equation}
Here $V_{f^{-1}(q)}$ is the one-dimensional subspace of ${\rm{tor}}_p\oplus {\rm{tor}}_{p'}$ where $\sigma$ acts trivially. Naturally one has that the vector space ${\rm{tor}}_{f^{-1}(q)}$ for $q\in Q_1$ parameterises smoothings of the node $f^{-1}(q)=p$. While the vector space $V_{f^{-1}(q)}$ for $q\in Q_2$ parameterises simultaneous smoothing of $p$ and $p'$, and the summands on the right of \eqref{eq:torinvariant_sequence} parameterise deformations of the pointed components preserving the nodes and cover structure. \\ 

We denote the degree $2g+2$ branch divisor of $f:C\to R$ disjoint from the nodes by $B$ and $B_\alpha$ the restriction to $R_\alpha$. The divisor $B_\alpha$ is contained in the branch divisor $Br(f_\alpha)$ of $f_\alpha:C_\alpha\to R_\alpha$, but not necessarily equal. We have the analogous exact sequence on $R$:
\begin{equation}
\label{eq:tor_sequenceR}
\scalebox{.8}{$
0\to \bigoplus_{q\in Sing(R)}{\rm{tor}}_q\to H^0\left(R,\omega_R\otimes\Omega_R\left(B+\sum_{i=1}^n f(p_i)\right)\right) \to \bigoplus_\alpha H^0\left(R_\alpha,\omega_{R_\alpha}^{\otimes 2}\left(B_\alpha+f(P)_\alpha+M_\alpha\right)\right) \to0,$}
\end{equation} 
where $f(P)_\alpha=\sum_{p_i\in C_\alpha} f(p_i)$ and $M_\alpha$ is the sum of the preimages of the nodes of $R$ that land in $R_\alpha$ after normalisation $\coprod R_\alpha\to R$. One observes the natural identification

\begin{equation}
\label{eq:tor_pq}\left(\bigoplus_{q\in Sing(R)}{\rm{tor}}_{f^{-1}(q)}\right)^\sigma\cong \bigoplus_{q\in Q_1}{\rm{tor}}_{f^{-1}(q)}\oplus \bigoplus_{q\in Q_2}V_{f^{-1}(q)}\cong \bigoplus_{q\in Q_1}{\rm{tor}}_q\oplus \bigoplus_{q\in Q_2}{
\rm{tor}}_q.
\end{equation}
Similarly, the natural identification 
$$H^0\left(C_\alpha, \omega_\alpha^{\otimes 2}(P_\alpha)\right)^{\sigma_\alpha}\cong H^0\left(R_\alpha, \omega_{R_\alpha}^{\otimes 2}(Br(f_\alpha)+f(P)_\alpha)\right)$$ 
induces an inclusion
\begin{equation}
\label{eq:H0_pq}
\bigoplus_\alpha H^0\left(R_\alpha,\omega_{R_\alpha}^{\otimes 2}\left(B_\alpha+f(P)_\alpha+M_\alpha\right)\right)\subset \bigoplus_{\alpha}H^0\left(C_\alpha,\omega_{C_\alpha}^{\otimes 2}\left(N_\alpha+P_\alpha\right)\right)^{\sigma_\alpha}.
\end{equation}
We want to obtain a lower bound on the age of a $\sigma$-invariant automorphism of the $n$-pointed curve $C$ acting on \eqref{eq:ext_sigma} by looking at the action of the induced automorphism $\Pi(g)$ on the $([2g+2]+n)$-pointed rational curve $R$ and use the identifications \eqref{eq:tor_pq} and \eqref{eq:H0_pq}. 

\begin{lemma}
\label{lemma:action_up-down}
Let $$\Pi:{\rm{Aut}}_\sigma\left(C,p_1,\ldots,p_n\right)\longrightarrow {\rm{Aut}}\left(R,Br(f), f(p_1),\ldots,f(p_n)\right)$$ be the morphism sending a $\sigma$-equivariant automorphism $g$ to the induced automorphism $\Pi(g)$ on $R$. Then, $\Pi(g)$ acts linearly on the versal deformation space
$$\mathbb{C}_t^{2g-1+n}\cong H^0\left(R,\omega_R\otimes\Omega_R\left(B+\sum_{i=1}^n f(p_i)\right)\right),$$ and the action can be decomposed as
\begin{equation}
\label{eq:structurePi(g)}
\Pi(g)=T_1\oplus T_2\oplus B,
\end{equation}
where $T_i$ acts on $\oplus_{q\in Q_i}{\rm{tor}}_{q}$, and $B$ acts on the right hand side of \eqref{eq:tor_sequenceR}. Moreover, the action of $g$ can be decomposed as 
$$g=\widetilde{T}_1\oplus T_2\oplus B',$$
where the action of $T_2$ is via the identification \eqref{eq:tor_pq} and $B'$ acts on the right hand side of \eqref{eq:H0_pq} and restricts to the left hand side as $B$. If we assume further that the automorphism $g$ preserves a node of type $Q_1$, i.e., $\widetilde{T}_1$ acts on ${\rm{tor}}_{f^{-1}(q_i)}$ by $\widetilde{T}_1(\tau_i)=\eta\cdot \tau_i$, then $T_1$ acts on ${\rm{tor}}_{q_i}$ by $T_1(t_i)=\eta^2\cdot t_i$.  
\end{lemma}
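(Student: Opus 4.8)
The plan is to trace through the construction of the deformation spaces on the source and target and check compatibility of the two $\sigma$-equivariant actions step by step. First I would record that $g\in {\rm{Aut}}_\sigma(C,p_1,\ldots,p_n)$ induces $\Pi(g)$ on $R$ because $g$ commutes with $\sigma$, hence descends to $C/\sigma\cong R$, and it visibly fixes $Br(f)$ and the $f(p_i)$. Then $\Pi(g)$ acts linearly on the versal deformation space $\mathbb{C}_t^{2g-1+n}\cong H^0(R,\omega_R\otimes\Omega_R(B+\sum f(p_i)))$ via the standard functoriality of Kodaira--Spencer / versal deformations: an automorphism of the pointed nodal curve acts naturally on its space of first-order deformations. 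Next I would observe that the exact sequence \eqref{eq:tor_sequenceR} is canonical — the torsion subspace $\bigoplus_{q\in Sing(R)}{\rm{tor}}_q$ is intrinsically the kernel of the restriction-to-normalisation map — so it is preserved by $\Pi(g)$. Since $\Pi(g)$ permutes the nodes and the components, it restricts to a block-diagonal operator once we further split $Sing(R)=Q_1\sqcup Q_2$ (this splitting is preserved because $g$, being $\sigma$-equivariant, preserves the ramification type of each node over the cover); this gives the decomposition $\Pi(g)=T_1\oplus T_2\oplus B$ with $T_i$ acting on $\bigoplus_{q\in Q_i}{\rm{tor}}_q$ and $B$ on the smooth-locus summands.

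For the source, the plan is to run the same argument with the $\sigma$-invariant sequence \eqref{eq:torinvariant_sequence}. The action of $g$ on $H^0(C,\Omega_C\otimes\omega_C(\sum p_i+\overline{p_i}))^\sigma$ preserves the torsion part $\bigoplus_{q\in Q_1}{\rm{tor}}_{f^{-1}(q)}\oplus\bigoplus_{q\in Q_2}V_{f^{-1}(q)}$ and the quotient $\bigoplus_\alpha H^0(C_\alpha,\omega_{C_\alpha}^{\otimes 2}(N_\alpha+P_\alpha))^{\sigma_\alpha}$. Using the canonical identification \eqref{eq:tor_pq}, the $Q_2$-part of the action is literally the same operator $T_2$ as downstairs — because for $q\in Q_2$ the space $V_{f^{-1}(q)}\cong{\rm{tor}}_q$ is the $\sigma$-invariant diagonal and $g$ commutes with $\sigma$ and covers $\Pi(g)$, so it acts through the identification by the identical scalar/permutation. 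For the smooth-locus part, I would use the inclusion \eqref{eq:H0_pq}: the action $B'$ of $g$ on $\bigoplus_\alpha H^0(C_\alpha,\omega_{C_\alpha}^{\otimes 2}(N_\alpha+P_\alpha))^{\sigma_\alpha}$ restricts along the inclusion to $B$, since the inclusion is $\Pi$-equivariant by construction (it comes from $\omega_{C_\alpha}^{\otimes 2}(P_\alpha)^{\sigma_\alpha}\cong\omega_{R_\alpha}^{\otimes 2}(Br(f_\alpha)+f(P)_\alpha)$, a pushforward identification). This yields $g=\widetilde{T}_1\oplus T_2\oplus B'$, where only the $Q_1$-block $\widetilde{T}_1$ genuinely differs from $T_1$.

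Finally, for the last assertion I would do the local computation at a single node $q=q_i\in Q_1$. Here $f^{-1}(q_i)$ is a single node of $C$ fixed by $\sigma$, with $\sigma$ acting on ${\rm{tor}}_{f^{-1}(q_i)}\cong\mathbb{C}_{\tau_i}$ trivially. The smoothing parameter $t_i$ of $q_i\in R$ and the smoothing parameter $\tau_i$ of $f^{-1}(q_i)\in C$ are related by $t_i=\tau_i^2$, exactly as in \eqref{eq:Hurhat_loc0}/\eqref{eq:Hurhat_loc} (the cover is ramified at this node). If $g$ preserves $q_i$ and acts by $\widetilde{T}_1(\tau_i)=\eta\,\tau_i$, then on $t_i=\tau_i^2$ it acts by $t_i\mapsto(\eta\tau_i)^2=\eta^2 t_i$, i.e.\ $T_1(t_i)=\eta^2 t_i$, as claimed. (If one wants to phrase this intrinsically: the linear action on a one-dimensional ${\rm{tor}}_p$ is the square of the action on a local coordinate of each branch, and a ramified node means the two branches upstairs are swapped/identified in a way that halves the relevant weight downstairs.)

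The main obstacle is the middle step: checking carefully that the identifications \eqref{eq:tor_pq} and \eqref{eq:H0_pq}, and the whole package \eqref{eq:ext_sigma}, are genuinely $\Pi$-equivariant, so that the $Q_2$-block and the smooth-locus block of $g$ and of $\Pi(g)$ agree on the nose rather than up to some unidentified cocycle. This is a bookkeeping exercise with the $\sigma$-equivariant structure on the sequences \eqref{eq:tor_sequence}, \eqref{eq:torinvariant_sequence}, \eqref{eq:tor_sequenceR}, following the dual argument of \cite[Ch.~XI, Lemma~6.15]{ACG}, but it requires care precisely at the nodes of type $Q_1$ where source and target deformation parameters differ.
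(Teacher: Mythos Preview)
Your plan is correct and follows essentially the same approach as the paper: both argue that the decomposition comes from $\Pi(g)$ preserving the nodal structure and the $Q_1/Q_2$ partition, and both obtain the $\eta^2$ relation from the local identity $t_i=\tau_i^2$ at $Q_1$-type nodes as in \eqref{eq:Hurhat_loc}. The paper's proof is much terser and does not spell out the equivariance checks for \eqref{eq:tor_pq} and \eqref{eq:H0_pq} that you flag as the main obstacle, so your expanded treatment is a strict elaboration of the same argument.
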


\begin{proof}
The decomposition \eqref{eq:structurePi(g)} follows simply from the fact that $\Pi(g)$ must preserve the nodal structure, i.e., smoothing parameters are sent to smoothing parameters. Moreover, since it is induced by $g$, nodes of type $Q_i$ must be sent to nodes of type $Q_i$. Locally if $q_i\in Q_1=\left\{q_1,\ldots,q_{\delta_1}\right\}$, and ${\rm{tor}}_{q_i}\cong \mathbb{C}_{t_i}$ is the smoothing parameter of $q_i$ and ${\rm{tor}}_{p_i}\cong \mathbb{C}_{\tau_i}$ is the smoothing parameter of $f^{-1}(q_i)=p_i$, then locally the map that forgets the cover is of the form $\tau_i\mapsto \tau_i^{2}=t_i$ for $0\leq i\leq \delta_{1}$ and $\tau_i=t_i$ for all other $\delta_1<i\leq 2g-1+n$. Assume further that $g$ acts on each $\mathbb{C}_{\tau_i}$ as $g(\tau_i)=\eta\tau_i$. Then $\Pi(g)(t_i)=\eta^2t_i$ if $i\leq\delta_1$ and $\Pi(g)(t_i)=\eta t_i$ otherwise.
\end{proof}

Following \cite{Lu}, we call automorphisms $g\in {\rm{Aut}}_\sigma\left(C,p_1,\ldots,p_n\right)$ that act trivially on $R$, i.e., such that $\Pi(g)=Id$, {\textit{inessential}} and denote them by ${\rm{Aut}}_\sigma\left(C,p_1,\ldots,p_n\right)^\circ={\rm{Ker}}(\Pi)$. 

\begin{lemma}
\label{lemma:inessential}
Let $g\in {\rm{Aut}}_\sigma\left(C,p_1,\ldots,p_n\right)^\circ$ be an inessential automorphism acting as quasi-reflexion on $\mathbb{C}_{\tau}^{2g-1+n}$. Then, $C$ is singular and has a connected subcurve $C'\subset C$ (not necessarily smooth) with no markings meeting the complementary curve $\overline{C\setminus C'}$ at only one node $p=f^{-1}(q)$ of type $Q_1$. In such case $g$ is the identity on $\overline{C\setminus C'}$ and the involution $\sigma'$ on $C'$ and $g$ acts on ${\rm{tor}}_q\cong\mathbb{C}_{\tau_i}$ as $\tau_i\mapsto -\tau_i$ and as the identity $\tau_j\mapsto \tau_j$ for all other parameters on $\mathbb{C}_{\tau}^{2g-1+n}$. Moreover, the group of inessential automorphisms ${\rm{Aut}}_\sigma\left(C,p_1,\ldots,p_n\right)^\circ$ is generated by such quasi-reflexions.
\end{lemma}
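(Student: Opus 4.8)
The plan is to unpack what ``inessential'' means componentwise, translate the quasi-reflection condition into the exact sequence \eqref{eq:torinvariant_sequence}, and read off the structure node by node. Since $\Pi(g)=\mathrm{Id}$, on the normalisation $\coprod C_\alpha\to\coprod R_\alpha$ the automorphism $g$ restricts on each $C_\alpha$ to an automorphism of the double cover $C_\alpha\to R_\alpha$ lying over $\mathrm{Id}_{R_\alpha}$, so $g|_{C_\alpha}$ is either $\mathrm{Id}$ or the involution $\sigma_\alpha$ (the hyperelliptic involution when $C_\alpha$ is irreducible, the swap when $C_\alpha=R_\alpha\coprod R_\alpha$). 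In particular $g^2=\mathrm{Id}$, and $g$ is determined by the subcurve $C':=\overline{\{x\in C\mid g(x)\ne x\}}=\bigcup\{C_\alpha\mid g|_{C_\alpha}=\sigma_\alpha\}$, on which $g$ restricts to $\sigma':=\sigma|_{C'}$ (note $\sigma(C')=C'$ as $g\sigma=\sigma g$). Because the markings $p_i$ are not Weierstrass points and $\sigma_\alpha$ moves any marking lying on $C_\alpha$, the subcurve $C'$ carries no markings and $g$ is the identity on $\overline{C\setminus C'}$.

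Next I would feed $g$ into \eqref{eq:torinvariant_sequence}. On the moduli term $\bigoplus_\alpha H^0(C_\alpha,\omega_{C_\alpha}^{\otimes 2}(N_\alpha+P_\alpha))^{\sigma_\alpha}$ the automorphism $g$ acts summand by summand either trivially or as $\sigma_\alpha$, which is the identity on its own invariants (and the swap on $R_\alpha\coprod R_\alpha$ is the identity on the diagonal), so $g$ is trivial on this term. Hence every nontrivial eigenvalue of $g$ on $\mathbb{C}_\tau^{2g-1+n}$ comes from the torsion subspace $\bigoplus_{q\in Q_1}\mathrm{tor}_{f^{-1}(q)}\oplus\bigoplus_{q\in Q_2}V_{f^{-1}(q)}$, a direct sum of lines indexed by the nodes of $R$. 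By the transformation law of smoothing parameters (Lemma~\ref{lemma:action_up-down}), $g$ acts on the line at $q$ by the product of the two differentials of $g$ on the branches of $f^{-1}(q)$: this is $-1$ precisely when exactly one of those branches lies in $C'$, and $+1$ otherwise (a $Q_2$-pair internal to $C'$ contributes $+1$ because there $g=\sigma$ acts by $+1$ on the $\sigma$-invariant line $V_{f^{-1}(q)}$, and a $Q_1$-node internal to $C'$ contributes $(-1)(-1)=+1$). Moreover such a ``mixed'' node must be of type $Q_1$: a $Q_2$-pair $\{p,p'\}$ cannot straddle the boundary of $C'$, since $\sigma$ exchanges $p$ and $p'$ while preserving both $C'$ and $\overline{C\setminus C'}$, and the requirement that $g$ be $\mathrm{Id}$ on $\overline{C\setminus C'}$ but $\sigma$ on $C'$ then cannot be realised by a well-defined automorphism of the nodal curve. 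Thus the $(-1)$-eigenspace of $g$ has dimension equal to the number of $Q_1$-nodes separating $C'$ from $\overline{C\setminus C'}$.

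Consequently $g$ is a quasi-reflection exactly when $C'$ meets $\overline{C\setminus C'}$ in a single node $p=f^{-1}(q)$, which is then of type $Q_1$; such a $C'$ is connected with connected complement, and since $g\ne\mathrm{Id}$ forces $C'\ne\varnothing$ while $g=\sigma$ on all of $C$ acts trivially on $\mathbb{C}_\tau$ (and anyway requires $n=0$), we have $\overline{C\setminus C'}\ne\varnothing$, so $p$ is a genuine node and $C$ is singular. By the computation above $g$ sends the smoothing parameter $\tau_i$ of $p$ to $-\tau_i$ and fixes every other $\tau_j$, and $g=\sigma'$ on $C'$, $g=\mathrm{Id}$ on $\overline{C\setminus C'}$, as asserted. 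For the last statement, the assignment $g\mapsto C'$ realises $\mathrm{Aut}_\sigma(C,p_1,\ldots,p_n)^\circ$ as a subgroup of $(\mathbb{Z}/2)^{\{\text{components of }C\}}$ under symmetric difference, with image in $GL(\mathbb{C}_\tau^{2g-1+n})$ a subgroup of $\{\pm1\}^{\{\text{nodes of }R\}}$; decomposing a general admissible $C'$ along its separating boundary $Q_1$-nodes and its $\sigma$-orbits of connected components writes the corresponding $g$ as a product of the minimal configurations, i.e. of quasi-reflections of the type above (the configurations acting trivially on $\mathbb{C}_\tau$, such as a $\sigma$-swapped pair of marking-free tails, lie in the kernel of the $GL$-action and need not be counted).

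I expect the main obstacle to be the bookkeeping in the second paragraph: verifying that $g$ acts trivially on the moduli part in all cases (including reducible components $C_\alpha=R_\alpha\coprod R_\alpha$), and pinning down the gluing-consistency argument forcing every boundary node of $C'$ to be of type $Q_1$; the combinatorial reduction in the generation statement is also delicate and needs the precise description of which subcurves $C'$ actually arise from an inessential automorphism.
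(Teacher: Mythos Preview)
Your proposal is correct and follows essentially the same route as the paper: identify $g|_{C_\alpha}\in\{\mathrm{Id},\sigma_\alpha\}$, define $C'$ as the locus where $g=\sigma$, feed $g$ into the exact sequence \eqref{eq:torinvariant_sequence} to see that all nontrivial eigenvalues come from the torsion lines at the boundary nodes of $C'$, and count them. You supply more detail than the paper on why $g$ is trivial on the moduli summand and why the boundary nodes are forced to be of type $Q_1$ (the paper simply asserts both), and your caveat about the generation statement is well placed---the paper dispatches it with ``it is clear that'', so your level of justification already matches or exceeds the original.
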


\begin{proof}
Let $g\in {\rm{Aut}}_\sigma(C,p_1,\ldots,p_n)$ such that the induced automorphism on the $([2g+2]+n)$-pointed rational curve is the identity. This can only happen if for every $p\in C$, $g(p)$ is either $p$ or the conjugate $\overline{p}$, i.e., if $g$ restricted to every component $C_\alpha$ is either the identity or the involution $\sigma_\alpha$. In particular $C$ cannot be smooth, since $\sigma$ acts as the identity on \eqref{eq:ext_sigma}. Let $C'$ be the union of component of $C$ where $g$ acts as non-trivially, i.e, as the involution $\sigma'$. Then $C'$ has no markings and meets the curve $\overline{C\setminus C'}$ only on nodes of type $Q_1$ and we can assume $C'\subsetneq C$. Then, $g$ acts as $\tau_i\mapsto-\tau_i$ on ${\rm{tor}}_p\cong \mathbb{C}_{\tau_i}$ for all nodes $p$ on the intersection $C'\cap\overline{C\setminus C'}$ and as the identity on the right hand side of \eqref{eq:torinvariant_sequence}. Then, $g$ is diagonalised as ${\rm{diag}}\left(-1,-1,\ldots,1\right)$, where the number of $-1$'s is the number of $Q_1$-type nodes $C'\cap\overline{C\setminus C'}$. One observes then that $g$ is an inessential quasi-reflexion if and only if $C'$ is connected and meets the rest of the curve $\overline{C\setminus C'}$ at only one node of type $Q_1$. Since any inessential automorphism is either the identity of the involution on each component, it is clear that ${\rm{Aut}}_\sigma\left(C,p_1,\ldots,p_n\right)^\circ$ is generated by such quasi-reflections.
\end{proof}

We start with a proposition that allows us to rule out the case when $C$ is smooth and will provide us with lower bonds on the age in the general case. 

\begin{proposition}
\label{prop:actionP1}
Let $h\in {\rm{Aut}}(\mathbb{P}^1, b_1+\ldots+b_r, p_1,\ldots,p_s)$ be an automorphism of order $\ell\geq 2$ on an $([r]+s)$-pointed smooth rational curve acting linearly on the versal deformation space $H^0\left(\omega_{\mathbb{P}^1}^{\otimes 2}\left(\sum b_i+\sum p_j\right)\right)$. Then $h$ acts as a quasi-reflexion if and only if one of the following conditions is satisfied:
\begin{itemize}
\item[(i)] $r+s=4$, where ${\rm{age}}(h)$ is $2/\ell$ if $s=0$ and no point in $\{b_1,\ldots,b_r\}$ is fixed by $h$, and $1/\ell$ otherwise.
\item[(ii)] $s=0$, $r=5$, $\ell=2$, and at least one unordered point is fixed by $h$. In this case ${\rm{age}}(h)=1/2$.
\item[(iii)] $s=0$, $r=6$, and $\ell=2$. In this case $\{b_1,\ldots,b_6\}$ is the union of three $h$-orbits and ${\rm{age}}(h)=1/2$.
\item[(iv)] $s=1, r=4$, and $\ell=2$. In this case $p_1$ is fixed by $h$, $\{b_1,\ldots,b_4\}$ is the union of two $h$-orbits, and ${\rm{age}}(h)=1/2$.
\end{itemize}
If $h$ is not a quasi-reflexion, then ${\rm{age}}(h)\geq 1$ with the only exceptions 
\begin{itemize}
\item $s=0, r=5, \ell=4$, and $\{b_1,\ldots,b_5\}$ consist of one fixed point and one non-trivial $h$-orbit.
\item $s=1, r=4, \ell=4$, $p_1$ being fixed by $h$ and $\{b_1,\ldots,b_4\}$ is one $h$-orbit. 
\end{itemize}
In these cases ${\rm{age}}(h)=\frac{3}{4}$. Moreover, if $h$ is not a quasi-reflexion, then $h^k$ is a quasi-reflexion only in two cases:
\begin{itemize}
\item $s=0, r=5, \ell=4$ and $\{b_1,\ldots,b_5\}$ consist on a fixed point and a single $h$-orbit. In this case $h^2$ is as in (ii) and the modified age satisfies ${\rm{age}}'(h)\geq1$.
\item $s=0, r=6, \ell=6$ and $\{b_1,\ldots,b_6\}$ is a single $h$-orbit. In this case $h^3$ is as in (iii) and the modified age satisfies ${\rm{age}}'(h)\geq1$. 
\item $s=1, r=4, \ell=4$, $p_1$ being fixed by $h$ and $\{b_1,\ldots,b_4\}$ is one $h$-orbit. In this case $h^2$ is as in (iv) and the modified age satisfies ${\rm{age}}'(h)\geq1$.
\end{itemize}
In particular ${\rm{age}}'(h)\geq 1$ for any non-trivial $h\in {\rm{Aut}}(\mathbb{P}^1, b_1+\ldots+b_r, p_1,\ldots,p_s)$.
\end{proposition}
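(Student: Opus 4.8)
The statement is a case analysis of automorphisms $h$ of a pointed rational curve $(\mathbb{P}^1, b_1+\dots+b_r, p_1,\dots,p_s)$ acting on the one-dimensional deformation space of each component, namely $V := H^0(\mathbb{P}^1, \omega_{\mathbb{P}^1}^{\otimes 2}(\sum b_i + \sum p_j))$, which has dimension $r + s - 3$. The plan is to diagonalize the $\zz/\ell$-action explicitly and read off eigenvalues. Since $\Aut(\mathbb{P}^1) = \mathrm{PGL}_2$, a finite-order element $h$ of order $\ell$ is conjugate to $z \mapsto \zeta z$ with $\zeta = e^{2\pi i/\ell}$ (when $\ell \geq 2$), with two fixed points, which we may place at $0$ and $\infty$. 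First I would set up the local coordinate: a basis of $V$ is given by rational quadratic differentials $\frac{(dz)^2}{z^2}\cdot m$ where $m$ ranges over monomials $z^a$ with the pole/zero constraints dictated by the divisor $b_1 + \dots + b_r + p_1 + \dots + p_s$; equivalently, after fixing three of the marked points to break the residual automorphisms, $V$ is spanned by the ``cross-ratio'' coordinates of the remaining $r+s-3$ points, and $h$ scales the $k$-th such coordinate by $\zeta^{e_k}$ where $e_k \in \{0,1,\dots,\ell-1\}$ records how the point sits relative to the orbit structure of $h$. The age is then $\sum_k \langle e_k/\ell\rangle$ where $\langle\cdot\rangle$ denotes the representative in $[0,1)$.

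\textbf{Key steps.} (1) Reduce to the normal form $h: z\mapsto \zeta z$ with fixed points $0,\infty$, and observe that the marked points $\{b_i\}\cup\{p_j\}$ decompose into $h$-orbits: free orbits of size $\ell$, and the (at most two) fixed points $0,\infty$. Since $h$ fixes each ordered $p_j$, every $p_j$ must be a fixed point, so $s\leq 2$; and if $s\geq 1$ after placing $p_1$ at $0$, the coordinate of any free point contributes its ``angle''. (2) Compute $\dim V = r+s-3$ and express the eigenvalues: choosing $0,\infty$ and one further point (the latter exists once $r+s\geq 3$; if the third anchor is a free point this ``uses up'' a would-be free coordinate), the remaining coordinates are honest cross-ratios and $h$ acts on the coordinate attached to point $x_k$ by $\zeta^{\nu_k}$ where $\zeta^{\nu_k}$ is the ratio of $x_k$ to the anchor. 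The upshot is that $\mathrm{age}(h) = \sum \langle \nu_k/\ell\rangle$, a sum of $r+s-3$ terms each a rational ``fraction of $\ell$''. (3) Quasi-reflection case: $h$ is a quasi-reflection iff $\dim V=1$ (so $r+s=4$) OR $\dim V\geq 2$ but all but one eigenvalue equal $1$; the latter forces enough collinearity of orbits to pin down $\ell=2$ and the small configurations in (ii), (iii), (iv). For $r+s=4$ one directly checks: if $s=0$ and no $b_i$ is fixed (so the four points form two $h$-orbits of size $2$, $\ell=2$), the single coordinate transforms by $-1$, age $=1/2$... wait, the statement says $2/\ell$; reconcile by noting that with two free orbits and anchors chosen among them the coordinate picks up $\zeta^2$ relative scaling when $\ell>2$ is possible only for... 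I would simply enumerate the orbit-partitions of $4$ points and compute. (4) Non-quasi-reflection, age $<1$ exceptions: with $\dim V = 2$ (i.e. $r+s=5$) we need both eigenvalue-angles summing to $<1$; the sharp case is $2\cdot\frac{3}{8}$... no, rather two coordinates each contributing $\frac{3}{4}$ is impossible — the genuine exceptions have a single nontrivial contribution plus one of size $\ell=4$ giving $\frac14+\ldots$; I would verify the claimed value $3/4$ arises precisely from $\ell=4$ with the stated orbit structure, and that $\dim V=2$ cases with $\ell=2,3,5,6$ or different orbit structure all reach age $\geq 1$. (5) Modified age: when $h$ is not a quasi-reflection but $h^k$ is, compute $\mathrm{age}'(h) = \mathrm{age}(h) + s r_1$ with $s = \ell/k$ and $r_1$ the angle of the eigenvalue that $h^k$ keeps nontrivial; in the three listed cases ($\ell=4$ reducing to (ii) or (iv); $\ell=6$ reducing to (iii)) check $\mathrm{age}'(h)\geq 1$ directly. (6) Finally, assemble: every non-trivial $h$ either is a quasi-reflection (handled), or has $\mathrm{age}(h)\geq 1$, or is one of the two age-$3/4$ exceptions whose power is a quasi-reflection and for which $\mathrm{age}'(h)\geq 1$; in all cases $\mathrm{age}'(h)\geq 1$, proving the last sentence.

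\textbf{Main obstacle.} The delicate part is the exhaustive but finite enumeration in small dimension: for $\dim V = 1$ ($r+s=4$) and $\dim V = 2$ ($r+s=5$) one must list all conjugacy-types of finite cyclic subgroups of $\mathrm{PGL}_2$ together with all $h$-invariant placements of $r$ unordered and $s$ ordered marked points, and for each compute the precise eigenvalue angles — being careful that (a) fixing points of $\mathbb{P}^1$ at $0,\infty$ forces $s\leq 2$ and determines which points can be anchors, (b) the choice of the third anchor among free points changes which coordinates survive and can shift an angle, and (c) when $\dim V \geq 3$ one must show age $\geq 1$ always, which follows because at least three free-orbit contributions each of size $\geq 1/\ell$... no — the real argument is that with $r+s\geq 6$ one can always find at least $\lceil (r+s-3)/\,?\rceil$ coordinates whose angles sum to $\geq 1$; I would prove this by showing that a cyclic action on $\geq 6$ points on $\mathbb{P}^1$ with age $<1$ is impossible via a counting/pigeonhole argument on orbit sizes and the constraint $\sum\langle \nu_k/\ell\rangle < 1$. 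Keeping the bookkeeping of ``which point is the anchor'' consistent across all cases is where errors are most likely, so I would organize the proof around the invariant $\dim V$ and, within each, around the partition of the marked points into $h$-orbits.
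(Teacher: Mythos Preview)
Your overall strategy matches the paper's: normalize $h$ to $z\mapsto \eta z$ with $\eta=e^{2\pi i/\ell}$, observe $s\leq 2$, and diagonalize the action on $V=H^0(\omega_{\mathbb{P}^1}^{\otimes 2}(\sum b_i+\sum p_j))$. However, you are making the eigenvalue computation much harder than necessary by passing to cross-ratio coordinates with a floating ``anchor''; this is the source of your visible uncertainty (the ``wait'' and ``no'' moments, and the vague pigeonhole argument for $\dim V\geq 3$).

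The paper's cleaner move, which you almost hit with your remark about monomials times $(dz)^2/z^2$, is to write down an explicit $h$-eigenbasis of $V$ directly. Group the unordered points into $a$ free orbits over $t_1,\dots,t_a$ in the quotient (plus possibly fixed points), set
\[
\omega=\frac{dZ^{\otimes 2}}{Z^m\prod_{j=1}^a (Z^\ell-t_j)},
\]
with $m=0$ when $s=0$ and no $b_i$ is fixed, and $m=1$ otherwise. Then $\{\omega,\,\omega Z,\,\ldots,\,\omega Z^{r+s-4}\}$ is a basis of $V$, and since $h$ scales $Z$ by $\eta$ one reads off immediately
\[
h=\mathrm{diag}\bigl(\eta^i,\eta^{i+1},\ldots,\eta^{i+(r+s-4)}\bigr),\qquad i=2\text{ in case (A1)},\ i=1\text{ otherwise.}
\]
The eigenvalues are \emph{consecutive} powers of a primitive $\ell$-th root of unity. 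This single observation collapses your entire enumeration: a quasi-reflection requires all but one of $r+s-3$ consecutive powers of $\eta$ to equal $1$, which forces $r+s-4<\ell$ and pins down the short list (i)--(iv) in a few lines; the age is the sum of fractional parts of an arithmetic progression mod $1$, and the exceptional age-$3/4$ cases and the modified-age checks drop out with no bookkeeping about anchors. Your cross-ratio parametrization would eventually reproduce this, but the consecutive-powers structure is what makes the case analysis ``simple'' rather than delicate.
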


\begin{proof}
The automorphism $h$ induces a cyclic ramified cover $\pi_{h}:\mathbb{P}^1\to \mathbb{P}^1\big/\langle h\rangle\cong\mathbb{P}^1$ ramified at two points $0,\infty\in \mathbb{P}^1$. Moreover, we may assume $s\leq 2$ (otherwise $g$ is trivial), $[r]+s\geq4$ so that the $([r]+s)$-pointed rational curve is not rigid, and $g$ acts on $\mathbb{P}^1$ as $z\mapsto\eta\cdot z$ where $\eta=e^{\frac{2\pi i}{\ell}}$ is a primitive $\ell$-root of unity. Since the map $z\mapsto 1/z$ preserves non-trivial $h$-orbits, if there is an unordered point being fixed by $h$ we can always assume is $0$.  

There are four cases to consider, when $s=0$ and no point is fixed among unordered point, when $s=0$ and at least one unordered point is fixed, when $s=1$ and $p_1=0$, and when $s=2$ and $p_1=0, p_2=\infty$. We label them:
\begin{equation}
\begin{array}{c}
\label{eq:ABC}
(A1): s=0\;\;\hbox{and no fixed unordered point,}\\
(A2): s=0\;\;\hbox{and at least one fixed unordered point,}\\
(B): (s,p_1)=(1,0),\;\;\;\hbox{and}\;\;\;(C):(2, p_1, p_2)=(2,0,\infty).
\end{array}
\end{equation} 
The set of unordered markings $\{b_1,\ldots,b_r\}$ is a union of $a$-many $h$-orbits over the points $t_1,\ldots,t_a\in \mathbb{P}^1\big/\langle h \rangle$ on the quotient. An $h$-eigenbasis for the space of quadratic differentials on $\mathbb{P}^1$ having at worst simple poles at the markings is given by $\{\omega, \omega\cdot Z, \ldots, \omega\cdot Z^{r+s-4}\}$, where
\begin{equation}
\label{eq:basis_dz2}
\omega=\frac{dZ^{\otimes 2}}{Z^m\cdot \prod_{j=1}^a\left(Z^\ell-t_j\right)},
\end{equation}
with $r=\ell\cdot a$, and $m=0$ in case (A1), $r=\ell\cdot a+\delta$ and $m=1$ in cases (A2), (B) and (C). Here $\delta$ is the number of fixed unordered points, i.e., $\delta\in \{1,2\}$ in case (A2), $\delta\in\{0,1\}$ in case (B) and $\delta=0$ in case (C).  One observes that $h$ acts on the linear form $Z$ by $Z\mapsto \eta\cdot Z$, thus
$$h={\rm{diag}}\left(\eta^i, \eta^{i+1},\ldots,\eta^{i+(r+s-4)}\right),$$
where $i=2$ in case $(A1)$ and $i=1$ otherwise. A simple case-by-case study shows that this diagonal matrix has a single non-trivial eigenvalue only in the listed cases.
\end{proof}

\begin{remark}
When the $([r]+s)$-pointed rational curve has a node, in most cases $h$ will cease to act as quasi-reflections on the versal deformation space $\mathbb{C}_t^{2g-1+n}$, even if $h$ is non-trivial only on one component. This is due to the contribution to the age coming from $\oplus_q {\rm{tor}}_q$.  
\end{remark}

We obtain the following immediate corollaries.
\begin{corollary}
For any $r,s$ with $r+s\geq4$ the moduli space $\rm{M}_{0,[r]+s}$ has at worst canonical singularities.
\end{corollary}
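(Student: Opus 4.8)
The plan is to deduce the corollary directly from Proposition~\ref{prop:actionP1} together with the Reid--Tai criterion (in its modified form for groups with quasi-reflections). Recall that the coarse space $\mathcal{M}_{0,[r]+s}$ is, \'etale-locally near a point $[R,b_1,\ldots,b_r,p_1,\ldots,p_s]$, the quotient $\mathbb{C}_t^{r+s-3}\big/\mathrm{Aut}(R,\sum b_i,p_1,\ldots,p_s)$, where the automorphism group acts linearly on the versal deformation space $H^0\bigl(\omega_R^{\otimes 2}(\sum b_i+\sum p_j)\bigr)$ (we may restrict attention to $r+s\geq 4$, as otherwise the moduli space is a point). When $R=\mathbb{P}^1$ is irreducible, this automorphism group is a finite cyclic subgroup of $\mathrm{PGL}_2$ fixing the (unordered) branch points and the ordered markings, and Proposition~\ref{prop:actionP1} is precisely a complete census of the action of each non-trivial element $h$: it classifies when $h$ is a quasi-reflection, when a power $h^k$ is a quasi-reflection, and shows that in \emph{every} remaining case the (modified) age satisfies $\mathrm{age}'(h)\geq 1$, while $\mathrm{age}(h)\geq 1$ whenever no power of $h$ is a quasi-reflection.

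The main step is then to invoke the modified Reid--Tai criterion (as recalled in the paragraph following \cite[Prop. 5.11]{GHS} in the excerpt): if $G$ acts linearly on $V$ and every $g\in G$ satisfies $\mathrm{age}'(g)\geq 1$, then $V/G$ has canonical singularities. Proposition~\ref{prop:actionP1} gives exactly the hypothesis $\mathrm{age}'(h)\geq 1$ for all non-trivial $h$ in the automorphism group of an $([r]+s)$-pointed smooth rational curve, so the quotient $\mathbb{C}_t^{r+s-3}\big/\mathrm{Aut}(R,\ldots)$ has canonical singularities. Since this holds in an \'etale neighbourhood of every point of $\mathcal{M}_{0,[r]+s}$, and canonicity is a local condition, we conclude that $\mathcal{M}_{0,[r]+s}$ has at worst canonical singularities.

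One point that needs care is the bookkeeping with quasi-reflections: the unmodified Reid--Tai criterion requires $G$ to contain no quasi-reflections, which fails here (e.g. the cases (i)--(iv) of Proposition~\ref{prop:actionP1}). This is why the \emph{modified} age $\mathrm{age}'$ and the corresponding weaker criterion are the right tool — it retains the ``if'' direction even in the presence of quasi-reflections, which is all we need for an upper bound on the singularities. Alternatively, one could pass to the quotient $V/H$ by the subgroup $H$ generated by quasi-reflections (which is smooth by the Chevalley--Shephard--Todd theorem) and apply the original criterion to the residual $G/H$-action; but since Proposition~\ref{prop:actionP1} already packages all the needed age computations, the modified-age route is cleaner. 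The only genuinely substantive input — the explicit diagonalization of $h$ on the eigenbasis $\{\omega, \omega Z,\ldots, \omega Z^{r+s-4}\}$ and the resulting age estimates — has already been carried out in the proof of Proposition~\ref{prop:actionP1}, so the corollary is a formal consequence and the proof is short.

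\begin{proof}
We may assume $r+s\geq 4$, since otherwise $\mathcal{M}_{0,[r]+s}$ is a point. Near a point $x=[R,b_1,\ldots,b_r,p_1,\ldots,p_s]$ with $R\cong\mathbb{P}^1$, the coarse space $\mathcal{M}_{0,[r]+s}$ is analytically locally isomorphic to the quotient
$$H^0\!\left(\omega_{\mathbb{P}^1}^{\otimes 2}\Bigl(\textstyle\sum_i b_i+\sum_j p_j\Bigr)\right)\Big/\, {\rm{Aut}}\bigl(\mathbb{P}^1, b_1+\cdots+b_r, p_1,\ldots,p_s\bigr),$$
with the automorphism group acting linearly on the versal deformation space. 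By Proposition~\ref{prop:actionP1} every non-trivial element $h$ of this group satisfies ${\rm{age}}'(h)\geq 1$. Hence, by the modified Reid--Tai criterion (cf. \cite[Prop. 5.11]{GHS}), the quotient has canonical singularities. As every point of $\mathcal{M}_{0,[r]+s}$ admits such a local description, $\mathcal{M}_{0,[r]+s}$ has at worst canonical singularities.
\end{proof}
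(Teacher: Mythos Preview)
Your proof is correct and follows exactly the paper's approach: invoke the modified Reid--Tai criterion of \cite[Prop.~5.11]{GHS} together with the age bounds ${\rm age}'(h)\geq 1$ established in Proposition~\ref{prop:actionP1}. One harmless inaccuracy in your preamble: for $s=0$ the automorphism group of the configuration need not be cyclic (it could be dihedral or platonic), but this is irrelevant since Reid--Tai is checked element by element and your formal proof does not use cyclicity.
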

\begin{proof}
It follows immediately from the Proposition together with \cite[Prop. 5.11]{GHS}.
\end{proof}
\begin{corollary}
The moduli space ${\rm{Hur}}_{g,2}^n$ has at worst canonical singularities.
\end{corollary}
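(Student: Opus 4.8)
The plan is to deduce this corollary directly from Proposition \ref{prop:actionP1} together with the local description of $\mathcal{H}ur_{g,2}^n$ obtained in this section. Recall that for a smooth double cover $[f:C\to \mathbb{P}^1, p_1,\ldots,p_n]$ lying in the interior ${\mathcal{H}ur}_{g,2}^n$, the completed local ring of the coarse space is analytically isomorphic to the quotient
$$H^0\left(C,\Omega_C\otimes\omega_C\left(p_1+\overline{p_1}+\ldots+p_n+\overline{p_n}\right)\right)^{\sigma}\big/ {\rm{Aut}}_\sigma(C,p_1,\ldots,p_n),$$
see \eqref{eq:ann_loc2}, and since $C$ is smooth the exact sequence \eqref{eq:torinvariant_sequence} degenerates: there are no nodes, so the versal deformation space is identified with $H^0\left(C,\omega_{C}^{\otimes 2}(P)\right)^{\sigma}\cong H^0\left(\mathbb{P}^1, \omega_{\mathbb{P}^1}^{\otimes 2}\left(Br(f)+\sum f(p_i)\right)\right)$ via the hyperelliptic quotient. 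Moreover, since $C$ is smooth, every $\sigma$-equivariant automorphism $g$ descends to an automorphism $\Pi(g)$ of the $([2g+2]+n)$-pointed rational curve $(\mathbb{P}^1, Br(f), f(p_1),\ldots,f(p_n))$ fixing the branch divisor and the images of the markings, and the kernel of $\Pi$ is trivial by Lemma \ref{lemma:inessential} (an inessential automorphism forces $C$ to be singular). Hence the action of ${\rm{Aut}}_\sigma(C,p_1,\ldots,p_n)$ on the versal deformation space is \emph{identified} with the action of a subgroup of ${\rm{Aut}}(\mathbb{P}^1, Br(f), f(p_1),\ldots,f(p_n))$ on $H^0\left(\mathbb{P}^1, \omega_{\mathbb{P}^1}^{\otimes 2}\left(Br(f)+\sum f(p_i)\right)\right)$.

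The key point is then purely local: apply Proposition \ref{prop:actionP1} with $r=2g+2\geq 6$ unordered points and $s=n\geq 0$ ordered points. For such an automorphism $h$ of order $\ell\geq 2$, Proposition \ref{prop:actionP1} states that $\mathrm{age}'(h)\geq 1$ — indeed the cases where $h$ fails to have age $\geq 1$ or where a power $h^k$ is a quasi-reflection all require $r\leq 6$, and in the genuinely problematic configurations ($r=5$ or $r=6$) the modified age is still at least one; with $r=2g+2\geq 6$ the only relevant case is $r=6$, $\ell=6$, $s=0$, which gives $\mathrm{age}'(h)\geq 1$ as well. Thus every non-trivial element of ${\rm{Aut}}_\sigma(C,p_1,\ldots,p_n)$ acts with modified age at least one on the versal deformation space. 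By the weak Reid--Tai criterion in the form of \cite[Prop. 5.11]{GHS} (the version that handles groups containing quasi-reflections and which we have already used in the preceding corollary), the quotient has at worst canonical singularities. Since this holds at every point of ${\mathcal{H}ur}_{g,2}^n$, the space ${\mathcal{H}ur}_{g,2}^n$ has at worst canonical singularities.

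The main subtlety — and the reason this corollary is stated separately from the $\Mbar{0}{[r]+s}$ corollary — is making sure the identification of the automorphism action upstairs with the automorphism action downstairs is legitimate, i.e. that no automorphism of $C$ acts more subtly than its image on $\mathbb{P}^1$ and that the $\sigma$-invariant deformation space really is the full deformation space of the rational curve with its branch-plus-marked-point configuration. For a smooth $C$ this is essentially the content of the classical fact that the hyperelliptic involution is central in $\mathrm{Aut}(C)$ together with the identification \eqref{eq:ann_loc2}, so there is no real obstacle here; the one thing to watch is that two marked points $p_i$ must map to distinct points under $f$ and are unramified, so the configuration $(Br(f), f(p_1),\ldots,f(p_n))$ is an honest point of $\mathcal{M}_{0,[2g+2]+n}$, keeping us squarely within the hypotheses of Proposition \ref{prop:actionP1}. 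Everything else is a direct quotation of the proposition with the parameter substitution $r=2g+2$, $s=n$.
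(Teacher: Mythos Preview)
Your argument is correct and follows the same route as the paper, only spelled out in greater detail. The paper compresses your identification of the versal deformation space and the group action into the single observation that the forgetful morphism ${\mathcal{H}ur}_{g,2}^n\to{\mathcal{M}}_{0,[2g+2]+n}$ is \'etale on the interior and that the only inessential automorphism is the covering involution $\sigma$, which acts trivially; from there the previous corollary on $\mathcal{M}_{0,[r]+s}$ (i.e.\ Proposition~\ref{prop:actionP1}) does the work. One small imprecision: you write that ``the kernel of $\Pi$ is trivial by Lemma~\ref{lemma:inessential}'', but the kernel always contains $\sigma$; Lemma~\ref{lemma:inessential} only constrains inessential automorphisms acting as quasi-reflections. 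Since $\sigma$ acts trivially on the $\sigma$-invariant deformation space this is harmless, and you effectively acknowledge it in your final paragraph, but the sentence as written is not quite right.
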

\begin{proof}
On the interior the only inessential automorphism is the covering involution $\sigma$ that acts trivially on the versal deformation space. Moreover, the forgetful morphism 
$${\rm{Hur}}_{g,2}^n\longrightarrow{\rm{M}}_{0,[2g+2]+n}$$
is \'{e}tale. 
\end{proof}

From now on we can assume that $C$ has at least one node. An automorphism $g\in {\rm{Aut}}_\sigma\left(C,p_1,\ldots,p_n\right)$ is called a {\textit{rational bridge involution}} if $C\to R$ has a rational bridge $C_\alpha\to R_\alpha$ such that $\Pi(g)$ acts on $R_\alpha$ by interchanging the two branch points and fixing the node, and $\Pi(g)$ acts trivially on $\overline{R\setminus R_\alpha}$. 

\begin{lemma}
\label{lemma:allqr}
The normal subgroup $H\subset{\rm{Aut}}_{\sigma}\left(C,p_1,\ldots,p_n\right)$ generated by all quasi-reflections is generated by inessential automorphisms ${\rm{Aut}}_{\sigma}\left(C,p_1,\ldots,p_n\right)^{\circ}$ and rational bridge involutions. In particular, if $C$ has no unmarked rational bridge, then $H={\rm{Aut}}_{\sigma}\left(C,p_1,\ldots,p_n\right)^{\circ}$.
\end{lemma}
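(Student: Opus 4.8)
The plan is to combine the two previous structural results — Lemma \ref{lemma:inessential}, which describes the inessential quasi-reflections, and Proposition \ref{prop:actionP1}, which classifies quasi-reflections downstairs on a single rational component — to pin down exactly which essential automorphisms can act as quasi-reflections on $\mathbb{C}_\tau^{2g-1+n}$. First I would take an arbitrary quasi-reflection $g\in{\rm{Aut}}_\sigma(C,p_1,\ldots,p_n)$ and, via Lemma \ref{lemma:action_up-down}, decompose its action as $g=\widetilde{T}_1\oplus T_2\oplus B'$. The key observation is that each of the three summands contributes non-negatively to the number of non-trivial eigenvalues, so if $g$ is a quasi-reflection then all but one of these contributions must vanish and the remaining one must itself be a quasi-reflection. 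In particular $\Pi(g)$ either is trivial — in which case $g$ is inessential and we are done by Lemma \ref{lemma:inessential} — or $\Pi(g)$ is non-trivial and must act as a quasi-reflection on the versal deformation space of $R$.

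Assume then $\Pi(g)\neq\mathrm{Id}$. Since $\Pi(g)$ must preserve each component of $R$ and the nodal structure, and since it acts as a quasi-reflection, it can be non-trivial on at most a bounded amount of data. I would argue that $\Pi(g)$ is the identity on all but one component $R_\alpha$ of $R$, and also acts trivially on every smoothing parameter ${\rm{tor}}_q$: indeed, if $\Pi(g)$ non-trivially permuted branch points on $R_\alpha$ it would either move a node — contributing extra non-trivial eigenvalues from $\oplus_q{\rm{tor}}_q$ — or else act as in one of the cases of Proposition \ref{prop:actionP1}. Running through that list, the only case where $\Pi(g)$ restricted to a single component, with the node(s) on that component fixed (so no contribution from $\oplus_q{\rm{tor}}_q$) and with the rest of $R$ fixed, gives an honest quasi-reflection is case (i) with $r=2$, $s=1$: that is, $R_\alpha$ is a two-branch-point component carrying one node, $\Pi(g)$ swaps the two branch points and fixes the node — precisely a rational bridge with no markings, so $g$ is a rational bridge involution. (One also checks the upstairs contribution $\widetilde T_1$ and the fact that $C_\alpha\to R_\alpha$ is then forced to be the irreducible degree-two cover over $\mathbb{P}^1$, i.e. an elliptic-type component, matching the figures.) Conversely, every inessential quasi-reflection and every rational bridge involution is readily checked to act as a quasi-reflection, so $H$ is generated exactly by these. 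The final sentence is then immediate: if $C$ has no unmarked rational bridge there are no rational bridge involutions, so $H={\rm{Aut}}_\sigma(C,p_1,\ldots,p_n)^\circ$.

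The main obstacle I anticipate is the bookkeeping in the middle step: carefully accounting for the eigenvalue contributions of $\widetilde{T}_1$ versus $T_2$ versus $B'$ and ruling out, case by case against Proposition \ref{prop:actionP1}, every way an essential automorphism restricted to one or more components could conspire with the $\oplus_q{\rm{tor}}_q$ contribution to still produce only a single non-trivial eigenvalue. In particular one must be careful with the cases of Proposition \ref{prop:actionP1} where $\Pi(g)|_{R_\alpha}$ fixes the node on that component (so the relevant smoothing parameter ${\rm{tor}}_q$ genuinely contributes nothing extra) versus those where it does not, and with the possibility that $g$ is non-trivial on several components at once while $\Pi(g)$ is trivial on some of them (which is governed by Lemma \ref{lemma:inessential} applied componentwise). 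Since $H$ is a normal subgroup by construction and the generating sets described are each manifestly stable under conjugation, normality requires no separate argument.
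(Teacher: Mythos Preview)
Your overall plan is the same as the paper's: reduce to the case $\Pi(g)\neq\mathrm{Id}$, pick a component $R_\alpha$ on which $\Pi(g)$ acts non-trivially, and use Proposition~\ref{prop:actionP1} together with the torsion contributions to force the rational bridge configuration. However, several of your concrete assertions are wrong and would derail the execution.

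The main error is your claim that $\Pi(g)$ ``acts trivially on every smoothing parameter ${\rm{tor}}_q$'' and that the single non-trivial eigenvalue comes from the $H^0$ factor via ``case~(i) with $r=2$, $s=1$'' of Proposition~\ref{prop:actionP1}. This is backwards. For the rational bridge, $R_\alpha$ carries two branch points and one node, so $r+s=3$; the space $H^0(R_\alpha,\omega_{R_\alpha}^{\otimes 2}(B_\alpha+q))$ is \emph{zero-dimensional} and Proposition~\ref{prop:actionP1} does not even apply (it requires $r+s\geq 4$). The unique non-trivial eigenvalue of $g$ lives in the smoothing direction: the node $q$ is of type $Q_2$ (two preimages $p,p'$ on the rational curve $C_\alpha$), and $g$ acts on $V_{f^{-1}(q)}$ by $-1$ while $\Pi(g)$ acts on ${\rm{tor}}_q$ by $-1$. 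So the relevant contribution is $T_2$, not $\widetilde{T}_1$ or $B'$. Relatedly, $C_\alpha$ is a smooth rational curve (degree-two cover of $\mathbb{P}^1$ branched at two points), not an ``elliptic-type component''.

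The paper's argument proceeds instead by showing that whenever the factor $H^0(R_\alpha,\omega_{R_\alpha}^{\otimes 2}(B_\alpha+f(P)_\alpha+q))$ is positive-dimensional, $\Pi(g)$ already produces at least two non-trivial eigenvalues (one from this $H^0$ and one from a torsion direction or an adjacent component), contradicting quasi-reflection; only when that $H^0$ vanishes --- i.e.\ $|B_\alpha|=2$, $|f(P)_\alpha|=0$ --- does a single eigenvalue survive, and it sits in $V_{f^{-1}(q)}$. A separate short case handles $R_\alpha$ meeting the rest of $R$ in two nodes. Your plan will go through once you relocate the eigenvalue correctly and drop the appeal to case~(i) of Proposition~\ref{prop:actionP1}.
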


\begin{proof}
Let $g\in{\rm{Aut}}_{\sigma}\left(C,p_1,\ldots,p_n\right)$ be a quasi-reflexion with a single non-trivial eigenvalue $\eta=e^{\frac{2\pi i}{\ell}}$. By Lemma \ref{lemma:inessential} it is enough to show that if $\Pi(g)$ is non-trivial, then $g$ is a rational bridge involution. Assume $\Pi(g)$ is a non-trivial automorphism of $(R,Br(f),f(p_1),\ldots,f(p_n))$. Then it must act non-trivially on at least one component $R_\alpha$. Let $f_\alpha:C_\alpha\to R_\alpha$ be the corresponding $2$-to-$1$ cover. We can assume that $g$ acts non-trivially on the factor
\begin{equation}
\label{eq:lemma_qr0}\left(\bigoplus_{q\hbox{ node in }R_\alpha}{\rm{tor}}_{f^{-1}(q)}\right)^{\sigma}\oplus H^0\left(C_\alpha, \omega^{\otimes 2}_{C_\alpha}\left(N_\alpha+P_\alpha\right)\right)^{\sigma_\alpha}.
\end{equation}

There are two cases to consider.

{\bf{Case 1.}}  If $R_\alpha$ meets the rest of the curve $\overline{R\setminus R_\alpha}$ at one node $q$, then $g$ must fix $q$ and it follows from the proof of Proposition \ref{prop:actionP1} that $\Pi(g)$ acts non-trivially on the factor 
\begin{equation}
\label{eq:lemma_qr1}
H^0\left(R_\alpha, \omega_{R_\alpha}^{\otimes 2}\left(B_\alpha+f(P)_\alpha+q\right)\right)
\end{equation}
corresponding to $R_\alpha$ on the right hand side of \eqref{eq:tor_sequenceR}, unless \eqref{eq:lemma_qr1} has dimension zero. If \eqref{eq:lemma_qr1} is not zero, then $s=1$ and we are in cases (i) or (iii) of Proposition \ref{prop:actionP1}. Recall that if locally $q$ in $R$ is of the form $\{x\cdot y=0\}$ with $x$ the local parameter of $R_\alpha$ and $y$ the one of $\overline{R\setminus R_\alpha}$ around the node. Then, ${\rm{tor}}_q\cong \mathbb{C}_t$ is generated the torsion differential $y(dx)^{\otimes 2}/x=x(dy)^{\otimes 2}/y$, see \cite[p. 33]{HM}, and $\Pi(g)$ acts by $x\mapsto \eta_1 x$ on $R_\alpha$ and as $y\mapsto \eta_2y$ on the component $R_\beta\subset R$ meeting $R_\alpha$ at $q$. In particular, $t\mapsto\eta_1\cdot \eta_2\cdot t$ and $\Pi(g)$ acts non-trivially on both \eqref{eq:lemma_qr1} and $\mathbb{C}_t$ unless $\eta_1=\eta_2^{-1}$ in which case $\Pi(g)$ acts on
\begin{equation}
\label{eq:lemma_qr2}
H^0\left(R_\beta, \omega_{R_\beta}^{\otimes 2}\left(B_\beta+f(P)_\beta+q\right)\right).
\end{equation}
and either $\Pi(g)$ acts non-trivially on ${\rm{tor}}_{q'}$ for a node $\{q'\}=R_\beta\cap R\setminus R_\alpha$ or the action of $\Pi(g)$ on \eqref{eq:lemma_qr2} is non-trivial. By Lemma \ref{lemma:action_up-down} one concludes that $\Pi(g)$ cannot be a quasi-reflexion unless $\left|B_{\alpha}\right|=2$, $\left|f(P)_\alpha\right|=0$, and \eqref{eq:lemma_qr1} is zero-dimensional. In this case, if one identifies $q=0\in R_\alpha$, then $\Pi(g)$ restricted to $R_\alpha$ has order $2$ and acts as $z\mapsto -z$. This forces $g$ to act on the $2$-to-$1$ cover $f_\alpha:C_\alpha\to R_\alpha$ with order $2$ or $4$ preserving the preimages of the two fixed points on $R_\alpha$ (the node and $\infty$) and permuting the two ramification points. Let $p, p'\in C_\alpha$ be the two preimages of the node $0\in R_\alpha$. One observes that $g$ acts on ${\rm{tor}}_p\oplus{\rm{tor}}_{p'}$ as $(t,t')\mapsto(-t,-t')$ or $(t,t')\mapsto(-t',-t)$. In any case acts with eigenvalue $-1$ on the $\sigma_\alpha$-invariant subspace $V_{f^{-1}(0)}$ and therefore on \eqref{eq:lemma_qr0}. Since $g$ is a quasi-reflexion, it must act trivially on all other factors of \eqref{eq:lemma_qr0}. This shows that $g$ is a rational bridge involution. 

{\bf{Case 2.}} The case when $R_\alpha$ meets the rest of the curve $\overline{R\setminus R_i}$ at two nodes $q_1, q_2$ is similar and follows again from the contribution to the age coming from Proposition \ref{prop:actionP1} plus the one coming from the nodes. One reduces to the case when $\Pi(g)$ acts on $R_\alpha$ as an involution interchanging the nodes and the $2$-ramification points, but then the action of $g$ on the cover $C_\alpha \to R_\alpha$ has non-trivial contribution to the age from both, $V_{f^{-1}(q_1)}\oplus V_{f^{-1}(q_2)}$ and from the left factor on \eqref{eq:lemma_qr0}. This contradicts the assumption that $g$ is a quasi-reflexion.
\end{proof}

\begin{theorem}
\label{thm:non-can_sing}
An $n$-pointed admissible cover $[f:C\to R, p_1,\ldots,p_n]\in\overline{\mathcal{H}ur}_{g,2}^n$ is a non-canonical singularity if and only if it has an elliptic tail $E$ with no markings and $j$-invariant $0$. If we fix the node at the orign $0\in E$, then $f$ restricts to the quotient $f:E\to E\big/\langle\pm 1\rangle\cong \mathbb{P}_1$ and the remining Weierstrass points are sent to a $\mathbb{Z}\big/3\mathbb{Z}$-orbit $\{z, \eta z, \eta^{2}z\}\subset \mathbb{P}^1$, with $\eta$ a $3$-root of unity.
\end{theorem}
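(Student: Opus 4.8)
The plan is to apply the Reid--Tai criterion to the local model \eqref{eq:ann_loc2}, namely the quotient $\mathbb{C}_\tau^{2g-1+n}\big/ {\rm{Aut}}_\sigma(C,p_1,\ldots,p_n)$, after first dividing by the subgroup $H$ generated by quasi-reflections described in Lemma \ref{lemma:allqr}. The quotient $\mathbb{C}_\tau^{2g-1+n}\big/H$ is smooth, so by \cite{P} it suffices to check whether the residual group ${\rm{Aut}}_\sigma(C,p_1,\ldots,p_n)\big/H$ acts with all ages $\geq 1$. Concretely, given a non-trivial $g\in {\rm{Aut}}_\sigma(C,p_1,\ldots,p_n)$ which is not in $H$, I must bound its age from below using the decomposition of the action in Lemma \ref{lemma:action_up-down}, i.e. $g=\widetilde{T}_1\oplus T_2\oplus B'$, splitting the age into the contributions from smoothing parameters of $Q_1$- and $Q_2$-nodes and from the deformations of the pointed components $H^0(C_\alpha,\omega_{C_\alpha}^{\otimes 2}(N_\alpha+P_\alpha))^{\sigma_\alpha}$. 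The component-wise contributions are then estimated using Proposition \ref{prop:actionP1} applied to the induced automorphism $\Pi(g)$ on each $R_\alpha$, together with the fact (Lemma \ref{lemma:action_up-down}) that a $Q_1$-node eigenvalue $\eta$ on $\tau_i$ upstairs becomes $\eta^2$ on $t_i$ downstairs, so upstairs nodes contribute \emph{more} to the age than they would on $\overline{\mathcal{M}}_{0,2g+2+n}$.

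First I would dispose of the case where $C$ is smooth: then the only inessential automorphism is $\sigma$ itself (acting trivially), and $\Pi$ embeds the automorphism group into ${\rm{Aut}}(\mathbb{P}^1, Br(f), f(p_1),\dots,f(p_n))$, so Proposition \ref{prop:actionP1} and $\widehat{\mathcal{H}}_{g,n}$ being reduced to the $\overline{\mathcal{M}}_{0,[2g+2]+n}$ case gives age $\geq 1$ for every non-quasi-reflection; the modified-age statement of Proposition \ref{prop:actionP1} then handles quasi-reflections. So I may assume $C$ has a node, and by Lemma \ref{lemma:allqr} that $g\notin H$. Next I would show that any such $g$ with ${\rm{age}}(g)<1$ must be ``localized'': it acts non-trivially on essentially one component, because every component on which $\Pi(g)$ acts non-trivially, or every node that $\Pi(g)$ moves, contributes a definite positive amount to the age, and these contributions add up. Using Proposition \ref{prop:actionP1}, the only way to keep the total age below $1$ is for $\Pi(g)$ to act on a single component $R_\alpha$, as an order-$\ell$ automorphism in one of the exceptional configurations of that proposition, while being the identity elsewhere and fixing all nodes of $R$.

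Then I would run through those exceptional configurations, bringing in the extra age contribution from the node $q$ joining $R_\alpha$ to the rest of the curve and from the cover structure $C_\alpha\to R_\alpha$, to see which survive. The $s\geq 1$ cases and the $\ell\in\{2\}$ rational-bridge cases get absorbed into $H$ or forced to have age $\geq 1$ by the node contribution (the $\eta^2$ effect for $Q_1$-nodes is decisive here). What remains is the genus-one situation: $R_\alpha$ is the quotient of an elliptic tail $E$ attached at one node, $\Pi(g)$ has order $3$ (coming from an order-$6$ automorphism of $E$ whose cube is the elliptic involution), $E$ carries no markings, so that the node contributes an eigenvalue equal to $1$ (the $\eta^2$ of an order-$6$, $\eta=e^{2\pi i\cdot 1/6}$ becomes a primitive cube root, but one checks the total lands exactly on the boundary), and the three remaining Weierstrass points on $R_\alpha$ form a single $\mathbb{Z}/3\mathbb{Z}$-orbit $\{z,\eta z,\eta^2 z\}$; equivalently $E$ has $j$-invariant $0$. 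Conversely, for such a cover one exhibits the order-$6$ automorphism and computes its age to be $<1$ directly from the eigenbasis \eqref{eq:basis_dz2} plus the trivial node contribution, confirming non-canonicity.

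The main obstacle I expect is the bookkeeping in the localization step: carefully tracking, across a chain of components meeting at $Q_1$- and $Q_2$-nodes, how the eigenvalues $\eta_1,\eta_2$ at a shared node $q$ interact (the case $\eta_1=\eta_2^{-1}$ in the proof of Lemma \ref{lemma:allqr} already shows this can conspire to make a node contribute $0$), and ensuring that no clever cancellation lets a ``spread-out'' automorphism sneak under age $1$. The cover structure helps decisively — Lemma \ref{lemma:action_up-down}'s observation that $Q_1$-node smoothing parameters are squared going downstairs means the Hurwitz space sees \emph{strictly more} age at those nodes than $\overline{\mathcal{M}}_{0,2g+2+n}$ does — but one still has to handle $Q_2$-nodes (where no squaring occurs) and verify that the only configuration that balances on the nose is the $j=0$ elliptic tail. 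Once localization is established, the remaining case analysis is a finite check combining Proposition \ref{prop:actionP1} with elementary eigenvalue computations on \eqref{eq:torinvariant_sequence}.
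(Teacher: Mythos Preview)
Your outline is essentially the paper's own approach: quotient by the quasi-reflection subgroup $H$ of Lemma~\ref{lemma:allqr}, then bound the age of the residual action using Proposition~\ref{prop:actionP1} on each component $R_\alpha$ together with the torsion contribution from the nodes, and split into the cases where $R_\alpha$ meets $\overline{R\setminus R_\alpha}$ in one versus two nodes. The paper's proof is exactly this two-case analysis and lands on the same exceptional $(r,s,\ell)=(3,1,3)$ configuration, i.e., the unmarked elliptic tail with $j=0$.

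Two numerical points in your sketch should be corrected before you write this up. First, the heuristic that a $Q_1$-node ``contributes strictly more age upstairs'' is not universally true: if $g$ acts on $\tau$ by $e^{2\pi i r}$ with $r>1/2$, then $\Pi(g)$ acts on $t=\tau^2$ by $e^{2\pi i(2r-1)}$, which contributes \emph{less} age downstairs; the comparison depends on the eigenvalue. Second, in the elliptic-tail case the node does \emph{not} contribute eigenvalue $1$. After dividing by $H$ (which contains the elliptic involution $\sigma_E$ acting on $\tau$ by $-1$), the coordinate becomes $\tau^2$, on which the order-$6$ automorphism $g$ acts by a primitive cube root of unity; this contributes $1/3$ to the age, and the one-dimensional $H^0(E,\omega_E^{\otimes 2}(x))^{\sigma_E}$ contributes another $1/3$, giving ${\rm age}=2/3<1$, as the paper records. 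Your parenthetical already computes $\eta^2$ as a primitive cube root, so this is likely just a slip, but it matters for the converse direction.
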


\begin{proof}
First, note that by Lemmas \ref{lemma:inessential} and \ref{lemma:allqr}
$$\mathbb{C}_{\tau}^{2g-1+n}\big/H\cong \mathbb{C}_{t'}^{2g-1+n}$$
and ${\rm{Aut}}_\sigma\left(C, p_1,\ldots,p_n\right)$ acts without non-trivial quasi-reflections on the latter. Moreover, 
$$\mathbb{C}_{t}^{2g-1+n}\cong \mathbb{C}_{t'}^{2g-1+n}$$
via $t_i=t_i'$ if $C$ ha no unmarked rational bridge and $t_i^2=t_i'$ if $C$ has as an unmarked rational bridge and $t_i$ is the smoothing parameter of the corresponding node on $R$. Let 
$$h\in \Pi\left({\rm{Aut}}_\sigma\left(C, p_1,\ldots,p_n\right)\right)\subset{\rm{Aut}}\left(R, Br(f), f(p_1),\ldots,f(p_n)\right)$$
be a non-trivial automorphism of the singular $([2g+2]+n)$-pointed rational curve $R$ induced by a $\sigma$-equivariant automorphism on the pointed cover. Then $h$ must act non-trivially on at least one (smooth) component $R_\alpha$.\\

{\bf{Case 1.}} Assume first that $R_\alpha$ meets $\overline{R\setminus R_\alpha}$ in two nodes. Stability forces the number of (ordered and unordered) markings and nodes $r+s$ on $R_\alpha$ to be at least $3$. If the nodes are fixed by $h$, then $s\geq2$ and we can assume $s=2$ and $r\geq 2$, otherwise $h$ is the identity. We fix $0,\infty\in R_\alpha$ as the fixed nodes and $h$ acts by $z\mapsto \eta z$ where $\eta$ ia a primitive $\ell$-root of unity. Then one observes that the action on ${\rm{tor}}_{q_i}\cong \mathbb{C}_{t_i}$ the node at $0=q_i$ is given by $t_i\mapsto \eta t_i$ and the action on the node at infinity ${\rm{tor}}_{q_j}\cong \mathbb{C}_{t_j}$ is given by $t_j\mapsto \eta^{-1}t_j$. In any case the contribution to the age is at least $1$. If the nodes are interchanged, then the contribution from $\mathbb{C}_{t_i}\oplus \mathbb{C}_{t_j}$ is of $1/2$. Moreover, since the automorphism is in the image of $\Pi$, both nodes have to be of the same type and $r$ must be even. Then, from the proof of Proposition \ref{prop:actionP1} is follows that the contribution from 
$$H^0\left(R_\alpha, \omega^{\otimes 2}_{R_\alpha}\left(B_\alpha+f(P)_\alpha+q_i+q_j\right)\right)$$
is at least $1/2$ unless $\left|B_\alpha\right|=0$ and $s=\left|f(P)_\alpha\right|=1$. But one immediately observes that if this is the case the the two components $R_{\beta_1}$ and $R_{\beta_2}$ meeting $R_\alpha$ at the interchanged nodes must be sent isomorphically to each other giving a contribution of at least $1/2$ from the action of $h$ on 
$$H^0\left(R_{\beta_1}, \omega^{\otimes 2}_{R_{\beta_1}}\left(B_{
\beta_1}+f(P)_{\beta_1}+M_{\beta_1}\right)\right)\oplus H^0\left(R_{\beta_2}, \omega^{\otimes 2}_{R_{\beta_2}}\left(B_{
\beta_2}+f(P)_{\beta_2}+M_{\beta_2}\right)\right)$$
or, if the above spaces are trivial, a contribution of at least $1/2$ coming from the action of ${\rm{tor}}$-spaces.\\

{\bf{Case 2.}} Assume that $R_\alpha$ meets $\overline{R\setminus R_\alpha}$ at one node $q$ that we fix as $0\in R_\alpha$. Then $h$ acts as $z\mapsto\eta z$ and $q$ is fixed by $h$. The contribution from ${\rm{tor}}_q$ is at least $1/\ell$. By Proposition \ref{prop:actionP1}, $1/\ell$ plus the contribution to the age coming from 
$$H^0\left(R_\alpha, \omega^{\otimes 2}_{R_\alpha}\left(B_\beta+f(P)_\alpha+q\right)\right)$$
is at least $1$ unless $r+s=3$ or $r=3, s=1$ and we are in case (i) of the proposition, i.e., the contribution is $1/3$. In the first case, either $s=2$, $r=1$ and the action is trivial, or $s=1, r=2$ and the action is a rational bridge involution. In any case the action is trivial on $\mathbb{C}_{t'}^{2g-1+n}$. When $r=3$ and $s=1$, then the age is at least $\frac{1}{2}+\frac{1}{3}$, and if $h$ is the identity on all components except $R_\alpha$, then $C_\alpha$ is an elliptic curve meeting the rest of the curve at one node of type $Q_1$ with Weierstrass points sitting over a $\mathbb{Z}\big/3\mathbb{Z}$-orbit. In this case $h$ does not act as quasi-reflexion and ${\rm{age}}(h)=\frac{2}{3}<1$, i.e., the corresponding point on the moduli space is a non-canonical singularity. 
\end{proof}

Curves with an elliptic tail of $j$-invariant $0$ are a recurrent source of non-canonical singularity on moduli spaces of curves. They admit an automorphism $g$ of order $6$ where $g^3$ is the elliptic involution and $g^2$ permutes the three non-zero $2$-torsion points. The second one is the one induced by the order three automorphism on the quotient $E/g^3\cong \mathbb{P}^1$. This appeared first \cite{HM}, and then in \cite[Thm. 3.1]{Lu} for moduli spaces of spin curves and \cite[Thm. 6.7]{FL} for moduli spaces of Prym curves. In all these cases, not only pluricanonical forms defined on the regular locus extend but also the source of non-canonical singularities is always the same and has only to do with the stable model of the curve and not with the extra datum (spin structure, Prym structure, or cover structure). In each case, the Reid--Tai analysis is different and uses the extra datum at hand in a fundamental way. In our case, the cover structure and the lower bound of the age obtained via the action on the versal deformation space of the pointed target rational curve. That being said, it does not come as a surprise that the argument for extending pluricanonical forms works more or less uniformly in all these cases. For the sake of completeness, while also trying to avoid the repetition of minor modifications of Harris--Mumford's original argument, we sketch the argument in our situation omitting some details.

\subsection{Extending pluri-canonical forms}
\label{section:extension_forms}

Recall the following refinement of Raid--Tai criterion:

\begin{proposition}[Appendix 1 to \S1 in \cite{HM} and Prop. 4.2 in \cite{Lu}]
\label{prop:apendix}
Let $G\subset GL(V)$ be a finite group acting linearly on $V$, $V^\circ\subset V$ the subset where $G$ acts freely, and
$$\rho:\widetilde{V\big/G}\to V\big/G$$
a resolution of singularities. Let $U$ be an open subset of $V\big/G$ containing $V^\circ\big/G$ with the property that for every $g\in G$ with $0<{\rm{age}}(g)<1$, $U$ intersects non-trivially the image of the fixed locus ${\rm{Fix}}(g)\subset V$ on the quotient $V\big/G$. Then, a $G$-invariant pluri-canonical form $\omega$ on $V$ considered as a meromorphic form on $\widetilde{V\big/G}$ is holomorphic if it is holomorphic on $\widetilde{U}=\rho^{-1}(U)$.
\end{proposition}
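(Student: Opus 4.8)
\emph{Proof plan.} The plan is to view a $G$-invariant section $\omega\in H^0(V,\omega_V^{\otimes m})$ as a rational section of $\omega_{\widetilde{V/G}}^{\otimes m}$ and to prove that, granted holomorphicity on $\widetilde U=\rho^{-1}(U)$, it has non-negative order along every prime divisor of $\widetilde{V/G}$; as $\widetilde{V/G}$ is smooth this forces $\omega$ to be holomorphic. By the reduction recalled before the proposition I may first assume $G\subset GL(V)$ acts without quasi-reflections, replacing $V$ by $V/H$ with $H$ the normal subgroup generated by quasi-reflections (so that $V/H$ is smooth and $G/H$ acts on it without quasi-reflections); one checks that a $G$-invariant pluricanonical form on $V$ and the associated $(G/H)$-invariant form on $V/H$ have the same spaces of holomorphic extensions, since an $H$-invariant form already descends to the smooth quotient $V/H$ without poles along the branch divisor. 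Writing $\pi\colon V\to X:=V/G$ for the quotient map, which is then \'etale away from a closed set of codimension $\geq 2$, the form $\omega$ descends to a rational section $\bar\omega$ of $\omega_X^{\otimes m}$ holomorphic on that \'etale locus; in particular $\operatorname{div}_X(\bar\omega)\geq 0$, and I set $\omega_{\widetilde{V/G}}:=\rho^*\bar\omega$.

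From $K_{\widetilde{V/G}}=\rho^*K_X+\sum_E a(E,X)\,E$ one gets $\operatorname{ord}_E(\omega_{\widetilde{V/G}})\geq m\,a(E,X)$ for every $\rho$-exceptional prime divisor $E$ (using $\rho^*\operatorname{div}_X\bar\omega\geq 0$) and $\operatorname{ord}_E(\omega_{\widetilde{V/G}})=\operatorname{ord}_{\rho(E)}(\operatorname{div}_X\bar\omega)\geq 0$ for non-exceptional $E$, so only $\rho$-exceptional divisors require attention. If $E$ meets $\widetilde U$, then $\omega_{\widetilde{V/G}}$ is holomorphic on the nonempty open subset $E\cap\widetilde U$ of $E$, whence $\operatorname{ord}_E(\omega_{\widetilde{V/G}})\geq 0$. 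For $E$ with $E\cap\widetilde U=\varnothing$, equivalently $\rho(E)\cap U=\varnothing$, I claim $a(E,X)\geq 0$, which completes the proof. Suppose not. Localizing at the generic point $\eta$ of $\rho(E)$ and applying Luna's slice theorem, $(X,\eta)$ is \'etale-locally a product $Z\times\bigl(N/G_\eta\bigr)$, where $G_\eta$ is the stabilizer of a point over $\eta$ acting linearly on the slice $N$ with no nonzero fixed vector, and $a(E,X)$ is the discrepancy of the induced exceptional divisor over $N/G_\eta$. By the Reid--Tai criterion its negativity forces the existence of a non-quasi-reflection $g\in G_\eta\setminus\{1\}$ with $0<\operatorname{age}(g)<1$, the age computed on $N$ agreeing with the age of $g$ acting on $V$ since $g$ fixes the complementary directions. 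Because the $G$-action is linear, $\operatorname{Fix}(g)=\ker(g-\mathrm{id})$ is a linear subspace, so $\pi(\operatorname{Fix}(g))$ is irreducible; moreover the weighted blow-ups of $N/G_\eta$ attached to the powers of $g$ are centred precisely along the image of this fixed subspace, so $\rho(E)=\pi(\operatorname{Fix}(g))$. By hypothesis $U$ meets $\pi(\operatorname{Fix}(g))=\rho(E)$, contradicting $\rho(E)\cap U=\varnothing$.

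I expect the hard part to be this last identification: that a divisor of negative discrepancy over $X=V/G$ has center an irreducible component of $\pi(\operatorname{Fix}(g))$ for some $g$ with $0<\operatorname{age}(g)<1$. Making this precise means combining Luna's slice theorem with the explicit toric resolutions of the cyclic quotient singularities $N/G_\eta$ and the age bookkeeping governing their discrepancies --- exactly the analysis carried out in the Appendix of \cite{HM} and in \cite[Prop. 4.2]{Lu}, which I would cite rather than redo. The remaining points --- the reduction to the quasi-reflection-free case and the compatibility of ``age'' under passage to the slice --- are routine.
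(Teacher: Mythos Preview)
The paper does not supply its own proof of this proposition; it simply records the statement and cites the Appendix to \S1 of \cite{HM} and \cite[Prop.~4.2]{Lu}. So there is no argument in the paper to compare against, and your proposal must stand on its own.

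Your skeleton is correct and standard: reduce to the quasi-reflection-free case, descend $\omega$ to a regular section $\bar\omega$ of $\omega_X^{[m]}$ on $X=V/G$, and check $\operatorname{ord}_E(\rho^*\bar\omega)\geq 0$ for each prime exceptional divisor $E$ of the given resolution, treating separately the cases $E\cap\widetilde U\neq\varnothing$ and $E\cap\widetilde U=\varnothing$. The first case and the inequality $\operatorname{ord}_E\geq m\cdot a(E,X)$ are fine.

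The gap is in your treatment of the second case. You assert that if $a(E,X)<0$ then $\rho(E)=\pi(\operatorname{Fix}(g))$ for some $g$ with $0<\operatorname{age}(g)<1$, justifying this by saying that ``the weighted blow-ups of $N/G_\eta$ attached to the powers of $g$ are centred precisely along the image of this fixed subspace.'' But $E$ is an exceptional divisor of an \emph{arbitrary} resolution $\rho$; it need not arise from any weighted blow-up, and there is no canonical bijection between exceptional divisors and group elements for non-cyclic $G_\eta$. All the slice argument gives you is $\rho(E)\subseteq\pi(\operatorname{Fix}(g))$ (since a lift of the generic point of $\rho(E)$ lies in $\operatorname{Fix}(G_\eta)\subseteq\operatorname{Fix}(g)$), and this inclusion may well be strict when $G_\eta\supsetneq\langle g\rangle$. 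Since the hypothesis only guarantees that $U$ meets $\pi(\operatorname{Fix}(g))$ somewhere, not everywhere, strict inclusion would break your contradiction. Your acknowledgment that this is ``exactly the analysis carried out in the Appendix of \cite{HM} and in \cite[Prop.~4.2]{Lu}, which I would cite rather than redo'' is circular: that appendix \emph{is} the proposition you are asked to prove.

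What actually closes the gap (and what \cite{HM} does) is a reduction to cyclic subgroups: one shows that extendability of $\omega$ over a resolution of $V/G$ is implied by extendability over resolutions of each $V/\langle g\rangle$. For the cyclic quotient $V/\langle g\rangle$ one has a genuine global product decomposition $\operatorname{Fix}(g)\times (N/\langle g\rangle)$, and then every exceptional divisor of negative discrepancy has centre \emph{equal to} the full stratum $\pi_{\langle g\rangle}(\operatorname{Fix}(g))$ (this follows from an mld computation for snc pairs on the product resolution, using $\dim\operatorname{Fix}(g)\geq 1$ to absorb the negative $a_i$'s). Now the hypothesis ``$U$ meets $\pi(\operatorname{Fix}(g))$'' does force such divisors to meet $\widetilde U$. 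The passage back from the cyclic quotients to $V/G$ is the genuine content of the Harris--Mumford appendix; your outline bypasses it by an unjustified identification of centres.
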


By Theorem \ref{thm:non-can_sing} it is enough to show that pluri-canonical forms $\omega$ defined on the regular locus of $\overline{\rm{H}ur}_{g,2}^n$ extend locally around points in the moduli space consisting on covers having an elliptic tail. As in \cite{HM}, one separates the proof into two steps. First, we show extendability locally around generic non-canonical points, that is, covers 
\begin{equation}
\label{eq:attachE}
\left[f:C\cup_{p\sim 0} E\to \mathbb{P}^1\cup_q \mathbb{P}^1, p_1,\ldots,p_n\right],
\end{equation}
where $p_i\in C$, both components $C$ and $E$ are smooth curves of genus $g-1$ and $1$ meeting at a node of type $Q_1$, and the only nontrivial automorphism of $C$ is the cover involution $\sigma_C$, in particular $(C,p_1,\ldots,p_n)$ has no non-trivial automorphisms if $n\geq1$. Then one uses this together with Proposition \ref{prop:apendix} to show extendability for an arbitrary non-canonical point.\\

\begin{proof}[Proof of Theorem \ref{thmsix}]
We start by fixing and $n$-pointed smooth admissible double cover $\left[f:C\to\mathbb{P}^1,p_1,\ldots,p_n\right]$ of genus $g-1$ without non-trivial automorphisms. Let $p\in C$ be a ramification point and consider the map
$$\phi:\overline{\rm{M}}_{1,1}\cong \mathbb{P}^1\longrightarrow\overline{\rm{Hur}}_{g,2}^n$$
given by attaching an elliptic tail as in \eqref{eq:attachE}. The key idea in \cite{HM} is to contract elliptic tails to cusps and study locally the map $\overline{\mathcal{M}}_g\to\overline{\mathcal{M}}^{cusp}_g$ on coarse spaces around the point $\left[C\cup_pE\right]$. The same argument then shows that there exists an open neighbourhood $S$ of $\phi(\overline{\rm{M}}_{1,1})\subset \overline{\rm{Hur}}_{g,2}^n$ and a birational map $\eps:S\to B$, such that $B$ is smooth, $S\to B$ is an isomorphism over $B\setminus Z$, where $Z\subset B$ has codimension two, and $B\setminus Z\cong \eps^{-1}(B\setminus Z)$ sits in the regular locus of $\overline{\rm{Hur}}_{g,2}^n$. The diagram is as follows (see \cite[p. 43]{HM}, \cite[Sec. 4]{Lu}):

\begin{equation}
\begin{tikzcd}
\phi\left(\overline{\rm{M}}_{1,1}\right)\arrow[hook,r]&\arrow[hook,r]S\arrow[dl, "\eps"']\arrow[hook,r]&\overline{\rm{Hur}}_{g,2}^n\\
B&B\setminus Z\arrow[hook,r]\arrow[hook,u]\arrow[hook',l]&\left(\overline{\rm{Hur}}_{g,2}^n\right)_{reg}.\arrow[hook, u]
\end{tikzcd}
\end{equation}
Let $\widetilde{S}\to S$ be a resolution and consider the composition $\widetilde{S}\to B$. Any pluricanonical form on $\left(\overline{\rm{Hur}}_{g,2}^n\right)_{reg}$ restricts to a holomorphic form on $B\setminus Z$. Since $B$ is smooth, it extends holomorphically to $B$ and therefore, by pulling it back, to $\widetilde{S}$. This shows extendability on a resolution around points in $\overline{\rm{Hur}}_{g,2}^{n}$ as above, in particular, by Theorem \ref{thm:non-can_sing}, around a general non-canonical singularity. Let $[f]\in\overline{\rm{Hur}}_{g,2}^{n}$ be any non-canonical point, and $(E_1,x_1), \ldots, (E_k, x_k)$ the elliptic tails with $j$-invariant $j(E_i,x_i)=0$. For each $(E_i,x_i)$ one considers a general deformation $[f^{(i)}]$ of $[f]$ preserving $(E_i,x_i)$, that is, fixing $\tau_i=\tau_j=0$ where $\tau_i$ is the smoothing parameter ${\rm{tor}}_{x_i}\cong \mathbb{C}_{\tau_i}$ and $\tau_j$ is the deforming paramter of $(E_i,x_i)$, i.e., $H^0(E_i, \omega_{E_i}^{\otimes 2}(x_i))^{\sigma_{E_i}}\cong \mathbb{C}_{\tau_j}$. Then $[f^{(i)}]$ is as before and there exists an open set $S^{(i)}$ containing the corresponding $\phi^{(i)}(\overline{\mathcal{M}}_{1,1})$ where forms extend to a resolution. Let $V^\circ$ be the locus in $V=\mathbb{C}_{\tau}^{2g-1+n}$ where $G={\rm{Aut}}_\sigma\left(C,p_1,\ldots,p_n\right)$ acts freely. Then one shows that 
$$U=V^\circ\big/G\cup\left(\bigcup_{i=1}^k S^{(i)}\cap V\big/G\right)$$
satisfies the hypothesis of Proposition \ref{prop:apendix}.

\end{proof} 


\appendix
\numberwithin{equation}{section}

\section{The canonical class revisited}
\label{Appendix}
\addtocontents{toc}{\protect\setcounter{tocdepth}{0}}
\section*{by Irene Schwarz}

In this appendix, we revisit an earlier attempt to calculate the Kodaira dimension of $\overline{\mathcal{H}}_{g, n}$ by studying the positivity properties of the canonical class. The coarse space $\overline{\rm{H}}_{g,n}$ has been shown to be non-$\mathbb{Q}$-factorial for $g\geq 3$ and $n\geq2$, see Proposition \ref{thm:Hgnsing}. This makes the extendability of pluricanonical forms untenable, necessitating a better behaved birational model to compute the Kodaira dimension when this value is non-negative. We calculate the canonical divisor of the moduli space $\overline{\rm{H}}_{g,n}$ and use Logan's divisor \cite[Thm. 5.4]{Lo} together with a fundamental relation between the Hodge class and boundary divisors on $\overline{\mathcal{H}}_{g,n}$  to show that the canonical divisor is effective when $n=4g+6$ and big when $n\geq 4g+7$.


Recall that the coarse morphism $\overline{\mathcal{H}}_{g,n}\longrightarrow \overline{\rm{H}}_{g,n}$ induces a natural isomorphism on codimension one cycles 
\begin{equation}
\label{appendix:eq1}
{\rm{CH}}^1\left(\overline{\rm{H}}_{g,n}\right)\otimes\qq\cong{\rm{CH}}^1\left(\overline{\mathcal{H}}_{g,n}\right)\otimes\qq.
\end{equation}
When $n=0$, the stack $\overline{\mathcal{H}}_g$ is smooth \cite[Ch. XI, Lemma 6.15]{ACG} and equation \eqref{appendix:eq1} is an isomorphism of Picard groups with rational coefficients. In what follows we recall the main result in \cite{Sca} about the structure of ${\rm{CH}}^1(\overline{\mathcal{H}}_{g,n})\otimes \qq$ generalising \cite{Co} in the unpointed case.\\

The divisors $\Delta_{i:S}\subset \overline{\mathcal{M}}_{g,n}$ for $i\geq 1$ intersect $\overline{\mathcal{H}}_{g,n}$ transversally on an irreducible divisor whose class on $\overline{\mathcal{H}}_{g,n}$ we also denote $\delta_{i:S}$.\footnote{When $n=0$, $\delta_{i}$ is defined as $\frac{1}{2}$ times the pullback of $\delta_{i}$ on $\overline{\mathcal{M}}_g$. This is again to take into account the fact that a general curve in $\delta_i$ has twice as many automorphisms as the general hyperelliptic curve. Similarly, $\delta_{i:\varnothing}$ is also $\frac{1}{2}$ times the pullback of $\delta_{i:\varnothing}$ on $\overline{\mathcal{M}}_{g,n}$.} A general point in $\delta_{i:S}$ consist on two hyperelliptic curves of genus $g-i$ and $i$ meeting transversally at a common Weierstrass point. The divisor $\Delta_{\rm{irr}}$ intersects $\overline{\mathcal{H}}_{g}$ on 
$$\lfloor\frac{g-1}{2}\rfloor+1$$ 
many components whose reduced class we denote by $\eta_0,\eta_1,\ldots, \eta_{\lfloor\frac{g-1}{2}\rfloor}$, where the multiplicity of the intersection along $\eta_{i}$ for $i>0$ is $2$. Hence
\begin{equation}
\label{sec2:eq2}
{\delta_{\rm{irr}}}=\eta_{irr}+2\sum_{i=1}^{\lfloor\frac{g-1}{2}\rfloor}\eta_i.
\end{equation}
The element $\eta_{irr}\in{\rm{Pic}}_\qq\left(\overline{\mathcal{H}}_{g}\right)$ is the class of the locus $\Xi_{irr}$ of curves whose general element is an hyperelliptic curve with two conjugate points identified. Similarly, $\eta_i\in{\rm{Pic}}_\qq\left(\overline{\mathcal{H}}_{g}\right)$ is the class of the locus $\Xi_i$ of curves whose general element consist of two hyperelliptic curves $C_1, C_2$ and two pairs of conjugate points $p,\overline{p}\in C_1$ and $q,\overline{q}\in C_2$, where the point $p$ is identified with $q$ and $\overline{p}$ with $\overline{q}$. When $n\geq 1$, we denote by $\eta_{i:S}\in{\rm{Pic}}_\qq\left(\overline{\mathcal{H}}_{g, n}\right)$ the class of the locus $\Xi_{i:S}$ whose general element is a curve on $\Xi_i$, and the marked points indexed by $S$ are in the component of genus $i$.

\begin{theorem}[\cite{Co, Sca}]
For $g\geq 2$, the classes $\psi_i$, $\eta_{irr}$, $\eta_{i:S}$, and $\delta_{i:S}$ freely generate ${\rm{CH}}^1_\qq\left(\overline{\mathcal{H}}_{g,n}\right)$.
\end{theorem}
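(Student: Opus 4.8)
The plan is to establish the two halves of the statement separately: that the classes $\psi_i$, $\eta_{irr}$, $\eta_{i:S}$, $\delta_{i:S}$ \emph{span} ${\rm{CH}}^1_\qq(\overline{\mathcal{H}}_{g,n})$, and that they are \emph{linearly independent}. Both I would prove by induction on the number of marked points, taking $n=0$ as the base case.

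\emph{Spanning.} First I would record that $\Pic_\qq(\mathcal{H}_g)=0$: via $\mathcal{H}_g\cong\mathcal{M}_{0,2g+2}/S_{2g+2}$ this follows from $\Pic_\qq(\mathcal{M}_{0,m})=0$, which in turn holds because the boundary divisors already generate $\Pic_\qq(\overline{\mathcal{M}}_{0,m})$ and hence restrict to zero on the interior. I would then climb the tower of forgetful maps $\mathcal{H}_{g,k}\to\mathcal{H}_{g,k-1}$: interpreting $\mathcal{H}_{g,k}$ as the complement of the marked sections in the universal curve over $\mathcal{H}_{g,k-1}$ and using the standard description of the rational Picard group of a family of smooth curves (pull-backs from the base, the relative dualizing sheaf, and the marked sections, cf. \cite{AC}), together with a Franchetta-type input for hyperelliptic curves — the generic relative Jacobian is generated by differences of marked sections and the hyperelliptic class, so nothing new appears — I would obtain $\Pic_\qq(\mathcal{H}_{g,n})=\langle\psi_1,\dots,\psi_n\rangle$ (with possible relations). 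Since $\overline{\mathcal{H}}_{g,n}$ is normal, the excision sequence $\bigoplus_{D}\qq\,[D]\to{\rm{CH}}^1_\qq(\overline{\mathcal{H}}_{g,n})\to{\rm{CH}}^1_\qq(\mathcal{H}_{g,n})\to 0$ over the irreducible boundary divisors $D$, which are exactly the $\delta_{i:S}$, $\eta_{irr}$ and $\eta_{i:S}$, then gives spanning (the $\psi_i$ lift from the interior). Along the way this shows $\lambda$ and the Weierstrass divisor are forced to be $\qq$-combinations of boundary classes.

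\emph{Independence.} For $n=0$ this is Cornalba's theorem \cite{Co}: $\alpha\colon\overline{\mathcal{H}}_g\overset{\sim}{\to}\overline{\mathcal{M}}_{0,[2g+2]}$ identifies ${\rm{CH}}^1_\qq(\overline{\mathcal{H}}_g)=\Pic_\qq(\overline{\mathcal{H}}_g)$ with $\Pic_\qq(\overline{\mathcal{M}}_{0,[2g+2]})$, whose $g$ boundary divisors $\delta_{[2]},\dots,\delta_{[g+1]}$ are linearly independent and correspond (up to the usual halving conventions) to $\eta_{irr},\eta_1,\dots,\eta_{\lfloor(g-1)/2\rfloor},\delta_1,\dots,\delta_{\lfloor g/2\rfloor}$. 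For the inductive step I would use the forgetful morphism $\pi\colon\overline{\mathcal{H}}_{g,n}\to\overline{\mathcal{H}}_{g,n-1}$, which is flat with Cohen--Macaulay (stable-curve) fibres; since $\overline{\mathcal{H}}_{g,n}$ is normal and smooth in codimension one (established earlier in the paper), the codimension-one rational Chow group of this universal curve should decompose as $\pi^*{\rm{CH}}^1_\qq(\overline{\mathcal{H}}_{g,n-1})$ together with the ``new'' classes $\psi_n$, the $\delta_{0:\{i,n\}}$ for $i<n$, and, for each boundary divisor $B$ of $\overline{\mathcal{H}}_{g,n-1}$, the component of $\pi^*B$ whose general point carries the $n$-th marking on the distinguished component (the complementary component being $\pi^*B$ minus this one). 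Independence of these new classes from $\pi^*{\rm{CH}}^1_\qq(\overline{\mathcal{H}}_{g,n-1})$ and from one another I would verify by restricting to a general fibre $C$ of $\pi$ — pull-backs restrict to zero, $\psi_n$ to a point class, and the $\delta_{0:\{i,n\}}$ to the $n-1$ distinct section points of $C$, which are independent in ${\rm{CH}}^1_\qq(C)$ — supplemented by one-parameter degenerations of the base furnished by the inductive hypothesis to separate the remaining new components. A combinatorial check then matches the resulting basis with the list in the statement.

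I expect the main obstacle to be making the universal-curve decomposition of ${\rm{CH}}^1_\qq$ rigorous here: the usual arguments are written for smooth stacks or $\qq$-factorial varieties, and $\overline{\mathcal{H}}_{g,n}$ is neither, so one must check carefully that normality, Cohen--Macaulayness, smoothness in codimension one, and the flatness of $\pi$ really do suffice (they should). Closely tied to this is the Franchetta-type statement for hyperelliptic curves needed to pin down $\Pic_\qq(\mathcal{H}_{g,n})$ on the interior, and the somewhat fiddly bookkeeping of matching the ``new components of $\pi^*B$'' with the intrinsic collection $\{\delta_{i:S},\eta_{irr},\eta_{i:S}\}$ and confirming the count agrees with the Picard number.
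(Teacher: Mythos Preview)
The paper does not prove this theorem: it is quoted from \cite{Co, Sca} and used as input in the appendix. So there is no in-paper argument to compare your proposal against; the relevant comparison is with Scavia's actual proof.

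Your outline is broadly reasonable and you correctly identify its weak points. Two comments. First, the spanning half is essentially fine: triviality of $\Pic_\qq(\mathcal{H}_g)$ via $\mathcal{M}_{0,2g+2}$, excision, and an inductive description of $\Pic_\qq$ of the open locus is the standard route, and the Franchetta-type input you need (that on the generic hyperelliptic curve the only line bundles extending over the family are generated by the $g^1_2$ and the marked sections) is known. Second, the independence half is where the real content lies, and your fibrewise argument is more delicate than it looks. Restricting to a general fibre $C$ does separate $\psi_n$ and the $\delta_{0:\{i,n\}}$ from pullbacks, but you also need to separate, for each boundary divisor $B$ downstairs, the two irreducible components of $\pi^{-1}(B)$ from each other and from everything else; your ``one-parameter degenerations furnished by the inductive hypothesis'' is exactly where test curves or test surfaces must be produced, and this is the nontrivial part you have not yet done. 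The non-$\qq$-factoriality of $\overline{\mathcal{H}}_{g,n}$ is not a serious obstruction here since you are working with ${\rm CH}^1_\qq=Cl_\qq$ rather than $\Pic_\qq$, and restriction of Weil divisors to a general (hence smooth-locus) fibre is unproblematic; but you should say so explicitly rather than flag it as a worry.

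Scavia's approach in \cite{Sca} is different in flavour: rather than an inductive fibrewise analysis, he exploits the comparison with $\overline{\mathcal{M}}_{0,2g+2+n}$ (where the Picard group is completely understood via Keel's relations) through the Hurwitz correspondence, which sidesteps the need for ad hoc test families. Your approach would work but requires more bookkeeping; the payoff is that it is closer in spirit to the Arbarello--Cornalba treatment of $\Pic_\qq(\overline{\mathcal{M}}_{g,n})$ and does not rely on the special combinatorics of genus zero.
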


In particular, the restriction of the Hodge class $\lambda\in{\rm{Pic}}_{\mathbb{Q}}\left(\overline{\mathcal{M}}_{g,n}\right)$ to $\overline{\mathcal{H}}_{g,n}$ can be expressed as a linear combination of boundary classes, cf. \cite[Prop. 4.7]{CH}: 
\begin{equation}
\label{appendix:eq_lambda}
(8g+4)\lambda= g\eta_{irr} +2\sum_{i=1}^{\lfloor\frac{g-1}{2}\rfloor}\sum_S (i+1)(g-i)\eta_{i:S} +4\sum_{i=1}^{\lfloor\frac{g}{2}\rfloor}\sum_S i(g-i)\delta_{i:S}.
\end{equation}

The branch isomorphism induces a map $\alpha$ fitting in the following diagram \cite[Ch. XIII, eq. 8.7]{ACG}
\begin{equation}
\label{intro:diag_alpha}
\begin{tikzcd}
&\overline{{\rm{Hur}}}_{g,2}\arrow[dl]\arrow[dr]&\\
\overline{\rm{H}}_g\arrow[rr,"\alpha"]&&\overline{\rm{M}}_{0,[2g+2]}.
\end{tikzcd}
\end{equation}
For a general rational curve in the divisor classes $\delta_{0:[j]}\subset\overline{\mathcal{M}}_{0,[2g+2]}$, the parity of $j$ determines whether the pullback along $\alpha$ is of $\delta$-type, that is, two hyperelliptic curves glued at a Weierstrass point, or of $\eta$-type, that is, two hyperelliptic curves glued at two conjugate points.
The calculations  in \cite[Ch. XIII, eq. 8.7]{ACG} give rise to the following identities:

\begin{equation}
\label{eqn:pullback}
\alpha^*\delta_{0:[2]}=\frac{1}{2}\eta_{irr},\quad\alpha^* \delta_{0:\left[2i+2\right]}=\eta_i\quad\hbox{and}\quad\alpha^*\delta_{0:\left[2i+1\right]}=2\delta_{i}.
\end{equation}
From the formula \cite[Lemma 3.5]{KM} for the canonical class of $\overline{\rm{M}}_{0,[2g+2]}$ 
one computes
\begin{align}
K_{\overline{\rm{H}}_g}=&-\left(\frac{1}{2}+\frac{1}{2g+1}\right)\eta_{irr}+2\sum_{i=1}^{\lfloor\frac{g-1}{2}\rfloor}\left(\frac{(i+1)(2g-2i)}{2g+1}-1\right)\eta_i\\
&+\sum_{i=1}^{\lfloor\frac{g}{2}\rfloor}\left(\frac{2(2i+1)(2g-2i+1)}{2g+1}-4\right)\delta_i.
\end{align}

The coarse map $c:\overline{\mathcal{H}}_g\to\overline{{\rm{H}}}_g$ is simply ramified at the divisor $\delta_i$. This accounts for the fact that a general hyperelliptic curve in $\delta_i$ has an automorphism group of order four as opposed to the general one in the interior or in any $\eta$-boundary divisor whose only non-trivial automorphism is the hyperelliptic involution. Thus we can calculate the canonical class $K_{\overline{\rm{H}}_g}$ from $K_{\overline{\mathcal{H}}_g}$.
Moreover, if $p:\overline{\mathcal{M}}_{g,n}\to\overline{\mathcal{M}}_g$ is the forgetful morphism and $q:\overline{\mathcal{H}}_{g,n}\to\overline{\mathcal{H}}_g$ the pullback via the restriction $i:\overline{\mathcal{H}}_{g,n}\hookrightarrow\overline{\mathcal{M}}_{g,n}$, then from the identity
$$c_1\left(\Omega_q^1\right)=c_1\left(i^*\Omega_p^1\right)=i^*\left(K_{\overline{\mathcal{M}}_{g,n}}-p^*K_{\overline{\mathcal{M}}_g}\right)=\psi-2\sum_{\left|S\right|\geq 2}\delta_{0:S}$$
together with the fact that when $n\geq 2$ the coarse map $\overline{\mathcal{H}}_{g,n}\to \overline{\rm{H}}_{g,n}$ is simply ramified at $\delta_{i:\varnothing}$ one obtains

\begin{align}
\label{appendix:eq:can_class}
K_{\overline{\rm{H}}_{g,n}}=&\psi-\left(\frac{1}{2}+\frac{1}{2g+1}\right)\eta_{irr}+2\sum_{i\geq 1,S}\left(\frac{(i+1)(2g-2i)}{2g+1}-1\right)\eta_{i:S}\\
&+\sum_{i\geq1, S}\left(\frac{2(2i+1)(2g-2j+1)}{2g+1}-3\right)\delta_{i:S}-2\sum_{\left|S\right|\geq 2}\delta_{0:S}-\sum_{i\geq 1}\delta_{i:\varnothing}.
\end{align}

Recall \cite{Lo} Logan's divisor $\overline{\rm{D}}_g$ defined as the closure in $\overline{\mathcal{M}}_{g,g}$ of the locus 
$${\rm{D}}_g=\left\{\left[C,p_1,\ldots,p_g\right]\in \mathcal{M}_{g,g}\left| h^0\left(C,\mathcal{O}_C(p_1+\ldots+p_g)\right)\geq 2\right.\right\}.$$
\begin{proposition}
The hyperelliptic locus $\mathcal{H}_{g,g}$ is not contained in ${\rm{D}}_g$.
\end{proposition}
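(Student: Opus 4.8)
The plan is to show that a general hyperelliptic curve of genus $g$, equipped with a general choice of $g$ marked points, does \emph{not} lie on Logan's divisor $\overline{\rm{D}}_g$; equivalently, I must produce a single $(C,p_1,\ldots,p_g)\in\mathcal{H}_{g,g}$ with $h^0(C,\mathcal{O}_C(p_1+\cdots+p_g))=1$. Since $\mathcal{H}_{g,g}$ is irreducible, exhibiting one such point suffices. First I would fix a general smooth hyperelliptic curve $C$ with hyperelliptic involution $\iota$ and degree-two map $\pi\colon C\to\mathbb{P}^1$, and choose the $p_i$ to be $g$ points lying over $g$ \emph{distinct} general points of $\mathbb{P}^1$, none of them branch points, with no two of the $p_i$ conjugate under $\iota$. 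The divisor $D=p_1+\cdots+p_g$ then has the property that $D$ contains no fibre of $\pi$ in the sense that $\pi^*(\text{pt})=p+\iota(p)$ is never a subdivisor of $D$.

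The key computation is then $h^0(C,\mathcal{O}_C(D))$. Here I would invoke the standard description of linear series on a hyperelliptic curve: every effective divisor class on $C$ is, up to adding a reduced fibre $g^1_2$ of $\pi$, supported in general position, and one has the formula that for an effective divisor $E$ of degree $d\le g$, $h^0(C,\mathcal{O}_C(E))-1$ equals the number of $g^1_2$'s that can be ``subtracted'' from $E$ and reinserted freely; more precisely, writing $E = \pi^*(F) + E'$ with $F$ effective on $\mathbb{P}^1$ of maximal degree $r$ and $E'$ containing no fibre, one gets $h^0(C,\mathcal{O}_C(E)) = r+1$ (this is Clifford-type behaviour on hyperelliptic curves; see e.g.\ the base-point-free pencil trick or the explicit treatment via $\pi_*$). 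Applying this with $E=D$, where $r=0$ by our choice, yields $h^0(C,\mathcal{O}_C(D))=1$, so that $(C,p_1,\ldots,p_g)\notin\overline{\rm{D}}_g$, which proves the proposition. An alternative, perhaps cleaner route is to argue directly via Riemann--Roch and the residue/adjoint description: $h^0(\mathcal{O}_C(D))\ge 2$ would force $h^0(\omega_C(-D))\ge g-1$, i.e.\ the $(g-2)$-dimensional space of holomorphic differentials vanishing on $D$ is at least $(g-1)$-dimensional; but the canonical map of $C$ factors as $C\xrightarrow{\pi}\mathbb{P}^1\hookrightarrow\mathbb{P}^{g-1}$ through the rational normal curve, so differentials correspond to degree $\le g-2$ forms on $\mathbb{P}^1$ pulled back, and the condition of vanishing at $g$ points lying over $g$ distinct points of $\mathbb{P}^1$ imposes $g$ independent conditions on the $(g-1)$-dimensional space $H^0(\mathbb{P}^1,\mathcal{O}(g-2))$, forcing $h^0(\omega_C(-D))=\max(0,g-1-g)=0$ for general points, hence $h^0(\mathcal{O}_C(D))=d-g+1=1$.

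I would write the argument in the second form, as it is self-contained and the genericity statement is transparent: the locus in $\mathbb{P}^1\times\cdots\times\mathbb{P}^1$ (or in the symmetric product) where the $g$ image points fail to impose independent conditions on $\mathcal{O}_{\mathbb{P}^1}(g-2)$ is a proper closed subset, since $g$ general points always impose independent conditions on a linear system of projective dimension $g-2$ whenever $g\ge g-1$, which holds. One also needs to check the hyperelliptic curve itself is general enough that its canonical image really is the composition through the rational normal curve --- but this is automatic for \emph{every} smooth hyperelliptic curve of genus $g\ge 2$, so no genericity on $C$ is actually needed beyond smoothness.

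The main obstacle, such as it is, is purely bookkeeping: making the ``$g$ general points impose independent conditions'' statement precise in a way that lands inside $\mathcal{H}_{g,g}$ rather than in an auxiliary parameter space, and ruling out the degenerate configurations (two $p_i$ conjugate, or some $p_i$ a Weierstrass point) cleanly. Since these bad loci are proper and closed and $\mathcal{H}_{g,g}$ is irreducible of the expected dimension, a general point of $\mathcal{H}_{g,g}$ avoids all of them, so the non-containment $\mathcal{H}_{g,g}\not\subset\overline{\rm{D}}_g$ follows. I expect this to be a short paragraph in the actual paper.
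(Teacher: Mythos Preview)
Your approach is essentially the same as the paper's: both arguments use Riemann--Roch to reduce $h^0(\mathcal{O}_C(p_1+\cdots+p_g))\ge 2$ to the effectivity of $K_C-(p_1+\cdots+p_g)$, and then invoke the fact that every effective canonical divisor on a hyperelliptic curve is a sum of $g-1$ conjugate pairs, forcing two of the $p_i$ to be conjugate---a proper closed condition on $\mathcal{H}_{g,g}$. One small indexing slip to fix: $H^0(C,\omega_C)\cong H^0(\mathbb{P}^1,\mathcal{O}(g-1))$ is $g$-dimensional (a basis is $x^k\,dx/y$ for $0\le k\le g-1$), not $(g-1)$-dimensional as you wrote, but your conclusion is unaffected since $g$ distinct points on $\mathbb{P}^1$ still annihilate all forms of degree $\le g-1$.
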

\begin{proof}
Take any $(C,p_1,\ldots,p_g)\in {\rm{D}}_g\cap \mathcal{H}_{g,g}$, then by Riemann-Roch there must be $q_1, \ldots, q_{g-2}\in C$ such that $p_1+\ldots+p_g+q_1+\ldots+q_{g-2}$ is a canonical divisor of $C$. Thus, there is some $i\neq j$ with $p_i$ and $p_j$ conjugate. This is a codimension one condition on $\mathcal{H}_{g,g}$.
\end{proof}

The formula for $\overline{\rm{D}}_g$ in terms of generators of ${\rm{Pic}}_\qq\left(\overline{\mathcal{M}}_{g,g}\right)$ was computed in \cite[Thm 5.4]{Lo}: 
\begin{equation}\label{Dg}
\overline{{\rm{D}}}_g = -\lambda + \psi -0\cdot\delta_{irr} -3\delta_{0:2} - \hbox{higher boundary terms}.
\end{equation}
When $n\geq g$ one can consider the {\it{symmetric pullback}} of $\overline{\rm{D}}_g$ to $\overline{\mathcal{M}}_{g,n}$ given by 
$$\overline{\rm{D}}_n=\frac{1}{\binom{n-1}{g-1}}\sum_{\substack{T\subset\{1,\ldots,n\}\\ \left|T\right|=g}}\pi_T^*\overline{\rm{D}}_g=-\frac{n}{g}\lambda + \psi-0\cdot\delta_{irr}-\left(2+\frac{g-1}{n-1}\right)\delta_{0:s} -\ldots,$$
where $\pi_T:\overline{\mathcal{M}}_{g,n}\to \overline{\mathcal{M}}_{g,g}$ is the forgetful map that remembers only the markings indexed by $T$. By pulling back $\overline{\rm{D}}_n$ to $\overline{\mathcal{H}}_{g,n}$ and using the relation \eqref{appendix:eq_lambda} one can write
$$K_{\overline{\rm{H}}_{g,n}}=\varepsilon\left(\psi_1+\ldots+\psi_n\right)+\left(1-\varepsilon\right)\overline{\rm{D}}_{n}+E,$$
for $\varepsilon>0$, with 
$$E=e_{irr}\eta_{irr}+\sum_{i=1}^{\lfloor\frac{g-1}{2}\rfloor}\sum_S e_{i:S}\eta_{i:S}+\sum_{i=0}^{\lfloor\frac{g}{2}\rfloor}\sum_S d_{i:S}\delta_{i:S}.$$ 
An immediate computation shows that for $n>g$, all coefficients of higher boundary divisors $e_{i:S}, d_{i:S}$ are positive for $0<\varepsilon<1$ sufficiently small. Similarly, using \eqref{appendix:eq_lambda} one computes 
$$e_{irr}= \frac{(1-\varepsilon)n-(4g+6)}{8g+4},$$
which is positive for $n>4g+6$ and $\varepsilon>0$ suficiently small and it vanishes for $n=4g+6$ and $\varepsilon=0$. Then bigness of the sum of $\psi$-classes gives us the theorem.

\begin{theorem}
The canonical divisor $K_{\overline{\rm{H}}_{g,n}}$ is effective when $n\geq 4g+6$ and big when $n\geq 4g+7$.
\end{theorem}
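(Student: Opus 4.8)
The plan is to write down an explicit effective (when $n=4g+6$), respectively big-plus-effective (when $n\ge 4g+7$), representative of $K_{\overline{\mathcal H}_{g,n}}$, built from Logan's divisor $\overline{\rm D}_g$ together with the relation \eqref{appendix:eq_lambda}. First I would record that Logan's divisor restricts effectively to the hyperelliptic locus: the Proposition above gives $\mathcal H_{g,g}\not\subset{\rm D}_g$, and more generally, for $n\ge g$ and any $T\subset\{1,\dots,n\}$ of size $g$ the pullback $\pi_T^{-1}({\rm D}_g)$ cannot contain $\mathcal H_{g,n}$, since membership forces a pair of markings indexed by $T$ to be conjugate, a divisorial condition on $\overline{\mathcal H}_{g,n}$. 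Hence the symmetric pullback $\overline{\rm D}_n=\binom{n-1}{g-1}^{-1}\sum_{|T|=g}\pi_T^*\overline{\rm D}_g$ restricts to an effective $\mathbb Q$-divisor on $\overline{\mathcal H}_{g,n}$, and from \eqref{Dg} its class is $\overline{\rm D}_n=-\tfrac{n}{g}\lambda+\psi-\bigl(2+\tfrac{g-1}{n-1}\bigr)\delta_{0:s}-(\text{higher boundary})$, where $\psi=\psi_1+\dots+\psi_n$.

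Next I would assemble the explicit class $K_{\overline{\mathcal H}_{g,n}}$, equation \eqref{appendix:eq:can_class}: starting from $K_{\overline{\mathcal M}_{0,[2g+2]}}$ (formula \eqref{eq:K_M0nb}), transport it back along the isomorphism $\alpha$ using the pullback identities \eqref{eqn:pullback}, lift to the smooth stack $\overline{\rm H}_g$ by Riemann--Hurwitz for the coarse map (simply ramified along $\delta_i$), add the $n$ markings via the relative-dualising-sheaf identity $c_1(\Omega^1_q)=\psi-2\sum_{|S|\ge2}\delta_{0:S}$ on $\overline{\rm H}_{g,n}$, and descend to $\overline{\mathcal H}_{g,n}$ by a further Riemann--Hurwitz (simply ramified along $\delta_{i:\varnothing}$, $i\ge1$). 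With both classes available, the key step is to introduce a parameter $\varepsilon$ and write
$$K_{\overline{\mathcal H}_{g,n}}=\varepsilon\,\psi+(1-\varepsilon)\,\overline{\rm D}_n+E,\qquad E=e_{irr}\,\eta_{irr}+\sum_{i,S}e_{i:S}\,\eta_{i:S}+\sum_{i,S}d_{i:S}\,\delta_{i:S};$$
the role of $\overline{\rm D}_n$ is to absorb a $\lambda$-term, which is eliminated by substituting \eqref{appendix:eq_lambda}, after which the coefficients of the boundary class $E$ become explicit rational functions of $g$, $i$, $|S|$, $n$, $\varepsilon$.

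The core of the argument is then the sign analysis of these coefficients: one checks that for $n>g$ and all sufficiently small $\varepsilon>0$ every coefficient of $E$ other than $e_{irr}$ is strictly positive, while the $\eta_{irr}$-coefficient is exactly $e_{irr}=\dfrac{(1-\varepsilon)n-(4g+6)}{8g+4}$. Granting this, the conclusion is immediate. For $n\ge 4g+7$, choose $\varepsilon>0$ small enough that $E$ is effective and $e_{irr}>0$; then $K_{\overline{\mathcal H}_{g,n}}=\varepsilon\psi+(\text{effective})$, which is big because $\psi=\sum_{i=1}^n\psi_i$ is big on $\overline{\mathcal H}_{g,n}$. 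For $n=4g+6$, take $\varepsilon=0$; then $e_{irr}=0$ and, by continuity from the case $\varepsilon>0$ (the coefficients of $E$ have no pole at $\varepsilon=0$), all remaining coefficients of $E$ are $\ge 0$, so $K_{\overline{\mathcal H}_{g,n}}=\overline{\rm D}_n+E$ is a sum of effective divisors, hence effective.

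The main obstacle is precisely the uniform positivity of the boundary coefficients of $E$: this must be verified over all $g\ge2$, all admissible $i$ and $|S|$, and all $n\ge 4g+6$, for $\varepsilon$ in a suitable interval, and the delicate cases are boundary divisors with $|S|$ small and $i$ extreme, where the contributions from $K_{\overline{\mathcal H}_{g,n}}$, from $\overline{\rm D}_n$, and from the Hodge-class relation \eqref{appendix:eq_lambda} come close to cancelling. A secondary point to pin down is the bigness of $\psi$ on $\overline{\mathcal H}_{g,n}$ itself rather than merely on $\overline{\mathcal M}_{g,n}$; this is standard (cf.\ \cite{Lo}), noting in particular that $\overline{\mathcal H}_{g,n}$ is not contracted by the morphism associated to a suitable multiple of $\psi$.
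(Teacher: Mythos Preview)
Your proposal is correct and follows essentially the same approach as the paper: the same decomposition $K_{\overline{\mathcal H}_{g,n}}=\varepsilon\psi+(1-\varepsilon)\overline{\rm D}_n+E$, the same use of the relation \eqref{appendix:eq_lambda} to eliminate $\lambda$, the same key computation $e_{irr}=\tfrac{(1-\varepsilon)n-(4g+6)}{8g+4}$, and the same conclusion via bigness of $\psi$. Your caveats about the positivity check and the bigness of $\psi$ on $\overline{\mathcal H}_{g,n}$ are exactly the points the paper dispatches as ``an immediate computation'' and ``bigness of the sum of $\psi$-classes'', respectively.
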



\end{document}